\newcommand{\Alpha}{\mathrm{A}}
\newcommand{\Kappa}{\mathrm{K}}
\newcommand{\assign}{\coloneqq}
\newcommand{\backassign}{\eqqcolon}
\newcommand{\llangle}{{\langle\!\langle}}
\newcommand{\mathd}{\mathrm{d}}
\newcommand{\nin}{\not\in}
\newcommand{\npreccurlyeq}{\mathrel{\not\preccurlyeq}}
\newcommand{\nsucccurlyeq}{\mathrel{\not\succcurlyeq}}
\newcommand{\of}{:}
\newcommand{\rrangle}{{\rangle\!\rangle}}
\newcommand{\suchthat}{:}
\newcommand{\tmabbr}[1]{#1}
\newcommand{\tmem}[1]{{\em #1\/}}
\newcommand{\tmmathbf}[1]{\ensuremath{\boldsymbol{#1}}}
\newcommand{\tmop}[1]{\ensuremath{\operatorname{#1}}}
\newcommand{\tmstrong}[1]{\textbf{#1}}
\newcommand{\tmtextit}[1]{\text{{\itshape{#1}}}}
\newcommand{\tmtextup}[1]{\text{{\upshape{#1}}}}
\newenvironment{descriptioncompact}{\begin{description} }{\end{description}}
\newenvironment{enumeratealpha}{\begin{enumerate}[a{\textup{)}}] }{\end{enumerate}}
\newenvironment{enumerateroman}{\begin{enumerate}[i.] }{\end{enumerate}}
\newenvironment{itemizedot}{\begin{itemize} }{\end{itemize}}
\newtheorem{theorem}{Theorem}[section]
\newtheorem{maintheorem}{Theorem}
\newtheorem{corollary}[theorem]{Corollary}
\newtheorem{definition}[theorem]{Definition}
\newtheorem{proposition}[theorem]{Proposition}
\newtheorem{lemma}[theorem]{Lemma}
\theoremstyle{remark}\newtheorem*{convention*}{Convention}
\newtheorem{notation}[theorem]{Notation}
\newtheorem{example}[theorem]{Example}
\newtheorem{remark}[theorem]{Remark}
\title{Taylor expansions over generalised power series}
\author{Vincent Bagayoko}
\affil{IMJ-PRG}
\author{Vincenzo Mantova}
\affil{University of Leeds}
\begin{document}

\maketitle
\begin{abstract}
  We study the existence of formal Taylor expansions for functions defined on
  fields of generalised series. We prove a general result for the existence
  and convergence of those expansions for fields equipped with a derivation
  and an exponential function, and apply this to the case of standard fields
  of transseries, such as $\log$-$\exp$ transseries and $\omega$-series.
\end{abstract}

\section*{Introduction}

In classical analysis, given a ring of real (or complex) smooth germs at a
point $a \in \mathbb{R}$, computing the Taylor series
\[ f \mapsto f (a) + f' (a) X + \frac{1}{2} f'' (a) X^2 + \cdots \]
yields a differential ring homomorphism into the ring of power series
$\mathbb{R} \llbracket X \rrbracket$. For analytic functions, \tmtextit{by
definition}, this is an \tmtextit{embedding} whose image is contained in the
subring $\mathbb{R} \left\{ \!\!\! \{ X \} \!\!\! \right\}$ of the
\tmtextup{convergent} power series: in this case, the power series of $f$
encodes all the local information about $f$, and by uniqueness of analytic
continuations, also its behaviour on the maximal interval over which $f$
extends to an analytic function.

More general power series can be used to study real analytic functions at
points where they fail to be analytic. \tmtextit{Transseries} in particular
allow writing expansion that include the symbols $\exp$, $\log$ in order to
account for essential singularities of respectively exponential and
logarithmic type. For instance, Stirling's formula for the $\Gamma (x)$
function can be read as the transserial expansion
\[ \Gamma (x) = \sqrt{2 \pi} e^{x \log x - x - \frac{1}{2} \log x} + \frac{B_2
   \sqrt{2 \pi}}{2} e^{x \log x - x - \frac{7}{2} \log x} + \cdots \]

Transseries come in different flavours: grid-based transseries
{\cite{Ec92,vdH:ln}} and log-exp transseries {\cite{DG87,Ec92,vdDMM}} are
contained in the class-sized field $\mathbb{R} \llangle \omega \rrangle$ of
$\omega$-series {\cite{BM19}}, and larger systems of so-called hyperseries
{\cite{vdH:loghyp,BvdHK:hyp}} including transexponential terms, and even
surreal numbers {\cite{BM18,BM19,vdH:hypno}}, fit in that picture. For the
purposes of this note, we deal with transseries in an abstract way. First, let
us fix an algebra $\mathbb{A}= \mathbf{K} \llbracket \mathfrak{M} \rrbracket$
of \tmtextit{Noetherian series} (see \Cref{subsection-Noetherian-series}),
where $\mathfrak{M}$ is a partialy ordered monoid and $\mathbf{K}$ is a field,
both of which can be proper classes. Noetherian series come with a natural
formal notion of infinite sum $\sum_{i \in I} f_i$ whenever the family
$(f_i)_{i \in I}$ is \tmtextit{summable}. We then fix a \tmtextit{derivation}
$\partial$ which is \tmtextit{strongly ($\mathbf{K}$-)linear}, meaning it
commutes with infinite sums and it vanishes on $\mathbf{K}$.

Since $\mathbb{A}$ is a differential ring, the Taylor series of some $f \in
\mathbb{A}$ is defined as
\[ f + f' X + \frac{f''}{2!} X^2 + \cdots = \sum_{n \in \mathbb{N}}
   \frac{f^{(n)}}{n!} X^n . \]
This induces {\cite{Ng:Taylor}} a differential ring homomorphism from
$\mathbb{A}$ to $\mathbb{A} \llbracket X \rrbracket$.

Since $\mathbb{A}$ allows some infinite sums, one may ask for which $\delta
\in \mathbb{A}$ the family $\left( \frac{f^{(k)}}{k!} \delta^k \right)_{k \in
\mathbb{N}}$ is summable, in which case its sum is an element of $\mathbb{A}$.
Given a power series $P = \sum_{k \in \mathbb{N}} P_k X^k \in \mathbb{A}
\llbracket X \rrbracket$ with coefficients in $\mathbb{A}$, let the \textit{convergence locus} of $P$ be
\[ \tmop{Conv} (P) \assign \left\{ \delta \in \mathbb{A} \suchthat (P_k
   \delta^k)_{k \in \mathbb{N}}  \text{ is summable} \right\} . \]
One may easily verify that $\tmop{Conv} (P)$ is always convex,
$0\in \tmop{Conv} (P)$, and for instance $\tmop{Conv} (P + Q) \supseteq
\tmop{Conv} (P) \cap \tmop{Conv} (Q)$ (Lemma~\ref{lem-conv-sum-prod}). We write
$\tmop{Conv} (f)$ for the convergence locus of the Taylor series of $f$.

The main result of this paper are explicit and almost sharp bounds for
$\tmop{Conv} (f)$ when $\mathbb{A}$ is an algebra of transseries. More
precisely, we say that $\mathbb{A}$ is a {\tmem{differential pre-logarithmic
Hahn field}} if $\mathbf{K}$ is an ordered field and we are given a morphism
of ordered monoids $\ell \of (\mathfrak{M}, \cdot, 1, \prec) \rightarrow
(\mathbb{A}, +, 0, <)$, which we call \tmtextit{pre-logarithm}, satisfying the
following assumptions:
\begin{enumerate}
  \item $\ell (\mathfrak{M})$ is closed under \tmtextit{truncation} (see
  Definition~\ref{def-truncation});
  
  \item for all $\mathfrak{m} \in \mathfrak{M}$, $\mathfrak{m}^{\dag} \assign
  \frac{\mathfrak{m}'}{\mathfrak{m}} = (\ell (\mathfrak{m}))'$.
\end{enumerate}
If $\mathbf{K}$ is an ordered exponential field {\cite{Kuhl:ordexp}}, then
$\ell$ extends to a morphism $\log \of \mathbb{A}^{> 0} \rightarrow
\mathbb{A}$ which we call \tmtextit{logarithm} (see
Remark~\ref{rem-transserial-extension}).

Our conditions and results are stated in terms of valuation theory, using the
dominance relations $\preccurlyeq$ and $\prec$
{\cite[Definition~3.1.1]{vdH:mt}} and equivalence relations $\asymp$ and
$\sim$ coming from the standard valuation on $\mathbb{A}$ (see
{\cite[Notations, p96]{vdH:mt}}). We require the following technical condition
(\ref{eq-spec-cond}): there is some $x \in \mathfrak{M}$ such that for any
$\mathfrak{m} \in \mathfrak{M}$ we have
\begin{equation}
  \mathfrak{m}^{\dag} \preccurlyeq x^{- 1} \Rightarrow (\tmop{supp}
  \mathfrak{m}')^{\dag} \preccurlyeq x^{- 1} \text{\quad and\quad}
  \mathfrak{m}^{\dag} \succ x^{- 1} \Rightarrow (\tmop{supp}
  \mathfrak{m}')^{\dag} \asymp \mathfrak{m}^{\dag} .
  \label{eq-spec-cond-intro}
\end{equation}
This condition, as arbitrary it may seem, is satisfied in most contexts of
interest (see \Cref{subsection-application-w-series}). Under this condition,
we obtain:

\begin{maintheorem}[\Cref{th-Taylor-convergence} for $\triangle =
  \tmop{id}_{\mathbb{S}}$]\label{main:conv}
  Let $\mathbb{A}$ be a differential
  pre-logarithmic Hahn field satisfying {\tmem{(\ref{eq-spec-cond-intro})}}.
  Then, for all $f \in \mathbb{A}$, we have
  \[ \tmop{Conv} (f) = \bigcap_{\mathfrak{m} \in \tmop{supp} (f)} \tmop{Conv}
     (\mathfrak{m}) \supseteq \left\{ \delta \in \mathbb{A} \suchthat \delta
     \prec x, \mathfrak{m^{\dag}} \delta \prec 1 \text{ for all } \mathfrak{m}
     \in \tmop{supp} (f)  \right\} . \]
\end{maintheorem}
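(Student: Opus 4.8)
The plan is to prove the identity $\tmop{Conv}(f)=\bigcap_{\mathfrak m\in\tmop{supp}(f)}\tmop{Conv}(\mathfrak m)$ first; once it is available, the displayed lower bound reduces to a single monomial, since $\{\delta:\delta\prec x,\ \mathfrak m^\dag\delta\prec1\text{ for all }\mathfrak m\in\tmop{supp}(f)\}=\bigcap_{\mathfrak m\in\tmop{supp}(f)}\{\delta:\delta\prec x,\ \mathfrak m^\dag\delta\prec1\}$, so that it suffices to prove $\tmop{Conv}(\mathfrak m)\supseteq\{\delta:\delta\prec x,\ \mathfrak m^\dag\delta\prec1\}$ for an arbitrary single $\mathfrak m\in\mathfrak M$.

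The technical core is this monomial case. Writing $\mathfrak m^{(k)}=R_k\,\mathfrak m$ one has $R_0=1$ and $R_{k+1}=R_k'+\mathfrak m^\dag R_k$, so, since $\tfrac1{k!}\mathfrak m^{(k)}\delta^k=\mathfrak m\cdot\tfrac1{k!}R_k\delta^k$, it is enough to show that $\bigl(\tfrac1{k!}R_k\,\delta^k\bigr)_{k\in\mathbb N}$ is summable whenever $\delta\prec x$ and $\mathfrak m^\dag\delta\prec1$. I would establish, by induction on $k$ using \eqref{eq-spec-cond-intro}, a two-sided control of $\tmop{supp}(R_k\,\delta^k)$: on the one hand it lies in a single Noetherian set $\mathfrak C_\delta$ of infinitesimal monomials (the submonoid generated by the supports of $\delta x^{-1}$, of $\mathfrak m^\dag\delta$, and of the iterated logarithmic derivatives of the monomials occurring in $\mathfrak m^\dag$, each times a suitable power of $\delta$); on the other hand every monomial of $\tmop{supp}(R_k\,\delta^k)$ is $\preccurlyeq\mathfrak w^{\,k}$ for a fixed $\mathfrak w\prec1$ depending only on $\delta$, $x$ and the dominant monomial of $\mathfrak m^\dag$. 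Condition \eqref{eq-spec-cond-intro} is exactly what makes this propagate through one step $R_k\mapsto R_k'+\mathfrak m^\dag R_k$: its first implication governs the regime $\mathfrak m^\dag\preccurlyeq x^{-1}$, in which the logarithmic derivatives of all monomials that ever occur remain $\preccurlyeq x^{-1}$, so each differentiation costs at most a factor $\preccurlyeq x^{-1}$ absorbed by $\delta\prec x$; its second implication governs $\mathfrak m^\dag\succ x^{-1}$, in which differentiation costs at most a factor $\asymp\mathfrak m^\dag$ absorbed by $\mathfrak m^\dag\delta\prec1$. (This is also where the word ``almost'' enters: the boundary cases $\mathfrak m^\dag\delta\asymp1$ and $\delta\asymp x$ are left undecided.)

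Granting the estimate, summability of $\bigl(\tfrac1{k!}R_k\,\delta^k\bigr)_k$ is immediate: the supports all lie in the fixed Noetherian set $\mathfrak C_\delta$, and since $\mathfrak w\prec1$ any fixed monomial $\mathfrak q$ satisfies $\mathfrak w^{\,k}\prec\mathfrak q$ for all large $k$, hence occurs in $\tmop{supp}(R_k\,\delta^k)$ for only finitely many $k$; so the family is pointwise finite and $\delta\in\tmop{Conv}(\mathfrak m)$, which is the monomial lower bound. The identity comes from the same estimate. As $\partial$ is strongly $\mathbf K$-linear and vanishes on $\mathbf K$, one has $f^{(n)}=\sum_{\mathfrak m\in\tmop{supp}(f)}f_{\mathfrak m}\mathfrak m^{(n)}$, so the $k$-th term of the Taylor family of $f$ at $\delta$ is $\sum_{\mathfrak m\in\tmop{supp}(f)}\tfrac1{k!}f_{\mathfrak m}\mathfrak m^{(k)}\delta^k$. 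Since $\mathfrak C_\delta$ and $\mathfrak w$ depend on $\mathfrak m$ only through $\mathfrak m^\dag$, one may fatten them to be uniform over the Noetherian set $\tmop{supp}(f)$; then, if $\delta\in\tmop{Conv}(\mathfrak m)$ for every $\mathfrak m\in\tmop{supp}(f)$, the doubly indexed family $\bigl(\tfrac1{k!}f_{\mathfrak m}\mathfrak m^{(k)}\delta^k\bigr)_{(\mathfrak m,k)}$ has all supports in a single Noetherian set with a uniform per-level bound, hence is summable, giving $\tmop{Conv}(f)\supseteq\bigcap_{\mathfrak m}\tmop{Conv}(\mathfrak m)$. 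For the reverse inclusion I would use \Cref{lem-conv-sum-prod} and a Noetherian induction on $\tmop{supp}(f)$, peeling off the dominant monomial $\mathfrak m_0$, so that it suffices to show $\tmop{Conv}(f)\subseteq\tmop{Conv}(\mathfrak m_0)$; here the estimate gives $\tmop{supp}(\mathfrak m^{(k)}\delta^k)\subseteq\mathfrak m\cdot\mathfrak C_\delta$ with all monomials $\preccurlyeq\mathfrak m\prec\mathfrak m_0$ for $\mathfrak m\in\tmop{supp}(f)\setminus\{\mathfrak m_0\}$, so that no cancellation in $f^{(k)}=\sum_{\mathfrak m}f_{\mathfrak m}\mathfrak m^{(k)}$ can remove the infinite ascending chain or the infinitely repeated monomial witnessing a failure of $\delta\in\tmop{Conv}(\mathfrak m_0)$, forcing $\delta\notin\tmop{Conv}(f)$.

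The hard part will be the inductive estimate of the second paragraph --- maintaining simultaneously the Noetherian containment $\tmop{supp}(R_k\,\delta^k)\subseteq\mathfrak C_\delta$ and the geometric decay $\preccurlyeq\mathfrak w^{\,k}$ under $R_k\mapsto R_k'+\mathfrak m^\dag R_k$, which is precisely what the opaque-looking dichotomy in \eqref{eq-spec-cond-intro} is tailored to deliver --- together with the uniformity of the constants over $\mathfrak m\in\tmop{supp}(f)$. A secondary point of care is the no-cancellation argument for $\tmop{Conv}(f)\subseteq\tmop{Conv}(\mathfrak m_0)$, where the witness of non-summability may a priori sit at a scale below $\mathfrak m_0$ and one must check, e.g.\ by a leading-term analysis along the sub-sums $\sum_{\mathfrak n\succcurlyeq\mathfrak m}f_{\mathfrak n}\mathfrak n^{(k)}$, that it is not destroyed by the lower-scale contributions. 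Finally, I expect the general statement \Cref{th-Taylor-convergence} (arbitrary $\triangle$) to follow from the same scheme with all dominance estimates carried out relative to $\triangle$.
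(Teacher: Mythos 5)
Your plan correctly locates the role of (\ref{eq-spec-cond-intro}): propagating the dichotomy through $R_k \mapsto R_k' + \mathfrak{m}^{\dag} R_k$ is the paper's Lemma~\ref{lem-diff-subfield}, and the resulting decay $\tmop{supp} (\mathfrak{m}^{(k)} \delta^k) \preccurlyeq \mathfrak{m} \mathfrak{w}^k$ is Proposition~\ref{prop-contracting-der} and Corollary~\ref{cor-Taylor-descent}. However, the argument has a genuine gap precisely at the step you defer as ``the hard part''. First, a concrete error: from $\mathfrak{w} \prec 1$ you cannot conclude that $\mathfrak{w}^k \prec \mathfrak{q}$ for all large $k$; the monomial groups here are non-Archimedean (take $\mathfrak{w} = x^{-1}$ and $\mathfrak{q} = \mathrm{e}^{-x}$), so geometric decay does not by itself give pointwise finiteness of the family. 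The correct tool is Proposition~\ref{prop-Neumann} (for $\mathfrak{q} \in \mathfrak{T}^{\infty}$ with $\mathfrak{T} \prec 1$ Noetherian, the set of $n$ with $\mathfrak{q} \in \mathfrak{T}^n$ is finite), packaged as Lemma~\ref{lem-well-based-fin-family}; to invoke it you need $\tmop{supp} (R_k \delta^k) \subseteq \mathfrak{T}^k \cdot \mathfrak{S}$ for a single Noetherian infinitesimal $\mathfrak{T}$, not merely a bound by $\mathfrak{w}^k$. Second, and more seriously, the Noetherianity of your class $\mathfrak{C}_{\delta}$ --- a submonoid generated by the iterated logarithmic derivatives of all monomials that ever occur --- is asserted, not proved, and it is the actual technical content of the paper. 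The paper obtains it by extending $\partial$ to a strongly linear derivation on the cut algebra $\mathbb{S} \llbracket X \rrbracket_{\mathfrak{S}}$ (Proposition~\ref{prop-Noeth-ext-der}) and then applying van der Hoeven's contraction theorem (Proposition~\ref{prop-iterating-operator}) to $X \cdot \overline{\partial}$; Proposition~\ref{prop-Noeth-ext-der} in turn uses the hypothesis that $\ell (\mathfrak{M})$ is truncation closed, via the delicate coinitial-sequence Lemma~\ref{lem:coinitial-sequence}. Your sketch never touches the pre-logarithm or truncation closure, so it cannot be reproducing that argument, and no substitute is offered.

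Two secondary issues. The uniformity of $\mathfrak{C}_{\delta}$ and $\mathfrak{w}$ over $\mathfrak{m} \in \tmop{supp} (f)$ is itself nontrivial; the paper sidesteps it by working with the whole subgroup $\mathfrak{M}_{\triangle, \delta}$ and the subfield $\mathbb{S}_{\triangle, \delta}$ at once rather than one monomial at a time. And your no-cancellation argument for $\tmop{Conv} (f) \subseteq \tmop{Conv} (\mathfrak{m}_0)$ is incomplete for the reason you yourself note: the witness of non-summability for $\mathfrak{m}_0$ lives at scales $\mathfrak{m}_0 \mathfrak{w}^k$ that may lie far below other monomials of $\tmop{supp} f$, so the scales do not separate and a leading-term analysis is not enough as stated. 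The paper's Theorem~\ref{th-Taylor-convergence} avoids this by proving summability of the doubly indexed family $(\mathfrak{m}^{(k)} \delta^k)_{\mathfrak{m} \in \mathfrak{S}, k \in \mathbb{N}}$ directly.
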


The bound is often sharp, meaning that $\tmop{Conv} (f)$ is often equal to the
right hand side: if $\mathfrak{m}^{\dag} \delta \succcurlyeq 1$ for some
$\mathfrak{m} \in \tmop{supp} (f)$, or if $\delta \succ x$, then $\delta
\notin \tmop{Conv} (f)$ (see Remark~\ref{rem-radius}). However, there are situations
where the Taylor series of $f$ converges on some $\delta \asymp x$, in which
case the above inclusion becomes strict.

We shall actually prove an even stronger statement. Many fields of
transseries admit a \tmtextit{composition law}: a function that takes $f, g
\in \mathbb{A}$ with $g \in \mathbb{A}^{> \mathbf{K}}$ and
return $f \circ g \in \mathbb{A}$, such that
\begin{enumerate}
  \item the map $f \mapsto f \circ g$ is strongly $\mathbf{K}$-linear;
  
  \item $(\log f) \circ g = \log (f \circ g)$;
  
  \item $(f \circ g) = (f' \circ g) g'$;
  
  \item $(f \circ g) \circ h = f \circ (g \circ h)$.
\end{enumerate}
We are then interested in whether the identity
\[ f \circ (g + \delta) = f \circ g + (f' \circ g) \delta + \frac{f'' \circ
   g}{2!} \delta^2 + \cdots \]
holds, and for which $\delta$. We formalise this notion by fixing an arbitrary
strongly linear operator $\triangle \of \mathbb{A} \rightarrow \mathbb{B}$.
For $P \in \mathbb{A} \llbracket X \rrbracket$, let $\tmop{Conv}_{\triangle}
(P)$ be the convergence locus of the power series $\triangle (P)$ (meaning
that we apply $\triangle$ on each coefficient of $P$) and
$\tmop{Conv}_{\triangle} (f)$ be the same for the Taylor series of $f \in
\mathbb{A}$ in place of $P$. We shall prove the following:

\begin{maintheorem}[\Cref{th-Taylor-convergence}]\label{main:conv-triangle}
  Let $\mathbb{A}$ be a differential pre-logarithmic Hahn field satisfying {\tmem{(\ref{eq-spec-cond-intro})}} and let $\triangle \of \mathbb{A}
  \rightarrow \mathbb{B}$ be strongly linear morphism of algebras. For all $f
  \in \mathbb{A}$, we have
  \begin{eqnarray*}\tmop{Conv}_{\triangle} (f) & =  & \bigcap_{\mathfrak{m} \in \tmop{supp} (f)}
     \tmop{Conv}_{\triangle} (\mathfrak{m}) \\ & \supseteq & \left\{ \varepsilon \in
     \mathbb{B} \suchthat \varepsilon \prec \triangle (x), \triangle
     (\mathfrak{m}^{\dagger}) \varepsilon \prec 1 \text{ for all }
     \mathfrak{m} \in \tmop{supp} (f)  \right\} . \end{eqnarray*}
\end{maintheorem}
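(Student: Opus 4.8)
The plan is to reduce to the case of a single monomial and then to control, via condition~(\ref{eq-spec-cond-intro}), the supports of the iterated derivatives of a monomial. For the reduction, note that strong $\mathbf{K}$-linearity of $\partial$ gives $f^{(n)}=\sum_{\mathfrak m\in\tmop{supp}(f)}f_{\mathfrak m}\,\mathfrak m^{(n)}$ for all $n$, so the Taylor series of $f$ is the coefficientwise strongly linear combination $\sum_{\mathfrak m}f_{\mathfrak m}\,\tmop{Tay}(\mathfrak m)$, and applying the strongly linear $\triangle$ yields $\triangle(\tmop{Tay}(f))=\sum_{\mathfrak m}f_{\mathfrak m}\,\triangle(\tmop{Tay}(\mathfrak m))$. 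From this, a strongly linear version of Lemma~\ref{lem-conv-sum-prod} gives $\tmop{Conv}_{\triangle}(f)\supseteq\bigcap_{\mathfrak m\in\tmop{supp}(f)}\tmop{Conv}_{\triangle}(\mathfrak m)$, once one knows that the summability of $\triangle(\tmop{Tay}(\mathfrak m))(\varepsilon)$ is \emph{uniform} over $\mathfrak m\in\tmop{supp}(f)$; this uniformity will come from the support bounds established below together with the reverse well-ordering of $\tmop{supp}(f)$. For the opposite inclusion, I would first show $\tmop{Conv}_{\triangle}(f)\subseteq\tmop{Conv}_{\triangle}(g)$ whenever $g$ is a truncation of $f$ --- no cancellation is possible between a truncation and its complementary tail after Taylor expansion, since their supports stay in separated regions --- and then, for a fixed $\mathfrak m_0\in\tmop{supp}(f)$, write the term $f_{\mathfrak m_0}\mathfrak m_0$ as the difference $A-B$ of the two truncations of $f$ that agree above $\mathfrak m_0$ and differ only at $\mathfrak m_0$, so that $\tmop{Conv}_{\triangle}(\mathfrak m_0)=\tmop{Conv}_{\triangle}(A-B)\supseteq\tmop{Conv}_{\triangle}(A)\cap\tmop{Conv}_{\triangle}(B)\supseteq\tmop{Conv}_{\triangle}(f)$.

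The heart is therefore the monomial case. Writing $\lambda\assign\ell(\mathfrak m)$, so that $\mathfrak m'=\mathfrak m\lambda'$ and $\lambda'=\mathfrak m^{\dagger}$, the Fa\`a di Bruno/complete Bell expansion gives $\mathfrak m^{(n)}/n!=\mathfrak m\cdot R_n$, where $\sum_n R_nX^n=\exp\big(\sum_{k\geq1}\tfrac{\lambda^{(k)}}{k!}X^k\big)$ is the $X$-adic exponential of a series with no constant term, so each $R_n$ is a fixed $\mathbf K$-polynomial in $\lambda',\dots,\lambda^{(n)}$. Since $\triangle$ is a morphism of algebras it commutes with these polynomial expressions, hence $\triangle(\mathfrak m^{(n)}/n!)=\triangle(\mathfrak m)\cdot\big[\exp\big(\sum_{k\geq1}\triangle(a_k)X^k\big)\big]_n$ with $a_k\assign\lambda^{(k)}/k!$ and, crucially, $\triangle(a_1)=\triangle(\mathfrak m^{\dagger})$. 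Thus (when $\triangle(\mathfrak m)\neq0$) one has $\tmop{Conv}_{\triangle}(\mathfrak m)=\tmop{Conv}\big(\exp\big(\sum_{k\geq1}\triangle(a_k)X^k\big)\big)$, and the problem becomes: for which $\varepsilon$ does this formal exponential converge at $\varepsilon$? Here condition~(\ref{eq-spec-cond-intro}) enters to bound the $a_k$: by induction on $n$, distinguishing the two cases $\mathfrak m^{\dagger}\preccurlyeq x^{-1}$ and $\mathfrak m^{\dagger}\succ x^{-1}$ of (\ref{eq-spec-cond-intro}), one shows $\tmop{supp}(\mathfrak m^{(n)})\subseteq\mathfrak m\cdot\{\mathfrak q\in\mathfrak M:\mathfrak q\preccurlyeq E_{\mathfrak m}^{\,n}\}$, where $E_{\mathfrak m}$ is $x^{-1}$ in the first case and the dominant monomial of $\mathfrak m^{\dagger}$ in the second; the induction simultaneously keeps track of the fact that every monomial that appears has logarithmic derivative $\preccurlyeq E_{\mathfrak m}$, which is exactly what the two implications in (\ref{eq-spec-cond-intro}) supply. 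Feeding this back, $a_k\preccurlyeq E_{\mathfrak m}^{\,k}$ in dominance, and the hypotheses $\varepsilon\prec\triangle(x)$ and $\triangle(\mathfrak m^{\dagger})\varepsilon\prec1$ force each $\big[\exp\big(\sum_k\triangle(a_k)X^k\big)\big]_n\varepsilon^n$ to be dominated, after peeling off $\triangle(\mathfrak m)$ and $\triangle(x)^{\pm n}$ with the multiplicativity of $\triangle$, by the $n$-th term of a geometric-type series with infinitesimal ratio; summability of $\big(\triangle(\mathfrak m^{(n)}/n!)\varepsilon^n\big)_n$ follows. Conversely the same support bound shows (this is Remark~\ref{rem-radius}) that if $\triangle(\mathfrak m^{\dagger})\varepsilon\succcurlyeq1$ then the $n=1$ term, more precisely the pure $a_1$-contribution, already obstructs summability, and if $\varepsilon\succcurlyeq\triangle(x)$ the term $\triangle(\mathfrak m)(\varepsilon/\triangle(x))^n$ does not tend to $0$; combined with the reduction step this yields the asserted equality.

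The step I expect to be the real obstacle is the support bound for $\mathfrak m^{(n)}$ and its robustness under an arbitrary strongly linear algebra morphism $\triangle$. The combinatorial difficulty is that (\ref{eq-spec-cond-intro}) controls $\tmop{supp}(\mathfrak m')$ only through the logarithmic derivatives of its monomials, so iterating the derivation forces one to apply (\ref{eq-spec-cond-intro}) to the monomials of the \emph{series} $\mathfrak m^{\dagger}$ --- and to the monomials of their derivatives, and so on --- while keeping the inductive invariant (the bound by $E_{\mathfrak m}^{\,n}$ together with the constraint on logarithmic derivatives) closed; the two cases of (\ref{eq-spec-cond-intro}) genuinely interact here, since a monomial with small logarithmic derivative can occur inside the support of one with large logarithmic derivative. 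On top of this, since $\triangle$ is neither monotone nor required to commute with $\partial$, one cannot transport dominance inequalities across $\triangle$ monomial by monomial; one has to keep $\mathfrak m$, $x$ and $\mathfrak m^{\dagger}$ intact and use only multiplicativity, and one needs a convergence criterion for the $X$-adic exponential strong enough to survive $\triangle$ --- in particular robust against pathologies such as supports accumulating at $1$ after infinitely many multiplications. It is the interplay of these two requirements that ultimately forces the precise shape ``$\varepsilon\prec\triangle(x)$ and $\triangle(\mathfrak m^{\dagger})\varepsilon\prec1$'' of the bound.
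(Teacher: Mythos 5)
Your reduction to monomials and your support analysis of $\mathfrak{m}^{(n)}$ --- the two-case induction via (\ref{eq-spec-cond-intro}) giving $\tmop{supp} \mathfrak{m}^{(n)} \subseteq \mathfrak{m} \cdot \{ \mathfrak{q} \suchthat \mathfrak{q} \preccurlyeq E_{\mathfrak{m}}^n \}$ --- are sound and correspond to Lemma~\ref{lem-diff-subfield} and Remark~\ref{rem-radius}. But the concluding step, where you pass from this dominance bound to summability, is precisely where the proof lives, and it is missing. In a Hahn field, a family $(s_n)_{n \in \mathbb{N}}$ with $s_n \preccurlyeq \mathfrak{v}^n$ for a fixed infinitesimal $\mathfrak{v}$ need \emph{not} be summable: take $\mathfrak{M} = x^{\mathbb{Z}} \times y^{\mathbb{Z}}$ ordered lexicographically with $1 \prec x \prec y$ and $x^n \prec y$ for all $n$, and $s_n \assign y^{- 1} x^n$; then $s_n \prec x^{- n}$ for every $n$, yet $\bigcup_n \tmop{supp} s_n$ contains the strictly $\prec$-increasing chain $(y^{- 1} x^n)_n$ and is not well-based. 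So ``dominated by the $n$-th term of a geometric-type series with infinitesimal ratio'' does not yield summability of $(\triangle (\mathfrak{m}^{(n)} / n!) \varepsilon^n)_n$, let alone the joint summability over $(\mathfrak{m}, n) \in \tmop{supp} (f) \times \mathbb{N}$ that the theorem requires. What is needed is a \emph{Noetherian} support bound of the shape $\tmop{supp} (\triangle (\mathfrak{m}^{(n)}) \varepsilon^n) \subseteq \mathfrak{T}^n \cdot \mathfrak{S}$ with $\mathfrak{T}, \mathfrak{S}$ well-based and $\mathfrak{T} \prec 1$, so that Proposition~\ref{prop-Neumann} and Lemma~\ref{lem-well-based-fin-family} apply. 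Producing such a bound is exactly what the paper's machinery is for: the cut-algebras $\mathbb{S} \llbracket X \rrbracket_{\mathfrak{S}}$ of \Cref{subsection-subalgebras}, the Noetherianity of the extended derivation $\overline{\partial}$ (Proposition~\ref{prop-Noeth-ext-der}, whose proof needs Lemma~\ref{lem:coinitial-sequence} and hence the hypothesis that $\ell (\mathfrak{M})$ is truncation closed --- a hypothesis your argument never invokes, which is a warning sign), the contraction property of $X \cdot \overline{\partial}$ (Proposition~\ref{prop-contracting-der}), and van der Hoeven's summability of $\sum_k \frac{1}{k!} (X \cdot \overline{\partial})^{[k]}$ (Proposition~\ref{prop-iterating-operator}). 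You correctly identify this as ``the real obstacle'', but flagging it is not the same as overcoming it.

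Two secondary gaps. First, the inclusion $\tmop{Conv}_{\triangle} (f) \supseteq \bigcap_{\mathfrak{m}} \tmop{Conv}_{\triangle} (\mathfrak{m})$ requires joint summability of $(\triangle (\mathfrak{m}^{(k)}) \varepsilon^k)_{\mathfrak{m} \in \tmop{supp} f, k \in \mathbb{N}}$, not merely summability for each fixed $\mathfrak{m}$; you defer this uniformity to ``the support bounds below'', but those bounds are dominance bounds and run into the same obstruction. (This is why the paper's Theorem~\ref{th-Taylor-convergence} is stated for an entire well-based family $\mathfrak{S}$ at once.) Second, for the equality, your ``no cancellation between a truncation and its complementary tail'' claim is unjustified: the cones $\mathfrak{m} \cdot \{ \mathfrak{q} \preccurlyeq E_{\mathfrak{m}}^n \}$ attached to distinct monomials of $f$ can overlap badly, so the supports of the two Taylor expansions are not separated after differentiation and application of $\triangle$.
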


\

We also show (Theorems~\ref{th-Taylor-analytic} and~\ref{th-chain-rule}) that
if $\triangle$ commutes with families of analytic functions on $\mathbb{S}$
and $\mathbb{T}$, or if it satisfies a chain rule with respect to a derivation
on $\mathbb{T}$, then so does its ``Taylor deformation'' operator $f \mapsto
\sum_{k \in \mathbb{N}} \frac{\triangle (f^{(k)})}{k!} \delta^k$. We then
apply these results in the case of $\omega$-series, taking $\triangle \of f
\mapsto f \circ g$, and show:

\begin{maintheorem}
  \label{th-main}Let $f \in \mathbb{R} \llangle \omega \rrangle$ and let $g,
  \delta \in \mathbb{R} \llangle \omega \rrangle$ with $g >\mathbb{R}$ and
  $\delta \prec g$. Assume that $(\mathfrak{m}^{\dag} \circ g) \delta \prec 1$
  for all $\mathfrak{m} \in \tmop{supp} f$ (i.e. $\delta \in \tmop{Conv}_{h
  \mapsto h \circ g} (f)$). Then we have
  \[ f \circ (g + \delta) = \sum_{k \in \mathbb{N}} \frac{f^{(k)} \circ g}{k!}
     \delta^k . \]
\end{maintheorem}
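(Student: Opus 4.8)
The strategy is to realise the map $\triangle \colon h \mapsto h \circ g$ as an instance of the general machinery of Theorems~\ref{main:conv-triangle} (i.e.\ \Cref{th-Taylor-convergence}) and~\ref{th-chain-rule} applied to $\mathbb{A} = \mathbb{B} = \mathbb{R}\llangle\omega\rrangle$. First I would record that $\mathbb{R}\llangle\omega\rrangle$, as discussed in \Cref{subsection-application-w-series}, is a differential pre-logarithmic Hahn field satisfying condition~(\ref{eq-spec-cond-intro}) with the appropriate choice of $x\in\mathfrak{M}$ (here $x$ can be taken to be the monomial corresponding to the identity, so that $\delta\prec g$ will play the role of the first convergence condition after composing). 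Next I would check that composition $\triangle = (h\mapsto h\circ g)$ is a strongly $\mathbf{K}$-linear morphism of ordered $\mathbb{R}$-algebras: strong linearity and multiplicativity are items~(1) of the composition-law axioms together with the fact that composition on the right with a fixed $g>\mathbb{R}$ is a ring homomorphism; order-preservation follows from $g>\mathbb{R}$ and the standard properties of composition in $\omega$-series. The axioms~(2)--(4) listed in the excerpt give exactly that $\triangle$ intertwines $\log$ with $\log$, and that it satisfies a chain rule $(h\circ g)' = (h'\circ g)\,g'$ with respect to the derivation on the target; this is precisely the hypothesis needed to invoke \Cref{th-chain-rule}.

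With these verifications in place, \Cref{th-Taylor-convergence} (= Theorem~\ref{main:conv-triangle}) applied to this $\triangle$ says that for $f\in\mathbb{R}\llangle\omega\rrangle$,
\[
\tmop{Conv}_{h\mapsto h\circ g}(f) \;\supseteq\; \bigl\{\varepsilon : \varepsilon \prec x\circ g,\ (\mathfrak{m}^{\dag}\circ g)\,\varepsilon \prec 1 \text{ for all } \mathfrak{m}\in\tmop{supp} f\bigr\}.
\]
Since $x\circ g \asymp g$ (composition with $g$ sends the chosen $x$ to something comparable to $g$, as $g>\mathbb{R}$), the condition $\varepsilon\prec x\circ g$ is equivalent to $\varepsilon\prec g$; combined with the standing hypothesis $(\mathfrak{m}^{\dag}\circ g)\,\delta\prec 1$, this shows $\delta$ lies in the convergence locus, so that the family $\bigl(\tfrac{f^{(k)}\circ g}{k!}\,\delta^k\bigr)_{k\in\mathbb{N}}$ is summable and its sum $T_{\triangle,\delta}(f) := \sum_k \tfrac{\triangle(f^{(k)})}{k!}\delta^k$ is a well-defined element of $\mathbb{R}\llangle\omega\rrangle$.

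It remains to identify this sum with $f\circ(g+\delta)$. Here I would use \Cref{th-chain-rule}: the Taylor deformation operator $f\mapsto T_{\triangle,\delta}(f)$ again satisfies a chain rule with respect to the derivation on $\mathbb{R}\llangle\omega\rrangle$, and by \Cref{th-Taylor-analytic} it commutes with families of analytic functions; moreover by construction it agrees with $f\mapsto f\circ g$ at $\delta=0$ and its ``derivative in $\delta$'' matches that of $f\mapsto f\circ(g+\delta)$. Both operators $f\mapsto f\circ(g+\delta)$ and $f\mapsto T_{\triangle,\delta}(f)$ are strongly linear, multiplicative, commute with $\log$ (for $T_{\triangle,\delta}$ this is the content of \Cref{th-chain-rule} applied to axiom~(2)), and act the same way on $\omega$ and on the monomials built from iterated logarithms and exponentials; since $\mathbb{R}\llangle\omega\rrangle$ is generated — in the strong-linear, exp-log sense — by such elements, a uniqueness argument forces the two operators to coincide on all of $\mathbb{R}\llangle\omega\rrangle$. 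The main obstacle I anticipate is precisely this last identification: one must be careful that the generation/uniqueness statement for $\omega$-series is strong enough to pin down a strongly linear, $\log$-commuting, chain-rule-respecting operator from its values on monomials, and that the hypotheses of \Cref{th-chain-rule} (a genuine derivation on the target and the chain-rule identity for $\triangle$) are verified for composition with $g$ — this is where axioms~(3) and~(4) of the composition law, together with the summability just established, do the real work.
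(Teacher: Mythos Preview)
Your overall strategy matches the paper's: verify that $\mathbb{R}\llangle\omega\rrangle$ with the distinguished monomial $x=\omega$ satisfies the hypotheses of the general framework, apply \Cref{th-Taylor-convergence} to obtain summability of the family $\bigl(\tfrac{f^{(k)}\circ g}{k!}\,\delta^k\bigr)_{k\in\mathbb{N}}$, and then identify the Taylor deformation $\mathcal{T}_\delta(\circ_g)$ with $\circ_{g+\delta}$ by a uniqueness argument for composition operators. Two points, however, need correction.

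First, you attribute the log-commutation of $\mathcal{T}_\delta(\triangle)$ to \Cref{th-chain-rule}, but that theorem concerns derivations on the target, not logarithms. The relevant result is \Cref{th-Taylor-analytic}, specialised as in Remark~\ref{rem-log-compatibility}: if $\triangle$ commutes with $\log$, then so does its Taylor deformation. The paper's identification uses only this, together with strong linearity and the direct computation $\mathcal{T}_\delta(\circ_g)(\omega)=g+\delta$ (since $\omega'=1$ and higher derivatives vanish). The chain rule plays no role in the final step, because the uniqueness characterisation of $\circ_b$ on $\omega$-series is purely in terms of strong linearity, commutation with $\log$, and the value at $\omega$; your mention of matching ``derivative in $\delta$'' and invoking \Cref{th-chain-rule} is superfluous and, if taken literally, would not yield log-commutation.

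Second, there is a genuine missing verification. Applying \Cref{th-Taylor-analytic} via Remark~\ref{rem-log-compatibility} requires that $\log$ stabilise the domain $\mathbb{S}_{\triangle,\delta}=\mathbb{R}\llangle\omega\rrangle_{\circ_g,\delta}$, i.e.\ that $\log\mathfrak{m}\in\mathbb{S}_{\triangle,\delta}$ whenever $\mathfrak{m}$ is a monomial in $\mathfrak{M}_{\triangle,\delta}$. The paper checks this explicitly: since $\tmop{supp}\log\mathfrak{m}\succ 1$ and $(\log\mathfrak{m})^\dag=\mathfrak{m}^\dag/(\log\mathfrak{m})^2\prec\mathfrak{m}^\dag$, one gets $((\log\mathfrak{m})^\dag\circ g)\,\delta\prec(\mathfrak{m}^\dag\circ g)\,\delta\prec 1$. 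Without this step the hypothesis of Remark~\ref{rem-log-compatibility} is unverified and the uniqueness argument cannot be launched.
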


The same conclusion applies for other maps $\triangle$. For instance, we could
take $g, \delta$ to be surreal numbers using the composition of {\cite{BM19}}.
The convergence locuses specified in our theorems are optimal (see
Remark~\ref{rem-radius}), and generalise various existing results about Taylor
expansions in fields of transseries. Their history is less linear than one
might think, so we feel it is appropriate to briefly discuss those results,
and their limitations, in chronological order:
\begin{itemizedot}
  \item {\'E}calle {\cite[4.1.26bis]{Ec92}} considered Taylor expansions of
  grid-based transseries or log--exp transseries. His bounds for the
  convergence locus are sometimes too small to be used appropriately (see
  {\cite[(6.32)]{vdDMM01}}).
  
  \item Van den Dries, Macintyre and Marker {\cite[(6.8)-(4)]{vdDMM01}} showed
  that logarithmic-exponential transseries in $\mathbb{T}_{\tmop{LE}}$ have
  Taylor expansions, but gave a non-optimal convergence locus.
  
  \item Schmeling {\cite[Section~6]{Schm01}} showed that $\omega$-series act
  on transserial fields that are closed under exponentiation, and that they
  have Taylor expansions with optimal convergence locus. Unfortunately, his
  proof is incomplete.
  
  \item Van der Hoeven {\cite[Proposition~5.11(c)]{vdH:ln}} showed that the
  theorem above is valid in the subfield of $\mathbb{R} \llangle \omega
  \rrangle$ of grid-based transseries.
  
  \item Berarducci and Mantova defined {\cite[Theorem~6.3]{BM19}} a
  composition law $\mathord{\circ} \of \mathbb{R} \llangle \omega \rrangle
  \times \mathbf{No}^{>\mathbb{R}} \longrightarrow \mathbf{No}$ on Conway's
  ordered field $\mathbf{No}$ of surreal numbers {\cite{Con76}}, and showed
  {\cite[Theorem~7.5]{BM19}} that a series $f \in \mathbb{R} \llangle \omega
  \rrangle$ has a Taylor expansion
  \[ f \circ (\xi + \delta) = \sum_{k \in \mathbb{N}} \frac{f^{(k)} \circ
     \xi}{k!} \delta^k \]
  at every $\xi \in \mathbf{No}^{>\mathbb{R}}$ for small enough (but
  undetermined) $\delta \in \mathbf{No}$ depending on $f$ and $\xi$.
  
  \item Van den Dries, van der Hoeven and Kaplan
  {\cite[Proposition~8.1]{vdH:loghyp}} showed that the theorem above is valid
  in their field of so-called logarithmic hyperseries.
\end{itemizedot}
In particular, there is no proof in the literature of the optimal result for
$\omega$-series or even $\log$-$\exp$ transseries. Our method is designed to
be as general as possible, and we will use it subsequently in order to derive
Taylor expansions results for larger fields of transseries, notably the
hyperexponential closure {\cite{BvdHK:hyp}} of the field of logarithmic
hyperseries, and later, surreal numbers.

We prove the main theorem by switching perspective. Instead of fixing $f$ and
looking at which $\delta$'s make the Taylor series of $f$ around $g$
convergent, we fix $\delta$ and $g$ and study which series in $\mathbb{A}
\llbracket X \rrbracket$ converge around $g$ at least as far as $\: \delta$.
Fixing $(g, \delta) \in \mathbb{A}^{> \mathbf{K}} \times \mathbb{A}$, the
operator
\[ \mathbb{A} \longrightarrow \mathbb{A} \: ; \: f \mapsto \sum_{k \in
   \mathbb{N}} \frac{f^{(k)} \circ g}{k!} \delta^k \]
can be obtained in the following three steps:
\begin{equation}
  \mathbb{A} \longrightarrow \mathbb{A} \llbracket X \rrbracket \: ; \: f
  \mapsto \sum_{k \in \mathbb{N}} \frac{f^{(k)}}{k!} X^k
  \label{eq-intro-diff1},
\end{equation}
\begin{equation}
  \mathbb{A} \llbracket X \rrbracket \longrightarrow \mathbb{A} \llbracket X
  \rrbracket \: ; \: \sum_{k \in \mathbb{N}} P_k X^k \mapsto \sum_{k \in
  \mathbb{N}} (P_k \circ g) X^k \label{eq-intro-comp},
\end{equation}
\begin{equation}
  \mathbb{A} \llbracket X \rrbracket \longrightarrow \mathbb{A} \hspace{0.8em}
  ; \: \sum_{k \in \mathbb{N}} P_k X^k \mapsto \sum_{k \in \mathbb{N}} P_k
  \delta^k, \label{eq-intro-ev}
\end{equation}
each of which defines a strongly linear morphism of algebras. The first step
itself can be seen as an infinite sum of iterated operators
\begin{equation}
  \sum_{k \in \mathbb{N}} \frac{1}{k!}  (X \cdot \overline{\partial})^k
  \label{eq-intro-scarysum}
\end{equation}
evaluated at $a \in \mathbb{K}$, where $\overline{\partial}$ is the operator
\begin{equation}
  \overline{\partial} \of \mathbb{A} \llbracket X \rrbracket \longrightarrow
  \mathbb{A} \llbracket X \rrbracket \: ; \: \sum_{k \in \mathbb{N}} P_k X^k
  \mapsto \sum_{k \in \mathbb{N}} P_k' X^k . \label{eq-intro-diff2}
\end{equation}

As a consequence of van der Hoeven's implicit function
theorem~{\cite{vdH:noeth}}, the summability of (\ref{eq-intro-scarysum}) only
requires the operator $X \cdot \overline{\partial}$ to commute with infinite
sums and to be contracting in a valuation theoretic sense. What makes
convergence of Taylor series difficult is the fact that the operation
(\ref{eq-intro-ev}) is {\tmem{not}} defined, in general, on the whole algebra
$\mathbb{A} \llbracket X \rrbracket$. In order to obtain the largest
subalgebra on which all operations can be performed, we are to find the domain
of (\ref{eq-intro-ev}), then its preimage under (\ref{eq-intro-comp}), and then
the preimage of the latter under (\ref{eq-intro-diff1}). This in turn leads us
to study conditions under which $X \cdot \overline{\partial}$ is contracting
and commutes with sums, and under which the infinite sum
(\ref{eq-intro-scarysum}) ranges in that last domain.

We determine the subalgebra of Noetherian series of $\mathbb{A} \llbracket X
\rrbracket$ on which (\ref{eq-intro-ev}) is defined in
\Cref{subsection-subalgebras}. A large part of the problem then reduces to
finding subalgebras of Noetherian series of $\mathbb{A} \llbracket X
\rrbracket$ between which endomorphisms of $\mathbb{A}$ and derivations on
$\mathbb{A}$ can be extended as in (\ref{eq-intro-comp},
\ref{eq-intro-diff2}). This is the purpose of
Sections~\ref{subsection-cut-extensions} and~\ref{subsection-cut-extensions2}.
We then combine our findings in \Cref{section-Taylor-expansions} to obtain the
main theorems and apply them to $\omega$-series.

\begin{convention*}
  Before we start, we set a few conventions.
  
  \begin{descriptioncompact}
    \item[Set theory] We adopt the set-theoretic framework of
    {\cite{BvdHK:hyp}}. The underlying set theory of this paper is NBG set
    theory with the axiom of limitation of size, a conservative extension of
    ZFC which allows us to prove statements about proper classes.\footnote{All
    of the arguments in this paper also work in ZFC, provided one fixes an
    uncountable regular cardinal $\kappa$ and replaces each occurence of the
    word `set' with `set of size $< \kappa$'.}
    
    \item[Ordinals] We consider the class $\mathbf{On}$ of all ordinals as a
    generalised, regular ordinal. We recall that the
    {\tmem{cofinality}}{\index{cofinality}} of a linearly ordered class
    $(\mathbf{L}, <)$ without maximum is the unique regular generalised
    ordinal $\tmmathbf{\kappa}$ such that there exists a nondecreasing map
    $\tmmathbf{\kappa} \longrightarrow \mathbf{L}$ whose range is cofinal in
    $\mathbf{L}$. Assuming limitation of size, this is always defined.
    
    \item[Ordered monoids] If $(\mathbf{M}, 0, +, <)$ is an ordered monoid,
    then $\mathbf{M}^{>}$\label{autolab1} denotes its subclass of strictly
    positive elements in $\mathbf{M}$, whereas
    $\mathbf{M}^{\neq}$\label{autolab2} denotes the class of non-zero elements
    of $\mathbf{M}$.
  \end{descriptioncompact}
\end{convention*}

\section{Noetherian series}\label{section-Noetherian-series}

\subsection{Noetherian orderings}\label{subsection-Noetherian-orderings}

We first introduce the types of ordered sets that will be the supports of our
formal series throughout the paper.

\begin{definition}
  Let $(\mathbf{X}, <)$ be a partially ordered class. A
  {\tmem{{\tmstrong{chain}}}}{\index{chain}} in $\mathbf{X}$ is a linearly
  ordered subclass of $\mathbf{X}$. A {\tmem{{\tmstrong{decreasing
  chain}}}}{\index{decreasing chain}} in $\mathbf{X}$ is chain $\mathbf{Y}
  \subseteq \mathbf{X}$ without minimal element, i.e. with
  \[ \forall y \in \mathbf{Y}, \exists z \in \mathbf{Y}, (z < y) . \]
  An {\tmstrong{{\tmem{antichain}}}}{\index{antichain}} in $\mathbf{X}$ is a
  subclass $\mathbf{Y} \subseteq \mathbf{X}$, no two distinct elements of
  which are comparable, i.e. with
  \[ \forall y, z \in \mathbf{Y}, y \leqslant z \Longrightarrow y = z. \]
  We say that $(\mathbf{X}, <)$ is
  {\tmstrong{{\tmem{Noetherian}}}}{\index{Noetherian ordering}} if there are
  no infinite decreasing chains and no infinite antichains in $(\mathbf{X},
  <)$.
\end{definition}

Noetherianity is a strengthening of well-foundedness (no decreasing chains),
and a weakening of well-orderedness (the conjunction of linearity and
well-foundedness). Noetherian orderings are sometimes called
well-partial-orderings. It will be convenient to rely on the notion of bad
sequence and minimal bad sequence of {\cite{NW63}}. If $(\mathbf{X},
<_{\mathbf{X}})$ is an ordered class, then a {\tmem{bad sequence}}{\index{bad
sequence}} in $\mathbf{X}$ is a sequence $u \of \mathbb{N} \longrightarrow
\mathbf{X}$ such that there are {\tmem{no}} numbers $i, j \in \mathbb{N}$ with
$i < j$ and $u_i \leqslant_{\mathbf{X}} u_j$.

\begin{lemma}
  \label{lem-Noeth-subsequence}{\tmem{{\cite[Theorem~2.1]{Hig52}}}} Let
  $(\mathbf{X}, <_{\mathbf{X}})$ be a partially ordered class. The following
  statements are equivalent
  \begin{enumeratealpha}
    \item $\label{lem-Noeth-subsequence-a} (\mathbf{X}, <_{\mathbf{X}})$ is
    Noetherian.
    
    \item \label{lem-Noeth-subsequence-b}There is no bad sequence in
    $(\mathbf{X}, <_{\mathbf{X}})$.
    
    \item \label{lem-Noeth-subsequence-c}Every sequence in $\mathbf{X}$ has a
    weakly increasing subsequence.
  \end{enumeratealpha}
\end{lemma}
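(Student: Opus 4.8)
The plan is to prove the cycle of implications (a)~$\Rightarrow$~(c)~$\Rightarrow$~(b)~$\Rightarrow$~(a). Beyond elementary recursion (dependent choice) and the observation that every infinite subclass of a class contains a countably infinite subset, the only external input is the infinite Ramsey theorem for pairs; all of this is available in the ambient set theory, and in the $\kappa$-version it suffices to use these principles for sets of size $<\kappa$. For (a)~$\Rightarrow$~(c), assume $(\mathbf{X},<_{\mathbf{X}})$ is Noetherian and let $u\colon\mathbb{N}\to\mathbf{X}$ be any sequence. Colour each pair $i<j$ of natural numbers by one of three colours, according to whether $u_i\leqslant_{\mathbf{X}}u_j$, or $u_j<_{\mathbf{X}}u_i$, or $u_i$ and $u_j$ are $<_{\mathbf{X}}$-incomparable; exactly one of these three holds, using antisymmetry for exclusivity. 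Applying the infinite Ramsey theorem, fix an infinite $I\subseteq\mathbb{N}$ homogeneous for this colouring. If the colour of $I$ were the second one, enumerating $I$ increasingly would exhibit a strictly $<_{\mathbf{X}}$-decreasing sequence, hence an infinite decreasing chain in $\mathbf{X}$; if it were the third, $\{u_i:i\in I\}$ would be an infinite antichain. Both contradict Noetherianity, so $I$ carries the first colour, and then $(u_i)_{i\in I}$ is the desired weakly increasing subsequence of $u$.

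The remaining two implications are routine. For (c)~$\Rightarrow$~(b): if $u$ admits a weakly increasing subsequence then it admits one of length two, say $u_{i_0}\leqslant_{\mathbf{X}}u_{i_1}$ with $i_0<i_1$, which already shows $u$ is not a bad sequence; so under (c) there are no bad sequences. For (b)~$\Rightarrow$~(a) I argue contrapositively: if $\mathbf{X}$ has an infinite decreasing chain then, that chain having no minimal element, I recursively choose $x_0>_{\mathbf{X}}x_1>_{\mathbf{X}}\cdots$ inside it, and since $i<j$ gives $x_j<_{\mathbf{X}}x_i$, hence $x_i\nleqslant_{\mathbf{X}}x_j$ by antisymmetry, the sequence $(x_i)_i$ is bad; if instead $\mathbf{X}$ has an infinite antichain, I enumerate a countably infinite subset of it as $(x_i)_{i\in\mathbb{N}}$, and since distinct $x_i,x_j$ are incomparable, again $x_i\nleqslant_{\mathbf{X}}x_j$ and $(x_i)_i$ is bad. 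In either case (b) fails.

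The only step with genuine content is (a)~$\Rightarrow$~(c): the infinite Ramsey theorem is precisely what is needed to sort an arbitrary sequence into one of the three ``forbidden or desired'' patterns, and it is where all the combinatorial force sits. An alternative that avoids Ramsey would instead run a minimal bad sequence argument in the style of \cite{NW63} to pass from a bad sequence directly to an infinite antichain or infinite decreasing chain; but for the present statement the three-colouring argument is the most economical. The one subtlety to keep an eye on is set-theoretic hygiene — extracting a countable subsequence from a possibly proper-class decreasing chain or antichain — which is unproblematic in NBG with limitation of size.
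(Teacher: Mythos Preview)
Your proof is correct. The paper does not give its own proof of this lemma; it simply cites Higman's original \cite[Theorem~2.1]{Hig52}, so there is no in-paper argument to compare against. Your Ramsey-based route for (a)~$\Rightarrow$~(c) is one of the standard proofs of this equivalence (the other being the direct minimal-bad-sequence argument you mention), and the remaining implications are handled correctly. One minor remark: in the colour-3 case you should note that incomparability forces $u_i\neq u_j$, so that $\{u_i:i\in I\}$ really is an infinite set and hence an infinite antichain in the paper's sense; this is implicit but worth making explicit.
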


\subsection{Noetherian subsets in ordered monoids}

An {\tmem{ordered monoid}}{\index{ordered monoid}} is a tuple $(\mathfrak{M},
\cdot, 1, \prec)$ where $(\mathfrak{M}, \cdot, 1)$ is a monoid and $\prec$ is
a partial ordering on $\mathfrak{M}$ with
\begin{equation}
  \forall \mathfrak{u}, \mathfrak{v}, \mathfrak{w} \in \mathfrak{M},
  \mathfrak{u} \prec \mathfrak{v} \Longrightarrow (\mathfrak{u}\mathfrak{w}
  \prec \mathfrak{v}\mathfrak{w} \wedge \mathfrak{w}\mathfrak{u} \prec
  \mathfrak{w}\mathfrak{v}) . \label{eq-ordered-monoid}
\end{equation}
We fix an ordered monoid $(\mathfrak{M}, \cdot, 1, \prec)$. For $\mathfrak{S}
\subseteq \mathfrak{M}$ we write\label{autolab3}
\[ \mathfrak{S}^n \assign \underset{\text{$n$ times}}{\mathfrak{S} \cdots
   \mathfrak{S}} = \{ \mathfrak{s}_1 \cdots \mathfrak{s}_n \suchthat
   \mathfrak{s}_1, \ldots, \mathfrak{s}_n \in \mathfrak{S} \} \]
and
\[ \mathfrak{S}^{\infty} \assign \bigcup_{n \in \mathbb{N}} \mathfrak{S}^n =
   \{ \mathfrak{s}_1 \cdots \mathfrak{s}_n \suchthat n \in \mathbb{N} \wedge
   \mathfrak{s}_1, \ldots, \mathfrak{s}_n \in \mathfrak{S} \} . \]
As in {\cite{Hig52}}, as consequences of {\cite[Theorems~2.3 and 4.3]{Hig52}},
we obtain

\begin{lemma}
  \label{lem-Neumann-easy}{\tmem{{\cite{Hig52}}}} Let $\mathfrak{S},
  \mathfrak{T} \subseteq \mathfrak{M}$ be Noetherian. Then the class
  $\mathfrak{S} \cdot \mathfrak{T}$ is Noetherian. Moreover, for all
  $\mathfrak{m} \in \mathfrak{S} \cdot \mathfrak{T}$, the set $\{
  (\mathfrak{u}, \mathfrak{v}) \in \mathfrak{S} \times \mathfrak{T} \suchthat
  \mathfrak{m}=\mathfrak{u}\mathfrak{v} \}$ is finite.
\end{lemma}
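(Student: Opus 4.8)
The plan is to reduce both assertions to Higman's theorem as recorded in Lemma~\ref{lem-Noeth-subsequence}, specifically to the characterisation~\ref{lem-Noeth-subsequence-c}: an ordered class is Noetherian precisely when every sequence indexed by $\mathbb{N}$ admits a weakly increasing subsequence. The first auxiliary observation is that the coordinatewise order on the product class $\mathfrak{S} \times \mathfrak{T}$ is Noetherian. Indeed, given a sequence $\big((\mathfrak{u}_n, \mathfrak{v}_n)\big)_{n \in \mathbb{N}}$ in $\mathfrak{S} \times \mathfrak{T}$, applying part~\ref{lem-Noeth-subsequence-c} of Lemma~\ref{lem-Noeth-subsequence} to $(\mathfrak{u}_n)_n$ in the Noetherian class $\mathfrak{S}$ yields a subsequence along which the first coordinate is weakly increasing; applying the same characterisation once more to the second coordinates of that subsequence, inside $\mathfrak{T}$, produces a sub-subsequence that is weakly increasing in both coordinates at once, i.e.\ weakly increasing for the coordinatewise order.

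The second ingredient is that the multiplication map $\mu \of \mathfrak{S} \times \mathfrak{T} \longrightarrow \mathfrak{M}$, $(\mathfrak{u}, \mathfrak{v}) \mapsto \mathfrak{u}\mathfrak{v}$, is monotone for the coordinatewise order, and in fact sends strict increases to strict increases: if $\mathfrak{u} \preccurlyeq \mathfrak{u}'$ and $\mathfrak{v} \preccurlyeq \mathfrak{v}'$ with at least one inequality strict, say $\mathfrak{u} \prec \mathfrak{u}'$, then \eqref{eq-ordered-monoid} gives $\mathfrak{u}\mathfrak{v} \prec \mathfrak{u}'\mathfrak{v}$, while $\mathfrak{u}'\mathfrak{v} \preccurlyeq \mathfrak{u}'\mathfrak{v}'$ (trivially if $\mathfrak{v}=\mathfrak{v}'$, otherwise again by \eqref{eq-ordered-monoid}), whence $\mathfrak{u}\mathfrak{v} \prec \mathfrak{u}'\mathfrak{v}'$; the case $\mathfrak{v} \prec \mathfrak{v}'$ is symmetric, and when both coordinates are equal so are the products. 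Since the monotone image of a Noetherian ordered class is Noetherian --- lift a sequence in $\mathfrak{S}\cdot\mathfrak{T} = \mu(\mathfrak{S}\times\mathfrak{T})$ to one in $\mathfrak{S}\times\mathfrak{T}$, extract a weakly increasing subsequence by the previous paragraph, and push it forward through $\mu$ --- we conclude via part~\ref{lem-Noeth-subsequence-c} of Lemma~\ref{lem-Noeth-subsequence} that $\mathfrak{S}\cdot\mathfrak{T}$ is Noetherian.

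For the finiteness claim, fix $\mathfrak{m} \in \mathfrak{S}\cdot\mathfrak{T}$ and suppose, towards a contradiction, that the set $F \assign \{(\mathfrak{u},\mathfrak{v}) \in \mathfrak{S}\times\mathfrak{T} \suchthat \mathfrak{u}\mathfrak{v} = \mathfrak{m}\}$ is infinite; choose an injective sequence $\big((\mathfrak{u}_n,\mathfrak{v}_n)\big)_{n}$ in $F$. As $\mathfrak{S}\times\mathfrak{T}$ is Noetherian, after passing to a subsequence we may assume $\big((\mathfrak{u}_n,\mathfrak{v}_n)\big)_n$ is weakly increasing for the coordinatewise order while remaining pairwise distinct. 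Then each step $(\mathfrak{u}_n,\mathfrak{v}_n)\mapsto(\mathfrak{u}_{n+1},\mathfrak{v}_{n+1})$ is a strict increase in the product order, so by the refined monotonicity of $\mu$ above we obtain $\mathfrak{m} = \mathfrak{u}_n\mathfrak{v}_n \prec \mathfrak{u}_{n+1}\mathfrak{v}_{n+1} = \mathfrak{m}$, contradicting the irreflexivity of $\prec$. Hence $F$ is finite.

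No step here is genuinely difficult: the entire argument is an unwinding of Higman's theorem, already available as Lemma~\ref{lem-Noeth-subsequence}. The only points demanding a little care are (i) the passage between the strict monoid compatibility~\eqref{eq-ordered-monoid} and the reflexive relation $\preccurlyeq$ entering the coordinatewise order, which is dealt with by splitting into the ``strict'' and ``equal'' cases as above; and (ii) checking that the two successive subsequence extractions, the choice of factorisations, and the push-forward along $\mu$ are all legitimate in the NBG framework with limitation of size --- which they are, since every sequence involved is indexed by $\mathbb{N}$ and Lemma~\ref{lem-Noeth-subsequence} is stated for arbitrary partially ordered classes.
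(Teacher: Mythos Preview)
Your proof is correct. The paper does not give its own proof of this lemma but simply records it as a consequence of \cite[Theorems~2.3 and~4.3]{Hig52}; your argument is precisely the standard unwinding of those results via the characterisation of Noetherianity in Lemma~\ref{lem-Noeth-subsequence}, and nothing more is needed.
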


We say that $\mathfrak{S} \subseteq \mathfrak{M}$ is Noetherian in
$(\mathfrak{M}, \succ)$ or that it is a Noetherian subclass of $(\mathfrak{M},
\succ)$ if it is Noetherian for the reverse ordering on $\mathfrak{M}$.

\begin{proposition}
  \label{prop-Neumann}Let $\mathfrak{S} \subseteq \mathfrak{M}^{\prec}$ be a
  Noetherian subclass of $(\mathfrak{M}, \succ)$. Then the class
  $\mathfrak{S}^{\infty}$ is a Noetherian subset of $(\mathfrak{M}, \succ)$.
  Moreover, for all $\mathfrak{m} \in \mathfrak{S}^{\infty}$, the set $\{ n
  \in \mathbb{N} \suchthat \mathfrak{m} \in \mathfrak{S}^n \}$ is finite.
\end{proposition}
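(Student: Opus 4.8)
The plan is to prove that $\mathfrak{S}^{\infty}$ is Noetherian in $(\mathfrak{M},\succ)$ by a minimal‑bad‑sequence argument (Nash--Williams style) built on \Cref{lem-Noeth-subsequence}, and then to deduce the finiteness clause from this, using \Cref{lem-Neumann-easy}. First, some bookkeeping inside $\mathfrak{M}^{\prec}$: by \eqref{eq-ordered-monoid}, $\mathfrak{s}\prec 1$ forces $\mathfrak{s}\mathfrak{n}\prec\mathfrak{n}$ for all $\mathfrak{n}$, hence inductively $\mathfrak{S}^{n}\subseteq\mathfrak{M}^{\prec}$ for every $n\geq 1$ (whereas $\mathfrak{S}^{0}=\{1\}$), and likewise $\mathfrak{u}\preccurlyeq\mathfrak{u}'$, $\mathfrak{v}\preccurlyeq\mathfrak{v}'$ imply $\mathfrak{u}\mathfrak{v}\preccurlyeq\mathfrak{u}'\mathfrak{v}'$. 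For $\mathfrak{m}\in\mathfrak{S}^{\infty}$ write $\lambda(\mathfrak{m})$ for the least $n$ with $\mathfrak{m}\in\mathfrak{S}^{n}$; then $\lambda(\mathfrak{m})=0$ iff $\mathfrak{m}=1$, and if $\mathfrak{m}=\mathfrak{s}_{1}\cdots\mathfrak{s}_{\lambda(\mathfrak{m})}$ is a factorisation of minimal length, its tail $\mathfrak{s}_{2}\cdots\mathfrak{s}_{\lambda(\mathfrak{m})}$ has $\lambda$‑value exactly $\lambda(\mathfrak{m})-1$ (a smaller value would shorten the factorisation of $\mathfrak{m}$).

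Suppose $\mathfrak{S}^{\infty}$ is not Noetherian in $(\mathfrak{M},\succ)$. By \Cref{lem-Noeth-subsequence} there is a bad sequence in $(\mathfrak{S}^{\infty},\succ)$, and I refine it to a minimal one $(\mathfrak{m}_{k})_{k\in\mathbb{N}}$: given $\mathfrak{m}_{0},\dots,\mathfrak{m}_{k-1}$, among all $a$ such that $(\mathfrak{m}_{0},\dots,\mathfrak{m}_{k-1},a)$ prolongs to a bad sequence, pick $\mathfrak{m}_{k}$ with $\lambda(\mathfrak{m}_{k})$ least. A bad sequence has no $i<j$ with $\mathfrak{m}_{i}\succcurlyeq\mathfrak{m}_{j}$, and since $1\succ\mathfrak{m}$ for all $\mathfrak{m}\in\mathfrak{S}^{\infty}\setminus\{1\}$, no $\mathfrak{m}_{k}$ can be $1$ (else $(k,k+1)$ would be good); using minimal‑length factorisations, write $\mathfrak{m}_{k}=\mathfrak{s}_{k}\mathfrak{n}_{k}$ with $\mathfrak{s}_{k}\in\mathfrak{S}$, $\mathfrak{n}_{k}\in\mathfrak{S}^{\infty}$ and $\lambda(\mathfrak{n}_{k})=\lambda(\mathfrak{m}_{k})-1$. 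Applying \Cref{lem-Noeth-subsequence} to $(\mathfrak{s}_{k})_{k}$ in the Noetherian class $(\mathfrak{S},\succ)$ yields an infinite $K=\{k_{0}<k_{1}<\cdots\}$ with $\mathfrak{s}_{k_{a}}\succcurlyeq\mathfrak{s}_{k_{b}}$ whenever $a\leq b$. The sequence $T\assign(\mathfrak{m}_{0},\dots,\mathfrak{m}_{k_{0}-1},\mathfrak{n}_{k_{0}},\mathfrak{n}_{k_{1}},\mathfrak{n}_{k_{2}},\dots)$ has $\mathfrak{n}_{k_{0}}$ in slot $k_{0}$ with $\lambda(\mathfrak{n}_{k_{0}})<\lambda(\mathfrak{m}_{k_{0}})$, so by minimality of $(\mathfrak{m}_{k})$ it is not bad and contains a good pair $i<j$. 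The three cases $j<k_{0}$, $i<k_{0}\leq j$, $k_{0}\leq i<j$ each produce a good pair of $(\mathfrak{m}_{k})$: directly in the first; from $\mathfrak{m}_{i}\succcurlyeq\mathfrak{n}_{k_{l}}\succ\mathfrak{s}_{k_{l}}\mathfrak{n}_{k_{l}}=\mathfrak{m}_{k_{l}}$ with $i<k_{0}\leq k_{l}$ in the second (here $l=j-k_{0}$, using $\mathfrak{s}_{k_{l}}\prec 1$); and from $\mathfrak{n}_{k_{a}}\succcurlyeq\mathfrak{n}_{k_{b}}$ multiplied by $\mathfrak{s}_{k_{a}}\succcurlyeq\mathfrak{s}_{k_{b}}$, i.e.\ $\mathfrak{m}_{k_{a}}\succcurlyeq\mathfrak{m}_{k_{b}}$ with $k_{a}<k_{b}$, in the third ($a=i-k_{0}<b=j-k_{0}$). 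Each contradicts badness of $(\mathfrak{m}_{k})$; hence $\mathfrak{S}^{\infty}$ is Noetherian in $(\mathfrak{M},\succ)$, and it is a set, being the countable union of the sets $\mathfrak{S}^{n}$.

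For the finiteness clause, fix $\mathfrak{m}\in\mathfrak{S}^{\infty}$ and suppose $\{n:\mathfrak{m}\in\mathfrak{S}^{n}\}$ is infinite, so $\mathfrak{m}\neq 1$. By the Noetherianity just proved and \Cref{lem-Neumann-easy} applied to $\mathfrak{S}$ and $\mathfrak{S}^{\infty}$, only finitely many pairs $(\mathfrak{u},\mathfrak{v})\in\mathfrak{S}\times\mathfrak{S}^{\infty}$ satisfy $\mathfrak{u}\mathfrak{v}=\mathfrak{m}$; splitting off the first factor from a representation of $\mathfrak{m}$ of each admissible length $n\geq 1$ and invoking the pigeonhole principle, we get a single pair $(\mathfrak{s},\mathfrak{m}')$ with $\mathfrak{m}=\mathfrak{s}\mathfrak{m}'$ and $\{n:\mathfrak{m}'\in\mathfrak{S}^{n}\}$ still infinite. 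Iterating produces $\mathfrak{s}_{1},\mathfrak{s}_{2},\dots\in\mathfrak{S}$ with $\mathfrak{m}=\mathfrak{m}_{0}$, $\mathfrak{m}_{k-1}=\mathfrak{s}_{k}\mathfrak{m}_{k}$ and every $\mathfrak{m}_{k}\in\mathfrak{S}^{\infty}\setminus\{1\}$; then $\mathfrak{m}_{k-1}=\mathfrak{s}_{k}\mathfrak{m}_{k}\prec\mathfrak{m}_{k}$ for all $k\geq 1$, so $\mathfrak{m}_{0}\prec\mathfrak{m}_{1}\prec\mathfrak{m}_{2}\prec\cdots$ is an infinite chain in $\mathfrak{S}^{\infty}$ with no $\prec$‑largest element, i.e.\ an infinite decreasing chain of $(\mathfrak{S}^{\infty},\succ)$, contradicting its Noetherianity.

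I expect the delicate step to be the Noetherianity argument: one has to choose the complexity measure $\lambda$ so that the bad‑sequence recursion is well‑founded, verify that the spliced sequence $T$ is strictly simpler in slot $k_{0}$ (so that minimality can be invoked), and — above all — interleave the $\mathfrak{s}_{k}$'s and $\mathfrak{n}_{k}$'s precisely enough that all three possible locations of the forced good pair in $T$ collapse back to a good pair of $(\mathfrak{m}_{k})$. The finiteness clause is then a soft consequence, via \Cref{lem-Neumann-easy} and the infinite descending chain it manufactures.
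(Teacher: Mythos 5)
Your proof is correct. The paper itself gives no argument for Proposition~\ref{prop-Neumann} --- it is quoted, like Lemma~\ref{lem-Neumann-easy}, as a consequence of Higman's theorems --- so there is nothing to match against; what you have written is essentially the classical Higman/Nash--Williams proof, carried out directly for $\mathfrak{S}^{\infty}$ rather than routed through the wqo on finite sequences. The delicate points are all handled: the complexity measure $\lambda$ is well-founded, so the minimal bad sequence exists; the hypothesis $\mathfrak{S}\subseteq\mathfrak{M}^{\prec}$ is used exactly where it must be (to rule out $\mathfrak{m}_k=1$, and in the case $i<k_0\leq j$ to get $\mathfrak{n}_{k_l}\succ\mathfrak{s}_{k_l}\mathfrak{n}_{k_l}$); and the case $k_0\leq i<j$ correctly uses both one-sided compatibilities of \eqref{eq-ordered-monoid}, which matters since $\mathfrak{M}$ is not assumed commutative. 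The finiteness clause is also sound: Lemma~\ref{lem-Neumann-easy} applied to $\mathfrak{S}$ and the just-proved Noetherian class $\mathfrak{S}^{\infty}$ gives finitely many splittings of $\mathfrak{m}$, the pigeonhole step preserves the property ``lies in $\mathfrak{S}^{n}$ for infinitely many $n$,'' and the resulting chain $\mathfrak{m}_0\prec\mathfrak{m}_1\prec\cdots$ is indeed an infinite decreasing chain of $(\mathfrak{S}^{\infty},\succ)$ (equivalently, a bad sequence), contradicting Noetherianity. The only cosmetic caveat is the aside that $\mathfrak{S}^{\infty}$ is a set ``being a countable union of the sets $\mathfrak{S}^{n}$'': this presumes $\mathfrak{S}$ is a set, whereas the statement only assumes it is a Noetherian subclass; but the paper makes the same set/class elision in passing from ``subclass'' to ``subset,'' so this is not a defect of your argument.
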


\subsection{Algebras of Noetherian series}\label{subsection-Noetherian-series}

For the rest of \Cref{section-Noetherian-series}, we fix a field $\mathbf{K}$.
Let $(\mathfrak{M}, \cdot, 1, \prec)$ be an ordered monoid. We write
$\mathbf{K} \llbracket \mathfrak{M} \rrbracket$\label{autolab4} for the class
of functions $s \of \mathfrak{M} \longrightarrow \mathbf{K}$ whose support
\[ \tmop{supp} s \assign \{ \mathfrak{m} \in \mathfrak{M} \suchthat s
   (\mathfrak{m}) \neq 0 \} \text{\label{autolab5}} \]
is a Noetherian sub{\tmem{set}} of $(\mathfrak{M}, \succ)$. For $s \in
\mathbf{K} \llbracket \mathfrak{M} \rrbracket$, we write $\max \tmop{supp} s$
for the (finite) set of maximal elements in $\tmop{supp} s$. Since Noetherian
subsets of $(\mathfrak{M}, \succ)$ are closed under binary unions, this class
is a subspace of the vector space of functions $\mathfrak{M} \longrightarrow
\mathbf{K}$ with set-sized support.

For $s, t \in \mathbf{K} \llbracket \mathfrak{M} \rrbracket$, we define a Cauchy
product $(s \cdot t) \of \mathfrak{M} \longrightarrow \mathbf{K}$ by
\begin{equation}
  \forall \mathfrak{m} \in \mathfrak{M}, (s \cdot t) (\mathfrak{m}) \assign
  \sum_{\mathfrak{u}, \mathfrak{v} \in \mathfrak{M} \wedge
  \mathfrak{u}\mathfrak{v}=\mathfrak{m}} s (\mathfrak{u}) t (\mathfrak{v}) .
  \label{eq-Cauchy-prod}
\end{equation}
In view of Lemma~\ref{lem-Neumann-easy}, each sum in (\ref{eq-Cauchy-prod}) has
finite support, and so is a well-defined element of $\tmmathbf{\Kappa}$.
Moreover $\tmop{supp} (s \cdot t) \subseteq (\tmop{supp} s) \cdot (\tmop{supp}
t)$ is a Noetherian subset of $(\mathfrak{M}, \succ)$, so $s \cdot t$ is a
well-defined element of~$\mathbf{K} \llbracket \mathfrak{M} \rrbracket$.

Writing $\mathbb{1}_{\mathfrak{S}}$ for the indicator function $\mathfrak{M}
\longrightarrow \{0, 1\} \subseteq \mathbf{K}$ of a subclass $\mathfrak{S}
\subseteq \mathfrak{M}$, we have an embedding of ordered monoids
\[ (\mathfrak{M}, \cdot, 1) \longrightarrow (\mathbb{A} \setminus \{0\},
   \cdot, 1) ; \mathfrak{m} \mapsto \mathbb{1}_{\{\mathfrak{m}\}} \]
We identify $\mathfrak{M}$ with its image in $\mathbb{A} \setminus \{0\}$, and
likewise identify $\tmmathbf{\Kappa}$ with the image of the field emdedding $c
\mapsto c 1$. The elements of $\mathfrak{M}$ are called
{\tmem{monomials}}{\index{monomial}}, whereas those in $\mathbf{K}^{\times}
\mathfrak{M}$ are called {\tmem{terms}}{\index{term}}. The structure
$\mathbf{K} \llbracket \mathfrak{M} \rrbracket$ is a unital algebra over
$\mathbf{K}$ (see {\cite[Proposition~3.9]{BKKMP:strong}}). We call $\mathbf{K}
\llbracket \mathfrak{M} \rrbracket$ the algebra of Noetherian series over
$\mathbf{K}$ with monomials in $\mathfrak{M}$. It has the same characteristic
as $\mathbf{K}$.

\begin{remark}
  Higman {\cite[Section~5]{Hig52}} considered the same structures in the case
  when $(\mathfrak{M}, \cdot, 1)$ is cancellative, which is not necessary in
  the proofs above. If one imposes that $\mathfrak{M}$ is a linearly ordered
  group, then $\mathbf{K} \llbracket \mathfrak{M} \rrbracket$ is a skew field
  (see {\cite[Chapter~2]{Cohn:skew}}).
\end{remark}

\begin{remark}
  If $\mathfrak{M}$ is trivial, then the embedding $\mathbf{K} \longrightarrow
  \mathbf{K} \llbracket \mathfrak{M} \rrbracket \: ; \: c \mapsto c 1$ is an
  isomorphism.
\end{remark}

\begin{example}
  Taking $\mathfrak{M}$ to be a multiplicative copy $X^{\mathbb{N}^n}$ of the
  partially ordered monoid $(\mathbb{N}^n, +, 0, >)$ (i.e. the $n$-th power of
  the linearly ordered monoid $(\mathbb{N}, +, 0, >)$), we obtain the algebra
  $\mathbf{K} \llbracket X^{\mathbb{N}^n} \rrbracket \simeq \mathbf{K}
  \llbracket X_1, \ldots, X_n \rrbracket$ of formal power series in $n$
  commmuting variables over~$\mathbf{K}$.
  
  Taking $\mathfrak{M}$ to be a multiplicative copy $X^{\mathbb{Z}}$ of
  $(\mathbb{Z}, +, 0, >)$, we obtain the field $\mathbf{K} \llbracket
  X^{\mathbb{Z}} \rrbracket$ of formal Laurent series over $\mathbf{K}$.
  
  Taking $\mathfrak{M}$ to be a multiplicative copy $X^{\mathbb{N}^n}$ of the
  partially (and vacuously) ordered monoid $(\mathbb{N}^n, +, 0,
  \varnothing)$, one obtains the algebra $\mathbf{K} [X_1, \ldots, X_n]$ of
  polynomials in $n$ variables over $\mathbf{K}$.
  
  Taking $\mathfrak{M}$ to be the monoid under concatenation of finite words
  over a well-ordered set $(I, <)$, ordered lexicographically, we obtain the
  algebra $\mathbf{K} \llangle I \rrangle$ of formal power series over
  $\mathbf{K}$ in a set of non-commuting indeterminates indexed by $I$.
\end{example}

\subsection{Dominance relation and valuation}

Let $\mathbb{A}= \textbf{{\tmstrong{K}}} \llbracket \mathfrak{M} \rrbracket$
be an algebra of Noetherian series. Given $s, t \in \mathbb{A}$, we
write $s \preccurlyeq t$ if for all $\mathfrak{m} \in \tmop{supp} s$, there is an $\mathfrak{n} \in
\tmop{supp} t$ with $\mathfrak{m} \preccurlyeq \mathfrak{n}$. Then
$\preccurlyeq$ is a linear quasi-ordering on $\mathbb{A}$. We write $\prec$
for the corresponding (strict) ordering $s \prec t \Longleftrightarrow (s
\preccurlyeq t \wedge t \npreccurlyeq s)$. We have $s \prec t$ if and only if
$t \neq 0$, and for all $\mathfrak{m} \in \tmop{supp} s$, there is an
$\mathfrak{n} \in \tmop{supp} t$ with $\mathfrak{m} \prec \mathfrak{n}$. Note
that the inclusion $\mathfrak{M} \subseteq \mathbb{A} \setminus \{0\}$
preserves the orderings on $(\mathfrak{M}, \prec)$ and $(\mathbb{A} \setminus
\{0\}, \prec)$ respectively. We define\label{autolab7} \label{autolab8}
\label{autolab9}
\begin{eqnarray*}
  \mathbb{A}^{\preccurlyeq} & \assign & \{ s \in \mathbb{A} \suchthat
  \tmop{supp} s \preccurlyeq 1 \} = \{s \in \mathbb{A} \suchthat s
  \preccurlyeq 1\},\\
  \mathbb{A}^{\prec} & \assign & \{ s \in \mathbb{A} \suchthat \tmop{supp} s
  \prec 1 \} = \{ s \in \mathbb{A} \suchthat s \prec 1 \} \text{, \quad and}\\
  \mathbb{A}^{\prec s} & \assign & \{t \in \mathbb{A} \suchthat t \prec s\}
\end{eqnarray*}
for all $s \in \mathbb{A}$. Series in $\mathbb{A}^{\prec}$ are said
{\tmem{infinitesimal}}{\index{infinitesimal series}}, whereas series in
$\mathbb{A}^{\preccurlyeq}$ are said {\tmem{bounded}}{\index{bounded series}}.
Note that $\mathbb{A}^{\preccurlyeq} = \mathbf{K} \oplus \mathbb{A}^{\prec}$.
The algebra $\mathbb{A}^{\preccurlyeq}$ is always local with maximal ideal
$\mathbb{A}^{\prec}$ ({\cite[Proposition~2.8]{BKKMP:strong}}).

Suppose that $(\mathfrak{M}, \prec)$ is a linearly ordered Abelian group. Then
it is well-known {\cite{Hahn1907}} that $\mathbb{A}$ is a field and
{\cite{Krull32}} that $\mathbb{A}^{\preccurlyeq}$ is a valuation ring of
$\mathbb{A}$. In that case, for all $s, t \in \mathbb{A}$, we have $s
\preccurlyeq t$\label{autolab10} if and only if $t \prec s$ is false. We write
write $s \asymp t$\label{autolab11} if $s \preccurlyeq t$ and $t \preccurlyeq
s$. Then $\preccurlyeq$ is a dominance relation as per
{\cite[Definition~3.1.1]{vdH:mt}}. For $s \in \mathbb{A}^{\times}$, so
$\tmop{supp} s \neq \varnothing$, we write\label{autolab12} \label{autolab13}
\begin{eqnarray*}
  \mathfrak{d}_s & \assign & \max \tmop{supp} s \in \mathfrak{M},\\
  \tau_s & \assign & s (\mathfrak{d}_s) \mathfrak{d}_s \in \mathbf{K}^{\times}
  \mathfrak{M}, \text{\quad and}\\
  c_s & \assign & s (\mathfrak{d}_s) \in \mathbf{K}^{\times} .
\end{eqnarray*}
respectively for the {\tmem{dominant monomial}}{\index{dominant monomial}},
{\tmem{dominant term}}{\index{dominant term}} and {\tmem{leading
coefficient}}{\index{leading coefficient}} of $s$. When $s, t$ are non-zero,
we have $s \prec t$ (resp. $s \preccurlyeq t$, resp. $s \asymp t$) if and only
if $\mathfrak{d}_s \prec \mathfrak{d}_t$ ({\tmabbr{resp.}} $\mathfrak{d}_s
\preccurlyeq \mathfrak{d}_t$, {\tmabbr{resp.}} $\mathfrak{d}_s
=\mathfrak{d}_t$). Moreover, any $s \in \mathbb{A}^{\times}$ can be written
uniquely as
\begin{equation}
  s = c_s \mathfrak{d}_s  (1 + \varepsilon_s) \label{eq-mult-dec}
\end{equation}
where $\mathfrak{d}_s \in \mathfrak{M}$, $c_s \in \mathbf{K}^{\times}$ and
$\varepsilon_s \prec 1$.

Assume finally that $\mathbf{K}$ is an ordered field, while $(\mathfrak{M},
\prec)$ is still a linearly ordered Abelian group. Then we have a positive
cone
\[ \text{$\mathbb{A}^{>} \assign \{ s \in \mathbb{A} \suchthat s \neq 0 \wedge
   c_s > 0 \}$.} \]
on $\mathbb{A}$, that is, defining $s < t \Longleftrightarrow t - s \in
\mathbb{A}^{>}$, the structure $(\mathbb{A}, +, \times, <, \prec)$ is an
ordered valued field with convex valuation ring~$\mathbb{A}^{\preccurlyeq}$.
We write\label{autolab14}
\[ \mathbb{A}^{>, \succ} \assign \{ s \in \mathbb{A} \suchthat s > \mathbf{K}
   \} = \{ s \in \mathbb{A} \suchthat s \geqslant 0 \wedge s \succ 1 \} \]
Series in $\mathbb{A}^{>, \succ}$ are called {\tmem{positive
infinite}}.{\index{purely large series}}{\index{infinitesimal
series}}{\index{positive infinite series}}

\begin{remark}
  \label{rem-alg-closed}If $\mathbf{K}$ is algebraically closed and
  $\mathfrak{M}$ is divisible, then {\cite{MacLane:pow}} the field
  $\mathbb{A}$ is algebraically closed. If $\mathbf{K}$ is real-closed and
  $\mathfrak{M}$ is divisible, then the field $\mathbb{A}$ is real-closed.
\end{remark}

\subsection{Summable families}

We fix an algebra $\mathbb{A}= \mathbf{K} \llbracket \mathfrak{M} \rrbracket$
of Noetherian series.

\begin{definition}
  Let $\mathbf{I}$ be a class. A family $(s_i)_{i \in \mathbf{I}}$ in
  $\mathbb{A}$ is said {\tmem{summable}}{\index{well-based family}} if
  \begin{enumerateroman}
    \item \label{def-summable-i}$\bigcup_{i \in \mathbf{I}} \tmop{supp} s_i$
    is a Noetherian subset of $(\mathfrak{M}, \succ)$, and
    
    \item $\{ i \in \mathbf{I} \suchthat \mathfrak{m} \in \tmop{supp} s_i \}$
    is finite for all $\mathfrak{m} \in \mathfrak{M}$.
  \end{enumerateroman}
  Then we may define the sum $\sum_{i \in \mathbf{I}} s_i$ of $(s_i)_{i \in
  \mathbf{I}}$ as the series
  \[ \sum_{i \in \mathbf{I}} s_i \assign \mathfrak{m} \longmapsto \sum_{i \in
     I} s_i (\mathfrak{m}) \in \mathbb{A}. \]
\end{definition}

If only \ref{def-summable-i} holds, then we say that $(s_i)_{i \in
\mathbf{I}}$ is {\tmem{weakly summable}}. For $s \in \mathbb{A}$, the family
of terms $(s (\mathfrak{m})\mathfrak{m})_{\mathfrak{m} \in \mathfrak{M}}$ is
summable with sum
\[ \sum_{\mathfrak{m} \in \mathfrak{M}} s (\mathfrak{m}) \mathfrak{m}= s. \]
\begin{definition}
  \label{def-truncation}A {\tmstrong{{\tmem{truncation}}}} of $s$ is a series
  of the form $\sum_{\mathfrak{m} \in \mathfrak{I}} s (\mathfrak{m})
  \mathfrak{m}$ where $\mathfrak{I}$ is a subclass of $\mathfrak{M}$ which is
  initial for the ordering $\prec$. A subclass $\mathbf{C}$ of $\mathbb{A}$ is
  said {\tmem{{\tmstrong{closed under truncation}}}} if any truncation of an
  element in $\mathbf{C}$ lies in $\mathbf{C}$.
\end{definition}

As a consequence of Lemma~\ref{lem-Noeth-subsequence}, we obtain:

\begin{lemma}
  \label{lem-summable-bad-sequence}Let $\mathbf{I}$ be a class and let
  $(s_i)_{i \in \mathbf{I}}$ be a family in $\mathbb{A}$. Then $(s_i)_{i \in
  \mathbf{I}}$ is summable (resp. weakly summable) if and only if for each
  injective sequence (resp. sequence) $i \of \mathbb{N} \longrightarrow
  \mathbf{I}$ and each sequence $(\mathfrak{m}_k)_{k \in \mathbb{N}}$ with
  $\mathfrak{m}_k \in \tmop{supp} s_{i (k)}$ for all $k \in \mathbb{N}$, there
  are a $k, l \in \mathbb{N}$ with $k < l$ and $\mathfrak{m}_k \succ
  \mathfrak{m}_l$ (resp. $\mathfrak{m}_k \succcurlyeq \mathfrak{m}_l$).
\end{lemma}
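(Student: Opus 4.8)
The plan is to reduce everything to the Higman-type equivalences of \Cref{lem-Noeth-subsequence}, applied to the reverse ordering $(\mathfrak{M}, \succ)$, writing $\mathfrak{S} \assign \bigcup_{i \in \mathbf{I}} \tmop{supp} s_i$. By the very definition of a bad sequence, condition~\ref{def-summable-i} --- that $\mathfrak{S}$ be Noetherian in $(\mathfrak{M}, \succ)$ --- says exactly that there is no sequence $(\mathfrak{m}_k)_k$ in $\mathfrak{S}$ with $\mathfrak{m}_k \nsucccurlyeq \mathfrak{m}_l$ for all $k < l$. The weakly summable case then follows at once in both directions: if $\mathfrak{S}$ is Noetherian in $(\mathfrak{M}, \succ)$, any sequence $(\mathfrak{m}_k)$ with $\mathfrak{m}_k \in \tmop{supp} s_{i(k)} \subseteq \mathfrak{S}$ fails to be bad, which yields $k < l$ with $\mathfrak{m}_k \succcurlyeq \mathfrak{m}_l$; and conversely a bad sequence in $\mathfrak{S}$, together with any choice of indices $i(k)$ with $\mathfrak{m}_k \in \tmop{supp} s_{i(k)}$, witnesses the failure of the right-hand condition. (Choosing such indices from the class $\mathbf{I}$ is legitimate under the axiom of limitation of size, resp.\ by ordinary choice in the set-sized variant.)

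The summable case must additionally encode the second clause of the definition --- that each monomial lies in only finitely many $\tmop{supp} s_i$ --- and this is exactly why one restricts to \emph{injective} index sequences and to the \emph{strict} relation $\succ$. For the forward implication I would argue by contradiction: given an injective $i$, elements $\mathfrak{m}_k \in \tmop{supp} s_{i(k)}$, and the assumption that $\mathfrak{m}_k \not\succ \mathfrak{m}_l$ for all $k < l$, I distinguish two cases. If some monomial $\mathfrak{m}$ equals $\mathfrak{m}_k$ for infinitely many $k$, then by injectivity of $i$ it lies in infinitely many supports, contradicting the finiteness clause. Otherwise every value is attained only finitely often, so one recursively extracts a subsequence $(\mathfrak{m}_{k_a})_a$ with pairwise distinct values; for $a < b$ one then has both $\mathfrak{m}_{k_a} \not\succ \mathfrak{m}_{k_b}$ and $\mathfrak{m}_{k_a} \neq \mathfrak{m}_{k_b}$, hence $\mathfrak{m}_{k_a} \nsucccurlyeq \mathfrak{m}_{k_b}$, i.e.\ a bad sequence in $(\mathfrak{S}, \succ)$, contradicting condition~\ref{def-summable-i}.

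For the converse I would prove the contrapositive. If the finiteness clause fails there is a monomial $\mathfrak{m}$ and an injective sequence $(j_k)_k$ with $\mathfrak{m} \in \tmop{supp} s_{j_k}$ for all $k$; taking $i \assign (j_k)_k$ and the constant sequence $\mathfrak{m}_k \assign \mathfrak{m}$, we never have $\mathfrak{m}_k \succ \mathfrak{m}_l$, which is precisely the point of using the strict relation. If instead $\mathfrak{S}$ is not Noetherian in $(\mathfrak{M}, \succ)$, \Cref{lem-Noeth-subsequence} supplies a bad sequence $(\mathfrak{n}_k)_k$ in it, automatically with pairwise distinct values, and the task is to realise an infinite subsequence of it through an injective index sequence over $\mathbf{I}$. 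I would do this by a greedy construction with skipping: maintaining a finite set $U \subseteq \mathbf{I}$ of indices used so far, at each stage I search for some $k$ beyond the previous choice with $\mathfrak{n}_k \in \tmop{supp} s_j$ for some $j \nin U$; such a $k$ must exist, since otherwise all sufficiently late $\mathfrak{n}_k$ lie in the finite union $\bigcup_{j \in U} \tmop{supp} s_j$, which is Noetherian in $(\mathfrak{M}, \succ)$ --- finite unions of Noetherian subsets being Noetherian, as noted after the definition of $\mathbf{K} \llbracket \mathfrak{M} \rrbracket$ --- and hence cannot contain that infinite bad subsequence. Relabelling the chosen indices gives the required injective witness.

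The recursive extractions and the index bookkeeping are routine; the one genuinely delicate step is this last greedy-with-skipping argument, converting an arbitrary bad sequence in $\mathfrak{S}$ into one realised injectively over $\mathbf{I}$, where the Noetherianity of each individual support (and of finite unions of supports) is exactly what keeps the construction from getting stuck.
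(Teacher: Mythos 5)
Your proof is correct, and it is essentially the intended argument: the paper states this lemma without proof, merely as "a consequence of Lemma~\ref{lem-Noeth-subsequence}", and your reduction of both clauses of summability to bad sequences in $(\mathfrak{M},\succ)$ — with the injectivity/strictness bookkeeping handling the finiteness clause, and the greedy extraction using closure of Noetherian sets under finite unions — is exactly the elaboration the authors leave to the reader.
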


\begin{proposition}
  \label{prop-summation-by-parts}{\tmem{{\cite[Proposition~3.1(e)]{vdH:noeth}}}}
  Let $\mathbf{I}, \mathbf{J}$ be classes and let $(\mathbf{I}_j)_{j \in
  \mathbf{J}}$ be a family of subclasses of $\mathbf{I}$ such that
  $\mathbf{I}$ is the disjoint union $\mathbf{I} = \bigsqcup_{j \in
  \mathbf{J}} \mathbf{I}_j$. Let $(s_i)_{i \in \mathbf{I}}$ be a summable
  family. Then for all $j \in \mathbf{J}$, the family $(s_i)_{i \in
  \mathbf{I}_j}$ is summable, the family $(\sum_{i \in \mathbf{I}_j} s_i)_{j
  \in \mathbf{J}}$ is summable, and
  \[ \sum_{j \in \mathbf{J}} \sum_{i \in \mathbf{I}_j} s_i = \sum_{i \in
     \mathbf{I}} s_i . \]
\end{proposition}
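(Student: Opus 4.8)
The plan is to prove all three assertions by working coefficientwise, collapsing every infinite‑looking sum to a finite sum in $\mathbf{K}$. Write $\mathfrak{S} \assign \bigcup_{i \in \mathbf{I}} \tmop{supp} s_i$, which is a Noetherian subset of $(\mathfrak{M}, \succ)$ by the summability of $(s_i)_{i \in \mathbf{I}}$, and for $\mathfrak{m} \in \mathfrak{M}$ write $A_{\mathfrak{m}} \assign \{ i \in \mathbf{I} \suchthat \mathfrak{m} \in \tmop{supp} s_i \}$, which is finite, again by summability. For each $j \in \mathbf{J}$ we have $\bigcup_{i \in \mathbf{I}_j} \tmop{supp} s_i \subseteq \mathfrak{S}$ and $\{ i \in \mathbf{I}_j \suchthat \mathfrak{m} \in \tmop{supp} s_i \} \subseteq A_{\mathfrak{m}}$; since a subclass of a Noetherian ordered class is Noetherian (a decreasing chain or antichain of the subclass is one of the whole class) and a subclass of a finite set is finite, the subfamily $(s_i)_{i \in \mathbf{I}_j}$ is summable. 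Set $t_j \assign \sum_{i \in \mathbf{I}_j} s_i$.

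Next I would check that $(t_j)_{j \in \mathbf{J}}$ is summable. Since $\tmop{supp} t_j \subseteq \bigcup_{i \in \mathbf{I}_j} \tmop{supp} s_i \subseteq \mathfrak{S}$, the union $\bigcup_{j \in \mathbf{J}} \tmop{supp} t_j$ lies in the Noetherian set $\mathfrak{S}$, which is the first clause of the definition. For the second clause, fix $\mathfrak{m} \in \mathfrak{M}$ and suppose $\mathfrak{m} \in \tmop{supp} t_j$, i.e.\ $\sum_{i \in \mathbf{I}_j} s_i(\mathfrak{m}) \neq 0$; then some $i \in \mathbf{I}_j$ satisfies $s_i(\mathfrak{m}) \neq 0$, so $\mathbf{I}_j \cap A_{\mathfrak{m}} \neq \varnothing$. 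The quotient map $\pi \of \mathbf{I} \to \mathbf{J}$ sending each $i$ to the unique $j$ with $i \in \mathbf{I}_j$ carries the finite set $A_{\mathfrak{m}}$ onto $\{ j \in \mathbf{J} \suchthat \mathbf{I}_j \cap A_{\mathfrak{m}} \neq \varnothing \}$, which is therefore finite and contains $\{ j \in \mathbf{J} \suchthat \mathfrak{m} \in \tmop{supp} t_j \}$. Hence $(t_j)_{j \in \mathbf{J}}$ is summable. (Alternatively, the Noetherianity parts can be read off from the bad‑sequence criterion of \Cref{lem-summable-bad-sequence}.)

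It remains to prove the identity, which I would do by comparing $\mathfrak{m}$‑coefficients for a fixed $\mathfrak{m} \in \mathfrak{M}$. By definition of the sum, $(\sum_{i \in \mathbf{I}} s_i)(\mathfrak{m}) = \sum_{i \in \mathbf{I}} s_i(\mathfrak{m}) = \sum_{i \in A_{\mathfrak{m}}} s_i(\mathfrak{m})$, a finite sum in $\mathbf{K}$, while $(\sum_{j \in \mathbf{J}} t_j)(\mathfrak{m}) = \sum_{j \in \mathbf{J}} t_j(\mathfrak{m}) = \sum_{j \in \mathbf{J}} \sum_{i \in \mathbf{I}_j} s_i(\mathfrak{m}) = \sum_{j \in \mathbf{J}} \sum_{i \in \mathbf{I}_j \cap A_{\mathfrak{m}}} s_i(\mathfrak{m})$, in which only the finitely many indices $j$ with $\mathbf{I}_j \cap A_{\mathfrak{m}} \neq \varnothing$ contribute. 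Since $(\mathbf{I}_j \cap A_{\mathfrak{m}})_{j \in \mathbf{J}}$ is a partition of the finite set $A_{\mathfrak{m}}$, finite associativity and commutativity of addition in $\mathbf{K}$ yield $\sum_{j \in \mathbf{J}} \sum_{i \in \mathbf{I}_j \cap A_{\mathfrak{m}}} s_i(\mathfrak{m}) = \sum_{i \in A_{\mathfrak{m}}} s_i(\mathfrak{m})$, so the two series have the same coefficients.

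There is no genuine obstacle in this argument: the one point that needs care is to reduce every sum indexed by a (possibly proper) class to a sum over a finite subset \emph{before} any rearrangement is carried out. Once this is done, the identity is finite‑sum bookkeeping, and the two summability clauses hold because all relevant supports stay inside the single Noetherian set $\mathfrak{S}$ and all relevant index sets stay inside the finite sets $A_{\mathfrak{m}}$; in particular no set‑theoretic subtleties arise, since $\mathfrak{S}$ and each $A_{\mathfrak{m}}$ are genuine sets.
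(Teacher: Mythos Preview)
Your proof is correct. The paper does not actually supply its own proof of this proposition: it is stated with a citation to \cite[Proposition~3.1(e)]{vdH:noeth} and no argument is given. Your coefficientwise verification is the standard elementary proof, and every step is sound; in particular the reduction of each purported infinite sum to a finite sum over $A_{\mathfrak{m}}$ before rearranging is exactly the right thing to do, and your use of the quotient map $\pi$ to bound $\{j \in \mathbf{J} : \mathfrak{m} \in \tmop{supp} t_j\}$ is clean.
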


We have the following corollary:

\begin{lemma}
  \label{lem-Fubini}Let $\mathbf{I}, \mathbf{J}$ be classes and let $(s_{i,
  j})_{(i, j) \in \mathbf{I} \times \mathbf{J}}$ be a summable family in
  $\mathbb{A}$. For each $i_0 \in \mathbf{I}$ and for each $j_0 \in
  \mathbf{J}$, the families $(s_{i_0, j})_{j \in \mathbf{J}}$ and $(s_{i,
  j_0})_{i \in \mathbf{I}}$ are summable. Moreover families $\left( \sum_{j
  \in \mathbf{J}} s_{i, j} \right)_{i \in \mathbf{I}}$ and $\left( \sum_{i \in
  \mathbf{I}} s_{i, j} \right)_{j \in \mathbf{J}}$ are summable, with
  \[ \sum_{i \in \mathbf{I}} \left( \sum_{j \in \mathbf{J}} s_{i, j} \right)
     = \sum_{(i, j) \in \mathbf{I} \times \mathbf{J}} s_{i, j} = \sum_{j \in
     \mathbf{J}} \left( \sum_{i \in \mathbf{I}} s_{i, j} \right) . \]
\end{lemma}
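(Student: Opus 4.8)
The plan is to derive everything from Proposition~\ref{prop-summation-by-parts} by choosing the right partitions of the index class $\mathbf{I}\times\mathbf{J}$. First I would apply Proposition~\ref{prop-summation-by-parts} with the partition $\mathbf{I}\times\mathbf{J}=\bigsqcup_{i\in\mathbf{I}}(\{i\}\times\mathbf{J})$. Since $(s_{i,j})_{(i,j)\in\mathbf{I}\times\mathbf{J}}$ is summable, the proposition immediately gives that each subfamily $(s_{i,j})_{j\in\mathbf{J}}$ is summable (identifying $\{i\}\times\mathbf{J}$ with $\mathbf{J}$), that the family $\bigl(\sum_{j\in\mathbf{J}}s_{i,j}\bigr)_{i\in\mathbf{I}}$ is summable, and that its sum equals $\sum_{(i,j)\in\mathbf{I}\times\mathbf{J}}s_{i,j}$. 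This proves the first equality, the summability of the inner sums in $j$, and the summability of the outer family in $i$.

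Next I would run the symmetric argument with the partition $\mathbf{I}\times\mathbf{J}=\bigsqcup_{j\in\mathbf{J}}(\mathbf{I}\times\{j\})$, which in the same way yields the summability of each $(s_{i,j_0})_{i\in\mathbf{I}}$, the summability of $\bigl(\sum_{i\in\mathbf{I}}s_{i,j}\bigr)_{j\in\mathbf{J}}$, and the identity $\sum_{j\in\mathbf{J}}\bigl(\sum_{i\in\mathbf{I}}s_{i,j}\bigr)=\sum_{(i,j)\in\mathbf{I}\times\mathbf{J}}s_{i,j}$. Chaining the two displayed identities through the common middle term $\sum_{(i,j)\in\mathbf{I}\times\mathbf{J}}s_{i,j}$ gives the asserted three-way equality.

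The only genuinely nontrivial point is that when I restrict to the slice $\{i_0\}\times\mathbf{J}$ for a \emph{fixed} $i_0$, Proposition~\ref{prop-summation-by-parts} is phrased for the subfamilies indexed by the members of the partition; here that is exactly what I need, so no extra work is required. (If one wanted summability of $(s_{i_0,j})_j$ for a single $i_0$ without invoking the full partition, it would also follow directly from the definition, since $\bigcup_j\operatorname{supp}s_{i_0,j}\subseteq\bigcup_{(i,j)}\operatorname{supp}s_{i,j}$ is Noetherian and each fiber $\{j:\mathfrak m\in\operatorname{supp}s_{i_0,j}\}$ is contained in the finite set $\{(i,j):\mathfrak m\in\operatorname{supp}s_{i,j}\}$.) I do not expect any real obstacle: this is a bookkeeping corollary, and the main care is simply to keep the bijections $\{i\}\times\mathbf{J}\cong\mathbf{J}$ and $\mathbf{I}\times\{j\}\cong\mathbf{I}$ straight when quoting the proposition.
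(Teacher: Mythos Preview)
Your proposal is correct and matches the paper's intent exactly: the paper presents Lemma~\ref{lem-Fubini} as an immediate corollary of Proposition~\ref{prop-summation-by-parts} with no further proof, and your argument via the two slice partitions $\mathbf{I}\times\mathbf{J}=\bigsqcup_{i\in\mathbf{I}}(\{i\}\times\mathbf{J})=\bigsqcup_{j\in\mathbf{J}}(\mathbf{I}\times\{j\})$ is precisely the intended derivation.
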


We leave it to the reader to check that the sum of two summable families is
summable:

\begin{lemma}
  \label{lem-sum-sum}Let $\mathbf{I}$ be a class and let $(s_i)_{i \in
  \mathbf{I}}$ and $(t_i)_{i \in \mathbf{I}}$ be summable families in
  $\mathbb{A}$ and let $c \in \mathbf{K}$. The family $(s_i + ct_i)_{i \in
  \mathbf{I}}$ is summable with $\sum_{i \in \mathbf{I}} (s_i + ct_i) =
  \sum_{i \in \mathbf{I}} s_i + c \sum_{i \in \mathbf{I}} t_i$.
\end{lemma}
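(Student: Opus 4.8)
The plan is to verify the two clauses in the definition of summability for $(s_i + ct_i)_{i\in\mathbf{I}}$ directly, deducing each from the corresponding property of $(s_i)_{i\in\mathbf{I}}$ and $(t_i)_{i\in\mathbf{I}}$, and then to compute the sum one monomial at a time. The only observation needed, and it is entirely elementary, is the pointwise support bound $\tmop{supp}(s_i + ct_i)\subseteq\tmop{supp} s_i\cup\tmop{supp} t_i$, valid for every $i\in\mathbf{I}$.

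For the first clause I would pass to unions over $\mathbf{I}$ in this bound, obtaining
\[ \bigcup_{i\in\mathbf{I}}\tmop{supp}(s_i+ct_i)\subseteq\Bigl(\bigcup_{i\in\mathbf{I}}\tmop{supp} s_i\Bigr)\cup\Bigl(\bigcup_{i\in\mathbf{I}}\tmop{supp} t_i\Bigr). \]
Both classes on the right are Noetherian subsets of $(\mathfrak{M},\succ)$ by summability of $(s_i)$ and of $(t_i)$; their union is again such a subset, since Noetherian subsets of $(\mathfrak{M},\succ)$ are closed under binary unions, and a subclass of a Noetherian subset is plainly Noetherian. For the second clause, fixing $\mathfrak{m}\in\mathfrak{M}$, the same pointwise bound gives
\[ \{i\in\mathbf{I}\suchthat\mathfrak{m}\in\tmop{supp}(s_i+ct_i)\}\subseteq\{i\in\mathbf{I}\suchthat\mathfrak{m}\in\tmop{supp} s_i\}\cup\{i\in\mathbf{I}\suchthat\mathfrak{m}\in\tmop{supp} t_i\}, \]
a union of two finite sets, hence finite. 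This establishes summability of $(s_i+ct_i)_{i\in\mathbf{I}}$. (One could alternatively route this through the bad-sequence criterion of Lemma~\ref{lem-summable-bad-sequence}, but the direct support computation is shorter.)

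To identify the sum, I would fix $\mathfrak{m}\in\mathfrak{M}$ and let $\mathbf{J}\assign\{i\in\mathbf{I}\suchthat\mathfrak{m}\in\tmop{supp} s_i\text{ or }\mathfrak{m}\in\tmop{supp} t_i\}$, which is finite by the second clause applied to both families. Since $s_i(\mathfrak{m})=t_i(\mathfrak{m})=0$ for $i\notin\mathbf{J}$, the value at $\mathfrak{m}$ of $\sum_{i\in\mathbf{I}}(s_i+ct_i)$ is the finite sum $\sum_{i\in\mathbf{J}}\bigl(s_i(\mathfrak{m})+c\,t_i(\mathfrak{m})\bigr)$, and by associativity and distributivity in $\mathbf{K}$ this equals $\sum_{i\in\mathbf{J}}s_i(\mathfrak{m})+c\sum_{i\in\mathbf{J}}t_i(\mathfrak{m})=\bigl(\sum_{i\in\mathbf{I}}s_i\bigr)(\mathfrak{m})+c\bigl(\sum_{i\in\mathbf{I}}t_i\bigr)(\mathfrak{m})$. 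As $\mathfrak{m}$ was arbitrary, the two series coincide. I do not expect any genuine obstacle here; the one point that deserves attention is to keep every sum indexed by $\mathbf{I}$ genuinely finite before invoking linearity in $\mathbf{K}$, which is precisely what the finiteness clause of summability guarantees.
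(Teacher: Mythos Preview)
Your proof is correct and is precisely the elementary verification the paper has in mind: the lemma is stated with the preamble ``We leave it to the reader to check that the sum of two summable families is summable,'' so no proof is supplied in the text. Your direct check of the two summability clauses via the support bound $\tmop{supp}(s_i+ct_i)\subseteq\tmop{supp} s_i\cup\tmop{supp} t_i$, followed by the monomial-wise computation of the sum, is exactly the intended argument.
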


\subsection{Products of summable families}

Let $\mathbb{A}= \mathbf{K} \llbracket \mathfrak{M} \rrbracket$ be a fixed
algebra of Noetherian series. As a consequence of Proposition~\ref{prop-Neumann}, we
obtain:

\begin{lemma}
  \label{lem-Neumann-series}Let $\varepsilon \in \mathbb{A}^{\prec}$ and
  $(c_k)_{k \in \mathbb{N}} \in \mathbf{K}^{\mathbb{N}}$. Then the family
  $(c_k \varepsilon^k)_{k \in \mathbb{N}}$ is summable.
\end{lemma}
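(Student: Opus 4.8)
The plan is to reduce the claim to Proposition~\ref{prop-Neumann} applied to a suitable Noetherian subclass built out of the support of $\varepsilon$. First I would dispose of the trivial case $\varepsilon = 0$, where only finitely many terms are nonzero. So assume $\varepsilon \neq 0$ and set $\mathfrak{S} \assign \tmop{supp} \varepsilon \subseteq \mathfrak{M}^{\prec}$. Since $\varepsilon \in \mathbb{A}$, the set $\mathfrak{S}$ is by definition a Noetherian subset of $(\mathfrak{M}, \succ)$, and since $\varepsilon \prec 1$ we have $\mathfrak{S} \subseteq \mathfrak{M}^{\prec}$. Proposition~\ref{prop-Neumann} then tells us that $\mathfrak{S}^{\infty}$ is a Noetherian subset of $(\mathfrak{M}, \succ)$, and that for every $\mathfrak{m} \in \mathfrak{S}^{\infty}$ the set $\{ n \in \mathbb{N} \suchthat \mathfrak{m} \in \mathfrak{S}^n \}$ is finite.

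Next I would verify the two clauses of the definition of summability for the family $(c_k \varepsilon^k)_{k \in \mathbb{N}}$. For clause~\ref{def-summable-i}: by the computation of supports of Cauchy products (as recalled after \eqref{eq-Cauchy-prod}), $\tmop{supp}(\varepsilon^k) \subseteq \mathfrak{S}^k \subseteq \mathfrak{S}^{\infty}$ for each $k$ (with $\mathfrak{S}^0 = \{1\}$), hence $\bigcup_{k \in \mathbb{N}} \tmop{supp}(c_k \varepsilon^k) \subseteq \mathfrak{S}^{\infty} \cup \{1\}$, which is Noetherian in $(\mathfrak{M}, \succ)$. For the second clause, fix $\mathfrak{m} \in \mathfrak{M}$. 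If $\mathfrak{m} \notin \mathfrak{S}^{\infty} \cup \{1\}$ then no term contains it. Otherwise, by Proposition~\ref{prop-Neumann} there are only finitely many $k$ with $\mathfrak{m} \in \mathfrak{S}^k$, hence only finitely many $k$ with $\mathfrak{m} \in \tmop{supp}(\varepsilon^k)$ (using additionally that $1 \in \tmop{supp}(\varepsilon^k)$ forces, via the product expansion, at most finitely many $k$ — or simply noting $1 \notin \mathfrak{S}^k$ for $k \geqslant 1$ when $\mathfrak{S} \subseteq \mathfrak{M}^{\prec}$, so $1$ occurs only in $\varepsilon^0$). Thus $\{ k \in \mathbb{N} \suchthat \mathfrak{m} \in \tmop{supp}(c_k \varepsilon^k) \}$ is finite.

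The only mildly delicate point — and the step I expect to require the most care — is the bookkeeping that $\tmop{supp}(\varepsilon^k) \subseteq \mathfrak{S}^k$ together with the finiteness transfer: a priori one needs that $\mathfrak{m} \in \tmop{supp}(\varepsilon^k)$ implies $\mathfrak{m} \in \mathfrak{S}^k$, which follows by iterating Lemma~\ref{lem-Neumann-easy} (the support of a product is contained in the product of supports), and that the finiteness of $\{ k : \mathfrak{m} \in \mathfrak{S}^k\}$ from Proposition~\ref{prop-Neumann} then bounds the number of relevant~$k$. Everything else is a direct unwinding of definitions, so the family is summable, completing the proof.
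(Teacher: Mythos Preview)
Your proof is correct and follows exactly the approach the paper indicates: the lemma is stated there simply as a consequence of Proposition~\ref{prop-Neumann}, and you have spelled out that consequence by checking both summability clauses via $\tmop{supp}(\varepsilon^k)\subseteq\mathfrak{S}^k$ and the finiteness statement in Proposition~\ref{prop-Neumann}. The only cosmetic point is that $1\in\mathfrak{S}^0\subseteq\mathfrak{S}^{\infty}$ already, so the ``$\cup\{1\}$'' and the separate discussion of $\mathfrak{m}=1$ are redundant rather than necessary.
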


\begin{proposition}
  \label{prop-sum-prod}Let $\mathbf{I}, \mathbf{J}$ be classes, and let
  $(s_i)_{i \in \mathbf{I}}$ and $(t_j)_{j \in \mathbf{J}}$ be summable
  families in $\mathbb{A}$. Then $(s_i \cdot t_j)_{(i, j) \in \mathbf{I}
  \times \mathbf{J}}$ is summable, with
  \[ \sum_{(i, j) \in \mathbf{I} \times \mathbf{J}} s_i \cdot t_j = \left(
     \sum_{i \in \mathbf{I}} s_i \right) \cdot \left( \sum_{j \in \mathbf{J}}
     t_j \right) . \]
\end{proposition}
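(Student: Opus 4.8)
The plan is to reduce the statement to the two ingredients already assembled: the Fubini-type rearrangement (Lemma~\ref{lem-Fubini}, resting on Proposition~\ref{prop-summation-by-parts}) and the combinatorial closure properties of Noetherian sets (Lemma~\ref{lem-Neumann-easy}, and the finiteness-of-factorisations clause therein). The argument splits cleanly into two halves: first checking that the family $(s_i\cdot t_j)_{(i,j)\in\mathbf{I}\times\mathbf{J}}$ is summable, and then, granting summability, computing its sum.

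For summability I would verify the two defining conditions directly. Condition~\ref{def-summable-i}: since $\tmop{supp}(s_i\cdot t_j)\subseteq(\tmop{supp} s_i)\cdot(\tmop{supp} t_j)$, we get $\bigcup_{(i,j)}\tmop{supp}(s_i\cdot t_j)\subseteq\big(\bigcup_i\tmop{supp} s_i\big)\cdot\big(\bigcup_j\tmop{supp} t_j\big)$; both factors are Noetherian in $(\mathfrak{M},\succ)$ by summability of the two families, so by Lemma~\ref{lem-Neumann-easy} the product is Noetherian in $(\mathfrak{M},\succ)$, and hence so is the subset on the left. Second condition: fix $\mathfrak{m}\in\mathfrak{M}$ and suppose $\mathfrak{m}\in\tmop{supp}(s_i\cdot t_j)$. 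Then $\mathfrak{m}=\mathfrak{u}\mathfrak{v}$ for some $\mathfrak{u}\in\tmop{supp} s_i$, $\mathfrak{v}\in\tmop{supp} t_j$ with $\mathfrak{u},\mathfrak{v}$ lying in the (fixed) Noetherian sets $\mathfrak{S}\assign\bigcup_i\tmop{supp} s_i$, $\mathfrak{T}\assign\bigcup_j\tmop{supp} t_j$; by the moreover clause of Lemma~\ref{lem-Neumann-easy}, there are only finitely many such pairs $(\mathfrak{u},\mathfrak{v})\in\mathfrak{S}\times\mathfrak{T}$, and for each of them the sets $\{i:\mathfrak{u}\in\tmop{supp} s_i\}$ and $\{j:\mathfrak{v}\in\tmop{supp} t_j\}$ are finite by summability of the original families. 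Hence $\{(i,j):\mathfrak{m}\in\tmop{supp}(s_i\cdot t_j)\}$ is finite, being contained in a finite union of finite rectangles.

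For the value of the sum I would use Lemma~\ref{lem-Fubini} on the now-summable family $(s_i\cdot t_j)_{(i,j)}$ to write $\sum_{(i,j)}s_i\cdot t_j=\sum_{i\in\mathbf{I}}\big(\sum_{j\in\mathbf{J}}s_i\cdot t_j\big)$; since for fixed $i$ the map $t\mapsto s_i\cdot t$ is strongly linear on $\mathbb{A}$ — equivalently, one checks coefficientwise via the Cauchy product formula~\eqref{eq-Cauchy-prod} that $\sum_j s_i\cdot t_j=s_i\cdot\sum_j t_j$, a single-variable instance that follows from Proposition~\ref{prop-summation-by-parts} applied to the expansion of each $s_i\cdot t_j$ into terms — the inner sum equals $s_i\cdot\big(\sum_{j\in\mathbf{J}}t_j\big)$. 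Setting $t\assign\sum_{j\in\mathbf{J}}t_j$, we are left with $\sum_{i\in\mathbf{I}}s_i\cdot t$, and the same strong-linearity of $s\mapsto s\cdot t$ (again via~\eqref{eq-Cauchy-prod}) gives $\sum_{i\in\mathbf{I}}s_i\cdot t=\big(\sum_{i\in\mathbf{I}}s_i\big)\cdot t$, which is the claimed identity.

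The only genuinely delicate point is the finiteness clause in the summability check: one must be careful to fix the ambient Noetherian sets $\mathfrak{S},\mathfrak{T}$ \emph{once} and apply the finiteness-of-factorisations statement of Lemma~\ref{lem-Neumann-easy} inside them, rather than naively summing over all $i,j$. Everything else is a routine bookkeeping application of the Fubini lemma and the Cauchy-product definition; in particular the strong $\mathbf{K}$-bilinearity of the product used in the last paragraph is either cited from \cite{BKKMP:strong} or verified coefficientwise with one more invocation of Proposition~\ref{prop-summation-by-parts}.
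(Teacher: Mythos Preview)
Your argument is correct and is essentially the standard proof: the paper does not spell it out but simply defers to \cite[Proposition~3.3]{vdH:noeth}, and what you wrote is precisely that argument (Noetherianity and finite fibres via Lemma~\ref{lem-Neumann-easy}, then a coefficientwise computation). One small stylistic point: invoking ``strong linearity of $t\mapsto s_i\cdot t$'' is circular in appearance, since that is exactly the $|\mathbf{I}|=1$ case of the proposition; your parenthetical coefficientwise check is the real argument, and it would be cleaner to do the whole identity in one step by comparing $\big(\sum_{(i,j)} s_i t_j\big)(\mathfrak{m})$ with $\big((\sum_i s_i)(\sum_j t_j)\big)(\mathfrak{m})$ directly, using that all the sums over $(i,j,\mathfrak{u},\mathfrak{v})$ involved are finite.
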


\begin{proof}
  The proof is the same as {\cite[Proposition~3.3]{vdH:noeth}} where the
  commutativity of the monoid does not play a role.
\end{proof}

\begin{proposition}
  \label{prop-sum-weaksum}Let $(s_i)_{i \in \mathbf{I}}$ be a summable family
  and let $(t_i)_{i \in \mathbf{I}}$ be a weakly summable family. Then the
  family $(s_i \cdot t_i)_{i \in \mathbf{I}}$ is summable.
\end{proposition}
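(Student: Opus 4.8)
The plan is to check directly the two clauses in the definition of summability for the family $(s_i \cdot t_i)_{i \in \mathbf{I}}$, leaning on Lemma~\ref{lem-Neumann-easy} applied to the ordered monoid $(\mathfrak{M}, \cdot, 1, \succ)$ obtained by reversing the order (this is again an ordered monoid, since reversing $\prec$ preserves~(\ref{eq-ordered-monoid})). Write $\mathfrak{S} \assign \bigcup_{i \in \mathbf{I}} \tmop{supp} s_i$ and $\mathfrak{T} \assign \bigcup_{i \in \mathbf{I}} \tmop{supp} t_i$; by hypothesis $\mathfrak{S}$ is a Noetherian subset of $(\mathfrak{M}, \succ)$ because $(s_i)_i$ is summable, and $\mathfrak{T}$ is one because $(t_i)_i$ is weakly summable.

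First I would dispatch the Noetherian-support clause. For every $i \in \mathbf{I}$ one has $\tmop{supp}(s_i \cdot t_i) \subseteq (\tmop{supp} s_i)\cdot(\tmop{supp} t_i) \subseteq \mathfrak{S}\cdot\mathfrak{T}$, hence $\bigcup_{i \in \mathbf{I}} \tmop{supp}(s_i \cdot t_i) \subseteq \mathfrak{S}\cdot\mathfrak{T}$. By Lemma~\ref{lem-Neumann-easy}, applied to $(\mathfrak{M}, \succ)$, the set $\mathfrak{S}\cdot\mathfrak{T}$ is Noetherian in $(\mathfrak{M}, \succ)$, and any subset of a Noetherian set is Noetherian, so clause~\ref{def-summable-i} holds for $(s_i \cdot t_i)_i$.

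Then I would treat the finite-fibre clause. Fix $\mathfrak{m} \in \mathfrak{M}$ and put $J \assign \{ i \in \mathbf{I} \suchthat \mathfrak{m} \in \tmop{supp}(s_i \cdot t_i)\}$. The second assertion of Lemma~\ref{lem-Neumann-easy} yields that $D \assign \{ (\mathfrak{u}, \mathfrak{v}) \in \mathfrak{S}\times\mathfrak{T} \suchthat \mathfrak{u}\mathfrak{v} = \mathfrak{m}\}$ is finite. If $i \in J$, then $0 \neq (s_i \cdot t_i)(\mathfrak{m}) = \sum_{\mathfrak{u}\mathfrak{v}=\mathfrak{m}} s_i(\mathfrak{u}) t_i(\mathfrak{v})$, so at least one summand is nonzero, giving a pair $(\mathfrak{u},\mathfrak{v}) \in D$ with $\mathfrak{u} \in \tmop{supp} s_i$; consequently $J \subseteq \bigcup_{(\mathfrak{u},\mathfrak{v}) \in D} \{ i \in \mathbf{I} \suchthat \mathfrak{u} \in \tmop{supp} s_i\}$. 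Each set on the right is finite because $(s_i)_i$ is summable, and the index set $D$ is finite, so $J$ is finite, which is the remaining clause.

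I do not expect a genuine obstacle: the whole argument is bookkeeping on top of Lemma~\ref{lem-Neumann-easy}. The one point to get right is the asymmetry between the two factors — of the $t_i$ we only use that the union of their supports is Noetherian (plus the finiteness of product-fibres, which comes for free from Lemma~\ref{lem-Neumann-easy}), whereas the ``each monomial lies in only finitely many supports'' condition is invoked solely for the $s_i$. This is exactly why assuming $(t_i)_i$ merely weakly summable is enough.
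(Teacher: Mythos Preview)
Your proof is correct and follows essentially the same route as the paper's: both set $\mathfrak{S}=\bigcup_i\tmop{supp} s_i$, $\mathfrak{T}=\bigcup_i\tmop{supp} t_i$, invoke Lemma~\ref{lem-Neumann-easy} to get that $\mathfrak{S}\cdot\mathfrak{T}$ is Noetherian and that the fibre over any $\mathfrak{m}$ is finite, and then use summability of $(s_i)_i$ alone to bound $\{i:\mathfrak{m}\in\tmop{supp}(s_it_i)\}$. The only cosmetic difference is that the paper phrases the fibre argument via the projection $\mathfrak{S}_0=\{\mathfrak{s}\in\mathfrak{S}:\exists\,\mathfrak{t}\in\mathfrak{T},\ \mathfrak{s}\mathfrak{t}=\mathfrak{m}\}$ rather than your set $D$ of pairs.
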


\begin{proof}
  Set $\mathfrak{S} \assign \bigcup_{i \in \mathbf{I}} \tmop{supp} s_i$ and
  $\mathfrak{T} \assign \bigcup_{i \in \mathbf{I}} \tmop{supp} t_i$. The class
  $\bigcup_{i \in \mathbf{I}} (\tmop{supp} s_i) \cdot (\tmop{supp} t_i)
  \subseteq \mathfrak{S} \cdot \mathfrak{T}$ is Noetherian by
  Lemma~\ref{lem-Neumann-easy}. For $\mathfrak{m} \in \mathfrak{M}$, the classes
  $\mathfrak{S}_0 \assign \{ \mathfrak{s} \in \mathfrak{S} \suchthat \exists
  \mathfrak{t} \in \mathfrak{T}, \mathfrak{m}=\mathfrak{s}\mathfrak{t} \}$ and
  $\mathfrak{T}_0 = \{ \mathfrak{t} \in \mathfrak{T} \suchthat \exists
  \mathfrak{s} \in \mathfrak{S}, \mathfrak{m}=\mathfrak{s}\mathfrak{t} \}$ are
  both finite. Thus the class $\mathfrak{X} \assign \{ i \in \mathbf{I}
  \suchthat \exists \mathfrak{s} \in \mathfrak{S}_0, \mathfrak{s} \in
  \tmop{supp} s_i \}$ is finite by summability of $(s_i)_{i \in \mathbf{I}}$.
  We deduce that $\{ i \in \mathbf{I} \suchthat \mathfrak{m} \in \tmop{supp}
  s_i t_i \} \subseteq \mathfrak{X}$ is finite. Therefore $(s_i t_i)_{i \in
  \mathbf{I}}$ is summable.
\end{proof}

\begin{proposition}
  \label{prop-adding-finite-factors}Let $\mathbf{I}$ be a class and let $f \of
  \mathbf{I} \longrightarrow \mathbb{N}$ be an arbitrary function. Let
  $(s_i)_{i \in \mathbf{I}}$ be a summable family in $\mathbb{A}$ and let
  $\delta \in \mathbb{A}^{\preccurlyeq}$. The family $(s_i \cdot \delta^{f
  (i)})_{i \in \mathbf{I}}$ is summable.
\end{proposition}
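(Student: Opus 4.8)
The plan is to deduce the statement from Proposition~\ref{prop-sum-weaksum}: since $(s_i)_{i \in \mathbf{I}}$ is summable by hypothesis, it suffices to show that the family $(\delta^{f(i)})_{i \in \mathbf{I}}$ is \emph{weakly} summable, and then apply that proposition with $t_i \assign \delta^{f(i)}$.

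The one delicate point is that $\delta$ is assumed bounded but not infinitesimal, so the unit monomial $1 \in \mathfrak{M}$ may lie in $\tmop{supp}\delta$; then $(\tmop{supp}\delta)^{\infty}$ need not be a set and Proposition~\ref{prop-Neumann} cannot be applied to $\tmop{supp}\delta$ as such. I would therefore first split off the constant part of $\delta$: using $\mathbb{A}^{\preccurlyeq} = \mathbf{K} \oplus \mathbb{A}^{\prec}$, write $\delta = a + \varepsilon$ with $a \in \mathbf{K}$ and $\varepsilon \in \mathbb{A}^{\prec}$, so that $\tmop{supp}\varepsilon$ is a Noetherian subset of $(\mathfrak{M},\succ)$ with $\tmop{supp}\varepsilon \subseteq \mathfrak{M}^{\prec}$ and $\tmop{supp}\delta \subseteq \{1\} \cup \tmop{supp}\varepsilon$. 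Since $1$ is the identity of $\mathfrak{M}$, a product of $n$ elements of $\{1\} \cup \tmop{supp}\varepsilon$ is a product of at most $n$ elements of $\tmop{supp}\varepsilon$; hence, using $\tmop{supp}(st) \subseteq (\tmop{supp} s)(\tmop{supp} t)$ inductively,
\[ \bigcup_{i \in \mathbf{I}} \tmop{supp} (\delta^{f(i)}) \subseteq \bigcup_{i \in \mathbf{I}} (\tmop{supp}\delta)^{f(i)} \subseteq \{1\} \cup (\tmop{supp}\varepsilon)^{\infty} . \]

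Then I would invoke Proposition~\ref{prop-Neumann}, which asserts precisely that $(\tmop{supp}\varepsilon)^{\infty}$ is a Noetherian subset of $(\mathfrak{M},\succ)$; as Noetherian subsets of $(\mathfrak{M},\succ)$ are closed under binary union, the right-hand side above is Noetherian in $(\mathfrak{M},\succ)$. This is exactly the defining condition for $(\delta^{f(i)})_{i \in \mathbf{I}}$ to be weakly summable, and feeding this together with the summability of $(s_i)_{i \in \mathbf{I}}$ into Proposition~\ref{prop-sum-weaksum} completes the argument.

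I do not anticipate a genuine obstacle; the only subtlety is the splitting $\delta = a + \varepsilon$, which is what makes the Neumann-type Proposition~\ref{prop-Neumann} applicable. Should one prefer to avoid Proposition~\ref{prop-sum-weaksum}, one can instead check the two clauses of summability for $(s_i \delta^{f(i)})_{i \in \mathbf{I}}$ directly, bounding its support inside $\bigl( \bigcup_{i} \tmop{supp} s_i \bigr) \cdot \bigl( \{1\} \cup (\tmop{supp}\varepsilon)^{\infty} \bigr)$ via Lemma~\ref{lem-Neumann-easy}, and, for each fixed $\mathfrak{m} \in \mathfrak{M}$, combining the finiteness of the set of factorizations $\mathfrak{m} = \mathfrak{u}\mathfrak{v}$ with $\mathfrak{u} \in \bigcup_i \tmop{supp} s_i$ and $\mathfrak{v} \in (\tmop{supp}\varepsilon)^{\infty}$ (Lemma~\ref{lem-Neumann-easy} and Proposition~\ref{prop-Neumann}) with the finiteness clause of summability of $(s_i)_{i \in \mathbf{I}}$.
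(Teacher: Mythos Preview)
Your argument is correct and follows the same route as the paper: show that $(\delta^{f(i)})_{i\in\mathbf{I}}$ is weakly summable via Proposition~\ref{prop-Neumann}, then invoke Proposition~\ref{prop-sum-weaksum}. Your decomposition $\delta=a+\varepsilon$ to ensure $\tmop{supp}\varepsilon\subseteq\mathfrak{M}^{\prec}$ before applying Proposition~\ref{prop-Neumann} is a genuine clarification that the paper's one-line proof leaves implicit.
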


\begin{proof}
  The family $(\delta^{f (i)})_{i \in \mathbf{I}}$ is weakly summable by
  Proposition~\ref{prop-Neumann}, so this follows from Proposition~\ref{prop-sum-weaksum}.
\end{proof}

\begin{lemma}
  \label{lem-well-based-fin-family}Let $(s_i)_{i \in \mathbf{I}}$ be a family
  in $\mathbb{A}$. Assume that there are Noetherian subclasses $\mathfrak{S}$
  and $\mathfrak{T}$ of $(\mathfrak{M}, \succ)$ with $\mathfrak{T} \prec 1$
  and a function $f \of \mathbf{I} \longrightarrow \mathbb{N}$ such that for
  all $i \in \mathbf{I}$, we have
  \[ \tmop{supp} s_i \subseteq \mathfrak{T}^{f (i)} \cdot \mathfrak{S}. \]
  If $(s_j)_{j \in \mathbf{J}}$ is summable whenever $\mathbf{J} \subseteq
  \mathbf{I}$ and $f (\mathbf{J})$ is finite, then $(s_i)_{i \in \mathbf{I}}$
  is summable.
\end{lemma}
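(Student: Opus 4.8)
The plan is to reduce summability of $(s_i)_{i \in \mathbf{I}}$ to the hypothesis by applying Lemma~\ref{lem-summable-bad-sequence} via its bad-sequence characterisation. So I would fix an injective sequence $i \of \mathbb{N} \longrightarrow \mathbf{I}$ together with a choice $\mathfrak{m}_k \in \tmop{supp} s_{i(k)}$ for every $k$, and aim to produce indices $k < l$ with $\mathfrak{m}_k \succ \mathfrak{m}_l$. By hypothesis each $\mathfrak{m}_k$ factors as $\mathfrak{m}_k = \mathfrak{t}_k \mathfrak{s}_k$ with $\mathfrak{s}_k \in \mathfrak{S}$ and $\mathfrak{t}_k \in \mathfrak{T}^{f(i(k))}$; since $\mathfrak{T} \prec 1$, Proposition~\ref{prop-Neumann} tells us $\mathfrak{T}^{\infty}$ is Noetherian in $(\mathfrak{M}, \succ)$, and moreover $\mathfrak{t}_k \preccurlyeq 1$ for all $k$.

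The key case split is on whether $f(i(\mathbb{N}))$ is finite. If it is, then $\mathbf{J} \assign i(\mathbb{N})$ is a subclass of $\mathbf{I}$ on which $f$ takes finitely many values, so by hypothesis $(s_j)_{j \in \mathbf{J}}$ is summable, and applying Lemma~\ref{lem-summable-bad-sequence} to this summable family and the given data $(i, (\mathfrak{m}_k))$ directly yields the desired $k < l$. If instead $f(i(\mathbb{N}))$ is infinite, then after passing to a subsequence we may assume $f(i(k)) \to \infty$ strictly increasingly, say $f(i(k)) \geqslant k$. Now I would use Noetherianity of $\mathfrak{T}^{\infty} \cdot \mathfrak{S}$: by Lemma~\ref{lem-Noeth-subsequence}\ref{lem-Noeth-subsequence-c} applied to the sequence $(\mathfrak{m}_k)$ (which lives in this Noetherian set), pass to a further subsequence along which $\mathfrak{m}_k$ is weakly increasing, i.e. $\mathfrak{m}_k \preccurlyeq \mathfrak{m}_l$ for $k < l$. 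That is almost what is wanted but with $\preccurlyeq$ rather than $\succ$; so I need to rule out $\mathfrak{m}_k \preccurlyeq \mathfrak{m}_l$ persisting, which is exactly where the growing powers of $\mathfrak{T}$ must be exploited. Here is the mechanism: along this weakly increasing subsequence, simultaneously refine so that $(\mathfrak{s}_k)$ is also weakly increasing (possible again by \ref{lem-Noeth-subsequence-c} since $\mathfrak{S}$ is Noetherian in $(\mathfrak{M}, \succ)$); then $\mathfrak{s}_k \preccurlyeq \mathfrak{s}_l$, so for $k < l$ we get $\mathfrak{t}_k \mathfrak{s}_k = \mathfrak{m}_k \preccurlyeq \mathfrak{m}_l = \mathfrak{t}_l \mathfrak{s}_l$, and combined with $\mathfrak{s}_k \preccurlyeq \mathfrak{s}_l$ this forces $\mathfrak{t}_k \preccurlyeq \mathfrak{t}_l \cdot (\mathfrak{s}_l \mathfrak{s}_k^{-1}\text{-type factor})$ — but since $\mathfrak{M}$ need not be cancellative, I would avoid division and instead argue directly that $\mathfrak{t}_l$ being a product of $f(i(l)) \geqslant l$ factors from $\mathfrak{T} \prec 1$ makes it arbitrarily small, contradicting $\mathfrak{m}_k \preccurlyeq \mathfrak{m}_l$ once $l$ is large enough relative to $k$.

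The main obstacle is precisely this last step: turning ``$\mathfrak{t}_l$ is a product of many small monomials'' into a usable smallness statement in a non-cancellative, only partially ordered monoid. I expect the clean route is to apply the \emph{finiteness} clause of Proposition~\ref{prop-Neumann}: for each fixed monomial the set of $n$ with $\mathfrak{m} \in \mathfrak{T}^n$ is finite. So for a \emph{fixed} $\mathfrak{m}_k$, there is a bound $N_k$ with $\mathfrak{m}_k \notin \mathfrak{T}^n \cdot \mathfrak{S}'$ for $n > N_k$ in an appropriate sense; more carefully, I would observe that if $\mathfrak{m}_k \preccurlyeq \mathfrak{m}_l$ then $\mathfrak{m}_k \in \tmop{supp} s_{i(l)}$'s ``downward shadow'', and then run a sub-argument showing that for each fixed value $v$ of $\mathfrak{m}_k$, only finitely many $l$ can have $f(i(l))$ large and $\mathfrak{m}_l \succcurlyeq \mathfrak{m}_k$ — because $\mathfrak{m}_l \succcurlyeq \mathfrak{m}_k$ with $\mathfrak{m}_l \in \mathfrak{T}^{f(i(l))}\cdot \mathfrak{S}$ and $\mathfrak{T}^\infty \mathfrak{S}$ Noetherian pins $\mathfrak{m}_l$ into a finite-height region above $\mathfrak{m}_k$, while $f(i(l)) \to \infty$ pushes it arbitrarily deep into $\mathfrak{T}^\infty$, and the two are incompatible by the finiteness clause. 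Combining the two cases via König-style subsequence extraction then completes the proof; the bookkeeping of nested subsequences is routine but needs care to keep $i$ injective throughout (which it remains, being a subsequence of an injective sequence).
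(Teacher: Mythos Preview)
Your application of Lemma~\ref{lem-Noeth-subsequence}\ref{lem-Noeth-subsequence-c} has the inequality reversed, and this single slip is what manufactures the ``main obstacle'' you then spend the rest of the proposal fighting. The class $\mathfrak{T}^{\infty}\cdot\mathfrak{S}$ is Noetherian in $(\mathfrak{M},\succ)$, i.e.\ it is a well-quasi-order for the relation $\succcurlyeq$, so every sequence in it has a subsequence that is weakly increasing \emph{for $\succcurlyeq$}: one gets $\mathfrak{m}_{k_0}\succcurlyeq\mathfrak{m}_{k_1}\succcurlyeq\cdots$, not $\mathfrak{m}_{k_0}\preccurlyeq\mathfrak{m}_{k_1}\preccurlyeq\cdots$ as you wrote. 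With the correct direction there is nothing left to do: either some comparison is strict, giving the required $k<l$ with $\mathfrak{m}_k\succ\mathfrak{m}_l$, or the subsequence is constant, say equal to $\mathfrak{m}$.

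The constant case is then dispatched by the finiteness clauses you already identified, but used directly rather than through any smallness heuristic. For each $k$ along the subsequence one has $\mathfrak{m}\in\mathfrak{T}^{f(i_k)}\cdot\mathfrak{S}$; by Lemma~\ref{lem-Neumann-easy} only finitely many pairs $(\mathfrak{t},\mathfrak{s})\in\mathfrak{T}^{\infty}\times\mathfrak{S}$ satisfy $\mathfrak{m}=\mathfrak{t}\mathfrak{s}$, and for each such $\mathfrak{t}$ the set $\{n\in\mathbb{N}:\mathfrak{t}\in\mathfrak{T}^{n}\}$ is finite by Proposition~\ref{prop-Neumann}. Hence $\{f(i_k):k\in\mathbb{N}\}$ is finite, so the hypothesis makes $(s_{i_k})_{k\in\mathbb{N}}$ summable, and Lemma~\ref{lem-summable-bad-sequence} then yields the desired strict drop (or, in the contradiction phrasing, contradicts the choice of the sequence). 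This is precisely the paper's argument; your preliminary case split on whether $f(i(\mathbb{N}))$ is finite, the refinement of $(\mathfrak{s}_k)$, the worry about cancellativity, and the attempt to argue that long products in $\mathfrak{T}$ are ``arbitrarily small'' are all artefacts of the reversed inequality and can be discarded.
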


\begin{proof}
  Assume for contradiction that $(s_i)_{i \in \mathbf{I}}$ is not summable. So
  there is an injective sequence $(i_k)_{k \in \mathbb{N}} \in
  \mathbf{I}^{\mathbb{N}}$ and a sequence $(\mathfrak{m}_k)_{k \in \mathbb{N}}
  \in \mathfrak{M}^{\mathbb{N}}$ with $\mathfrak{m}_0 \nsucc \mathfrak{m}_1
  \nsucc \cdots$ and $\mathfrak{m}_k \in \tmop{supp} s_{i_k}$ for all $k \in
  \mathbb{N}$. We have $\{ \mathfrak{m}_k \suchthat k \in \mathbb{N} \}
  \subseteq \mathfrak{T}^{\infty} \cdot \mathfrak{S}$ where
  $\mathfrak{T}^{\infty} \cdot \mathfrak{S}$ is Noetherian in $(\mathfrak{M},
  \succ)$ by Lemma~\ref{lem-Neumann-easy} and Proposition~\ref{prop-Neumann}. So $\{
  \mathfrak{m}_k \suchthat k \in \mathbb{N} \}$ is Noetherian and we may
  assume that $(\mathfrak{m}_k)_{k \in \mathbb{N}}$ is constant. Fix
  $\mathfrak{t} \in \mathfrak{T}^{\infty}$ and $\mathfrak{s} \in S$ with
  $\mathfrak{m}_k =\mathfrak{t}\mathfrak{s}$ for all $k \in \mathbb{N}$. We
  have $\mathfrak{t} \in \mathfrak{T}^{f (i_k)}$ for all $k \in \mathbb{N}$.
  By Proposition~\ref{prop-Neumann}, this implies that $\{ f (i_k) \suchthat k \in
  \mathbb{N} \}$ is finite, so $(s_{i_k})_{k \in \mathbb{N}}$ is summable: a
  contradiction.
\end{proof}

\subsection{Strongly linear functions}

Let $\mathbb{A}= \mathbf{K} \llbracket \mathfrak{M} \rrbracket$ and
$\mathbb{B}= \mathbf{K} \llbracket \mathfrak{N} \rrbracket$ be algebras of
Noetherian series over $\mathbf{K}$. Consider a function $\Phi \of \mathbb{A}
\longrightarrow \mathbb{B}$ which is $\mathbf{K}$-linear. Then $\Phi$ is said
{\tmem{strongly linear}}{\index{strongly linear function}} if for every
summable family $(s_i)_{i \in \mathbf{I}}$ in $\mathbb{A}$, the family $(\Phi
(s_i))_{i \in \mathbf{I}}$ in $\mathbb{B}$ is summable, with
\[ \Phi \left( \sum_{i \in \mathbf{I}} s_i \right) = \sum_{i \in \mathbf{I}}
   \Phi (s_i) . \]
\begin{definition}
  A function $\Phi \of \mathfrak{M} \longrightarrow \mathbb{B}$ is said
  {\tmstrong{{\tmem{Noetherian}}}}{\index{Noetherian function}} if for all
  Noetherian subsets $\mathfrak{S}$ of $(\mathfrak{M}, \succ)$, the family
  $(\Phi (\mathfrak{m}))_{\mathfrak{m} \in \mathfrak{S}}$ is summable in
  $\mathbb{B}$.
\end{definition}

\begin{proposition}
  \label{prop-strongly-linear-extension}{\tmem{{\cite[Proposition~3.5]{vdH:noeth}}}}
  Assume that $\Phi \of \mathfrak{M} \longrightarrow \mathbb{B}$ is
  Noetherian. Then $\Phi$ extends uniquely into a strongly linear function
  $\hat{\Phi} \of \mathbb{A} \longrightarrow \mathbb{B}$. Furthermore, if
  $\Phi$ is a morphism of monoids, then $\hat{\Phi}$ is a morphism of
  algebras.
\end{proposition}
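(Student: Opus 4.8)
The plan is to define the extension explicitly and then check the required properties in order, with the summability bookkeeping being the only nontrivial part. For $s \in \mathbb{A}$ I would set
\[ \hat{\Phi} (s) \assign \sum_{\mathfrak{m} \in \tmop{supp} s} s (\mathfrak{m}) \, \Phi (\mathfrak{m}) , \]
which makes sense because $\tmop{supp} s$ is Noetherian in $(\mathfrak{M}, \succ)$, so $(\Phi (\mathfrak{m}))_{\mathfrak{m} \in \tmop{supp} s}$ is summable by hypothesis, and rescaling each term by the constant $s (\mathfrak{m}) \in \mathbf{K}$ preserves summability since $\tmop{supp} (s (\mathfrak{m}) \Phi (\mathfrak{m})) \subseteq \tmop{supp} \Phi (\mathfrak{m})$. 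That $\hat{\Phi}$ extends $\Phi$ is immediate from the identification $\mathfrak{m} = \mathbb{1}_{\{\mathfrak{m}\}}$, and $\mathbf{K}$-linearity follows by rewriting $\hat{\Phi} (s) + c \hat{\Phi} (t)$ as a sum over the common Noetherian index set $\tmop{supp} s \cup \tmop{supp} t$ (padding with zero terms) and applying Lemma~\ref{lem-sum-sum} termwise.

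The heart of the argument is strong linearity. Given a summable family $(s_i)_{i \in \mathbf{I}}$ in $\mathbb{A}$, I would consider the family $\bigl( s_i (\mathfrak{m}) \Phi (\mathfrak{m}) \bigr)_{(i, \mathfrak{m})}$ indexed by the pairs with $\mathfrak{m} \in \tmop{supp} s_i$. Writing $\mathfrak{S} \assign \bigcup_{i} \tmop{supp} s_i$, which is Noetherian in $(\mathfrak{M}, \succ)$ by summability of $(s_i)$, the supports of all terms lie in $\bigcup_{\mathfrak{m} \in \mathfrak{S}} \tmop{supp} \Phi (\mathfrak{m})$, which is Noetherian in $(\mathfrak{N}, \succ)$ by the Noetherian hypothesis on $\Phi$. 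For a fixed $\mathfrak{n} \in \mathfrak{N}$, only finitely many $\mathfrak{m} \in \mathfrak{S}$ satisfy $\mathfrak{n} \in \tmop{supp} \Phi (\mathfrak{m})$ (again the hypothesis on $\Phi$), and for each such $\mathfrak{m}$ only finitely many $i$ satisfy $\mathfrak{m} \in \tmop{supp} s_i$ (summability of $(s_i)$); hence the set of pairs contributing to $\mathfrak{n}$ is finite, so the pair-indexed family is summable. Then Lemma~\ref{lem-Fubini} lets me sum in either order: summing first over $\mathfrak{m}$ produces $\bigl( \hat{\Phi} (s_i) \bigr)_i$, proving this family summable with total $\sum_i \hat{\Phi} (s_i)$; summing first over $i$ produces $\sum_{\mathfrak{m}} \bigl( \sum_i s_i \bigr) (\mathfrak{m}) \Phi (\mathfrak{m}) = \hat{\Phi} \bigl( \sum_i s_i \bigr)$, using that $\sum_i s_i (\mathfrak{m})$ is a finite sum of scalars equal to $\bigl( \sum_i s_i \bigr) (\mathfrak{m})$. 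Equating the two orders yields $\hat{\Phi} \bigl( \sum_i s_i \bigr) = \sum_i \hat{\Phi} (s_i)$.

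Uniqueness is then immediate: any strongly linear $\Psi$ agreeing with $\Phi$ on $\mathfrak{M}$ satisfies $\Psi (s) = \Psi \bigl( \sum_{\mathfrak{m}} s (\mathfrak{m}) \mathfrak{m} \bigr) = \sum_{\mathfrak{m}} s (\mathfrak{m}) \Phi (\mathfrak{m}) = \hat{\Phi} (s)$, since $(s (\mathfrak{m}) \mathfrak{m})_{\mathfrak{m}}$ is summable with sum $s$. Finally, when $\Phi$ is a morphism of monoids, $\hat{\Phi} (1) = \Phi (1_{\mathfrak{N}}) = 1$, and for $s, t \in \mathbb{A}$ I would expand $s t = \sum_{(\mathfrak{m}, \mathfrak{m}')} s (\mathfrak{m}) t (\mathfrak{m}') \, \mathfrak{m} \mathfrak{m}'$ via Proposition~\ref{prop-sum-prod}, apply strong linearity of $\hat{\Phi}$ together with $\Phi (\mathfrak{m} \mathfrak{m}') = \Phi (\mathfrak{m}) \Phi (\mathfrak{m}')$, and then re-apply Proposition~\ref{prop-sum-prod} in $\mathbb{B}$ to the summable families $(s (\mathfrak{m}) \Phi (\mathfrak{m}))_{\mathfrak{m}}$ and $(t (\mathfrak{m}') \Phi (\mathfrak{m}'))_{\mathfrak{m}'}$ to recognise $\sum_{(\mathfrak{m}, \mathfrak{m}')} s (\mathfrak{m}) t (\mathfrak{m}') \Phi (\mathfrak{m}) \Phi (\mathfrak{m}') = \hat{\Phi} (s) \hat{\Phi} (t)$.

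The step I expect to be the main obstacle is the double-summability verification in the second paragraph: one must track that both "Noetherian support" and "finite fibres" survive two successive layers — from $(s_i)$ to $\mathfrak{S}$, and from $\mathfrak{S}$ to $\bigcup_{\mathfrak{m}} \tmop{supp} \Phi (\mathfrak{m})$ — and combine them in the right way. Everything else is routine bookkeeping on top of Lemmas~\ref{lem-sum-sum} and~\ref{lem-Fubini} and Proposition~\ref{prop-sum-prod}.
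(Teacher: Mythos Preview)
Your argument is correct and is essentially the standard proof of this result. Note, however, that the paper does not give its own proof: the proposition is stated with a citation to {\cite[Proposition~3.5]{vdH:noeth}} and no proof environment follows. Your write-up reconstructs what one finds in that reference --- define $\hat{\Phi}(s) \assign \sum_{\mathfrak{m}} s(\mathfrak{m})\Phi(\mathfrak{m})$, verify summability of the doubly-indexed family $(s_i(\mathfrak{m})\Phi(\mathfrak{m}))_{(i,\mathfrak{m})}$ by combining the Noetherian hypothesis on $\Phi$ with summability of $(s_i)$, and then exchange the order of summation. Two cosmetic remarks: in the unit computation you wrote $\Phi(1_{\mathfrak{N}})$ where you meant $\Phi(1_{\mathfrak{M}})$; and when invoking Lemma~\ref{lem-Fubini} you are implicitly padding the family by zeros so that it is indexed by all of $\mathbf{I}\times\mathfrak{M}$ rather than the subset $\{(i,\mathfrak{m}):\mathfrak{m}\in\tmop{supp} s_i\}$, which is harmless but worth saying (alternatively, Proposition~\ref{prop-summation-by-parts} applies directly to the partition by $i$ or by $\mathfrak{m}$).
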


It follows that a linear function $\Phi \of \mathbb{A} \longrightarrow
\mathbb{B}$ is strongly linear if and only if $\Phi \upharpoonleft
\mathfrak{M}$ is Noetherian and $\Phi (s) = \sum_{\mathfrak{m} \in
\mathfrak{M}} s (\mathfrak{m}) \Phi (\mathfrak{m})$ for all $s \in
\mathbb{A}$.

\begin{corollary}
  \label{cor-Noetherian-subgroup}Any embedding of ordered monoids $f \of
  \mathfrak{M} \longrightarrow \mathfrak{N}$ extends uniquely to a strongly
  linear embedding of algebras~$\mathbb{A} \longrightarrow \mathbb{B}$.
\end{corollary}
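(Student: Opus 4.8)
The plan is to deduce this from \Cref{prop-strongly-linear-extension}. Composing $f$ with the multiplicative embedding $\mathfrak{N} \hookrightarrow \mathbb{B} \setminus \{0\}$, we regard $f$ as a map $\mathfrak{M} \longrightarrow \mathbb{B}$ whose value at each $\mathfrak{m}$ is a monomial. To invoke \Cref{prop-strongly-linear-extension} we must check that this map is Noetherian, i.e. that $(f(\mathfrak{m}))_{\mathfrak{m} \in \mathfrak{S}}$ is summable in $\mathbb{B}$ for every Noetherian subset $\mathfrak{S}$ of $(\mathfrak{M}, \succ)$. Since $\tmop{supp} f(\mathfrak{m}) = \{ f(\mathfrak{m}) \}$, the first summability condition amounts to $f(\mathfrak{S})$ being a Noetherian subset of $(\mathfrak{N}, \succ)$, and the second condition is automatic: for $\mathfrak{n} \in \mathfrak{N}$ the set $\{ \mathfrak{m} \in \mathfrak{S} \suchthat f(\mathfrak{m}) = \mathfrak{n} \}$ has at most one element because $f$ is injective.

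The only point requiring an argument is thus that $f(\mathfrak{S})$ is Noetherian in $(\mathfrak{N}, \succ)$, and here I would use the bad-sequence criterion of \Cref{lem-Noeth-subsequence}. Suppose not, and pick a bad sequence $(f(\mathfrak{m}_k))_{k \in \mathbb{N}}$ in $(f(\mathfrak{S}), \succ)$, with $\mathfrak{m}_k \in \mathfrak{S}$. An embedding of ordered monoids is in particular order-preserving, and since it is injective it preserves $\preccurlyeq$ as well as $\prec$; hence any relation $\mathfrak{m}_i \succcurlyeq \mathfrak{m}_j$ would give $f(\mathfrak{m}_i) \succcurlyeq f(\mathfrak{m}_j)$. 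Therefore $(\mathfrak{m}_k)_{k \in \mathbb{N}}$ is itself a bad sequence in $(\mathfrak{S}, \succ)$, contradicting the Noetherianity of $\mathfrak{S}$ via \Cref{lem-Noeth-subsequence} again.

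It then remains to assemble the conclusion. \Cref{prop-strongly-linear-extension} provides a unique strongly linear extension $\hat{f} \of \mathbb{A} \longrightarrow \mathbb{B}$, and since $f \of \mathfrak{M} \longrightarrow \mathbb{B}$ is a morphism of monoids (being the composite of the monoid morphism $f \of \mathfrak{M} \to \mathfrak{N}$ with the monoid embedding $\mathfrak{N} \hookrightarrow \mathbb{B} \setminus \{0\}$), the same proposition guarantees that $\hat{f}$ is a morphism of algebras; its uniqueness among strongly linear maps extending $f$ is part of the statement of that proposition. Finally, $\hat{f}$ is injective: for $s \in \mathbb{A}$ one has $\hat{f}(s) = \sum_{\mathfrak{m} \in \mathfrak{M}} s(\mathfrak{m}) f(\mathfrak{m})$, and because $f$ is injective the coefficient of $\hat{f}(s)$ at $f(\mathfrak{m}) \in \mathfrak{N}$ equals $s(\mathfrak{m})$, so $\hat{f}(s) = 0$ forces $s = 0$. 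I do not expect any genuine obstacle here; the only mildly delicate points are the passage from preservation of $\prec$ to preservation of $\preccurlyeq$ and the coefficientwise reading of injectivity, both of which are routine.
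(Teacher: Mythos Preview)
Your proof is correct and follows exactly the approach the paper intends: the corollary is stated without proof immediately after \Cref{prop-strongly-linear-extension}, and your argument spells out the routine verifications (Noetherianity of $f$ via the bad-sequence criterion, the morphism and injectivity checks) that the paper leaves implicit.
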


\begin{lemma}
  \label{lem-Noetherian-function}Let $\Phi \of \mathbb{A} \longrightarrow
  \mathbb{B}$ be strongly linear. Let $(s_i)_{i \in \mathbf{I}}$ be a weakly
  summable family in $\mathbb{A}$. Then $(\Phi (s_i))_{i \in \mathbf{I}}$ is
  weakly summable.
\end{lemma}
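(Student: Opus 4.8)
The plan is to reduce the statement about \emph{weak} summability of $(\Phi(s_i))_{i\in\mathbf I}$ to the already-established strong linearity of $\Phi$, by ``fattening'' the weakly summable family into a genuinely summable one. First I would recall what needs to be shown: by Definition, weak summability of $(\Phi(s_i))_{i\in\mathbf I}$ in $\mathbb B$ amounts to the single condition that $\bigcup_{i\in\mathbf I}\tmop{supp}\Phi(s_i)$ is a Noetherian subset of $(\mathfrak N,\succ)$ — the ``finite fibres'' clause of summability is dropped. So the real content is a statement about the union of supports, and by Lemma~\ref{lem-summable-bad-sequence} (the weakly summable case) it suffices to show: for every sequence $i\of\mathbb N\longrightarrow\mathbf I$ and every choice $\mathfrak n_k\in\tmop{supp}\Phi(s_{i(k)})$, there exist $k<l$ with $\mathfrak n_k\succcurlyeq\mathfrak n_l$.

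The key idea is that from the weakly summable family $(s_i)_{i\in\mathbf I}$ one can build an auxiliary \emph{summable} family to which strong linearity of $\Phi$ applies directly. The cleanest route: $\mathfrak S\assign\bigcup_{i\in\mathbf I}\tmop{supp}s_i$ is Noetherian in $(\mathfrak M,\succ)$ by hypothesis, so the family of \emph{single terms} $\bigl(s_i(\mathfrak m)\,\mathfrak m\bigr)_{(i,\mathfrak m)\in\mathbf I\times\mathfrak M}$ indexed over pairs is summable: its support-union is contained in $\mathfrak S$, and for a fixed $\mathfrak m\in\mathfrak M$ the set of pairs $(i,\mathfrak m')$ with $\mathfrak m\in\tmop{supp}(s_i(\mathfrak m')\mathfrak m')$ is exactly $\{(i,\mathfrak m): \mathfrak m\in\tmop{supp}s_i\}$, which need \emph{not} be finite — so this particular family is only weakly summable again, and this naive approach stalls at the same point. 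Instead I would fix the sequence $(i(k))_{k\in\mathbb N}$ and the chosen monomials $\mathfrak n_k$ from the bad-sequence criterion, and work with the countable subfamily $(s_{i(k)})_{k\in\mathbb N}$: re-index so that distinct values of $i(k)$ are kept, then for each retained index pick the term $t_k\assign c_k\mathfrak m_k$ of $s_{i(k)}$ whose image under $\Phi$ contributes $\mathfrak n_k$ to $\tmop{supp}\Phi(s_{i(k)})$, i.e. $\Phi(\mathfrak m_k)(\mathfrak n_k)\neq 0$ after scaling. The monomial sequence $(\mathfrak m_k)_{k\in\mathbb N}$ lies in the Noetherian set $\mathfrak S$, hence has a weakly increasing (for $\succcurlyeq$) subsequence by Lemma~\ref{lem-Noeth-subsequence}; along that subsequence the single-monomial family $(c_k\mathfrak m_k)_k$ has all its support in $\mathfrak S$ and — crucially, after thinning to a strictly $\succ$-decreasing or constant chain — becomes summable, so $\Phi$ applied to it gives a summable family in $\mathbb B$, forcing a repetition-or-drop $\mathfrak n_k\succcurlyeq\mathfrak n_l$ among the $\mathfrak n$'s.

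The main obstacle, and the place where care is required, is exactly the gap flagged above: strong linearity of $\Phi$ only transports \emph{summable} families, not weakly summable ones, so one cannot simply feed $(s_i)_{i\in\mathbf I}$ into $\Phi$. The fix is to exploit that weak summability of the \emph{source} family is equivalent (via Lemma~\ref{lem-summable-bad-sequence}) to a purely order-theoretic statement on $\mathfrak S=\bigcup_i\tmop{supp}s_i$, namely that $\mathfrak S$ is Noetherian in $(\mathfrak M,\succ)$. Then invoke Proposition~\ref{prop-strongly-linear-extension}: $\Phi\upharpoonright\mathfrak M$ is a Noetherian function, meaning $(\Phi(\mathfrak m))_{\mathfrak m\in\mathfrak S}$ is summable in $\mathbb B$; consequently $\bigcup_{\mathfrak m\in\mathfrak S}\tmop{supp}\Phi(\mathfrak m)$ is Noetherian in $(\mathfrak N,\succ)$. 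Since $\tmop{supp}\Phi(s_i)\subseteq\bigcup_{\mathfrak m\in\tmop{supp}s_i}\tmop{supp}\Phi(\mathfrak m)\subseteq\bigcup_{\mathfrak m\in\mathfrak S}\tmop{supp}\Phi(\mathfrak m)$ for each $i$ — using that $\Phi(s_i)=\sum_{\mathfrak m}s_i(\mathfrak m)\Phi(\mathfrak m)$ — we get $\bigcup_{i\in\mathbf I}\tmop{supp}\Phi(s_i)\subseteq\bigcup_{\mathfrak m\in\mathfrak S}\tmop{supp}\Phi(\mathfrak m)$, which is Noetherian; a subclass of a Noetherian class is Noetherian, so $(\Phi(s_i))_{i\in\mathbf I}$ is weakly summable. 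This is the whole argument; the only genuine work is checking the support containment, which is immediate from the characterisation of strong linearity recorded just before the statement.
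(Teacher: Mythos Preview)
Your final paragraph is correct and is precisely the paper's argument: set $\mathfrak S=\bigcup_i\tmop{supp}s_i$, use strong linearity (equivalently, that $\Phi\upharpoonright\mathfrak M$ is Noetherian) to get $(\Phi(\mathfrak m))_{\mathfrak m\in\mathfrak S}$ summable, and bound each $\tmop{supp}\Phi(s_i)$ by the Noetherian class $\bigcup_{\mathfrak m\in\mathfrak S}\tmop{supp}\Phi(\mathfrak m)$. The exploratory detours in your first two paragraphs are unnecessary --- you can simply present the last paragraph as the proof.
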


\begin{proof}
  Write $\mathfrak{S} \assign \bigcup_{i \in \mathbf{I}} \tmop{supp} s_i$. The
  family $(\mathfrak{s})_{\mathfrak{s} \in \mathfrak{S}}$ is summable, so
  $(\Phi (\mathfrak{s}))_{\mathfrak{s} \in \mathfrak{S}}$ is summable. So the
  class $\mathfrak{T} \assign \bigcup_{\mathfrak{s} \in \mathfrak{S}}
  \tmop{supp} \Phi (\mathfrak{s})$ is Noetherian. But for each $i \in
  \mathbf{I}$, we have $\tmop{supp} \Phi (s_i) = \tmop{supp}
  \sum_{\mathfrak{m} \in \mathfrak{S}} s (\mathfrak{m}) \Phi (\mathfrak{m})
  \subseteq \mathfrak{T}$, so $\bigcup_{i \in \mathbf{I}} \tmop{supp} \Phi
  (s_i) \subseteq \mathfrak{T}$ is Noetherian, i.e. $(\Phi (s_i))_{i \in
  \mathbf{I}}$ is weakly summable.
\end{proof}

\begin{notation}
  Given a function $\Psi \of \mathbf{X} \longrightarrow \mathbf{X}$ on a class
  $\mathbf{X}$ and a $k \in \mathbb{N}$, we write
  $\Psi^{[k]}$\label{autolab15} for the $k$-fold iterate of $\Psi$. So
  $\Psi^{[k]}$ is the function $\mathbf{X} \longrightarrow \mathbf{X}$ with
  $\Psi^{[0]} = \Psi$ and $\Psi^{[k + 1]} \assign \Psi^{[k]} \circ \Psi = \Psi
  \circ \Psi^{[k]}$ for all $k \in \mathbb{N}$.
\end{notation}

\begin{proposition}[Corollary of {\cite[Theorem~6.2]{vdH:noeth}}]\label{prop-iterating-operator}
  Let $\mathbb{A}= \mathbf{K} \llbracket
  \mathfrak{M} \rrbracket$ be an algebra of Noetherian series and let $\Phi
  \of \mathbb{A} \longrightarrow \mathbb{A}$ be strongly linear with $\Phi
  (\mathfrak{m}) \prec \mathfrak{m}$ for all $\mathfrak{m} \in \mathfrak{M}$.
  Let $(c_k)_{k \in \mathbb{N}} \in \mathbf{K}^{\mathbb{N}}$. Then for all $s
  \in \mathbb{A}$, the family $(c_k \Phi^{[k]} (s))_{k \in \mathbb{N}}$ is
  summable, and the function
  \begin{eqnarray*}
    \sum_{k \in \mathbb{N}} c_k \Phi^{[k]} \of \mathbb{A} & \longrightarrow &
    \mathbb{A}\\
    s & \longmapsto & \sum_{k \in \mathbb{N}} c_k \Phi^{[k]} (s)
  \end{eqnarray*}
  is strongly linear.
\end{proposition}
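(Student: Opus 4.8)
The plan is to reduce everything to a single technical fact, together with routine bookkeeping via Fubini for summable families. The technical fact is: for every Noetherian subset $\mathfrak{S}$ of $(\mathfrak{M}, \succ)$, the doubly-indexed family $(\Phi^{[k]} (\mathfrak{m}))_{(k, \mathfrak{m}) \in \mathbb{N} \times \mathfrak{S}}$ is summable in $\mathbb{A}$. This is essentially \cite[Theorem~6.2]{vdH:noeth}, and I would prove it by contradiction via \Cref{lem-summable-bad-sequence}: a failure of summability yields an injective sequence $(k_j, \mathfrak{m}_j)_{j \in \mathbb{N}}$ in $\mathbb{N} \times \mathfrak{S}$ and monomials $\mathfrak{n}_j \in \tmop{supp} \Phi^{[k_j]} (\mathfrak{m}_j)$ with $\mathfrak{n}_j \nsucc \mathfrak{n}_l$ for all $j < l$. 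If the $k_j$ are bounded, some value $k$ occurs for infinitely many $j$, and restricting to those indices (the corresponding $\mathfrak{m}_j$ being then pairwise distinct) contradicts summability of the single family $(\Phi^{[k]} (\mathfrak{m}))_{\mathfrak{m} \in \mathfrak{S}}$ --- which holds because $\Phi^{[k]}$, a composition of strongly linear maps, is itself strongly linear, hence Noetherian on $\mathfrak{M}$. Otherwise $k_j \to \infty$, and for each $j$ one picks a ``path'' $\mathfrak{m}_j = \mathfrak{p}_{j, 0} \succ \mathfrak{p}_{j, 1} \succ \cdots \succ \mathfrak{p}_{j, k_j} = \mathfrak{n}_j$ with $\mathfrak{p}_{j, i + 1} \in \tmop{supp} \Phi (\mathfrak{p}_{j, i})$, the strict descents coming precisely from the hypothesis $\Phi (\mathfrak{m}) \prec \mathfrak{m}$; a minimal bad sequence argument in the style of \cite{NW63} on the starting monomials $\mathfrak{m}_j \in \mathfrak{S}$ --- replacing a minimal such configuration by the one obtained by shifting every path one step, so that it starts strictly $\prec$-lower --- produces a strictly smaller bad configuration, a contradiction. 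The hard part will be this combinatorial core; everything else is formal.

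Granting the technical fact, I would first upgrade it to arbitrary scalar coefficients: since $\tmop{supp} (c \cdot t) \subseteq \tmop{supp} t$ for $c \in \mathbf{K}$ and $t \in \mathbb{A}$, multiplying the terms of a summable family by scalars keeps it summable, so both $(c_k \Phi^{[k]} (\mathfrak{m}))_{(k, \mathfrak{m}) \in \mathbb{N} \times \mathfrak{S}}$ (for Noetherian $\mathfrak{S}$) and $(s (\mathfrak{m}) c_k \Phi^{[k]} (\mathfrak{m}))_{(k, \mathfrak{m}) \in \mathbb{N} \times \tmop{supp} (s)}$ (for $s \in \mathbb{A}$) are summable. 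Summing the latter over $\mathfrak{m}$ first via \Cref{lem-Fubini}, and using $\Phi^{[k]} (s) = \sum_{\mathfrak{m}} s (\mathfrak{m}) \Phi^{[k]} (\mathfrak{m})$ (valid since $\Phi^{[k]}$ is strongly linear), shows that $(c_k \Phi^{[k]} (s))_{k \in \mathbb{N}}$ is summable, with
\[ \sum_{k \in \mathbb{N}} c_k \Phi^{[k]} (s) = \sum_{(k, \mathfrak{m}) \in \mathbb{N} \times \tmop{supp} (s)} s (\mathfrak{m}) c_k \Phi^{[k]} (\mathfrak{m}) . \]
This is the first assertion, and it gives a closed formula for the operator $F \of s \mapsto \sum_{k \in \mathbb{N}} c_k \Phi^{[k]} (s)$.

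It then remains to check that $F$ is strongly linear. It is $\mathbf{K}$-linear by \Cref{lem-sum-sum} (together with linearity of each $\Phi^{[k]}$), so I would invoke the characterisation following \Cref{prop-strongly-linear-extension}: it suffices that $F \upharpoonleft \mathfrak{M}$ be Noetherian and that $F (s) = \sum_{\mathfrak{m}} s (\mathfrak{m}) F (\mathfrak{m})$ for all $s$. For the first point, fix a Noetherian $\mathfrak{S} \subseteq (\mathfrak{M}, \succ)$; since $(c_k \Phi^{[k]} (\mathfrak{m}))_{(k, \mathfrak{m}) \in \mathbb{N} \times \mathfrak{S}}$ is summable, \Cref{lem-Fubini} (summing over $k$) shows that $(F (\mathfrak{m}))_{\mathfrak{m} \in \mathfrak{S}} = \bigl( \sum_{k} c_k \Phi^{[k]} (\mathfrak{m}) \bigr)_{\mathfrak{m} \in \mathfrak{S}}$ is summable. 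For the second, I start from the displayed formula for $F (s)$ and reassociate the double sum by \Cref{lem-Fubini}, this time summing over $k$ for each fixed $\mathfrak{m}$, obtaining $F (s) = \sum_{\mathfrak{m}} \sum_{k} s (\mathfrak{m}) c_k \Phi^{[k]} (\mathfrak{m}) = \sum_{\mathfrak{m}} s (\mathfrak{m}) F (\mathfrak{m})$. This would complete the proof.
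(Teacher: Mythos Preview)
Your proposal is correct and follows essentially the same route as the paper: both rest on the combinatorial fact that $(\Phi^{[k]}(\mathfrak{m}))_{(k,\mathfrak{m})\in\mathbb{N}\times\mathfrak{S}}$ is summable for Noetherian $\mathfrak{S}$ (this is \cite[Theorem~6.2]{vdH:noeth}, already cited in the statement), followed by routine bookkeeping to absorb the scalars $c_k$ and verify strong linearity. The paper's own proof is terser---it cites \cite[Theorem~1.3 and Corollary~1.4]{vdH:dagap} for the unweighted case and then applies \Cref{prop-sum-weaksum} (with the constant family $(c_k)_k$ weakly summable) to insert the coefficients---whereas you spell out the Fubini steps and the Noetherian-extension criterion explicitly, but the substance is identical.
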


\begin{proof}
  See {\cite[Theorem~1.3 and Corollary~1.4]{vdH:dagap}} and apply
  Proposition~\ref{prop-sum-weaksum} for $(c_k)_{k \in \mathbb{N}}$ and $(\Phi^{[k]}
  (s))_{k \in \mathbb{N}}$ for each $s \in \mathbb{A}$.
\end{proof}

\section{Power series}\label{section-power-series}

\subsection{Elementary analysis on valued fields}

Let $(\mathbf{F}_0, v_0), (\mathbf{F}_1, v_1)$ be (possibly class-sized)
valued fields with non-trivial valuations. For $x_0, \rho_0 \in \mathbf{F}_0$
and $x_1, \rho_1 \in \mathbf{F}_1$ we write
\begin{eqnarray*}
  B_0 (x_0, \rho_0) & \assign & \{y \in \mathbf{F}_0 \suchthat v_0 (y - x_0)
  \geqslant v (\rho_0)\} \text{\quad and}\\
  B_1 (x_1, \rho_1) & \assign & \{y \in \mathbf{F}_1 \suchthat v_1 (y - x_1)
  \geqslant v (\rho_1)\} .
\end{eqnarray*}
Then $\mathbf{F}_0$ and $\mathbf{F}_1$ have a natural topology called the
{\tmem{valuation topology}}. We say that a subclass $\mathbf{X} \subseteq
\mathbf{F}_0$ is a {\tmem{neighborhood}}{\index{neighborhood}} of $x \in
\mathbf{X}$ if there is a $\rho \in \mathbf{F}_0^{\times}$ with $B_0 (x, \rho)
\subseteq \mathbf{X}$. We say that $\mathbf{X}$ is {\tmem{open}}{\index{open}}
if it is empty or if it is a neighborhood of each of its points.

The standard definition of differentiable real-valued function can be
formulated for functions between $\mathbf{F}_0$ and $\mathbf{F}_1$.

\begin{definition}
  Let $x \in \mathbf{F}_0$ and let $\mathbf{X} \subseteq \mathbf{F}_0$ be a
  neighborhood of $x$. Then a function $f \of \mathbf{X} \longrightarrow
  \mathbf{F}_1$ is said differentiable at $x$ if there is an $l \in
  \mathbf{F}_1$ such that
  \[ \forall \varepsilon \in \mathbf{F}_1^{\times}, \exists \delta \in
     \mathbf{F}_0^{\times}, \forall y \in B_0 (x, \delta), f (y) \in B_1 (f
     (x) - (y - x) l, (y - x) \varepsilon), \]
  i.e. $l$ is a limit at $0$ of the function $\mathbf{F}_0^{\times}
  \longrightarrow \mathbf{F}_1 ; h \mapsto \frac{f (x + h) - f (x)}{h}$.
\end{definition}

Then $l$ is unique, we write $l = f' (x)$ and we call $f' (x)$ the derivative
of $f$ at $x$. If moreover $\mathbf{X}$ is open and $f$ is differentiable at
each $x \in \mathbf{X}$, then we say that $f$ is
{\tmem{differentiable}}{\index{differentiable function}} and we write $f'$ for
the function $\mathbf{X} \longrightarrow \mathbf{F}_1 ; y \mapsto f' (y)$.

Many elementary properties of differentiable functions on $\mathbb{R}$ are
retained in the more general context of valued fields. In particular, the sum
and product of differentiable functions at a point is differentiable at this
point. Moreover, for $f, g$ differentiable at $x$ (resp. on $\mathbf{O}$), we
have
\[ (fg)' (x) = f' (x) g (x) + f (x) g' (x) . \]
In other words, the derivation operator $f \mapsto f' (x)$ behaves as a
derivation on the ring of differentiable functions at $x$. We also have a
chain rule: if $f \of \mathbf{O} \longrightarrow \mathbf{U} \subseteq
\mathbf{F}_1$ is differentiable at $x$ where $\mathbf{U}$ is a neighborhood of
$f (x)$, and $g \of \mathbf{U} \longrightarrow \mathbf{F}_2$ is differentiable
at $f (x)$ where $\mathbf{F}_2$ is a valued field, then $g \circ f$ is
differentiable at $x$ with
\begin{equation}
  (g \circ f)' (x) = g' (f (x)) f' (x) . \label{eq-chain-rule-functions}
\end{equation}
See {\cite{BGM}} for more details on these facts.

\subsection{Power series}

Let $\mathbf{F}$ be a field and let $\mathbf{A}$ be an algebra over
$\mathbf{F}$. Seeing $\mathbb{N}$ as the ordered monoid $(\mathbb{N}, +, 0,
>)$, we have an algebra $\mathbf{A} \llbracket X \rrbracket \assign \mathbf{A}
\llbracket X^{\mathbb{N}} \rrbracket$ of Noetherian series corresponding to
the algebra of formal power series over $\mathbf{A}$. It is equipped with a
standard derivation
\[ P = \sum_{k \geqslant 0} P_k X^k \longmapsto P' \assign \sum_{k \geqslant
   0} (k + 1) P_{k + 1} X^k . \]
Moreover, for $P, Q \in \mathbf{A} \llbracket X \rrbracket$ with $Q_0 = 0$ (in
other words, $Q \in X\tmmathbf{\Alpha} \llbracket X \rrbracket$), we have a
{\tmem{composite power series}}{\index{composite power series}}
\[ P \circ Q \assign P_0 + \sum_{k \in \mathbb{N}} \left( \sum_{m_1 + \cdots +
   m_n = k} P_n Q_{m_1} \cdots Q_{m_n} \right) X^k \in \mathbf{A} \llbracket X
   \rrbracket . \]
For $P \in \mathbf{A} \llbracket X \rrbracket$ and $Q, R \in X \mathbf{A}
\llbracket X \rrbracket$, we have $Q \circ R \in X \mathbf{A} \llbracket X
\rrbracket$ and
\[ P \circ (Q \circ R) = (P \circ Q) \circ R. \]

\subsection{Convergence of power series}

We fix a linearly ordered Abelian group $\mathfrak{M}$, a field $\mathbf{K}$,
and consider the field $\mathbb{S} \assign \mathbf{K} \llbracket \mathfrak{M}
\rrbracket$.

\begin{definition}
  Given a power series
  \[ P = \sum_{k \in \mathbb{N}} P_k X^k \in \mathbb{S} \llbracket X
     \rrbracket, \]
  and $s \in \mathbb{S}$, we say that $P$ {\tmstrong{{\tmem{converges at}}}}
  $s${\index{convergence of a power series}} if the family $(P_k s^k)_{k \in
  \mathbb{N}}$ is summable. We then set\label{autolab16}
  \[ \tilde{P} (s) \assign \sum_{k \in \mathbb{N}} P_k s^k . \]
  We write $\tmop{Conv} (P)$\label{autolab17} for the class of series $s \in
  \mathbb{S}$ at which $P$ converges. 
\end{definition}

\begin{example}
  Any power series $P = \sum_{k \in \mathbb{N}} c_k X^k \in \mathbf{K}
  \llbracket X \rrbracket$ converges on $\mathbb{S}^{\prec}$ by
  Lemma~\ref{lem-Neumann-series}. In fact, since the sequence $(s^k)_{k \in
  \mathbb{N}}$ is $\preccurlyeq$-increasing whenever $s \succcurlyeq 1$, we
  have $\tmop{Conv} (P) =\mathbb{S}^{\prec}$ unless $P$ is a polynomial, in
  which case $\tmop{Conv} (P) =\mathbb{S}$.
\end{example}

\begin{proposition}
  \label{prop-analytic-convex}{\tmem{{\cite[Corollary~1.5.8]{Schm01}}}} For
  all $P \in \mathbb{S} \llbracket X \rrbracket$, and $\varepsilon, \delta \in
  \mathbb{S}$ with $\delta \in \tmop{Conv} (P)$, we have $\varepsilon
  \preccurlyeq \delta \Longrightarrow \varepsilon \in \tmop{Conv} (P)$.
\end{proposition}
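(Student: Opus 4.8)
The plan is to reduce the statement immediately to Proposition~\ref{prop-adding-finite-factors}. First I would dispose of the degenerate case $\delta = 0$: since $\mathfrak{M}$ is a linearly ordered Abelian group, $\preccurlyeq$ is the dominance relation of the valued field $\mathbb{S}$, so $\varepsilon \preccurlyeq \delta = 0$ forces $\varepsilon = 0$, and the family $(P_k \cdot 0^k)_{k \in \mathbb{N}} = (P_0, 0, 0, \dots)$ is trivially summable, whence $\varepsilon \in \tmop{Conv}(P)$.

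Assume now $\delta \neq 0$. Because $\mathbb{S}$ is a field (with $\mathbb{S}^{\preccurlyeq}$ its convex valuation ring), I can form the quotient $u \assign \varepsilon / \delta \in \mathbb{S}$, and $\varepsilon \preccurlyeq \delta$ translates exactly into $u \preccurlyeq 1$, i.e. $u \in \mathbb{S}^{\preccurlyeq}$ is bounded. Then for every $k \in \mathbb{N}$ we have the identity $P_k \varepsilon^k = (P_k \delta^k)\, u^k$ in $\mathbb{S}$. By hypothesis $\delta \in \tmop{Conv}(P)$, that is, the family $(P_k \delta^k)_{k \in \mathbb{N}}$ is summable. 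I now apply Proposition~\ref{prop-adding-finite-factors} with index class $\mathbf{I} = \mathbb{N}$, with $s_k \assign P_k \delta^k$, with the function $f = \tmop{id}_{\mathbb{N}}$, and with the bounded series $u$ in the role of the series denoted $\delta$ there, obtaining that $(s_k \cdot u^{f(k)})_{k \in \mathbb{N}} = (P_k \delta^k u^k)_{k \in \mathbb{N}} = (P_k \varepsilon^k)_{k \in \mathbb{N}}$ is summable. Hence $\varepsilon \in \tmop{Conv}(P)$, which is the claim.

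There is essentially no obstacle in this argument; the only point requiring a moment's care is recognising that working inside the field $\mathbb{S}$ lets one divide by $\delta$ and that $\varepsilon \preccurlyeq \delta$ is precisely boundedness of $u = \varepsilon/\delta$, after which Proposition~\ref{prop-adding-finite-factors} does all the work. One could alternatively run a direct bad-sequence argument via Lemma~\ref{lem-summable-bad-sequence}, but the reduction above is cleaner and shorter.
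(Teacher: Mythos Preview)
Your proof is correct and follows exactly the same route as the paper's: set $u \assign \varepsilon/\delta \preccurlyeq 1$ and invoke Proposition~\ref{prop-adding-finite-factors} with $\mathbf{I}=\mathbb{N}$ and $f=\tmop{id}_{\mathbb{N}}$. You are slightly more careful in explicitly disposing of the case $\delta=0$, which the paper leaves implicit.
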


\begin{proof}
  Write $P = \sum_{k \in \mathbb{N}} P_k X^k$ and $u \assign \varepsilon /
  \delta \preccurlyeq 1$. By Proposition~\ref{prop-adding-finite-factors} for
  $I =\mathbb{N}$ and $f = \tmop{id}_{\mathbb{N}}$, the family $(P_k \delta^k
  u^k)_{k \in \mathbb{N}} = (P_k \varepsilon^k)_{k \in \mathbb{N}}$ is
  summable.
\end{proof}

In particular, $\tmop{Conv} (P)$ is a union of balls, hence the following.

\begin{corollary}
  \label{cor-open-conv}Let $P \in \mathbb{S} \{ \!\! \{X\} \!\! \}$. Then
  $\tmop{Conv} (P)$ is an open additive subgroup of $\mathbb{S} \left\{ \!\! 
  \!\!\! \{ X \} \!\! \right\}$.
\end{corollary}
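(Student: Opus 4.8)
The plan is to verify the two assertions of the corollary — that $\tmop{Conv}(P)$ is open and that it is an additive subgroup — essentially independently, leaning almost entirely on Proposition~\ref{prop-analytic-convex} (convexity of the convergence locus) together with the fact that $\preccurlyeq$ is a \emph{linear} quasi-ordering on $\mathbb{S}$, since $\mathfrak{M}$ is linearly ordered.

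For the subgroup property I would record the elementary facts: $0 \in \tmop{Conv}(P)$, because $(P_k 0^k)_{k \in \mathbb{N}}$ is finitely supported; $\delta \in \tmop{Conv}(P) \Leftrightarrow -\delta \in \tmop{Conv}(P)$, because $\tmop{supp}(P_k(-\delta)^k) = \tmop{supp}(P_k\delta^k)$ for every $k$, so the two families satisfy the summability conditions simultaneously; and, for closure under addition, given $\varepsilon, \delta \in \tmop{Conv}(P)$, linearity of $\preccurlyeq$ lets me assume $\varepsilon \preccurlyeq \delta$, whence $\tmop{supp}(\varepsilon+\delta) \subseteq \tmop{supp}(\varepsilon) \cup \tmop{supp}(\delta)$ gives $\varepsilon+\delta \preccurlyeq \delta$, and Proposition~\ref{prop-analytic-convex} yields $\varepsilon+\delta \in \tmop{Conv}(P)$. (One could instead reduce to a common ball via Proposition~\ref{prop-adding-finite-factors} and apply Lemma~\ref{lem-sum-sum}, but the support computation is shorter.)

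For openness, the key point — already flagged in the sentence preceding the statement — is that Proposition~\ref{prop-analytic-convex} exhibits $\tmop{Conv}(P)$ as a union of closed valuation balls centered at $0$: $\tmop{Conv}(P) = \bigcup_{\delta \in \tmop{Conv}(P)} \{\varepsilon \in \mathbb{S} : \varepsilon \preccurlyeq \delta\}$, where for $\delta \neq 0$ the set $\{\varepsilon : \varepsilon \preccurlyeq \delta\}$ is the ball $B_0(0,\delta)$, which is open in the valuation topology by the standard ultrametric argument ($\varepsilon \preccurlyeq \delta$ forces $B_0(\varepsilon,\delta) \subseteq B_0(0,\delta)$). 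The hypothesis $P \in \mathbb{S}\{\!\!\{X\}\!\!\}$ is used precisely here: it ensures $\tmop{Conv}(P)$ is a neighborhood of $0$, i.e. contains some nonzero $\delta_0$, so that $0 \in B_0(0,\delta_0) \subseteq \tmop{Conv}(P)$ while every nonzero $\delta \in \tmop{Conv}(P)$ lies in $B_0(0,\delta) \subseteq \tmop{Conv}(P)$; hence $\tmop{Conv}(P)$ is a union of open balls, so it is open.

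I do not expect a real obstacle: the statement is close to a repackaging of Proposition~\ref{prop-analytic-convex}. The only steps needing a moment of care are identifying the $\preccurlyeq$-lower set $\{\varepsilon : \varepsilon \preccurlyeq \delta\}$ with the valuation ball $B_0(0,\delta)$ and checking that this ball is topologically open, and invoking the hypothesis $P \in \mathbb{S}\{\!\!\{X\}\!\!\}$ at exactly the point where $\tmop{Conv}(P)$ must be known to contain a nontrivial ball — without it, $\tmop{Conv}(P)$ could a priori be the non-open singleton $\{0\}$.
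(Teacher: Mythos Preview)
Your proposal is correct and is essentially the same approach as the paper's, which dispatches the corollary in a single sentence (``In particular, $\tmop{Conv}(P)$ is a union of balls, hence the following'') immediately preceding the statement. You have simply unpacked that sentence: the subgroup property via $0\in\tmop{Conv}(P)$, closure under negation, and $\varepsilon+\delta\preccurlyeq\delta$ combined with Proposition~\ref{prop-analytic-convex}; openness via the fact that the closed valuation balls $B_0(0,\delta)$ are clopen in the valuation topology and the hypothesis $P\in\mathbb{S}\{\!\!\{X\}\!\!\}$ supplies a nonzero $\delta_0$ so that $0$ lies in such a ball.
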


We say that a $P \in \mathbb{S} \llbracket X \rrbracket$ is
{\tmem{convergent}} if $\tmop{Conv} (P) \neq \{0\}$, and we write $\mathbb{S}
\{ \!\! \{X\} \!\! \}$\label{autolab18} for the class of convergent power
series.

\begin{lemma}
  \label{lem-conv-sum-prod}Let $P, Q \in \mathbb{S} \{ \!\! \{X\} \!\! \}$.
  Then $\tmop{Conv} (P) \cap \tmop{Conv} (Q) \subseteq \tmop{Conv} (P + Q)$,
  with equality if $\tmop{Conv} (P) \neq \tmop{Conv} (Q)$. Moreover,
  $\tmop{Conv} (P) \cap \tmop{Conv} (Q) \subseteq \tmop{Conv} (PQ)$.
\end{lemma}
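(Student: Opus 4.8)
The plan is to prove each of the three assertions by exhibiting, for a given point in the intersection of convergence loci, an appropriate "majorant" structure that is summable, and then invoking the summability results from Section~1. All three statements are about showing that a certain family $(R_k s^k)_{k\in\mathbb N}$ is summable, where $R$ is $P+Q$ or $PQ$, under the hypothesis that $s$ (and perhaps $\varepsilon$) lies in both $\operatorname{Conv}(P)$ and $\operatorname{Conv}(Q)$.

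First I would handle $\operatorname{Conv}(P)\cap\operatorname{Conv}(Q)\subseteq\operatorname{Conv}(P+Q)$: if $s$ is in both loci, then $(P_ks^k)_k$ and $(Q_ks^k)_k$ are both summable, so by Lemma~\ref{lem-sum-sum} (with $c=1$) the family $((P_k+Q_k)s^k)_k$ is summable, i.e.\ $s\in\operatorname{Conv}(P+Q)$. For the equality clause when $\operatorname{Conv}(P)\ne\operatorname{Conv}(Q)$: without loss of generality there is $\delta\in\operatorname{Conv}(P)\setminus\operatorname{Conv}(Q)$; using that convergence loci are unions of balls (Proposition~\ref{prop-analytic-convex}) and are convex with $0$ inside, one gets $\operatorname{Conv}(Q)\subsetneq\operatorname{Conv}(P)$, hence $\operatorname{Conv}(P)\cap\operatorname{Conv}(Q)=\operatorname{Conv}(Q)$; then if $s\in\operatorname{Conv}(P+Q)$, write $Q=(P+Q)-P$ and apply the first part again to conclude $s\in\operatorname{Conv}(Q)$, giving the reverse inclusion. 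The key observation is that the convergence locus of a sum can only exceed the intersection when the two loci are nested, and in that case the larger one is absorbed.

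For the product statement $\operatorname{Conv}(P)\cap\operatorname{Conv}(Q)\subseteq\operatorname{Conv}(PQ)$: fix $s$ in both loci. Since $(P_js^j)_{j\in\mathbb N}$ and $(Q_\ell s^\ell)_{\ell\in\mathbb N}$ are summable, Proposition~\ref{prop-sum-prod} gives that $(P_js^j\cdot Q_\ell s^\ell)_{(j,\ell)\in\mathbb N\times\mathbb N}=(P_jQ_\ell s^{j+\ell})_{(j,\ell)}$ is summable. Now $(PQ)_k=\sum_{j+\ell=k}P_jQ_\ell$, so $(PQ)_ks^k=\sum_{j+\ell=k}P_jQ_\ell s^{j+\ell}$, a finite sum; partitioning $\mathbb N\times\mathbb N$ into the diagonals $\{(j,\ell):j+\ell=k\}$ and applying Proposition~\ref{prop-summation-by-parts} (summation by parts) shows that $((PQ)_ks^k)_{k\in\mathbb N}$ is summable with sum $\widetilde P(s)\widetilde Q(s)$, i.e.\ $s\in\operatorname{Conv}(PQ)$.

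The only genuinely delicate point is the equality clause, since one must rule out the possibility that $\operatorname{Conv}(P+Q)$ is strictly larger than $\operatorname{Conv}(P)\cap\operatorname{Conv}(Q)$ when the loci are incomparable—but the symmetry trick $P=(P+Q)+(-Q)$, combined with the fact (Corollary~\ref{cor-open-conv}, Proposition~\ref{prop-analytic-convex}) that these loci are balls-unions closed under the relevant operations, closes the gap: $\operatorname{Conv}(P+Q)\subseteq\operatorname{Conv}(P)$ fails only if there is a point where $P+Q$ converges but $P$ does not, and feeding that back through the first inclusion forces $\operatorname{Conv}(P)$ and $\operatorname{Conv}(Q)$ to be nested after all. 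Everything else is a direct appeal to the summability toolkit (Lemmas~\ref{lem-sum-sum}, Propositions~\ref{prop-sum-prod} and~\ref{prop-summation-by-parts}), so the proof should be short.
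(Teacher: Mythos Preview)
Your approach is essentially the paper's: Lemma~\ref{lem-sum-sum} for the sum inclusion, Proposition~\ref{prop-sum-prod} together with Proposition~\ref{prop-summation-by-parts} for the product, and Proposition~\ref{prop-analytic-convex} to force the loci to be nested in the equality clause. One step needs tightening: in your equality argument you take $s\in\tmop{Conv}(P+Q)$, write $Q=(P+Q)-P$, and invoke the first inclusion, but that requires $s\in\tmop{Conv}(P)$ as well, which you have not yet justified. The fix (implicit in your last paragraph, and in the paper's use of Proposition~\ref{prop-analytic-convex}) is that since $\preccurlyeq$ is linear, downward-closed classes are totally ordered by inclusion; so if $\tmop{Conv}(P+Q)\not\subseteq\tmop{Conv}(P)$ then $\tmop{Conv}(P)\subsetneq\tmop{Conv}(P+Q)$, whence $\tmop{Conv}(P)=\tmop{Conv}(P+Q)\cap\tmop{Conv}(P)\subseteq\tmop{Conv}(Q)$, contradicting $\tmop{Conv}(Q)\subsetneq\tmop{Conv}(P)$. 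With this line added, your proof matches the paper's.
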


\begin{proof}
  For $\delta \in \tmop{Conv} (P) \cap \tmop{Conv} (Q)$, the families $(P (k)
  \delta^k)_{k \in \mathbb{N}}$ and $(Q (k) \delta^k)_{k \in \mathbb{N}}$ are
  summable, so by Proposition~\ref{lem-sum-sum} so is $((P_k + Q_k)
  \delta^k)_{k \in \mathbb{N}}$. So $\delta \in \tmop{Conv} (P + Q)$.
  
  The family $(P_k Q_n \delta^{k + n})_{k, n \in \mathbb{N}}$ is summable by
  Proposition~\ref{prop-sum-prod}. Therefore $(\sum_{k + n = m} \binom{m}{k} P
  (k) Q (n) \delta^m)_{m \in \mathbb{N}}$ is summable by
  Proposition~\ref{prop-summation-by-parts}. So $\delta \in \tmop{Conv}
  (PQ)$. If $\tmop{Conv} (P) \neq \tmop{Conv} (Q)$, then by
  Proposition~\ref{prop-analytic-convex}, we may assume that $\tmop{Conv} (P) \subseteq
  \tmop{Conv} (Q)$, so $\tmop{Conv} (P) \cap \tmop{Conv} (Q) = \tmop{Conv}
  (P)$. For $\delta \in \tmop{Conv} (Q) \setminus \tmop{Conv} (P)$, if $((P_k
  + Q_k) \delta^k)_{k \in \mathbb{N}}$, were summable, then so would be $(P_k
  \delta^k)_{k \in \mathbb{N}} = ((P_k + Q_k) \delta^k - Q_k \delta^k)_{k \in
  \mathbb{N}}$ by Proposition~\ref{lem-sum-sum}: a contradiction. So
  $\tmop{Conv} (P + Q) = \tmop{Conv} (P)$.
\end{proof}

\begin{corollary}
  The class $\mathbb{S} \{ \!\! \{X\} \!\! \}$ is a subalgebra of $\mathbb{S}
  \llbracket X \rrbracket$ containing $\mathbb{S} \cup \{X\}$.
\end{corollary}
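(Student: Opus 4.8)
The plan is to verify the three defining properties of an $\mathbb{S}$-subalgebra of $\mathbb{S} \llbracket X \rrbracket$ — that it contains the prescribed generating set $\mathbb{S} \cup \{X\}$, that it is closed under addition and $\mathbf{K}$-scalar multiplication, and that it is closed under multiplication — using only the facts about convergence loci established above.

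First I would dispose of the containment $\mathbb{S} \cup \{X\} \subseteq \mathbb{S} \{ \!\! \{ X \} \!\! \}$. For any polynomial $P = \sum_{k \leqslant n} P_k X^k \in \mathbb{S} \llbracket X \rrbracket$ and any $s \in \mathbb{S}$, the family $(P_k s^k)_{k \in \mathbb{N}}$ has finite support, hence is summable, so $\tmop{Conv} (P) = \mathbb{S}$. Since $1 \in \mathbb{S}$ we have $\mathbb{S} \neq \{0\}$, so every polynomial is convergent; in particular each constant series $s \in \mathbb{S}$ and the series $X$ lie in $\mathbb{S} \{ \!\! \{ X \} \!\! \}$.

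The crux is the observation that if $P, Q \in \mathbb{S} \{ \!\! \{ X \} \!\! \}$ then $\tmop{Conv} (P) \cap \tmop{Conv} (Q) \neq \{0\}$. Indeed, choose $\delta \in \tmop{Conv} (P) \setminus \{0\}$ and $\eta \in \tmop{Conv} (Q) \setminus \{0\}$. Because $\mathfrak{M}$ is a linearly ordered Abelian group, $\preccurlyeq$ is a (linear) dominance relation on $\mathbb{S}$, so we may assume $\delta \preccurlyeq \eta$; then Proposition~\ref{prop-analytic-convex} applied to $Q$ yields $\delta \in \tmop{Conv} (Q)$, so $\delta \in (\tmop{Conv} (P) \cap \tmop{Conv} (Q)) \setminus \{0\}$. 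Now Lemma~\ref{lem-conv-sum-prod} gives $\tmop{Conv} (P) \cap \tmop{Conv} (Q) \subseteq \tmop{Conv} (P + Q)$ and $\tmop{Conv} (P) \cap \tmop{Conv} (Q) \subseteq \tmop{Conv} (PQ)$, whence $\tmop{Conv} (P + Q) \neq \{0\}$ and $\tmop{Conv} (PQ) \neq \{0\}$, i.e. $P + Q, PQ \in \mathbb{S} \{ \!\! \{ X \} \!\! \}$.

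Finally, closure under scalar multiplication requires no further work: for $c \in \mathbf{K}$ the series $cP$ is the product of the constant convergent series $c \in \mathbb{S}$ with $P$ (or, directly, $\tmop{Conv} (cP) \supseteq \tmop{Conv} (P)$), so it is covered by the multiplicative closure; together with $1 \in \mathbb{S} \subseteq \mathbb{S} \{ \!\! \{ X \} \!\! \}$ this shows $\mathbb{S} \{ \!\! \{ X \} \!\! \}$ is a subalgebra containing $\mathbb{S} \cup \{X\}$. I do not anticipate any real obstacle; the only step needing care is the nonemptiness of the common convergence locus, which rests precisely on the linearity of $\preccurlyeq$ — available here because $\mathfrak{M}$ was assumed to be a linearly ordered group.
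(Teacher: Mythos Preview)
Your argument is correct and matches the paper's intended reasoning: the corollary is stated without proof immediately after Lemma~\ref{lem-conv-sum-prod}, and the implicit derivation is precisely what you wrote—use Proposition~\ref{prop-analytic-convex} together with the linearity of $\preccurlyeq$ to get a common nonzero element of $\tmop{Conv}(P)\cap\tmop{Conv}(Q)$, then apply Lemma~\ref{lem-conv-sum-prod}. The containment $\mathbb{S}\cup\{X\}\subseteq\mathbb{S}\{\!\!\{X\}\!\!\}$ via $\tmop{Conv}(P)=\mathbb{S}$ for polynomials is also the obvious route.
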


In the sequel, we assume that $\mathbf{K}$ has characteristic zero.

\begin{lemma}
  \label{lem-derivative-power-series-radius}For all $P \in \mathbb{S}
  \llbracket X \rrbracket$ and $n \in \mathbb{N}$, we have $\tmop{Conv} (P) =
  \tmop{Conv} (P^{(n)})$.
\end{lemma}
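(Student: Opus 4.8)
The plan is to reduce to the case $n = 1$ by a trivial induction and then prove the two inclusions $\tmop{Conv} (P) \subseteq \tmop{Conv} (P')$ and $\tmop{Conv} (P') \subseteq \tmop{Conv} (P)$ directly from the definition of summability. Throughout, write $P = \sum_{k \in \mathbb{N}} P_k X^k$, so that $P' = \sum_{k \in \mathbb{N}} (k + 1) P_{k + 1} X^k$; since every power series converges at $0$, it suffices to fix $\delta \in \mathbb{S} \setminus \{0\}$. The one auxiliary observation I would record first is that multiplying the members of a summable family $(s_i)_{i \in \mathbf{I}}$ by arbitrary scalars $c_i \in \mathbf{K}$ yields a summable family, because $\tmop{supp} (c_i s_i) \subseteq \tmop{supp} (s_i)$ for every $i$, so both conditions in the definition of summability are inherited.

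For $\tmop{Conv} (P) \subseteq \tmop{Conv} (P')$, suppose $(P_k \delta^k)_{k \in \mathbb{N}}$ is summable. Then its shift $(P_{k + 1} \delta^{k + 1})_{k \in \mathbb{N}}$, being (a reindexing of) a subfamily of the previous one, is summable; multiplying term by term by the scalars $k + 1 \in \mathbf{K}$ (summability preserved by the observation above) and then by the single fixed element $\delta^{- 1} \in \mathbb{S}$ via Proposition~\ref{prop-sum-prod} (with a one-element index set for the second factor) yields that $((k + 1) P_{k + 1} \delta^k)_{k \in \mathbb{N}}$ is summable, i.e. $\delta \in \tmop{Conv} (P')$.

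For the reverse inclusion, suppose $((k + 1) P_{k + 1} \delta^k)_{k \in \mathbb{N}}$ is summable. Running the same manipulations backwards, I multiply term by term by the scalars $(k + 1)^{- 1} \in \mathbf{K}$ and then by the fixed element $\delta \in \mathbb{S}$, obtaining that $(P_{k + 1} \delta^{k + 1})_{k \in \mathbb{N}}$ is summable; reindexing and adjoining the single extra term $P_0$ (Lemma~\ref{lem-sum-sum}) shows that $(P_k \delta^k)_{k \in \mathbb{N}}$ is summable, so $\delta \in \tmop{Conv} (P)$. Induction on $n$ then gives $\tmop{Conv} (P) = \tmop{Conv} (P^{(n)})$ for all $n$.

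The only place any hypothesis beyond formal bookkeeping enters is the reverse inclusion, where dividing by the integers $k + 1$ needs $k+1$ to be invertible in $\mathbf{K}$, that is, $\mathbf{K}$ of characteristic zero — exactly the standing assumption in force (in positive characteristic $\tmop{Conv} (P')$ can be strictly larger than $\tmop{Conv} (P)$, e.g. when differentiation annihilates a divergent series all of whose exponents are divisible by the characteristic). I do not anticipate any genuine obstacle: the content is just the elementary fact that scalar multiplication shrinks supports, combined with Proposition~\ref{prop-sum-prod} for multiplication by a fixed element of $\mathbb{S}$.
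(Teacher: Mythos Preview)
Your proof is correct and follows essentially the same approach as the paper's: reduce to $n=1$, dispose of $\delta=0$, and argue that summability of $(P_k\delta^k)_{k\in\mathbb{N}}$, of its shift $(P_{k+1}\delta^{k+1})_{k\in\mathbb{N}}$, and of $((k+1)P_{k+1}\delta^k)_{k\in\mathbb{N}}$ are all equivalent via scalar multiplication and multiplication by the fixed unit $\delta^{\pm 1}$, invoking characteristic zero for the division by $k+1$. The paper's proof is terser but identical in substance; your version simply spells out the support-shrinking observation and the use of Proposition~\ref{prop-sum-prod} explicitly, and correctly isolates the single step where characteristic zero is genuinely used.
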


\begin{proof}
  It suffices to prove the result for $n = 1$. We have $0 \in \tmop{Conv} (P)
  \cap \tmop{Conv} (P')$. Recall that $P' = \sum_{k \in \mathbb{N}} (k + 1)
  P_{k + 1} X^k$. For $\varepsilon \in \mathbb{S}^{\times}$, the family $(P_k
  \varepsilon^k)_{k \in \mathbb{N}}$ is summable if and only if $(P_{k + 1}
  \varepsilon^{k + 1})_{k \in \mathbb{N}}$ is summable. Since $\mathbf{K}$ has
  characteristic zero, this is equivalent to $((k + 1) P_{k + 1}
  \varepsilon^k)_{k \in \mathbb{N}}$ being summable. We deduce that
  $\tmop{Conv} (P) = \tmop{Conv} (P')$.
\end{proof}

\begin{corollary}
  The algebra $\mathbb{S} \{ \!\! \{X\} \!\! \}$ is a differential subalgebra
  of $(\mathbb{S} \llbracket X \rrbracket, \mathord{\:'})$.
\end{corollary}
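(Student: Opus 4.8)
The plan is to obtain this as an immediate consequence of Lemma~\ref{lem-derivative-power-series-radius} together with the corollary just established, that $\mathbb{S} \{ \!\! \{X\} \!\! \}$ is a subalgebra of $\mathbb{S} \llbracket X \rrbracket$ containing $\mathbb{S} \cup \{X\}$. Recall that a \emph{differential subalgebra} of $(\mathbb{S} \llbracket X \rrbracket, \mathord{\:'})$ is simply a subalgebra which is stable under the derivation $P \mapsto P'$; since the subalgebra part is already in hand, the only thing left to verify is that $P' \in \mathbb{S} \{ \!\! \{X\} \!\! \}$ whenever $P \in \mathbb{S} \{ \!\! \{X\} \!\! \}$.

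So I would start from the definition: $P \in \mathbb{S} \{ \!\! \{X\} \!\! \}$ means exactly that $\tmop{Conv}(P) \neq \{0\}$. Applying Lemma~\ref{lem-derivative-power-series-radius} with $n = 1$ gives $\tmop{Conv}(P') = \tmop{Conv}(P)$, whence $\tmop{Conv}(P') = \tmop{Conv}(P) \neq \{0\}$. Thus $P'$ is convergent, i.e. $P' \in \mathbb{S} \{ \!\! \{X\} \!\! \}$, which is all that was needed.

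There is essentially no obstacle here: the entire content of the statement already sits inside Lemma~\ref{lem-derivative-power-series-radius}, which in turn reduces to the case $n = 1$ and uses only that, over a field of characteristic zero, the families $(P_k \varepsilon^k)_{k \in \mathbb{N}}$, $(P_{k+1}\varepsilon^{k+1})_{k \in \mathbb{N}}$ and $((k+1)P_{k+1}\varepsilon^k)_{k \in \mathbb{N}}$ are simultaneously summable or not. If one wishes, the same lemma applied for general $n$ (equivalently, iterating the case $n = 1$) shows moreover that $\mathbb{S} \{ \!\! \{X\} \!\! \}$ is closed under every higher derivative $P \mapsto P^{(n)}$, though this is not required for the notion of differential subalgebra.
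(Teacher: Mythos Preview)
Your argument is correct and matches the paper's intended approach: the corollary is stated without proof precisely because it follows immediately from Lemma~\ref{lem-derivative-power-series-radius} together with the earlier corollary that $\mathbb{S}\{\!\!\{X\}\!\!\}$ is a subalgebra. Your unpacking of this one-line deduction is accurate and complete.
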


\begin{proposition}
  \label{prop-power-series-translation}Let $P = \sum_{k \in \mathbb{N}} P_k
  X^k \in \mathbb{S} \llbracket X \rrbracket$ be a power series and let
  $\varepsilon, \delta \in \tmop{Conv} (P)$. Write $P_{+ \varepsilon}$ for the
  power series
  \[ P_{+ \varepsilon} \assign \sum_{k \in \mathbb{N}}
     \frac{\widetilde{P^{(k)}} (\varepsilon)}{k!} X^k . \]
  We have $\delta \in \tmop{Conv} (P_{+ \varepsilon})$ and
  \[ \widetilde{P_{+ \varepsilon}} (\delta) = \tilde{P} (\varepsilon + \delta)
     . \]
\end{proposition}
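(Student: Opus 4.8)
The plan is to reduce both assertions to the summability, in $\mathbb{S}$, of a single bivariate family obtained by expanding each power $(\varepsilon + \delta)^n$ by the binomial theorem, and then to read off the two identities by regrouping that family along each of its two coordinates.

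First I would check that $P_{+\varepsilon}$ is well defined. By Lemma~\ref{lem-derivative-power-series-radius} we have $\varepsilon \in \tmop{Conv}(P) = \tmop{Conv}(P^{(k)})$ for every $k$, so $\widetilde{P^{(k)}}(\varepsilon)$ is a genuine element of $\mathbb{S}$, and dividing by $k!$ is legitimate as $\mathbf{K}$ has characteristic zero. Recalling $P^{(k)} = \sum_{i \in \mathbb{N}} \tfrac{(i+k)!}{i!} P_{i+k} X^i$, the $k$-th coefficient of $P_{+\varepsilon}$ is $(P_{+\varepsilon})_k = \tfrac{1}{k!}\widetilde{P^{(k)}}(\varepsilon) = \sum_{i \in \mathbb{N}} \binom{i+k}{k} P_{i+k}\varepsilon^i$, the family $\big(\binom{i+k}{k} P_{i+k}\varepsilon^i\big)_{i \in \mathbb{N}}$ being summable with this sum since $\varepsilon \in \tmop{Conv}(P^{(k)})$ and $\binom{i+k}{k}P_{i+k}$ is (up to the scalar $\tfrac1{k!}$) the $i$-th coefficient of $P^{(k)}$ (Lemma~\ref{lem-sum-sum}).

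The core of the proof is to show that the family
\[ \mathbf{s} \assign \left( \binom{i+j}{j}\, P_{i+j}\, \varepsilon^i \delta^j \right)_{(i,j) \in \mathbb{N}^2} \]
is summable in $\mathbb{S}$. Since $\mathbf{K}$ has characteristic zero, the scalars $\binom{i+j}{j}$ are nonzero and do not affect supports, so it suffices to prove that $\big(P_{i+j}\varepsilon^i\delta^j\big)_{(i,j)}$ is summable. As $\preccurlyeq$ is a linear quasi-ordering on $\mathbb{S}$, put $\eta \assign \delta$ if $\varepsilon \preccurlyeq \delta$ and $\eta \assign \varepsilon$ otherwise; then $\eta \in \{\varepsilon, \delta\} \subseteq \tmop{Conv}(P)$ and $\varepsilon, \delta \preccurlyeq \eta$. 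If $\eta = 0$ then $\varepsilon = \delta = 0$, $P_{+\varepsilon} = P$, and the statement is trivial; so assume $\eta \neq 0$ and set $u \assign \varepsilon/\eta$, $w \assign \delta/\eta$, so $u, w \preccurlyeq 1$. Now $(P_n \eta^n)_n$ is summable because $\eta \in \tmop{Conv}(P)$, and pulling this family back along the finite-to-one map $(i,j) \mapsto i+j$ shows that $\big(P_{i+j}\eta^{i+j}\big)_{(i,j)}$ is summable (both conditions of the definition are immediate). On the other hand, writing $u = a + u_0$ and $w = b + w_0$ with $a, b \in \mathbf{K}$ and $u_0, w_0 \in \mathbb{S}^{\prec}$ (possible since $\mathbb{S}^{\preccurlyeq} = \mathbf{K} \oplus \mathbb{S}^{\prec}$), one gets $\tmop{supp}(u^i w^j) \subseteq (\tmop{supp} u_0)^{\infty} \cdot (\tmop{supp} w_0)^{\infty}$, which is a Noetherian subset of $(\mathfrak{M}, \succ)$ by Proposition~\ref{prop-Neumann} and Lemma~\ref{lem-Neumann-easy}; hence $\big(u^i w^j\big)_{(i,j)}$ is weakly summable. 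By Proposition~\ref{prop-sum-weaksum}, the family $\big(P_{i+j}\eta^{i+j} u^i w^j\big)_{(i,j)} = \big(P_{i+j}\varepsilon^i \delta^j\big)_{(i,j)}$ is summable, and so is $\mathbf{s}$. I expect this to be the main obstacle; its only delicate point is that $u, w$ are merely bounded rather than infinitesimal, which is exactly what the decomposition $u = a + u_0$ handles.

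It remains to regroup $\mathbf{s}$ in two ways, by Proposition~\ref{prop-summation-by-parts}. Partitioning $\mathbb{N}^2$ into the finite blocks $\{(i,j) : i+j = n\}$ yields $\sum_{(i,j)} \mathbf{s}_{(i,j)} = \sum_{n} P_n \sum_{j=0}^{n} \binom{n}{j}\varepsilon^{n-j}\delta^j = \sum_n P_n (\varepsilon + \delta)^n$; in particular $\varepsilon + \delta \in \tmop{Conv}(P)$ and this common value is $\tilde P(\varepsilon + \delta)$. Partitioning instead into the blocks $\{(i, k) : i \in \mathbb{N}\}$ yields that each family $\big(\binom{i+k}{k}P_{i+k}\varepsilon^i\delta^k\big)_i$ is summable, with sum $(P_{+\varepsilon})_k\, \delta^k$ by strong linearity of multiplication by $\delta^k$ together with the formula for $(P_{+\varepsilon})_k$ from the second paragraph, and that $\big((P_{+\varepsilon})_k \delta^k\big)_k$ is summable with $\sum_k (P_{+\varepsilon})_k \delta^k = \sum_{(i,j)} \mathbf{s}_{(i,j)}$. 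The last statement says precisely that $\delta \in \tmop{Conv}(P_{+\varepsilon})$ and $\widetilde{P_{+\varepsilon}}(\delta) = \sum_{(i,j)} \mathbf{s}_{(i,j)} = \tilde P(\varepsilon + \delta)$, which is the desired conclusion.
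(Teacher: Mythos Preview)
Your proof is correct and follows essentially the same route as the paper's: both reduce to showing that $(P_{i+j}\varepsilon^i\delta^j)_{(i,j)}$ is summable by factoring $\varepsilon^i\delta^j = \eta^{i+j}\cdot(\text{bounded})$ with $\eta\in\{\varepsilon,\delta\}$ the $\preccurlyeq$-larger element, then regroup via Fubini. The only cosmetic differences are that you work symmetrically with two bounded ratios $u,w$ (one of which is actually $1$) and explicitly split off the constant part to justify weak summability, whereas the paper keeps a single ratio and invokes Proposition~\ref{prop-adding-finite-factors} directly.
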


\begin{proof}
  Note that $P_{+ 0} = P$ and that $P_{+ \varepsilon} (0) = \tilde{P}
  (\varepsilon)$, so we may assume that $\varepsilon$ and $\delta$ are
  non-zero. The power series $P_{+ \varepsilon}$ is well-defined by
  Lemma~\ref{lem-derivative-power-series-radius}. We have trivially that
  \[ \bigcup_{i, k \in \mathbb{N}} \tmop{supp} (P_{k + i} \varepsilon^{k + i})
     = \bigcup_{j \in \mathbb{N}} \tmop{supp} (P_j \varepsilon^j), \]
  where the right hand set is well-based since $(P_j \varepsilon^j)_{j \in
  \mathbb{N}}$ is summable. For each monomial $\mathfrak{m} \in \mathfrak{M}$,
  the set $I_{\mathfrak{m}} \assign \{ (i, k) \in \mathbb{N}^2 \suchthat
  \mathfrak{m} \in \tmop{supp} (P_{k + i} \delta^{k + i}) \}$ is contained in
  $\{ (i, k) \in \mathbb{N}^2 \suchthat i + k \in J_{\mathfrak{m}} \}$ where
  \[ J_{\mathfrak{m}} \assign \{ j \in \mathbb{N} \suchthat \mathfrak{m} \in
     \tmop{supp} (P_j \delta^j) \} . \]
  Since $(P_j \varepsilon^j)_{j \in \mathbb{N}}$ is summable, we deduce that
  $J_{\mathfrak{m}}$, and hence $I_{\mathfrak{m}}$ are finite. This shows that
  $(P_{k + i} \varepsilon^{k + i})_{i, k \in \mathbb{N}}$ is summable.
  Likewise, $(P_{k + i} \delta^{k + i})_{i, k \in \mathbb{N}}$ is summable.
  
  For $k \in \mathbb{N}$, we have
  \begin{equation}
    \frac{\widetilde{P^{(k)}} (\varepsilon)}{k!} \delta^k = \sum_{i \in
    \mathbb{N}} \binom{k + i}{k} P_{k + i} \varepsilon^i \delta^k .
    \label{eq-binom}
  \end{equation}
  Therefore it suffices to show that the family $(P_{k + i} \varepsilon^i
  \delta^k)_{i, k \in \mathbb{N}}$ is summable in order to prove that~$\delta
  \in \tmop{Conv} (P_{+ \varepsilon})$. For $i, k \in \mathbb{N}$, write
  \[ \varepsilon^i \delta^k = u^{i + k} v^k \]
  where $(u, v) = \left( \varepsilon, \delta / \varepsilon \right)$ if $\delta
  \preccurlyeq \varepsilon$ and $(u, v) = \left( \delta, \varepsilon / \delta
  \right)$ if $\varepsilon \prec \delta$. In any case, we have $v \preccurlyeq
  1$ and the family $(P_{k + i} u^{k + i})_{i, k \in \mathbb{N}}$ is summable.
  Applying Proposition~\ref{prop-adding-finite-factors} for $I =\mathbb{N}
  \times \mathbb{N}$ and $f = (a, b) \mapsto a + b$, we see that the family
  $(P_{k + i} u^{k + i} v^k)_{i, k \in \mathbb{N}} = (P_{k + i} \varepsilon^i
  \delta^k)_{i, k \in \mathbb{N}}$ is summable.
  
  On the other hand we have $\delta + \varepsilon \preccurlyeq \varepsilon$ or
  $\delta + \varepsilon \preccurlyeq \delta$, so $\delta + \varepsilon \in
  \tmop{Conv} (P)$ and $(P_k  (\delta + \varepsilon)^k)_{k \in \mathbb{N}}$ is
  summable. By Lemma~\ref{lem-Fubini}, we have
  \begin{eqnarray*}
    \sum_{k \in \mathbb{N}} \frac{\widetilde{P^{(k)}} (\varepsilon)}{k!}
    \delta^k & = & \sum_{k \in \mathbb{N}} \sum_{i \in \mathbb{N}} \binom{k +
    i}{k} P_{k + i} \varepsilon^i \delta^k\\
    & = & \sum_{i, k \in \mathbb{N}} \binom{k + i}{k} P_{k + i} \varepsilon^i
    \delta^k\\
    & = & \sum_{j \in \mathbb{N}} \sum_{l \leqslant j} \binom{j}{l} P_j
    \varepsilon^{j - l} \delta^l\\
    & = & \sum_{j \in \mathbb{N}} P_j  (\varepsilon + \delta)^j\\
    & = & \tilde{P} (\varepsilon + \delta),
  \end{eqnarray*}
  as desired.
\end{proof}

\begin{lemma}
  \label{lem-true-derivative-power-series}Let $P \in \mathbb{S} \{ \!\! \{X\}
  \!\! \}$. The function $\tilde{P}$ is infinitely differentiable on
  $\tmop{Conv} (P)$ with $\tilde{P}^{(n)} = \widetilde{P^{(n)}}$ on
  $\tmop{Conv} (P)$ for all $n \in \mathbb{N}$.
\end{lemma}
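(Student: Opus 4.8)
The plan is to argue by induction on $n$, reducing the inductive step to the case $n=1$. For $n=0$ there is nothing to prove. Assuming the statement in orders $\le n$ for every convergent power series, take $P\in\mathbb{S}\{\!\!\{X\}\!\!\}$; then $P'$ is again convergent with $\tmop{Conv}(P')=\tmop{Conv}(P)$ by \Cref{lem-derivative-power-series-radius}, and $\tmop{Conv}(P)$ is open by \Cref{cor-open-conv}. So it suffices to establish the case $n=1$, namely that $\tilde{P}$ is differentiable on $\tmop{Conv}(P)$ with $\tilde{P}'=\widetilde{P'}$, and then apply the induction hypothesis to $P'$: since $\tilde{P}'=\widetilde{P'}$ as functions on the open set $\tmop{Conv}(P)=\tmop{Conv}(P')$, the function $\tilde{P}'$ is $n$-times differentiable there with $(\tilde{P}')^{(n)}=\widetilde{(P')^{(n)}}=\widetilde{P^{(n+1)}}$, i.e.\ $\tilde{P}$ is $(n+1)$-times differentiable with $\tilde{P}^{(n+1)}=\widetilde{P^{(n+1)}}$.

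For the case $n=1$ I would fix $\varepsilon\in\tmop{Conv}(P)$. Since $\tmop{Conv}(P)$ is an open additive subgroup containing $\varepsilon$, it is a neighbourhood of $\varepsilon$ and $\varepsilon+h\in\tmop{Conv}(P)$ exactly when $h\in\tmop{Conv}(P)$. By \Cref{prop-power-series-translation}, for all such $h$ we have $\tilde{P}(\varepsilon+h)=\widetilde{P_{+\varepsilon}}(h)$, where $Q\assign P_{+\varepsilon}\in\mathbb{S}\{\!\!\{X\}\!\!\}$ satisfies $Q_0=\tilde{P}(\varepsilon)$, $Q_1=\widetilde{P^{(1)}}(\varepsilon)=\widetilde{P'}(\varepsilon)$, and $\tmop{Conv}(Q)\supseteq\tmop{Conv}(P)$. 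Hence it is enough to prove the purely local statement: for every $Q\in\mathbb{S}\{\!\!\{X\}\!\!\}$, the function $\widetilde{Q}$ is differentiable at $0$ with derivative $Q_1$, i.e.\ $\widetilde{Q}(h)-Q_0-Q_1h\preccurlyeq h\nu$ for every $\nu\in\mathbb{S}^{\times}$ once $h$ is small. Putting $\bar{Q}\assign Q-Q_0-Q_1X=\sum_{k\ge 2}Q_kX^k$ (so $\tmop{Conv}(\bar{Q})=\tmop{Conv}(Q)$ by \Cref{lem-conv-sum-prod}) and $R\assign\sum_{k\ge 2}Q_kX^{k-2}$, one has for $h\in\tmop{Conv}(Q)\setminus\{0\}$
\[ \widetilde{Q}(h)-Q_0-Q_1h=\widetilde{\bar{Q}}(h)=h^2\sum_{k\ge 2}(Q_kh^{k-2})=h^2\,\widetilde{R}(h), \]
the rearrangement being legitimate because multiplication by the fixed elements $h^{\pm 2}$ is strongly linear (\Cref{prop-sum-prod}); the same observation shows $R\in\mathbb{S}\{\!\!\{X\}\!\!\}$ with $\tmop{Conv}(R)\supseteq\tmop{Conv}(Q)$. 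So it remains to bound $h^2\widetilde{R}(h)$, which I will do via a uniform estimate on $\widetilde{R}$ near $0$.

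The crux — the only step that is not bookkeeping with summable families — is the claim that $\widetilde{R}$ is dominated by a \emph{single monomial} on a ball around $0$: there are $\rho_0\in\tmop{Conv}(R)\setminus\{0\}$ and $\mathfrak{D}\in\mathfrak{M}$ with $\widetilde{R}(h)\preccurlyeq\mathfrak{D}$ for all $h\prec\rho_0$. By openness of $\tmop{Conv}(R)$ (\Cref{cor-open-conv}) pick $\rho_0\in\tmop{Conv}(R)\setminus\{0\}$ with $\{h:h\preccurlyeq\rho_0\}\subseteq\tmop{Conv}(R)$; for $h\prec\rho_0$, writing $u\assign h/\rho_0\prec 1$, every monomial of $\widetilde{R}(h)=\sum_k(R_k\rho_0^k)u^k$ lies in $\mathfrak{U}\cdot(\tmop{supp}u)^{\infty}$, where $\mathfrak{U}\assign\bigcup_k\tmop{supp}(R_k\rho_0^k)$ is Noetherian in $(\mathfrak{M},\succ)$ because $\rho_0\in\tmop{Conv}(R)$, and $(\tmop{supp}u)^{\infty}$ is Noetherian with all elements $\preccurlyeq 1$ by \Cref{prop-Neumann}; so such a monomial factors as $\mathfrak{a}\mathfrak{b}$ with $\mathfrak{a}\in\mathfrak{U}$ and $\mathfrak{b}\preccurlyeq 1$, whence it is $\preccurlyeq\mathfrak{a}$, hence $\preccurlyeq$ the largest $\mathfrak{D}$ among the finitely many (by Noetherianity of $\mathfrak{U}$) maximal elements of $\mathfrak{U}$. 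Granting this, given $\nu\in\mathbb{S}^{\times}$ choose $\eta\in\mathbb{S}^{\times}$ with $\eta\prec\rho_0$ and $\eta\mathfrak{D}\preccurlyeq\nu$; then for all $h\preccurlyeq\eta$ (the case $h=0$ being trivial)
\[ \widetilde{Q}(h)-Q_0-Q_1h=h^2\widetilde{R}(h)\preccurlyeq h^2\mathfrak{D}=h\,(h\mathfrak{D})\preccurlyeq h\nu, \]
which is exactly the defining condition for $\widetilde{Q}$ to be differentiable at $0$ with derivative $Q_1$. I expect the uniform monomial bound on $\widetilde{R}$ near $0$ to be the main obstacle; the rest reduces to the translation identity of \Cref{prop-power-series-translation} together with routine manipulations of summable families.
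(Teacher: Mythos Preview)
Your proof is correct and follows essentially the same approach as the paper: reduce to $n=1$ by induction, invoke the translation identity of Proposition~\ref{prop-power-series-translation} to rewrite $\tilde{P}(\varepsilon+h)-\tilde{P}(\varepsilon)-\widetilde{P'}(\varepsilon)h$ as $h^{2}$ times a ``tail'' series, and then bound that tail uniformly in $h$ using that the relevant family of coefficients is summable (hence its union of supports has a $\prec$-maximum). The paper's version is terser---it writes the tail as $u=\sum_{k}\frac{\widetilde{P^{(k+2)}}(s)}{(k+2)!}h^{k}$ and picks the dominant term $\widetilde{P^{(k+2)}}(s)s^{k}$ directly---while you spell out the Noetherianity argument for $\mathfrak{U}$ explicitly and use an auxiliary $\rho_{0}$ instead of $s$ itself; this has the minor advantage of treating the point $\varepsilon=0$ uniformly (the paper's bound $h\preccurlyeq s$ tacitly assumes $s\neq 0$). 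One cosmetic remark: since $\mathfrak{M}$ is linearly ordered here, your ``finitely many maximal elements of $\mathfrak{U}$'' is in fact a single maximum.
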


\begin{proof}
  Recall by Corollary~\ref{cor-open-conv} that $\tmop{Conv} (P)$ is open. We
  first prove that $\tilde{P}$ is differentiable on $\tmop{Conv} (P)$ with
  $\tilde{P}' = \widetilde{P'}$. Let $s \in \tmop{Conv} (P)$ and $\varepsilon
  \in \mathbb{S}^{\times}$. For all $h \in \mathbb{S}$ with $h \preccurlyeq
  s$, Proposition~\ref{prop-power-series-translation} yields
  \begin{eqnarray*}
    \frac{\tilde{P} (s + h) - \tilde{P} (s)}{h} & = & \sum_{k > 0}
    \frac{\widetilde{P^{(k)}} (s)}{k!} h^{k - 1}\\
    & = & \widetilde{P'} (s) + hu,
  \end{eqnarray*}
  where $u \assign \sum_{k \in \mathbb{N}} \frac{\widetilde{P^{(k + 2)}}
  (s)}{(k + 2) !} h^k$. We have $u \preccurlyeq \widetilde{P^{(k + 2)}} (s)
  s^k \backassign v$ for a $k \in \mathbb{N}$. Seting $\delta \assign
  \varepsilon / v$, we obtain $\frac{\tilde{P} (s + h) - \tilde{P} (s)}{h} -
  \widetilde{P'} (s) = hu \preccurlyeq \varepsilon$ whenever $h \preccurlyeq
  \delta$. So $\tilde{P}$ is differentiable at $s$ with $\tilde{P}' (s) =
  \widetilde{P'} (s)$. The result for all $n$ follows by induction.
\end{proof}

\begin{proposition}
  \label{prop-Taylor-composition-pow}Let $\mathbf{U} \subseteq \mathbb{S}$ be
  open. Let $P = \sum_{k \in \mathbb{N}} P_k X^k \in \mathbb{S} \left\{ \!\!
  \{ X \} \!\! \right\}$ and $Q = \sum_{k > 0} Q_k X^k \in X\mathbb{S} \left\{
  \!\! \{ X \} \!\! \right\}$. Let $\varepsilon_P \in \tmop{Conv} (P)$ and
  $\varepsilon \in \tmop{Conv} (Q)$ with
  \begin{equation}
    \forall m > 0, Q_m \varepsilon^m \prec \varepsilon_P .
    \label{eq-analytic-comp-cond}
  \end{equation}
  Then $\varepsilon \in \tmop{Conv} (P \circ Q)$, and $(\widetilde{P \circ Q})
  (\varepsilon) = \tilde{P} (\tilde{Q} (\varepsilon))$.
\end{proposition}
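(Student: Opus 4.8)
The plan is to prove the summability of a single large family of terms and then to regroup it in two different ways. Write $P\circ Q=\sum_{n\in\mathbb{N}}P_nQ^n$; expanding $Q^n=\bigl(\sum_{m>0}Q_mX^m\bigr)^n$ shows that, for each $k$, the coefficient $(P\circ Q)_k$ is the finite sum of the products $P_nQ_{m_1}\cdots Q_{m_n}$ over tuples $(n;m_1,\dots,m_n)$ with $n\ge 0$, $m_i\ge 1$ and $m_1+\cdots+m_n=k$ (the empty tuple, at $n=0$, accounting for the term $P_0$ of $P\circ Q$). Accordingly, set $\mathbf{I}:=\bigsqcup_{n\in\mathbb{N}}(\mathbb{N}^{>0})^n$ and, for $\iota=(n;m_1,\dots,m_n)\in\mathbf{I}$, let $s_\iota:=P_n\prod_{j=1}^n\bigl(Q_{m_j}\varepsilon^{m_j}\bigr)=P_nQ_{m_1}\cdots Q_{m_n}\,\varepsilon^{m_1+\cdots+m_n}$. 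Note that \eqref{eq-analytic-comp-cond}, applied with $m=1$, forces $\varepsilon_P\ne 0$, so $\varepsilon_P$ is invertible in the field $\mathbb{S}$.

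The crux of the proof is to show that $(s_\iota)_{\iota\in\mathbf{I}}$ is summable, which I would deduce from \Cref{lem-well-based-fin-family}. Put $r_m:=(Q_m\varepsilon^m)/\varepsilon_P$ for $m>0$, so that $r_m\prec 1$ by \eqref{eq-analytic-comp-cond} and $s_\iota=(P_n\varepsilon_P^{\,n})\,r_{m_1}\cdots r_{m_n}$. The family $(P_k\varepsilon_P^{\,k})_{k\in\mathbb{N}}$ is summable because $\varepsilon_P\in\tmop{Conv}(P)$, and $(Q_m\varepsilon^m)_{m>0}$ is summable because $\varepsilon\in\tmop{Conv}(Q)$; multiplying the latter family termwise by the fixed element $\varepsilon_P^{-1}$, a strongly linear operation, shows that $(r_m)_{m>0}$ is summable as well. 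Hence $\mathfrak{S}:=\bigcup_{k\in\mathbb{N}}\tmop{supp}(P_k\varepsilon_P^{\,k})$ and $\mathfrak{T}:=\bigcup_{m>0}\tmop{supp}(r_m)$ are Noetherian subsets of $(\mathfrak{M},\succ)$, with $\mathfrak{T}\prec 1$, and for $\iota$ of length $n$ one has $\tmop{supp}(s_\iota)\subseteq\mathfrak{T}^n\cdot\mathfrak{S}$. Taking $f\colon\mathbf{I}\to\mathbb{N}$ to be the length function, the remaining hypothesis of \Cref{lem-well-based-fin-family} holds: if $\mathbf{J}\subseteq\mathbf{I}$ has $f(\mathbf{J})\subseteq\{0,\dots,N\}$, then $\mathbf{J}$ is contained in the finite union $\bigcup_{n\le N}(\mathbb{N}^{>0})^n$, and for each fixed $n$ the family $\bigl(P_nQ_{m_1}\varepsilon^{m_1}\cdots Q_{m_n}\varepsilon^{m_n}\bigr)_{(m_1,\dots,m_n)}$ is summable by $n$-fold application of \Cref{prop-sum-prod} to the summable families $(Q_m\varepsilon^m)_{m>0}$; since a finite union of summable families, and any subfamily of a summable family, are summable, $(s_\iota)_{\iota\in\mathbf{J}}$ is summable. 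Thus \Cref{lem-well-based-fin-family} gives that $(s_\iota)_{\iota\in\mathbf{I}}$ is summable. I expect this to be the main obstacle; everything that follows is bookkeeping with \Cref{prop-summation-by-parts}.

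Next I would regroup $(s_\iota)_{\iota\in\mathbf{I}}$ in two ways. Partitioning $\mathbf{I}$ according to the value of $m_1+\cdots+m_n$ yields a partition into finite pieces, the $\iota$-sum over the piece attached to $k$ being exactly $(P\circ Q)_k\,\varepsilon^k$; by \Cref{prop-summation-by-parts} the family $\bigl((P\circ Q)_k\,\varepsilon^k\bigr)_{k\in\mathbb{N}}$ is therefore summable, that is, $\varepsilon\in\tmop{Conv}(P\circ Q)$, with $(\widetilde{P\circ Q})(\varepsilon)=\sum_{\iota\in\mathbf{I}}s_\iota$. Partitioning instead by length and using \Cref{prop-sum-prod} to expand each product, $\sum_{\iota\in(\mathbb{N}^{>0})^n}s_\iota=P_n\bigl(\sum_{m>0}Q_m\varepsilon^m\bigr)^n=P_n\,\tilde{Q}(\varepsilon)^n$, so $\sum_{\iota\in\mathbf{I}}s_\iota=\sum_{n\in\mathbb{N}}P_n\,\tilde{Q}(\varepsilon)^n$.

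It remains to identify this last sum with $\tilde{P}(\tilde{Q}(\varepsilon))$. By \eqref{eq-analytic-comp-cond}, every monomial occurring in $\tilde{Q}(\varepsilon)=\sum_{m>0}Q_m\varepsilon^m$ lies strictly below some monomial of $\varepsilon_P$, so $\tilde{Q}(\varepsilon)\preccurlyeq\varepsilon_P$; since $\varepsilon_P\in\tmop{Conv}(P)$, \Cref{prop-analytic-convex} gives $\tilde{Q}(\varepsilon)\in\tmop{Conv}(P)$, whence $\sum_{n\in\mathbb{N}}P_n\,\tilde{Q}(\varepsilon)^n=\tilde{P}(\tilde{Q}(\varepsilon))$. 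Comparing the two evaluations of $\sum_{\iota\in\mathbf{I}}s_\iota$ yields $(\widetilde{P\circ Q})(\varepsilon)=\tilde{P}(\tilde{Q}(\varepsilon))$, as desired.
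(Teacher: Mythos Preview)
Your proof is correct and follows essentially the same strategy as the paper: normalize the $Q_m\varepsilon^m$ by $\varepsilon_P$ (the paper uses its dominant monomial instead) to obtain infinitesimal factors, invoke \Cref{lem-well-based-fin-family} with $f$ the length function, then regroup the resulting summable family in two ways via \Cref{prop-summation-by-parts}. Your final paragraph is slightly redundant---summability of $(P_n\tilde{Q}(\varepsilon)^n)_n$, and hence $\tilde{Q}(\varepsilon)\in\tmop{Conv}(P)$, already falls out of the regrouping by length---but this does not affect correctness.
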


\begin{proof}
  For $n \in \mathbb{N}$ and $k \in \mathbb{N}^{>}$, set $X_{n, k} \assign \{
  v \in (\mathbb{N}^{>})^n \suchthat | v | = k \}$ and
  \[ c_{n, k} \assign \sum_{v \in X_{n, k}} P_n Q_{v_{[1]}} \cdots
     Q_{v_{[n]}}, \]
  so $P \circ Q = P_0 + \sum_{k > 0} \left( \sum_{n \geqslant 0} c_{n, k}
  \right) X^k$. Note that since $\varepsilon \in \tmop{Conv} (Q)$, the set
  \[ \mathfrak{S}_Q \assign \bigcup_{m \in \mathbb{N}} \tmop{supp} (Q_m
     \varepsilon^m) \]
  is well-based. We have $\mathfrak{S}_Q \prec \varepsilon_P$ by
  (\ref{eq-analytic-comp-cond}). Let $\mathfrak{m} \assign
  \mathfrak{d}_{\varepsilon}$. The set $\mathfrak{S}_P \assign \bigcup_{n \in
  \mathbb{N}} \tmop{supp} (P_n \mathfrak{m}^n)$ is well-based. For $n \in
  \mathbb{N}$ and $k \in \mathbb{N}^{>}$, we have
  \[ \tmop{supp} c_{n, k} \varepsilon^k \subseteq (\mathfrak{S}_Q \cdot
     \mathfrak{m}^{- 1})^n \cdot \mathfrak{S}_P, \]
  where $\mathfrak{S}_Q \cdot \mathfrak{m}^{- 1}$ is well-based and
  infinitesimal, and $\mathfrak{S}_P$ is well-based. Since each family $(c_{n,
  k} \varepsilon^k)_{k > 0}$ for $n \in \mathbb{N}$ is summable with sum
  $\tilde{Q} (\varepsilon)^n$. Applying Lemma~\ref{lem-well-based-fin-family} for
  $f (n, k) = n$, we conclude that $(c_{n, k} \varepsilon^k)_{n \geqslant 0, k
  > 0}$ is summable. We deduce by Lemma~\ref{lem-Fubini} that
  \begin{eqnarray*}
    \tilde{P} (\tilde{Q} (\varepsilon)) & = & \sum_{n \geqslant 0} P_n 
    \tilde{Q} (\varepsilon)^n\\
    & = & \sum_{n \geqslant 0} P_n  \left( \sum_{k > 0} Q_k \varepsilon^k
    \right)^n\\
    & = & P_0 + \sum_{n \geqslant 0} \sum_{k > 0} c_{n, k} \varepsilon^k\\
    & = & P_0 + \sum_{k > 0} \left( \sum_{n \geqslant 0} c_{n, k} \right)
    \varepsilon^k\\
    & = & (\widetilde{P \circ Q}) (\varepsilon) .
  \end{eqnarray*}
  This concludes the proof.
\end{proof}

\begin{corollary}
  \label{cor-power-series-translation}Let $P \in \mathbb{S} \{ \!\! \{X\} \!\!
  \}$ and let $\delta, \varepsilon \in \tmop{Conv} (P)$. We have $\tmop{Conv}
  (P_{+ \delta}) = \tmop{Conv} (P)$ and $P_{+ (\delta + \varepsilon)} = (P_{+
  \delta})_{+ \varepsilon}$.
\end{corollary}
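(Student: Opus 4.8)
The plan is to prove the additive identity $(P_{+\delta})_{+\varepsilon} = P_{+(\delta+\varepsilon)}$ first, and then read off the equality of convergence loci from it. Throughout I would use Corollary~\ref{cor-open-conv}: since $P \in \mathbb{S} \{ \!\! \{X\} \!\! \}$, the class $\tmop{Conv}(P)$ is an \emph{open additive subgroup} of $\mathbb{S} \{ \!\! \{X\} \!\! \}$, so in particular $\delta+\varepsilon, -\delta \in \tmop{Conv}(P)$ and $P_{+(\delta+\varepsilon)}$ is well-defined. One inclusion is then immediate: Proposition~\ref{prop-power-series-translation} applied with shift $\delta$ shows that every $s \in \tmop{Conv}(P)$ lies in $\tmop{Conv}(P_{+\delta})$ with $\widetilde{P_{+\delta}}(s) = \tilde P(\delta+s)$; hence $\tmop{Conv}(P) \subseteq \tmop{Conv}(P_{+\delta})$, and in particular $\tmop{Conv}(P_{+\delta}) \neq \{0\}$, so $P_{+\delta} \in \mathbb{S} \{ \!\! \{X\} \!\! \}$ and Corollary~\ref{cor-open-conv} and Lemma~\ref{lem-true-derivative-power-series} apply to $P_{+\delta}$ as well.

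For the additive identity it suffices to check equality of the $X^k$-coefficients, i.e.\ that $\widetilde{(P_{+\delta})^{(k)}}(\varepsilon) = \widetilde{P^{(k)}}(\delta+\varepsilon)$ for every $k$. By Lemma~\ref{lem-true-derivative-power-series} applied to $P_{+\delta}$ (and $\varepsilon \in \tmop{Conv}(P_{+\delta})$), the left-hand side equals $\widetilde{P_{+\delta}}^{(k)}(\varepsilon)$, the $k$-th derivative at $\varepsilon$ of the function $\widetilde{P_{+\delta}}$. On the open set $\tmop{Conv}(P)$ this function coincides with $s \mapsto \tilde P(\delta+s)$ by the first paragraph, and since $\varepsilon$ is an interior point, derivatives being local, the two functions have the same $k$-th derivative at $\varepsilon$ for every $k$. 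Now $s \mapsto \delta+s$ has derivative $1$ and maps a neighborhood of $\varepsilon$ into $\tmop{Conv}(P)$ (using $\delta+\varepsilon \in \tmop{Conv}(P)$ and openness), while $\tilde P$ is infinitely differentiable on $\tmop{Conv}(P)$ with $\tilde P^{(k)} = \widetilde{P^{(k)}}$; iterating the chain rule~(\ref{eq-chain-rule-functions}) gives $\widetilde{P_{+\delta}}^{(k)}(\varepsilon) = \widetilde{P^{(k)}}(\delta+\varepsilon)$, as wanted. (One could instead expand everything into double sums over binomial coefficients and rearrange via Lemma~\ref{lem-Fubini}, but the analytic argument is shorter.) This establishes $(P_{+\delta})_{+\varepsilon} = P_{+(\delta+\varepsilon)}$.

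Finally, for the equality of convergence loci: taking $\varepsilon = -\delta$ in the identity just proved and using $P_{+0} = P$ yields $(P_{+\delta})_{+(-\delta)} = P$. Applying the first paragraph with $P_{+\delta}$ in place of $P$ and $-\delta$ in place of $\delta$ — legitimate since $-\delta \in \tmop{Conv}(P) \subseteq \tmop{Conv}(P_{+\delta})$ — gives $\tmop{Conv}(P_{+\delta}) \subseteq \tmop{Conv}((P_{+\delta})_{+(-\delta)}) = \tmop{Conv}(P)$, and combined with the first paragraph this proves $\tmop{Conv}(P_{+\delta}) = \tmop{Conv}(P)$. I expect the only delicate point is the domain bookkeeping in the middle step: one must know in advance that all the relevant translates land inside the appropriate convergence loci so that the chain rule can be run on a genuine open neighborhood, which is exactly what the open-additive-subgroup property of Corollary~\ref{cor-open-conv} supplies.
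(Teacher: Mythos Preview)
Your proof is correct, and its overall architecture matches the paper's: first use Proposition~\ref{prop-power-series-translation} to get $\tmop{Conv}(P)\subseteq\tmop{Conv}(P_{+\delta})$, then establish the additive identity, then specialise to $\varepsilon=-\delta$ to reverse the inclusion. The difference is in how you prove $(P_{+\delta})_{+\varepsilon}=P_{+(\delta+\varepsilon)}$. The paper shows that both power series induce the same \emph{function} on $\tmop{Conv}(P)$, namely $\iota\mapsto\tilde P(\delta+\varepsilon+\iota)$ (two applications of Proposition~\ref{prop-power-series-translation}), and then invokes the identity principle Proposition~\ref{prop-lifting-inequalities} to conclude equality of the formal series. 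You instead compare coefficients directly: the $X^k$-coefficient is $\widetilde{(P_{+\delta})^{(k)}}(\varepsilon)/k!$, which you identify via Lemma~\ref{lem-true-derivative-power-series} and the chain rule with $\widetilde{P^{(k)}}(\delta+\varepsilon)/k!$. Your route is slightly more hands-on but has the mild advantage of not relying on Proposition~\ref{prop-lifting-inequalities}, which in the paper is stated only \emph{after} this corollary; the paper's route is a clean one-line reduction once the identity principle is in hand. Either way the bookkeeping you flag (that $\tmop{Conv}(P)$ is an open additive subgroup, so all the needed translates stay in the right domains) is exactly what makes both arguments go through.
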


\begin{proof}
  We may assume that $\delta \neq 0$.
  Proposition~\ref{prop-power-series-translation} shows that $\tmop{Conv}
  (P_{+ \delta}) \supseteq \tmop{Conv} (P)$ and that $(P_{+ \delta})_{+
  \varepsilon}$ is well-defined. Since $\delta \in \tmop{Conv} (P_{+
  \delta})$, Propositions~\ref{prop-analytic-convex} and
  \ref{prop-power-series-translation} yield
  \[ \widetilde{(P_{+ \delta})_{+ \varepsilon}} (\iota) = \widetilde{P_{+
     \delta}} (\varepsilon + \iota) = \tilde{P} (\delta + \varepsilon + \iota)
  \]
  for all $\iota \in \tmop{Conv} (P)$. We deduce by
  Proposition~\ref{prop-lifting-inequalities} that $P_{+ (\delta + \varepsilon)} = (P_{+
  \delta})_{+ \varepsilon}$. Applying
  Proposition~\ref{prop-power-series-translation}, this time to $(P_{+
  \delta}, - \delta)$, we get $\tmop{Conv} (P_{+ \delta}) \subseteq
  \tmop{Conv} (P)$, hence the equality.
\end{proof}

\subsection{Zeroes of power series}

We next consider zeros of power series functions. A {\tmem{zero}}{\index{zero
of a power series}} of a power series $P \in \mathbb{S} \llbracket X
\rrbracket$ is an element $s \in \tmop{Conv} (P)$ with $\tilde{P} (s) = 0$. We
still assume that $\mathbf{K}$ has characteristic zero.

\begin{proposition}
  \label{prop-lifting-inequalities}Let $P \in \mathbb{S} \{ \!\! \{X\} \!\!
  \}$ and let $\delta \in \tmop{Conv} (P) \setminus \{0\}$. If $\tilde{P}
  (\varepsilon) = 0$ for all $\varepsilon \preccurlyeq \delta$ then $P = 0$.
\end{proposition}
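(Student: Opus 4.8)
The plan is to show that every coefficient $P_n$ of $P$ vanishes, by exploiting that $\tilde P$ is zero on a whole ball around $0$ — not merely at isolated points — so that all of its derivatives vanish at $0$, and then identifying $n!\,P_n$ with the value $\widetilde{P^{(n)}}(0)$.

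First I would observe that the set $B \assign \{\varepsilon \in \mathbb{S} \suchthat \varepsilon \preccurlyeq \delta\}$ is exactly the ball $B_0(0,\delta)$, hence an open neighbourhood of $0$ in $\mathbb{S}$; by Proposition~\ref{prop-analytic-convex} we have $B \subseteq \tmop{Conv}(P)$, and by hypothesis $\tilde P$ is identically $0$ on $B$. By Lemma~\ref{lem-true-derivative-power-series}, $\tilde P$ is infinitely differentiable on the open set $\tmop{Conv}(P)$ with $\tilde P^{(n)} = \widetilde{P^{(n)}}$ there for all $n \in \mathbb{N}$ (using also Lemma~\ref{lem-derivative-power-series-radius} to know that each $P^{(n)}$ is again convergent, with $\tmop{Conv}(P^{(n)}) = \tmop{Conv}(P)$).

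The key step is the remark that a function on $\mathbb{S}$ which is identically $0$ on the ball $B = B_0(0,\delta)$ has \emph{all} of its derivatives identically $0$ on $B$. This is where the ultrametric structure does the work: for any $s \in B$ one has $B_0(s,\delta) = B_0(0,\delta) = B$, so for every nonzero $h \preccurlyeq \delta$ the point $s + h$ lies in $B$ and the difference quotient $\bigl(\tilde P(s+h) - \tilde P(s)\bigr)/h$ equals $0$; since $\tilde P'(s)$ is by definition a limit at $0$ of $h \mapsto \bigl(\tilde P(s+h) - \tilde P(s)\bigr)/h$, we get $\tilde P'(s) = 0$. As $s \in B$ was arbitrary, $\tilde P' \equiv 0$ on $B$, and iterating this argument (each $\tilde P^{(k)}$ is again differentiable on $\tmop{Conv}(P)$ by Lemma~\ref{lem-true-derivative-power-series}) yields $\tilde P^{(n)} \equiv 0$ on $B$ for every $n$; in particular $\tilde P^{(n)}(0) = 0$.

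Finally I would translate this back to the coefficients: $\tilde P^{(n)}(0) = \widetilde{P^{(n)}}(0) = \bigl(P^{(n)}\bigr)_0 = n!\,P_n$ for each $n$ (the last equality by iterating the formula $P' = \sum_{k} (k+1)P_{k+1}X^k$), so $n!\,P_n = 0$; since $\mathbf{K}$ has characteristic zero, $P_n = 0$ for all $n$, i.e.\ $P = 0$. I do not expect a genuine obstacle here: the argument is short, and the only point needing care is the ultrametric observation that vanishing on a ball forces the derivative to vanish on the same ball, which is immediate once one notes that $h \preccurlyeq \delta$ keeps $s + h$ inside that ball.
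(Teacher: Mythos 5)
Your proof is correct and follows essentially the same route as the paper's: the paper also deduces $(\tilde P)^{(n)}(0)=0$ for all $n$ from the fact that $\tilde P$ is constant near $0$, identifies these values with $\widetilde{P^{(n)}}(0)=n!\,P_n$ via Lemma~\ref{lem-true-derivative-power-series}, and concludes $P=P_{+0}=0$. You merely spell out the ultrametric observation (vanishing on the ball $\{\varepsilon\preccurlyeq\delta\}$ forces each successive derivative to vanish on that same ball) that the paper leaves implicit.
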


\begin{proof}
  We have $(\tilde{P})^{(n)} (0) = 0$ for all $n \in \mathbb{N}$ since
  $\tilde{P}$ is constant around $0$. It follows by
  Lemma~\ref{lem-true-derivative-power-series} that $\widetilde{(P^{(n)})} (0) = 0$
  for all $n \in \mathbb{N}$, so $P = P_{+ 0} = 0$.\end{proof}

\subsection{Analytic functions}\label{subsection-analytic-functions}

Assume that $\mathbf{K}$ has characteristic zero. Let $\mathbb{S}= \mathbf{K}
\llbracket \mathfrak{M} \rrbracket$ be a fixed field of well-based series over
$\mathbf{K}$ where $\mathfrak{M}$ is non-trivial. We also fix a non-empty open
subclass $\mathbf{O}$ of $\mathbb{S}$.

\begin{definition}
  Let $f \of \mathbf{O} \longrightarrow \mathbb{S}$ be a function and let $s
  \in \mathbf{O}$. We say that $f$ is {\tmem{{\tmstrong{analytic
  at}}}}{\index{analyticity}} $s$ if there are a convergent power series $f_s
  \in \mathbb{S} \left\{ \!\! \{ X \} \!\! \right\}$ and a $\delta \in
  \tmop{Conv} (f_s) \setminus \{ 0 \}$ such that for all $\varepsilon
  \preccurlyeq \delta$, we have
  \[ s + \varepsilon \in \mathbf{O} \Longrightarrow f (s + \varepsilon) =
     \widetilde{f_s} (\varepsilon) . \]
  We say that $f_s$ is a {\tmem{{\tmstrong{Taylor series}}}}{\index{Taylor
  series}} of $f$ at $s$. We say that $f$ is {\tmem{{\tmstrong{analytic}}}} if
  it is analytic at each~$s \in \mathbf{O}$.
\end{definition}

\begin{example}
  The function $\tilde{P}$ induced by a convergent power series $P \in
  \mathbb{S} \left\{ \!\! \{ X \} \!\! \right\}$ is analytic on $\tmop{Conv}
  (P)$, by definition.
\end{example}

\begin{lemma}
  Let $f \of \mathbf{O} \longrightarrow \mathbb{S}$ be analytic at $s \in
  \mathbf{O}$. Then $f_s$ is the unique Taylor series of $f$ at $s$.
\end{lemma}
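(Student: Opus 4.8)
The plan is to show that if $f_s$ and $g_s$ are both Taylor series of $f$ at $s$, then $f_s = g_s$ as elements of $\mathbb{S}\{\!\!\{X\}\!\!\}$. Since $\mathbf{O}$ is open, there is a $\rho \in \mathbb{S}^\times$ with $B(s,\rho) \subseteq \mathbf{O}$; shrinking the witnesses, we may pick a single $\delta \in \mathbb{S}^\times$ with $\delta \preccurlyeq \rho$ and $\delta \in \tmop{Conv}(f_s) \cap \tmop{Conv}(g_s)$ (using that $\tmop{Conv}(f_s)$, $\tmop{Conv}(g_s)$ are unions of balls, i.e. $\preccurlyeq$-downward closed by Proposition~\ref{prop-analytic-convex}, and that their intersection is a ball by Corollary~\ref{cor-open-conv}) such that both defining identities hold for all $\varepsilon \preccurlyeq \delta$. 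Then for every such $\varepsilon$ we have $s + \varepsilon \in \mathbf{O}$, hence $\widetilde{f_s}(\varepsilon) = f(s+\varepsilon) = \widetilde{g_s}(\varepsilon)$.

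The key step is then to apply Proposition~\ref{prop-lifting-inequalities} to the convergent power series $P \assign f_s - g_s$: we have $\delta \in \tmop{Conv}(P) \setminus \{0\}$ by Lemma~\ref{lem-conv-sum-prod} (the convergence locus of a difference contains the intersection of the convergence loci), and $\widetilde{P}(\varepsilon) = \widetilde{f_s}(\varepsilon) - \widetilde{g_s}(\varepsilon) = 0$ for all $\varepsilon \preccurlyeq \delta$. Proposition~\ref{prop-lifting-inequalities} yields $P = 0$, i.e.\ $f_s = g_s$, which is exactly the asserted uniqueness.

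I expect the main (and really only) obstacle to be purely bookkeeping: arranging a \emph{single} $\delta$ that simultaneously witnesses both analyticity conditions and lies in $B(s,\rho)$, which requires knowing that the relevant convergence loci are closed under $\preccurlyeq$ and that balls are closed under finite intersection — both already recorded above. Once that is in place, the substance is entirely carried by the rigidity statement Proposition~\ref{prop-lifting-inequalities}, so there is no genuinely hard analytic content here; the lemma is essentially a restatement of "a convergent power series vanishing on a ball is zero" combined with the definition of analyticity.
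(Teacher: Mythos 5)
Your proposal is correct and follows essentially the same route as the paper: both reduce uniqueness to the observation that the difference of two candidate Taylor series vanishes on the class of $\varepsilon \preccurlyeq \delta$ for a suitable common witness $\delta$, and then invoke Proposition~\ref{prop-lifting-inequalities}. The extra bookkeeping you supply (choosing a single $\delta$ using the $\preccurlyeq$-downward closure of convergence loci) is exactly what the paper leaves implicit.
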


\begin{proof}
  Let $P \in \mathbb{S} \left\{ \!\! \{ X \} \!\! \right\}$ and $\delta \in
  \tmop{Conv} (P) \setminus \{ 0 \}$ with $s + \varepsilon \in \mathbf{O}$ and
  $f (s + \varepsilon) = \tilde{P} (\varepsilon)$ for all $\varepsilon
  \preccurlyeq \delta$. Then the function $\widetilde{f_s - P}$ is zero on the
  class of series $\varepsilon \preccurlyeq \delta$, so we have $f_s = P$ by
  Proposition~\ref{prop-lifting-inequalities}.
\end{proof}

If $f \of \mathbf{O} \longrightarrow \mathbb{S}$ is analytic at $s \in
\mathbf{O}$ where $\mathbf{O}$ is open, then we can define\label{autolab19}
\[ \tmop{Conv} (f)_s \assign \{ t \in \mathbf{O} \suchthat t - s \in
   \tmop{Conv} (f_s) \wedge f (t) = \widetilde{f_s} (t - s) \} . \]
\begin{proposition}
  \label{prop-analytic-power-series}Let $P \in \mathbb{S} \left\{ \!\! \{ X \}
  \!\! \right\}$. Then $\tilde{P}$ is analytic on $\tmop{Conv} (P)$ with
  $\tilde{P}_{\delta} = P_{+ \delta}$ and $\tmop{Conv} (\tilde{P})_{\delta} =
  \tmop{Conv} (P)$ for all $\delta \in \tmop{Conv} (P)$.
\end{proposition}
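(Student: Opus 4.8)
The plan is to verify the definition of analyticity of $\tilde{P}$ at an arbitrary point $\delta \in \tmop{Conv} (P)$ by exhibiting $P_{+ \delta}$ as its Taylor series, and then to read off the two remaining equalities from the uniqueness of Taylor series together with Proposition~\ref{prop-power-series-translation} and Corollary~\ref{cor-power-series-translation}. Throughout, the relevant open set is $\mathbf{O} = \tmop{Conv} (P)$, which is open by Corollary~\ref{cor-open-conv} and is an additive subgroup of $\mathbb{S} \{ \!\! \{ X \} \!\! \}$.

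First I would fix $\delta \in \tmop{Conv} (P)$. By Corollary~\ref{cor-power-series-translation} the series $P_{+ \delta}$ is well-defined and $\tmop{Conv} (P_{+ \delta}) = \tmop{Conv} (P)$; since $P$ is convergent and $\mathfrak{M}$ is non-trivial, $\tmop{Conv} (P) \neq \{ 0 \}$, so $P_{+ \delta} \in \mathbb{S} \{ \!\! \{ X \} \!\! \}$ and we may pick $\delta' \in \tmop{Conv} (P_{+ \delta}) \setminus \{ 0 \}$. For any $\varepsilon \preccurlyeq \delta'$, Proposition~\ref{prop-analytic-convex} gives $\varepsilon \in \tmop{Conv} (P)$, hence $\delta + \varepsilon \in \tmop{Conv} (P) = \mathbf{O}$, while Proposition~\ref{prop-power-series-translation}, applied with its ``$\varepsilon$'' taken to be our $\delta$ and its ``$\delta$'' taken to be our $\varepsilon$, yields $\tilde{P} (\delta + \varepsilon) = \widetilde{P_{+ \delta}} (\varepsilon)$. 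This is exactly the condition for $\tilde{P}$ to be analytic at $\delta$ with Taylor series $P_{+ \delta}$, and the uniqueness lemma for Taylor series then gives $\tilde{P}_{\delta} = P_{+ \delta}$.

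It then remains to show $\tmop{Conv} (\tilde{P})_{\delta} = \tmop{Conv} (P)$. The inclusion $\tmop{Conv} (\tilde{P})_{\delta} \subseteq \mathbf{O} = \tmop{Conv} (P)$ is immediate from the definition of $\tmop{Conv} (\tilde{P})_{\delta}$. Conversely, for $t \in \tmop{Conv} (P)$ we have $t - \delta \in \tmop{Conv} (P) = \tmop{Conv} (P_{+ \delta}) = \tmop{Conv} (\tilde{P}_{\delta})$ because $\tmop{Conv} (P)$ is an additive subgroup, and Proposition~\ref{prop-power-series-translation} (again with ``$\varepsilon$'' $= \delta$ and ``$\delta$'' $= t - \delta$) gives $\widetilde{P_{+ \delta}} (t - \delta) = \tilde{P} (t)$, so $t$ satisfies both defining conditions of $\tmop{Conv} (\tilde{P})_{\delta}$. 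The only delicate point in the whole argument is purely notational, namely keeping straight the several distinct roles of the symbol $\delta$ (in the statement, in Proposition~\ref{prop-power-series-translation}, and in the definition of analyticity); beyond that, everything is a direct repackaging of earlier results, so I do not anticipate a real obstacle.
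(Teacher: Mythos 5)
Your proposal is correct and follows essentially the same route as the paper: both arguments reduce everything to Proposition~\ref{prop-power-series-translation} together with Corollary~\ref{cor-power-series-translation} (for $\tmop{Conv}(P_{+\delta})=\tmop{Conv}(P)$) and Corollary~\ref{cor-open-conv}, with your version merely spelling out the quantifier structure of the definition of analyticity and the appeal to uniqueness of Taylor series a little more explicitly.
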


\begin{proof}
  Let $\delta \in \tmop{Conv} (P)$. The class $\tmop{Conv} (P)$ is open by
  Corollary~\ref{cor-open-conv}, with $\tmop{Conv} (P_{+ \delta}) =
  \tmop{Conv} (P)$. By Proposition~\ref{prop-power-series-translation}, we
  have $\tilde{P} (\delta + \varepsilon) = \widetilde{P_{+ \delta}
  (\varepsilon)}$ for all $\varepsilon \in \tmop{Conv} (P)$, so $\tilde{P}$ is
  indeed analytic on $\tmop{Conv} (P)$ with $\tmop{Conv} (\tilde{P})_{\delta}
  \supseteq \tmop{Conv} (P_{+ \delta}) = \tmop{Conv} (P)$. But we also have
  $\tmop{Conv} (\tilde{P})_{\delta} \subseteq \tmop{Conv} (P_{+ \delta}) =
  \tmop{Conv} (P)$ by definition, hence the result.
\end{proof}

\begin{proposition}
  \label{prop-analicity-strong}Let $f \of \mathbf{O} \longrightarrow
  \mathbb{S}$ be analytic at $s \in \mathbf{O}$ and let $\mathbf{U} \subseteq
  \tmop{Conv} (f)_s$ be a non-empty open subclass containing $0$. Then $f$ is
  analytic on $s + \mathbf{U}$, with $f_{s + \delta} = (f_s)_{+ \delta}$ for
  all $\delta \in \mathbf{U}$.
\end{proposition}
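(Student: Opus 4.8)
The plan is to fix an arbitrary $\delta \in \mathbf{U}$ and verify, straight from the definition, that $f$ is analytic at $s + \delta$ with Taylor series $(f_s)_{+ \delta}$. Since $\mathbf{U}$ is open, its translate $s + \mathbf{U}$ is open, and analyticity at each of its points is precisely analyticity on it, so this suffices.

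First I would unpack the hypothesis. For $\delta \in \mathbf{U} \subseteq \tmop{Conv} (f)_s$ we have $\delta \in \tmop{Conv} (f_s)$, $s + \delta \in \mathbf{O}$, and $f (s + \delta) = \widetilde{f_s} (\delta)$. Then Corollary~\ref{cor-power-series-translation} tells us that $(f_s)_{+ \delta}$ is a well-defined convergent power series with $\tmop{Conv} ((f_s)_{+ \delta}) = \tmop{Conv} (f_s)$, so it is a legitimate candidate for the Taylor series at $s + \delta$.

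Next I would choose a radius lying in the intersection of two balls centred at $0$: using that $\mathbf{U}$ is open at $\delta$, pick $\rho_1 \in \mathbb{S}^{\times}$ with $\delta + \varepsilon \in \mathbf{U}$ whenever $\varepsilon \preccurlyeq \rho_1$; using that $\tmop{Conv} (f_s)$ is open at $0$ (Corollary~\ref{cor-open-conv}), pick $\rho_2 \in \mathbb{S}^{\times}$ with $\varepsilon \in \tmop{Conv} (f_s)$ whenever $\varepsilon \preccurlyeq \rho_2$; and, as $\preccurlyeq$ is linear, let $\delta' \in \{ \rho_1, \rho_2 \}$ satisfy $\delta' \preccurlyeq \rho_1$ and $\delta' \preccurlyeq \rho_2$, so $\delta' \neq 0$. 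For $\varepsilon \preccurlyeq \delta'$ we then have on one hand $\delta + \varepsilon \in \mathbf{U} \subseteq \tmop{Conv} (f)_s$, hence $s + \delta + \varepsilon \in \mathbf{O}$ and $f (s + \delta + \varepsilon) = \widetilde{f_s} (\delta + \varepsilon)$, and on the other hand $\varepsilon \in \tmop{Conv} (f_s)$, so Proposition~\ref{prop-power-series-translation} applied to $f_s$ and the points $\delta, \varepsilon \in \tmop{Conv} (f_s)$ gives $\widetilde{f_s} (\delta + \varepsilon) = \widetilde{(f_s)_{+ \delta}} (\varepsilon)$. Chaining these yields $f ((s + \delta) + \varepsilon) = \widetilde{(f_s)_{+ \delta}} (\varepsilon)$ for all $\varepsilon \preccurlyeq \delta'$. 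Since moreover $\delta' \in \tmop{Conv} (f_s) = \tmop{Conv} ((f_s)_{+ \delta})$ and $\delta' \neq 0$, this is exactly the statement that $f$ is analytic at $s + \delta$ with $(f_s)_{+ \delta}$ a Taylor series there; uniqueness of Taylor series then gives $f_{s + \delta} = (f_s)_{+ \delta}$, and as $\delta \in \mathbf{U}$ was arbitrary we are done.

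I do not expect a genuine obstacle: all the analytic content is already packaged in Proposition~\ref{prop-power-series-translation} and Corollary~\ref{cor-power-series-translation}. The only point requiring attention is that the working radius $\delta'$ must sit simultaneously inside the ball on which $f$ agrees with the translate of $\widetilde{f_s}$ \emph{and} inside a ball contained in $\tmop{Conv} (f_s)$, so that Proposition~\ref{prop-power-series-translation} is applicable; this is immediate because balls around $0$ in $\mathbb{S}$ are totally ordered by $\preccurlyeq$, so their intersection is again one such ball.
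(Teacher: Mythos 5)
Your proof is correct and follows essentially the same route as the paper's: fix $\delta\in\mathbf{U}$, use openness to find a nonzero radius on which $f$ agrees with $\widetilde{f_s}(\delta+\cdot)$, and invoke Proposition~\ref{prop-power-series-translation} to rewrite this as $\widetilde{(f_s)_{+\delta}}$. Your extra care in intersecting with a ball inside $\tmop{Conv}(f_s)$ (so that Proposition~\ref{prop-power-series-translation} is literally applicable to the pair $\delta,\varepsilon$) is a detail the paper leaves implicit, but it is the same argument.
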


\begin{proof}
  Let $\delta \in \mathbf{U}$ and set $t \assign s + \delta$. Since
  $\mathbf{U} \ni 0$ is open and non-empty, we find a $\rho \neq 0$ with
  $\delta + \varepsilon \in \mathbf{U}$ for all $\varepsilon \preccurlyeq
  \rho$. Thus $f (t + \varepsilon) = \widetilde{f_s} (\delta + \varepsilon)$
  whenever $\varepsilon \preccurlyeq \rho$. But given such $\varepsilon$, we
  have $\widetilde{f_s} (\delta + \varepsilon) = \widetilde{(f_s)_{+ \delta}}
  (\varepsilon)$ by Proposition~\ref{prop-power-series-translation}, whence
  \[ f (t + \varepsilon) = \widetilde{f_s} (\delta + \varepsilon) =
     \widetilde{(f_s)_{+ \delta}} (\varepsilon) . \]
  So $f$ is analytic at $t$ with $f_t = (f_s)_{+ (t - s)}$.
\end{proof}

\begin{proposition}
  \label{prop-analytic-main}Let $f \of \mathbf{O} \longrightarrow \mathbb{S}$
  be analytic at $s \in \mathbf{O}$. Then $f$ is infinitely differentiable at
  $s$, and each $f^{(n)}$ for $n \in \mathbb{N}$ is analytic at $s$ with
  $\tmop{Conv} (f^{(n)})_s \supseteq \tmop{Conv} (f)_s$. Moreover, we have
  \[ f_s = \sum_{k \in \mathbb{N}} \tfrac{f^{(k)} (s)}{k!} X^k . \]
\end{proposition}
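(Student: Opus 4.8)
The plan is to transfer everything to the power-series level, where the corresponding statements are already available: Lemma~\ref{lem-true-derivative-power-series} (differentiability of $\widetilde{P}$ and the formula $\widetilde{P}^{(n)}=\widetilde{P^{(n)}}$) and Lemma~\ref{lem-derivative-power-series-radius} ($\tmop{Conv}(P)=\tmop{Conv}(P^{(n)})$). First I would unpack the hypothesis that $f$ is analytic at $s$: this yields a convergent power series $f_s$ and a $\delta_0\in\tmop{Conv}(f_s)\setminus\{0\}$ with $f(s+\varepsilon)=\widetilde{f_s}(\varepsilon)$ for every $\varepsilon\preccurlyeq\delta_0$ such that $s+\varepsilon\in\mathbf{O}$; since $\mathbf{O}$ is open, after replacing $\delta_0$ by something smaller we may assume $s+\varepsilon\in\mathbf{O}$ holds automatically whenever $\varepsilon\preccurlyeq\delta_0$. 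Thus $f$ coincides, on the ball $\{s+\varepsilon:\varepsilon\preccurlyeq\delta_0\}$, with the function $x\mapsto\widetilde{f_s}(x-s)$, which by Proposition~\ref{prop-analytic-power-series} is analytic on $s+\tmop{Conv}(f_s)$; here $\{\varepsilon:\varepsilon\preccurlyeq\delta_0\}\subseteq\tmop{Conv}(f_s)$ by Proposition~\ref{prop-analytic-convex}, and $\tmop{Conv}(f_s)$ is open by Corollary~\ref{cor-open-conv}.

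For the differentiability at $s$ and the coefficient formula, I would argue as follows. By Lemma~\ref{lem-true-derivative-power-series}, $\widetilde{f_s}$ is infinitely differentiable on $\tmop{Conv}(f_s)$ with $(\widetilde{f_s})^{(n)}=\widetilde{(f_s)^{(n)}}$ for all $n\in\mathbb{N}$. Since differentiability at a point is a local property and $f$ agrees with $x\mapsto\widetilde{f_s}(x-s)$ on an entire ball around $s$, it follows at once that $f$ is infinitely differentiable at $s$ and that $f^{(n)}(s)=\widetilde{(f_s)^{(n)}}(0)$ for every $n$. Writing $f_s=\sum_{k\in\mathbb{N}}a_kX^k$, the definition of the power-series derivative gives $\widetilde{(f_s)^{(n)}}(0)=n!\,a_n$, so $a_n=f^{(n)}(s)/n!$ and $f_s=\sum_{k\in\mathbb{N}}\tfrac{f^{(k)}(s)}{k!}X^k$, which is the last assertion.

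It remains to show that each $f^{(n)}$ is analytic at $s$ and that $\tmop{Conv}(f^{(n)})_s\supseteq\tmop{Conv}(f)_s$. The power series $(f_s)^{(n)}$ is convergent, since $\tmop{Conv}((f_s)^{(n)})=\tmop{Conv}(f_s)\ni\delta_0$ by Lemma~\ref{lem-derivative-power-series-radius}, and on the ball around $s$ we have $f^{(n)}=\widetilde{(f_s)^{(n)}}(\cdot-s)$ by the previous paragraph applied at each point of that ball; hence $f^{(n)}$ is analytic at $s$ with Taylor series $(f^{(n)})_s=(f_s)^{(n)}$ (uniqueness being the uniqueness-of-Taylor-series lemma). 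For the inclusion, take $t\in\tmop{Conv}(f)_s$; then $t-s\in\tmop{Conv}(f_s)=\tmop{Conv}((f^{(n)})_s)$, and it suffices to verify $f^{(n)}(t)=\widetilde{(f_s)^{(n)}}(t-s)$. Using Proposition~\ref{prop-analicity-strong} to propagate analyticity from $s$ along open subclasses of $\tmop{Conv}(f)_s$, one gets that $f$ is analytic at $t$ with $f_t=(f_s)_{+(t-s)}$; then, since differentiation of power series commutes with the translation operator (i.e. $(P_{+\delta})^{(n)}=(P^{(n)})_{+\delta}$, immediate from the definition), Lemma~\ref{lem-true-derivative-power-series} and Proposition~\ref{prop-power-series-translation} give $f^{(n)}(t)=\widetilde{(f_t)^{(n)}}(0)=\widetilde{((f_s)^{(n)})_{+(t-s)}}(0)=\widetilde{(f_s)^{(n)}}(t-s)$. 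I expect this last step to be the only delicate point: everything except the passage from ``$f$ is analytic on a ball around $s$'' to ``the derivative identity holds throughout $\tmop{Conv}(f)_s$'' is routine bookkeeping with power series, and it is precisely there that Proposition~\ref{prop-analicity-strong} does the real work.
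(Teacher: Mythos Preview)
Your proof is correct and follows essentially the same route as the paper's: reduce everything to the convergent power series $f_s$, invoke Lemma~\ref{lem-true-derivative-power-series} for infinite differentiability and the formula $\widetilde{f_s}^{(n)}=\widetilde{(f_s)^{(n)}}$, and read off the coefficient formula from $\widetilde{(f_s)^{(n)}}(0)=n!\,a_n$. The only difference is in your final step: where you route the verification of $f^{(n)}(t)=\widetilde{(f_s)^{(n)}}(t-s)$ through Proposition~\ref{prop-analicity-strong} and the commutation $(P_{+\delta})^{(n)}=(P^{(n)})_{+\delta}$, the paper argues more directly that since $f$ coincides with $\widetilde{f_s}(\cdot-s)$ on $\tmop{Conv}(f)_s$, Lemma~\ref{lem-true-derivative-power-series} gives $f^{(n)}(s+\delta)=\widetilde{f_s}^{(n)}(\delta)=\widetilde{(f_s)^{(n)}}(\delta)$ immediately; your detour is sound but unnecessary.
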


\begin{proof}
  Recall that $\widetilde{f_s}$ is infinitely differentiable on $\tmop{Conv}
  (f_s)$. It follows since $\tmop{Conv} (f)_s$ is a neighborhood of $s$ that
  $f$ is infinitely differentiable at~$s$. By
  Lemma~\ref{lem-true-derivative-power-series}, each derivative
  $\widetilde{f_s}^{(n)}$ for $n \in \mathbb{N}$ is a power series function on
  $\tmop{Conv} (f_s)$, and is thus analytic on $\tmop{Conv} (f_s)$ by
  Proposition~\ref{prop-analytic-power-series}. \ By
  Lemma~\ref{lem-true-derivative-power-series}, given $\delta \in \tmop{Conv}
  (f)_s - s$, we have $f^{(n)} (s + \delta) = \widetilde{f_s}^{(n)} (\delta) =
  \widetilde{(f_s)^{(n)}} (\delta)$. Therefore $f^{(n)}$ is analytic at $s$
  with $f^{(n)}_s = (f_s)^{(n)}$ and $\tmop{Conv} (f^{(n)})_s \supseteq
  \tmop{Conv} (f)_s$. Write $f_s = \sum_{k \in \mathbb{N}} s_k X^k$. We have
  $f^{(k)} (s) = \widetilde{(f_s)}^{(k)} (0) = \widetilde{(f_s)^{(k)}} (0) =
  k!s_k$. We deduce that $f_s = \sum_{k \in \mathbb{N}} \tfrac{f^{(k)}
  (s)}{k!} X^k$.
  
  \ 
\end{proof}

\begin{proposition}
  \label{prop-piecewise-analycity}Let $\mathbf{O} \subseteq \mathbb{S}$ be
  open and non-empty and assume that $\mathbf{O} = \bigsqcup_{i \in
  \mathbf{I}} \mathbf{O}_i$ where each $\mathbf{O}_i$ is open and non-empty.
  Let $(s_i)_{i \in \mathbf{I}}$ be a family where $s_i \in \mathbf{O}_i$ for
  all $i \in \mathbf{I}$. Let $(P_i)_{i \in \mathbf{I}}$ be a family of
  convergent power series in $\mathbb{S} \left\{ \!\! \{ X \} \!\! \right\}$
  with $(s_i + \tmop{Conv} (P_i)) \supseteq \mathbf{O}_i$. The function $f \of
  \mathbf{O} \longrightarrow \mathbb{S}$ such that for all $i \in \mathbf{I}$
  and $s \in \mathbf{O}_i$, we have $f (s) = P_i (s - s_i)$ is well-defined
  and analytic.
\end{proposition}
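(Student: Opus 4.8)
The statement has two parts, and the first is immediate: since the $\mathbf{O}_i$ are pairwise disjoint and cover $\mathbf{O}$, every $s \in \mathbf{O}$ lies in exactly one $\mathbf{O}_i$, and then $s - s_i \in \mathbf{O}_i - s_i \subseteq \tmop{Conv} (P_i)$, so $\widetilde{P_i} (s - s_i)$ is defined and the defining clause of $f$ is unambiguous. Hence $f$ is well-defined, and the real content is to verify analyticity at each point.

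The plan is to fix $s \in \mathbf{O}$, let $i$ be the unique index with $s \in \mathbf{O}_i$, set $\delta_0 \assign s - s_i \in \tmop{Conv} (P_i)$, and show that $f$ is analytic at $s$ with Taylor series $f_s \assign (P_i)_{+ \delta_0}$. By Corollary~\ref{cor-power-series-translation} (applied to $P = P_i$), $f_s$ is a well-defined convergent power series with $\tmop{Conv} (f_s) = \tmop{Conv} (P_i)$; and by Proposition~\ref{prop-power-series-translation} one has $\widetilde{f_s} (\varepsilon) = \widetilde{P_i} (\delta_0 + \varepsilon)$ for every $\varepsilon \in \tmop{Conv} (P_i)$. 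It then remains to exhibit a single nonzero radius $\delta$ along which $f$ agrees with $\widetilde{f_s}$.

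For this I would use that $\tmop{Conv} (P_i)$ is open and contains $0$ (Corollary~\ref{cor-open-conv}), giving a nonzero $\sigma$ with $B_0 (0, \sigma) \subseteq \tmop{Conv} (P_i)$, together with the hypothesis that $\mathbf{O}_i$ is a neighbourhood of $s$, giving a nonzero $\rho$ with $B_0 (s, \rho) \subseteq \mathbf{O}_i$. Picking a nonzero $\delta$ with $\delta \preccurlyeq \sigma$ and $\delta \preccurlyeq \rho$ (possible since $\preccurlyeq$ is linear on $\mathbb{S}$; e.g.\ $\delta = \sigma$ if $\sigma \preccurlyeq \rho$, else $\delta = \rho$), one checks: for any $\varepsilon \preccurlyeq \delta$ with $s + \varepsilon \in \mathbf{O}$, one has $s + \varepsilon \in B_0 (s, \rho) \subseteq \mathbf{O}_i$, hence $f (s + \varepsilon) = \widetilde{P_i} ((s + \varepsilon) - s_i) = \widetilde{P_i} (\delta_0 + \varepsilon)$; and $\varepsilon \in B_0 (0, \sigma) \subseteq \tmop{Conv} (P_i)$, so the displayed identity rewrites this as $\widetilde{f_s} (\varepsilon)$. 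Since also $\delta \in \tmop{Conv} (P_i) = \tmop{Conv} (f_s)$ and $\delta \neq 0$, this is exactly the definition of analyticity of $f$ at $s$; as $s$ was arbitrary, $f$ is analytic.

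The only slightly delicate point — and the reason the hypothesis that each $\mathbf{O}_i$ is open is needed — is that a priori the values of $f$ near $s$ could be governed by several different power series $P_j$. Shrinking the radius below both $\sigma$ and $\rho$ confines us to the single patch $\mathbf{O}_i$, after which everything is a routine invocation of the translation statements Proposition~\ref{prop-power-series-translation} and Corollary~\ref{cor-power-series-translation} for convergent power series; I do not anticipate any genuine obstacle beyond bookkeeping the two radii.
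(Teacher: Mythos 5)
Your proposal is correct and follows essentially the same route as the paper's proof: identify the unique patch $\mathbf{O}_i$ containing $s$, take $f_s = (P_i)_{+(s-s_i)}$, and use Proposition~\ref{prop-power-series-translation} together with a radius small enough to stay inside $\mathbf{O}_i$ and $\tmop{Conv}(P_i)$. Your explicit bookkeeping of the two radii $\sigma$ and $\rho$ is in fact slightly more careful than the paper's one-line version of the same step.
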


\begin{proof}
  Let $s \in \mathbf{O}$ and let $i \in \mathbf{I}$ with $s \in \mathbf{O}_i$.
  We have $s - s_i \in \mathbf{O}_i - s_i \subseteq \tmop{Conv} (P_i)$ so
  $\widetilde{P_i} (s - s_i)$ is defined. In particular $f$ is well-defined.
  The class $\mathbf{O}_i - s_i$ is a neighborhood of $0$, so there is a
  $\delta \in \tmop{Conv} (P_i) \setminus \{ 0 \}$ such that $s_i +
  \varepsilon \in \mathbf{O}_i$ whenever $\varepsilon \preccurlyeq \delta$.
  Given $\varepsilon \preccurlyeq \delta$, we have
  \[ f (s + \varepsilon) = P_i (s + \varepsilon - s_i) = (P_i)_{+ (s - s_i)}
     (\varepsilon) \]
  by Proposition~\ref{prop-power-series-translation}. Therefore $f$ is
  analytic at $s$ with $f_s = (P_i)_{+ (s - s_i)}$.
\end{proof}

We leave it to the reader to check that analyticity, at a point or on an open
class, is preserved by sums and products. The following result can be used to
show that the compositum of analytic functions is analytic. As a corollary of
Proposition~\ref{prop-Taylor-composition-pow}, we obtain:

\begin{corollary}
  \label{cor-Taylor-composition}Let $\mathbf{U} \subseteq \mathbb{S}$ be open.
  Let $f \of \mathbf{U} \longrightarrow \mathbb{S}, g \of \mathbf{O}
  \longrightarrow \mathbf{U}$ and let $s \in \mathbf{O}$ such that $g$ is
  analytic at $s$ and $f$ is analytic at $g (s)$. Write $g_s = \sum_{n \in
  \mathbb{N}} a_k X^n$. Let $\varepsilon_f \in \tmop{Conv} (f)_{g (s)} - g
  (s)$ and $\varepsilon \in \tmop{Conv} (g)_s - s$ with $\forall k > 0, a_k
  \varepsilon^k \prec \varepsilon_f$. Then function $f \circ g$ is analytic at
  $s$ with $s + \varepsilon \in \tmop{Conv} (f \circ g)_s$, and $(f \circ g)_s
  = f_{g (s)} \circ (g_s - g (s))$.
\end{corollary}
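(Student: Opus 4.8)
The plan is to reduce the statement to the power-series composition result Proposition~\ref{prop-Taylor-composition-pow}: encode the analytic data of $f$ at $g(s)$ and of $g$ at $s$ as convergent power series, apply that proposition, and read the conclusion back as analyticity of $f\circ g$ at $s$. Write $a\assign g(s)$, $P\assign f_a$ and $Q\assign g_s-a$. By Proposition~\ref{prop-analytic-main} the constant coefficient of $g_s$ is $g(s)=a$, so $Q\in X\mathbb{S}\left\{\!\!\{X\}\!\!\right\}$ with $Q_k=a_k$ for $k>0$ and $\tmop{Conv}(Q)=\tmop{Conv}(g_s)$, while $P\in\mathbb{S}\left\{\!\!\{X\}\!\!\right\}$. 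The hypotheses translate exactly into $\varepsilon_f\in\tmop{Conv}(P)$, $\varepsilon\in\tmop{Conv}(Q)$, and $Q_k\varepsilon^k=a_k\varepsilon^k\prec\varepsilon_f$ for all $k>0$ (so in particular $\varepsilon_f\neq 0$). Proposition~\ref{prop-Taylor-composition-pow} with $\varepsilon_P\assign\varepsilon_f$ then gives $\varepsilon\in\tmop{Conv}(P\circ Q)$ and $\widetilde{P\circ Q}(\varepsilon)=\widetilde{P}(\widetilde{Q}(\varepsilon))=\widetilde{f_a}(\widetilde{Q}(\varepsilon))$; in particular $P\circ Q$ is convergent (when $\varepsilon=0$ one first produces a nonzero element of $\tmop{Conv}(P\circ Q)$ by replacing $\varepsilon$ with a sufficiently small nonzero monomial multiple of an element of $\tmop{Conv}(Q)$), so $\tmop{Conv}(P\circ Q)$ is open by Corollary~\ref{cor-open-conv}. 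We set $(f\circ g)_s\assign P\circ Q=f_{g(s)}\circ(g_s-g(s))$.

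Next I would check that $f\circ g$ is indeed analytic at $s$ with this Taylor series. By analyticity of $f$ at $a$ and of $g$ at $s$, fix nonzero $\rho_f,\rho_g$ with $f(a+\zeta)=\widetilde{f_a}(\zeta)$ whenever $\zeta\preccurlyeq\rho_f$ and $a+\zeta\in\mathbf{U}$, and $g(s+\eta)=\widetilde{g_s}(\eta)$ whenever $\eta\preccurlyeq\rho_g$ and $s+\eta\in\mathbf{O}$. Since $\widetilde{Q}$ is differentiable on the open neighborhood $\tmop{Conv}(Q)$ of $0$ (Lemma~\ref{lem-true-derivative-power-series}, Corollary~\ref{cor-open-conv}) and $\widetilde{Q}(0)=0$, it is continuous at $0$; hence there is a nonzero $\rho\preccurlyeq\rho_g$, which we may also take $\preccurlyeq\varepsilon$ (or, if $\varepsilon=0$, below the nonzero witness produced above), such that $\widetilde{Q}(\eta)\preccurlyeq\rho_f$ for all $\eta\preccurlyeq\rho$; note also that then $\rho\in\tmop{Conv}(P\circ Q)$ by Proposition~\ref{prop-analytic-convex}. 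For such $\eta$ with $s+\eta\in\mathbf{O}$ we get $g(s+\eta)=\widetilde{g_s}(\eta)=a+\widetilde{Q}(\eta)\in\mathbf{U}$, so $(f\circ g)(s+\eta)=f(a+\widetilde{Q}(\eta))=\widetilde{f_a}(\widetilde{Q}(\eta))$; and since $Q_k\eta^k\preccurlyeq Q_k\varepsilon^k\prec\varepsilon_f$ for all $k>0$, Proposition~\ref{prop-Taylor-composition-pow} applied at $\eta$ gives $\widetilde{f_a}(\widetilde{Q}(\eta))=\widetilde{P\circ Q}(\eta)$. Therefore $(f\circ g)(s+\eta)=\widetilde{(f\circ g)_s}(\eta)$ for all $\eta\preccurlyeq\rho$ with $s+\eta\in\mathbf{O}$, i.e. $f\circ g$ is analytic at $s$ with $(f\circ g)_s=f_{g(s)}\circ(g_s-g(s))$.

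It remains to show $s+\varepsilon\in\tmop{Conv}(f\circ g)_s$. We already have $\varepsilon\in\tmop{Conv}((f\circ g)_s)$; moreover, since $s+\varepsilon\in\tmop{Conv}(g)_s$, we have $(f\circ g)(s+\varepsilon)=f(\widetilde{g_s}(\varepsilon))=f\bigl(a+\widetilde{Q}(\varepsilon)\bigr)$ with $a+\widetilde{Q}(\varepsilon)=g(s+\varepsilon)\in\mathbf{U}$. Now $\widetilde{Q}(\varepsilon)=\sum_{k\geq 1}a_k\varepsilon^k\prec\varepsilon_f$ while $a+\varepsilon_f\in\tmop{Conv}(f)_{g(s)}$, so the stability of convergence loci under perturbation (Proposition~\ref{prop-analicity-strong}, together with Proposition~\ref{prop-power-series-translation} and Corollary~\ref{cor-power-series-translation}) yields $a+\widetilde{Q}(\varepsilon)\in\tmop{Conv}(f)_{g(s)}$, i.e. $f\bigl(a+\widetilde{Q}(\varepsilon)\bigr)=\widetilde{f_a}(\widetilde{Q}(\varepsilon))$. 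Combined with $\widetilde{P\circ Q}(\varepsilon)=\widetilde{f_a}(\widetilde{Q}(\varepsilon))$ from the first paragraph, this gives $(f\circ g)(s+\varepsilon)=\widetilde{(f\circ g)_s}(\varepsilon)$, hence $s+\varepsilon\in\tmop{Conv}(f\circ g)_s$. I expect this last step to be the main obstacle: it is where the hypothesis $\varepsilon_f\in\tmop{Conv}(f)_{g(s)}-g(s)$ — strictly stronger than $\varepsilon_f\in\tmop{Conv}(f_{g(s)})$ — is genuinely used, and it rests on the downward-closedness of $\tmop{Conv}(f)_{g(s)}-g(s)$ under $\preccurlyeq$, which in turn relies on the analyticity-propagation results above. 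The remaining steps are routine bookkeeping with summable families and with the ultrametric inequalities for $\preccurlyeq$.
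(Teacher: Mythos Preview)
Your approach is exactly what the paper intends: the corollary is stated without proof, as an immediate consequence of Proposition~\ref{prop-Taylor-composition-pow}, and your reduction via $P=f_{g(s)}$, $Q=g_s-g(s)$ is the natural unpacking. The first two paragraphs are correct and carefully done.

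You rightly flag the last step as the crux, but the downward-closedness of $\tmop{Conv}(f)_{g(s)}-g(s)$ under $\preccurlyeq$ does \emph{not} follow from the results you cite: Propositions~\ref{prop-analicity-strong}, \ref{prop-power-series-translation} and Corollary~\ref{cor-power-series-translation} concern $\tmop{Conv}(f_a)$ (the convergence locus of the power series, which is downward closed), not $\tmop{Conv}(f)_a$ (which additionally requires $f(t)=\widetilde{f_a}(t-a)$). One can build $f$ analytic at $a$ with $a+\varepsilon_f\in\tmop{Conv}(f)_a$ yet $a+\zeta\notin\tmop{Conv}(f)_a$ for some $\zeta\prec\varepsilon_f$; this is precisely the kind of pathology the paper warns about in the remark following Remark~\ref{rem-analytic}. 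So the conclusion $s+\varepsilon\in\tmop{Conv}(f\circ g)_s$ seems to need the stronger hypothesis that the whole ball $\{\eta:\eta\preccurlyeq\varepsilon_f\}$ lies in $\tmop{Conv}(f)_{g(s)}-g(s)$. This holds in every application in the paper (e.g.\ Theorem~\ref{th-Taylor-analytic} assumes $\tmop{Conv}(f)_s\supseteq s+\mathbb{S}^{\prec s}$), so the issue is a minor imprecision in the corollary as stated rather than a flaw in your strategy.
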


\begin{remark}
  \label{rem-analytic}A well-known type of analytic functions is that of
  restricted real-analytic functions of {\cite{DvdD:an,vdDMM}}. Given a
  non-empty interval $I$ of $\mathbb{R}$ and $f \of I \longrightarrow \mathbb{R}$
  is an analytic function, then $f$ extends into a function $\overline{f} \of
  I +\mathbb{S}^{\prec} \longrightarrow \mathbb{R}+\mathbb{S}^{\prec}$
  by
  \[ \forall r \in I, \forall \varepsilon \prec 1, \overline{f} (r +
     \varepsilon) \assign \sum_{k \in \mathbb{N}} \frac{f^{(k)} (r)}{k!}
     \varepsilon^k . \]
  We say that $\overline{f}$ is a restricted real-analytic function on
  $\mathbb{S}$. The function $\overline{f}$ is in fact analytic.
\end{remark}

\begin{remark}
  Our notion of analyticity is local, which makes it subject to pathologies
  (see Proposition~\ref{prop-piecewise-analycity}). A stronger version of analyticity
  would be to impose that a function $f$ is analytic at $s \in \mathbb{S}$ if
  there is a power series $f_s \in \mathbb{S} \left\{ \!\! \{ X \} \!\!
  \right\}$ such that $f (s + \varepsilon) = \widetilde{f_s} (\varepsilon)$
  for $\varepsilon$ ranging in the {\tmem{whole locus of convergence}}
  $\tmop{Conv} (f_s)$ of $f_s$.
\end{remark}

\section{Algebras of Noetherian series given by cuts}

In this section, we assume that $\mathbf{K}$ is a field and that
$\mathfrak{M}$ is a linearly ordered Abelian group, so $\mathbb{S} \assign
\mathbf{K} \llbracket \mathfrak{M} \rrbracket$ is a field. Our main tool for
proving the strong linearity of operators involved in Taylor expansions is the
construction in \Cref{subsection-subalgebras} of algebras of formal series
over $\mathbb{S}$ related to a convergence condition given by a final segment
$\mathfrak{S}$ of $(\mathfrak{M}, \prec)$. A typical example would be the
interval $\mathfrak{S}= \{ \mathfrak{m} \in \mathfrak{M} \of \mathfrak{m}
\succ \mathfrak{n} \}$ for some fixed $\mathfrak{n}$. For instance, we can
construct a subalgebra of $\mathbb{S} \llbracket X \rrbracket$ whose elements
converge for all $\delta \prec \mathfrak{S}$.

\subsection{Algebras of formal power series given by
cuts}\label{subsection-subalgebras}

Let $\mathfrak{S}$ be a final segment of $(\mathfrak{M}, \prec)$. We will
define a partial ordering $\prec_{\mathfrak{S}}$ on the direct product
\[ \mathfrak{M} \times X^{\mathbb{Z}} \assign \{\mathfrak{m}X^k \suchthat
   \mathfrak{m} \in \mathfrak{M} \wedge k \in \mathbb{Z}\} . \]
It will extend to the smallest ordering on this product such that $X
\prec_{\mathfrak{S}} \mathfrak{S}$ in $\mathbf{K} \llbracket \mathfrak{M}
\times X^{\mathbb{Z}} \rrbracket$. Consider the subclass
\begin{equation}
  (\mathfrak{M} \times X^{\mathbb{Z}})^{\prec, \mathfrak{S}} \assign
  (\mathfrak{M}^{\prec} \times \{X^0 \}) \sqcup \{\mathfrak{m}X^k \suchthat k
  > 0 \wedge \exists \mathfrak{u} \in \mathfrak{S}, \mathfrak{m} \preccurlyeq
  \mathfrak{u}^{- k} \} \label{eq-negative-cone} .
\end{equation}
So for $(\mathfrak{m}, k) \in \mathfrak{M} \times \mathbb{N}$, we have
$\mathfrak{m}X^k \prec_{\mathfrak{S}} 1 \Longleftrightarrow \mathfrak{m}
\nsucc \mathfrak{S}^{- k}$. Recall that a strictly positive cone on an
Abelian, torsion-free group $(\mathcal{G}, \cdot, 1)$ is a subset $P \subseteq
\mathcal{G} \setminus \{1\}$ which is closed under products and such that $P
\cap P^{- 1} = \varnothing$. Such a cone induces a partial ordering $<_P$ on
$\mathcal{G}$ given by $f <_P g \Longleftrightarrow gf^{- 1} \in P$.

\begin{lemma}
  The class $(\mathfrak{M} \times X^{\mathbb{Z}})^{\prec, \mathfrak{S}}$ is a
  strictly positive cone on $\mathfrak{M} \cdot X^{\mathbb{Z}}$.
\end{lemma}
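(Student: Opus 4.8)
The plan is to verify the three defining properties of a strictly positive cone directly from the description \eqref{eq-negative-cone}. Write $P \assign (\mathfrak{M} \times X^{\mathbb{Z}})^{\prec, \mathfrak{S}}$. Before starting I would record two structural facts. First, the group $\mathfrak{M} \cdot X^{\mathbb{Z}} \cong \mathfrak{M} \times \mathbb{Z}$ is Abelian and torsion-free, being a product of Abelian groups whose factors are both torsion-free ($\mathbb{Z}$ trivially, $\mathfrak{M}$ because it is linearly ordered). Second, every element of $P$ has $X$-exponent $\geqslant 0$, and the two pieces of the disjoint union in \eqref{eq-negative-cone} are precisely those of $X$-exponent $0$ and those of $X$-exponent $> 0$; this observation trivialises the sign bookkeeping below.

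That $1 \nin P$ is immediate: $1 = 1 \cdot X^0$ lies neither in $\mathfrak{M}^{\prec} \times \{X^0\}$ (since $1 \nin \mathfrak{M}^{\prec}$) nor in the second piece (whose elements have positive $X$-exponent). For closure under products I would take $\mathfrak{m}X^k, \mathfrak{n}X^l \in P$, so $k, l \geqslant 0$, and split into cases. If $k = l = 0$ then $\mathfrak{m}, \mathfrak{n} \prec 1$, hence $\mathfrak{m}\mathfrak{n} \prec 1$. If, say, $k = 0 < l$ and $\mathfrak{u} \in \mathfrak{S}$ witnesses $\mathfrak{n} \preccurlyeq \mathfrak{u}^{-l}$, then $\mathfrak{m}\mathfrak{n} \prec \mathfrak{n} \preccurlyeq \mathfrak{u}^{-(k+l)}$. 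If $k, l > 0$, pick witnesses $\mathfrak{u}, \mathfrak{v} \in \mathfrak{S}$ with $\mathfrak{m} \preccurlyeq \mathfrak{u}^{-k}$ and $\mathfrak{n} \preccurlyeq \mathfrak{v}^{-l}$, and let $\mathfrak{w}$ be the $\preccurlyeq$-smaller of the two, which is again a member of $\mathfrak{S}$; then $\mathfrak{w}^{k} \preccurlyeq \mathfrak{u}^{k}$ and $\mathfrak{w}^{l} \preccurlyeq \mathfrak{v}^{l}$ give $\mathfrak{w}^{k+l} \preccurlyeq \mathfrak{u}^{k}\mathfrak{v}^{l}$, hence $\mathfrak{m}\mathfrak{n} \preccurlyeq \mathfrak{u}^{-k}\mathfrak{v}^{-l} \preccurlyeq \mathfrak{w}^{-(k+l)}$. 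In every case $\mathfrak{m}\mathfrak{n}X^{k+l} \in P$.

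For $P \cap P^{-1} = \varnothing$ I would suppose $\mathfrak{m}X^k$ and its inverse $\mathfrak{m}^{-1}X^{-k}$ both lie in $P$. Since both must have nonnegative $X$-exponent, $k = 0$; then $\mathfrak{m} \prec 1$ and $\mathfrak{m}^{-1} \prec 1$, contradicting the equivalence $\mathfrak{m} \prec 1 \Longleftrightarrow \mathfrak{m}^{-1} \succ 1$ in the ordered group $\mathfrak{M}$.

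No step here is a genuine obstacle: everything reduces to elementary arithmetic of the orderings on $\mathfrak{M}$ and $\mathbb{Z}$. The only point requiring a moment's care is the product case with $k, l > 0$, where one exploits that $\prec$ is linear on $\mathfrak{M}$ — so that the two witnesses $\mathfrak{u}, \mathfrak{v}$ are comparable and their minimum still lies in $\mathfrak{S}$ — in order to produce a single element of $\mathfrak{S}$ witnessing membership of the product.
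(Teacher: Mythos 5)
Your proof is correct and follows essentially the same case analysis as the paper's (product of the two zero-exponent cases, mixed case, and both-positive case, plus the exponent-sign argument for $P \cap P^{-1} = \varnothing$). One small point in your favour: in the case $k, l > 0$ you correctly take the $\preccurlyeq$-\emph{smaller} of the two witnesses $\mathfrak{u}, \mathfrak{v}$, which is what the inequality $\mathfrak{m}\mathfrak{n} \preccurlyeq \mathfrak{w}^{-(k+l)}$ actually requires, whereas the paper writes $\max(\mathfrak{v},\mathfrak{w})$ at that step (harmless only if read as the maximum for the reverse ordering $\succ$).
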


\begin{proof}
  By definition, the class $(\mathfrak{M} \times X^{\mathbb{Z}})^{\prec,
  \mathfrak{S}}$ does not contain $1 = 1 X^0$. Let $\mathfrak{m}X^k,
  \mathfrak{n}X^{k'} \in (\mathfrak{M} \times X^{\mathbb{Z}})^{\prec,
  \mathfrak{S}}$. We may assume without loss of generality that $k \leqslant
  k'$. If $k = k' = 0$, then $\mathfrak{m}, \mathfrak{n} \prec 1$ so
  $\mathfrak{m}X^k \mathfrak{n}X^{k'} =\mathfrak{m}\mathfrak{n} \prec 1$. If
  $k = 0$ and $k' \neq 0$, then $\mathfrak{m} \prec 1$, $k' > 0$ and there is
  a $\mathfrak{u} \in \mathfrak{S}$ with $\mathfrak{n} \preccurlyeq
  \mathfrak{u}^{- k'}$. We then have $\mathfrak{m}X^k \mathfrak{n}X^{k'} =
  (\mathfrak{m}\mathfrak{n}) X^{k'}$ where $\mathfrak{m}\mathfrak{n} \prec
  \mathfrak{n} \preccurlyeq \mathfrak{u}^{- k'}$. We deduce that
  $\mathfrak{m}X^k \mathfrak{n}X^{k'} \in (\mathfrak{M} \times
  X^{\mathbb{Z}})^{\prec, \mathfrak{S}}$. Otherwise, we must have $k, k' > 0$,
  and there are $(\mathfrak{v}, \mathfrak{w}) \in \mathfrak{S}$ such that
  $\mathfrak{m} \preccurlyeq \mathfrak{v}^{- k}$ and $\mathfrak{n}
  \preccurlyeq \mathfrak{w}^{- k'}$. Taking $\mathfrak{p} \assign \max
  (\mathfrak{v}, \mathfrak{w}) \in \mathfrak{S}$, we have
  $\mathfrak{m}\mathfrak{n} \preccurlyeq \mathfrak{p}^{- (k + k')}$, so
  $\mathfrak{m}X^k \mathfrak{n}X^{k'} \in (\mathfrak{M} \times
  X^{\mathbb{Z}})^{\prec, \mathfrak{S}}$. Thus $\mathfrak{m}X^k
  \mathfrak{n}X^{k'} \in (\mathfrak{M} \times X^{\mathbb{Z}})^{\prec,
  \mathfrak{S}}$ is closed under products.
  
  It remains to show that we cannot have $\mathfrak{m}^{- 1} X^{- k} \in
  (\mathfrak{M} \times X^{\mathbb{Z}})^{\prec, \mathfrak{S}}$. If $k = 0$,
  then this follows from the fact that $\mathfrak{M}^{\prec}$ is a strictly
  positive cone on $\mathfrak{M}$. Otherwise, we have $k > 0$ so
  $\mathfrak{m}^{- 1} X^{- k} \nin (\mathfrak{M} \times
  X^{\mathbb{Z}})^{\prec, \mathfrak{S}}$.
\end{proof}

We thus obtain a partial ordering $\prec_{\mathfrak{S}}$\label{autolab20} on
$\mathfrak{M} \cdot X^{\mathbb{N}} \subseteq \mathfrak{M} \cdot
X^{\mathbb{Z}}$ by setting
\[ \mathfrak{m}X^k \prec_{\mathfrak{S}} \mathfrak{n}X^{k'} \Longleftrightarrow
   \mathfrak{m}\mathfrak{n}^{- 1} X^{k - k'} \in (\mathfrak{M} \times
   X^{\mathbb{Z}})^{\prec, \mathfrak{S}} . \]
Mind that this is the {\tmem{reverse}} ordering of the ordering $<_P$ given by
the positive cone $P = (\mathfrak{M} \times X^{\mathbb{Z}})^{\prec,
\mathfrak{S}}$. We write $\mathfrak{M} \times_{\mathfrak{S}} X^{\mathbb{N}}$
for the corresponding partially ordered monoid. \ We may consider the algebra
of Noetherian series\label{autolab21}
\[ \mathbb{S} \llbracket X \rrbracket_{\mathfrak{S}} \assign \mathbf{K}
   \llbracket \mathfrak{M} \times_{\mathfrak{S}} X^{\mathbb{N}} \rrbracket \]
for this ordering.

\begin{lemma}
  We have a natural inclusion $\mathbb{S} \llbracket X
  \rrbracket_{\mathfrak{S}} \longrightarrow \mathbb{S} \llbracket X
  \rrbracket$ given by
  \[ s \mapsto \sum_{k \in \mathbb{Z}} \left( \sum_{\mathfrak{m}X^k \in
     \tmop{supp} s} s (\mathfrak{m}X^k)\mathfrak{m} \right) X^k . \]
\end{lemma}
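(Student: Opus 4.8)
The map in the statement simply re-indexes the coefficient function of $s$: viewing a series in either algebra as a function $\mathfrak{M}\times\mathbb{N}\to\mathbf{K}$ (the monomials of $\mathbb{S}\llbracket X\rrbracket_{\mathfrak{S}}$ being the $\mathfrak{m}X^k$ with $k\in\mathbb{N}$), the image of $s$ is the power series $\sum_k P_k X^k$ whose coefficient $P_k\in\mathbf{K}^{\mathfrak{M}}$ is given by $P_k(\mathfrak{m})\assign s(\mathfrak{m}X^k)$. So the plan is: (1) show that the image genuinely lies in $\mathbb{S}\llbracket X\rrbracket$, i.e. that each $P_k$ is an element of $\mathbb{S}$; then (2) check that the resulting map is an injective morphism of $\mathbf{K}$-algebras (indeed $\mathbb{S}$-linear, and restricting to the identity on $\mathbb{S}\cup\{X\}$, which is what ``natural'' refers to). Injectivity will be immediate, since the map does not alter the underlying coefficient function, and the algebraic compatibilities will follow once (1) is in place, because addition and the Cauchy product are computed by literally the same formulas on coefficient functions in both algebras. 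The only real work is in (1), and it is a matter of comparing the two orderings.

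For (1), the observation I would isolate first is that, for each fixed $k\in\mathbb{N}$, the ordering $\prec_{\mathfrak{S}}$ restricted to the slice $\mathfrak{M}X^k$ is just $\prec$: by definition $\mathfrak{m}X^k\prec_{\mathfrak{S}}\mathfrak{n}X^k$ iff $\mathfrak{m}\mathfrak{n}^{-1}X^0\in(\mathfrak{M}\times X^{\mathbb{Z}})^{\prec,\mathfrak{S}}$, and by the first summand of (\ref{eq-negative-cone}) this holds iff $\mathfrak{m}\mathfrak{n}^{-1}\in\mathfrak{M}^{\prec}$, i.e. iff $\mathfrak{m}\prec\mathfrak{n}$. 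Hence $\mathfrak{m}\mapsto\mathfrak{m}X^k$ is an isomorphism of ordered classes from $(\mathfrak{M},\prec)$ onto $(\mathfrak{M}X^k,\prec_{\mathfrak{S}})$. Now $\tmop{supp}s$ is Noetherian in $(\mathfrak{M}\times_{\mathfrak{S}}X^{\mathbb{N}},\succ_{\mathfrak{S}})$, hence so is its subclass $\tmop{supp}s\cap\mathfrak{M}X^k$ (a subclass of a Noetherian class contains no bad sequence, by \Cref{lem-Noeth-subsequence}), and transporting through the slice isomorphism shows that $\{\mathfrak{m}\in\mathfrak{M}:s(\mathfrak{m}X^k)\neq 0\}$ is Noetherian in $(\mathfrak{M},\succ)$. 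Therefore $P_k\assign\sum_{\mathfrak{m}}s(\mathfrak{m}X^k)\mathfrak{m}\in\mathbb{S}$, and since $\mathbb{S}\llbracket X\rrbracket$ imposes no Noetherianity condition relating distinct powers of $X$, the element $\sum_k P_k X^k$ is a well-defined member of $\mathbb{S}\llbracket X\rrbracket$.

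For (2): the map is visibly injective, as the image is $0$ only if all $P_k=0$, i.e. only if $s=0$. It is additive and $\mathbb{S}$-linear by applying \Cref{lem-sum-sum} coefficientwise (multiplication by $c\in\mathbb{S}$ mixes only monomials within a single slice and contributes $cP_k$ to the $k$-th coefficient). For multiplicativity one computes, for $s,t\in\mathbb{S}\llbracket X\rrbracket_{\mathfrak{S}}$ and a monomial $\mathfrak{p}X^l$, that $(s\cdot t)(\mathfrak{p}X^l)=\sum_{k+j=l}\sum_{\mathfrak{u}\mathfrak{v}=\mathfrak{p}}s(\mathfrak{u}X^k)\,t(\mathfrak{v}X^j)$, a finite sum by \Cref{lem-Neumann-easy}; grouping by $l$ exhibits the $l$-th coefficient of the image of $s\cdot t$ as $\sum_{k+j=l}P_kQ_j$, which is exactly the $l$-th coefficient of the product of the images of $s$ and $t$ in $\mathbb{S}\llbracket X\rrbracket$. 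Since $1\mapsto 1$, we get the asserted injective morphism. I expect the only delicate point to be in (1), namely making sure that Noetherianity with respect to $\succ_{\mathfrak{S}}$ really does pass to each slice $\mathfrak{M}X^k$ and is preserved under the slice order-isomorphism; once that is checked, everything in (2) is routine Cauchy-product bookkeeping, since multiplication on $\mathbb{S}\llbracket X\rrbracket_{\mathfrak{S}}$ and on $\mathbb{S}\llbracket X\rrbracket$ is described by one and the same formula in terms of the underlying coefficient functions.
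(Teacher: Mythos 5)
Your argument is correct, but it takes a more hands-on route than the paper's. The paper's proof is a one-liner: the identity on $\mathfrak{M} \times X^{\mathbb{N}}$ is an order-preserving embedding of $(\mathfrak{M} \times X^{\mathbb{N}}, \prec_{\mathfrak{S}})$ into the lexicographic power $(\mathfrak{M} \times X^{\mathbb{N}}, \prec_{\tmop{lex}})$ with prevalence on $X^{\mathbb{N}}$ --- the ordering for which the algebra of Noetherian series is exactly $\mathbb{S} \llbracket X \rrbracket$ --- and Corollary~\ref{cor-Noetherian-subgroup} then produces the inclusion as a strongly linear embedding of algebras in one stroke. You instead verify everything by hand. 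Your slice computation (that $\prec_{\mathfrak{S}}$ restricted to $\mathfrak{M} X^k$ is $\prec$) together with the well-orderedness of $(X^{\mathbb{N}}, \succ)$ is precisely the content of the statement that $\prec_{\tmop{lex}}$ extends $\prec_{\mathfrak{S}}$, so that $\succ_{\mathfrak{S}}$-Noetherianity of a support implies $\succ_{\tmop{lex}}$-Noetherianity; and your coefficientwise check of the Cauchy product merely re-proves, in this special case, the multiplicativity clause of Proposition~\ref{prop-strongly-linear-extension}. What your write-up buys is an explicit description of the map and of why each coefficient $P_k$ lands in $\mathbb{S}$; what it omits, and what the paper's route gives for free and relies on later, is that the inclusion is \emph{strongly} linear, i.e.\ commutes with infinite sums. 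That does follow immediately from your setup (a summable family for the finer support condition of $\mathbb{S} \llbracket X \rrbracket_{\mathfrak{S}}$ is summable for the weaker condition of $\mathbb{S} \llbracket X \rrbracket$, the finiteness condition being identical), but it should be recorded if the lemma is to be used as in the paper.
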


\begin{proof}
  The identity is an embedding of $(\mathfrak{M} \times X^{\mathbb{N}},
  \prec_{\mathfrak{S}})$ into the lexicographic power $(\mathfrak{M} \times
  X^{\mathbb{N}}, \prec_{\tmop{lex}})$ with prevalence on $X^{\mathbb{N}}$, so
  Corollary~\ref{cor-Noetherian-subgroup} yields the inclusion.
\end{proof}

Under this inclusion, we have $\mathbb{S} \llbracket X
\rrbracket_{\mathfrak{S}} =\mathbb{S} \llbracket X \rrbracket$ if and only if
$\mathfrak{S}=\mathfrak{M}$ and $\mathbb{S} \llbracket X
\rrbracket_{\mathfrak{S}} =\mathbb{S} [X]$ if and only if~$\mathfrak{S}=
\varnothing$. In the divisible case, this generalises as follows:

\begin{lemma}
  \label{lem-increasing-cuts}Given a final segment $\mathfrak{T}$ of
  $\mathfrak{M}$, we have
  \[ \mathfrak{S} \subsetneq \mathfrak{T} \Longleftrightarrow \mathbb{S}
     \llbracket X \rrbracket_{\mathfrak{S}} \subsetneq \mathbb{S} \llbracket X
     \rrbracket_{\mathfrak{T}} . \]
\end{lemma}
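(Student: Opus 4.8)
The plan is to prove the two implications separately, using the already-established inclusions $\mathbb{S}\llbracket X\rrbracket_{\mathfrak{S}}\hookrightarrow\mathbb{S}\llbracket X\rrbracket$ and reducing everything to the orderings $\prec_{\mathfrak{S}}$ and $\prec_{\mathfrak{T}}$ on $\mathfrak{M}\times X^{\mathbb{N}}$. The right-to-left direction is the easy one: if $\mathbb{S}\llbracket X\rrbracket_{\mathfrak{S}}\subsetneq\mathbb{S}\llbracket X\rrbracket_{\mathfrak{T}}$, then in particular $\mathfrak{S}\neq\mathfrak{T}$, and since any final segment contained in another is either equal or strictly smaller, $\mathfrak{S}\subsetneq\mathfrak{T}$ will follow once I rule out the incomparable case; but two final segments of a linear order are always comparable, so $\mathfrak{S}\subsetneq\mathfrak{T}$ or $\mathfrak{T}\subsetneq\mathfrak{S}$, and the latter would give the reverse strict inclusion by the forward direction (or directly), a contradiction. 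So it really suffices to prove the forward implication together with its ``non-strict'' companion $\mathfrak{S}\subseteq\mathfrak{T}\Rightarrow\mathbb{S}\llbracket X\rrbracket_{\mathfrak{S}}\subseteq\mathbb{S}\llbracket X\rrbracket_{\mathfrak{T}}$.

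For the inclusion $\mathbb{S}\llbracket X\rrbracket_{\mathfrak{S}}\subseteq\mathbb{S}\llbracket X\rrbracket_{\mathfrak{T}}$ when $\mathfrak{S}\subseteq\mathfrak{T}$: I would first observe, straight from the definition \eqref{eq-negative-cone}, that $(\mathfrak{M}\times X^{\mathbb{Z}})^{\prec,\mathfrak{S}}\subseteq(\mathfrak{M}\times X^{\mathbb{Z}})^{\prec,\mathfrak{T}}$, because the condition ``$\exists\mathfrak{u}\in\mathfrak{S},\ \mathfrak{m}\preccurlyeq\mathfrak{u}^{-k}$'' only gets easier to satisfy when $\mathfrak{S}$ is enlarged. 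Hence $\mathfrak{m}X^{k}\prec_{\mathfrak{S}}\mathfrak{n}X^{k'}$ implies $\mathfrak{m}X^{k}\prec_{\mathfrak{T}}\mathfrak{n}X^{k'}$, so the identity map $\mathfrak{M}\times_{\mathfrak{S}}X^{\mathbb{N}}\to\mathfrak{M}\times_{\mathfrak{T}}X^{\mathbb{N}}$ is an embedding of ordered monoids. By Corollary~\ref{cor-Noetherian-subgroup} this extends to a strongly linear embedding of algebras $\mathbb{S}\llbracket X\rrbracket_{\mathfrak{S}}\hookrightarrow\mathbb{S}\llbracket X\rrbracket_{\mathfrak{T}}$, which is compatible with both embeddings into $\mathbb{S}\llbracket X\rrbracket$ (it is ``the identity on coefficients''); a subset of $\mathfrak{M}\times X^{\mathbb{N}}$ that is Noetherian for $\succ_{\mathfrak{S}}$ need not a priori be Noetherian for $\succ_{\mathfrak{T}}$, so the real content here is that the embedding goes the right way — and it does, precisely because $\prec_{\mathfrak{T}}$ refines $\prec_{\mathfrak{S}}$, so $\succ_{\mathfrak{T}}$-Noetherianity is implied by $\succ_{\mathfrak{S}}$-Noetherianity via Lemma~\ref{lem-Noeth-subsequence}.

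For strictness, suppose $\mathfrak{S}\subsetneq\mathfrak{T}$ and pick $\mathfrak{u}\in\mathfrak{T}\setminus\mathfrak{S}$. Since $\mathfrak{S}$ is a final segment and $\mathfrak{u}\notin\mathfrak{S}$, every element of $\mathfrak{S}$ is $\succ\mathfrak{u}$, i.e. $\mathfrak{u}\prec\mathfrak{S}$. I claim the monomial $\mathfrak{u}^{-1}X$ lies in $\mathbb{S}\llbracket X\rrbracket_{\mathfrak{T}}$ but not in $\mathbb{S}\llbracket X\rrbracket_{\mathfrak{S}}$. Indeed $\mathfrak{u}\in\mathfrak{T}$ witnesses $\mathfrak{u}^{-1}\preccurlyeq\mathfrak{u}^{-1}$ with $k=1$, so $\mathfrak{u}^{-1}X\prec_{\mathfrak{T}}1$, hence it is a legitimate monomial of $\mathbb{S}\llbracket X\rrbracket_{\mathfrak{T}}$; on the other hand, for any $\mathfrak{v}\in\mathfrak{S}$ we have $\mathfrak{v}\succ\mathfrak{u}$, so $\mathfrak{v}^{-1}\prec\mathfrak{u}^{-1}$, i.e. $\mathfrak{u}^{-1}\npreccurlyeq\mathfrak{v}^{-1}=\mathfrak{v}^{-k}$ for $k=1$, so the defining condition of \eqref{eq-negative-cone} fails and $\mathfrak{u}^{-1}X$ is not a monomial of $\mathfrak{M}\times_{\mathfrak{S}}X^{\mathbb{N}}$ at all. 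Thus the single-term series $\mathfrak{u}^{-1}X$ separates the two algebras, giving the strict inclusion. The main obstacle, if any, is the bookkeeping in the non-strict direction — making sure the extended embedding is genuinely the ``coefficientwise identity'' and that Noetherianity transfers in the correct direction — but this is routine once one notes that $\prec_{\mathfrak{T}}$ extends $\prec_{\mathfrak{S}}$ as a partial order on the same underlying monoid.
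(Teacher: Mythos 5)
Your reduction of the right-to-left implication to the forward one (via the fact that final segments of $\mathfrak{M}$ are linearly ordered by inclusion) and your proof of the non-strict inclusion $\mathfrak{S}\subseteq\mathfrak{T}\Rightarrow\mathbb{S}\llbracket X\rrbracket_{\mathfrak{S}}\subseteq\mathbb{S}\llbracket X\rrbracket_{\mathfrak{T}}$ (the cone $(\mathfrak{M}\times X^{\mathbb{Z}})^{\prec,\mathfrak{S}}$ grows with $\mathfrak{S}$, so the identity is an embedding of ordered monoids and Corollary~\ref{cor-Noetherian-subgroup} applies) are both correct and are exactly what the paper does.

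Your witness for strictness, however, does not work. All of the algebras $\mathbb{S}\llbracket X\rrbracket_{\mathfrak{S}}=\mathbf{K}\llbracket\mathfrak{M}\times_{\mathfrak{S}}X^{\mathbb{N}}\rrbracket$ share the \emph{same} underlying monomial monoid $\mathfrak{M}\times X^{\mathbb{N}}$; the final segment $\mathfrak{S}$ only changes the partial ordering $\prec_{\mathfrak{S}}$, and hence which \emph{supports} count as Noetherian. The condition ``$\exists\mathfrak{u}\in\mathfrak{S},\ \mathfrak{m}\preccurlyeq\mathfrak{u}^{-k}$'' in (\ref{eq-negative-cone}) decides whether $\mathfrak{m}X^{k}\prec_{\mathfrak{S}}1$, not whether $\mathfrak{m}X^{k}$ is a monomial. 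Since a singleton support is Noetherian for every partial order, $\mathfrak{u}^{-1}X$ lies in $\mathbb{S}\llbracket X\rrbracket_{\mathfrak{S}}$ for \emph{every} final segment $\mathfrak{S}$ --- it already lies in $\mathbb{S}[X]=\mathbb{S}\llbracket X\rrbracket_{\varnothing}$ --- so it cannot separate the two algebras. To get strictness you must exhibit an \emph{infinite} support that is Noetherian for $\succ_{\mathfrak{T}}$ but not for $\succ_{\mathfrak{S}}$. The paper takes $P=\sum_{k\in\mathbb{N}}\mathfrak{u}^{-k}X^{k}$ with $\mathfrak{u}\in\mathfrak{T}\setminus\mathfrak{S}$ (so $\mathfrak{u}\prec\mathfrak{S}$): since $\mathfrak{u}^{-k}(\mathfrak{u}^{-(k+1)})^{-1}=\mathfrak{u}\in\mathfrak{T}$, the support of $P$ is a strictly $\prec_{\mathfrak{T}}$-decreasing chain, hence Noetherian for $\succ_{\mathfrak{T}}$; whereas for $k<l$ the relation $\mathfrak{u}^{-l}X^{l}\prec_{\mathfrak{S}}\mathfrak{u}^{-k}X^{k}$ would require some $\mathfrak{v}\in\mathfrak{S}$ with $\mathfrak{v}\preccurlyeq\mathfrak{u}$, which is impossible, so the support is an infinite $\prec_{\mathfrak{S}}$-antichain and $P\notin\mathbb{S}\llbracket X\rrbracket_{\mathfrak{S}}$.
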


\begin{proof}
  Assume that $\mathfrak{S} \subsetneq \mathfrak{T}$. Then the identity $(\mathfrak{M} \times X^{\mathbb{N}}, \prec_{\mathfrak{S}})
  \longrightarrow (\mathfrak{M} \times X^{\mathbb{N}}, \prec_{\mathfrak{T}})$ is an
  embedding,
  whence $\mathbb{S} \llbracket X \rrbracket_{\mathfrak{S}} \subseteq
  \mathbb{S} \llbracket X \rrbracket_{\mathfrak{T}}$ by
  Corollary~\ref{cor-Noetherian-subgroup}. Now let $\mathfrak{u} \in \mathfrak{T}
  \setminus \mathfrak{S}$, so $\mathfrak{u} \prec \mathfrak{S}$. We claim that
  the power series $P \assign \sum_{k \in \mathbb{N}} \mathfrak{u}^{- k} X^k$
  lies in $\mathbb{S} \llbracket X \rrbracket_{\mathfrak{T}} \setminus
  \mathbb{S} \llbracket X \rrbracket_{\mathfrak{S}}$. Indeed, we have
  \[ \mathfrak{u}^{- k}  (\mathfrak{u}^{- (k + 1)})^{- 1} = u \in
     \mathfrak{T}, \text{\qquad whereas\qquad$\mathfrak{u}^{- k} 
     (\mathfrak{u}^{- (k + 1)})^{- 1} =\mathfrak{u} \prec \mathfrak{S}$} . \]
  Thus $\mathfrak{u}^{- k} X^k \succ_{\mathfrak{T}} \mathfrak{u}^{- (k + 1)}
  X^{k + 1}$, but the same terms are not comparable for
  $\prec_{\mathfrak{S}}$. Hence the support of $P$ is Noetherian for
  $\succ_{\mathfrak{T}}$ but not for $\succ_{\mathfrak{S}}$, i.e. $P \in
  \mathbb{S} \llbracket X \rrbracket_{\mathfrak{T}} \setminus \mathbb{S}
  \llbracket X \rrbracket_{\mathfrak{S}}$. Recall that inclusion is a linear
  ordering on the collection of final segments of~$\mathfrak{M}$, so this
  concludes the proof.
\end{proof}

The main feature of $\mathbb{S} \llbracket X \rrbracket_{\mathfrak{S}}$ is
that its elements can be evaluated at series $\delta$ in extensions of
$\mathbb{S}$ such that $\delta \prec \mathfrak{S}$.

\begin{proposition}
  \label{prop-cut-evaluation}Let $\mathfrak{N} \supseteq \mathfrak{M}$ be an
  Abelian, linearly ordered group extension and write $\mathbb{T} \assign
  \mathbf{K} \llbracket \mathfrak{N} \rrbracket$, so we have a natural
  inclusion $\mathbb{S} \subseteq \mathbb{T}$. Let $\mathfrak{I}$ be a
  Noetherian subset of $(\mathfrak{M} \times X^{\mathbb{N}},
  \succ_{\mathfrak{S}})$ and let $\delta \in \mathbb{T}$ with $\delta \prec
  \mathfrak{S}$. Then the family $(\mathfrak{m} \delta^k)_{\mathfrak{m}X^k \in
  \mathfrak{I}}$ is summable in $\mathbb{T}$.
\end{proposition}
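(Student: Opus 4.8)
The plan is to reduce, by means of the multiplicative decomposition (\ref{eq-mult-dec}) of $\delta$, to the case where $\delta$ is a single monomial of $\mathfrak{N}$; this isolates the only real difficulty, namely that $\delta$ is not assumed infinitesimal, so summability cannot be read off directly from Proposition~\ref{prop-Neumann} or Lemma~\ref{lem-Neumann-series}. If $\delta = 0$, the family is, up to zeroes, $(\mathfrak{m})_{\mathfrak{m}X^0 \in \mathfrak{I}}$, which is summable because $\{ \mathfrak{m} \in \mathfrak{M} \suchthat \mathfrak{m}X^0 \in \mathfrak{I} \}$ is Noetherian in $(\mathfrak{M}, \succ)$ --- on the slice $X^0$ the ordering $\prec_{\mathfrak{S}}$ restricts to $\prec$. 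Otherwise put $\mathfrak{d} \assign \mathfrak{d}_{\delta}$, so that $\mathfrak{d} \asymp \delta \prec \mathfrak{S}$, and write $\delta = c_{\delta}\, \mathfrak{d}\, (1 + \varepsilon_{\delta})$ with $\varepsilon_{\delta} \prec 1$; then $\mathfrak{m} \delta^k = (\mathfrak{m} \mathfrak{d}^k) \cdot (c_{\delta}(1 + \varepsilon_{\delta}))^k$ for every $\mathfrak{m}X^k$. The family $\bigl( (c_{\delta}(1 + \varepsilon_{\delta}))^k \bigr)_{\mathfrak{m}X^k \in \mathfrak{I}}$ is weakly summable, since its members have the same supports as $(1 + \varepsilon_{\delta})^k = \sum_{i \leqslant k} \binom{k}{i} \varepsilon_{\delta}^i$ and $\bigcup_{k} \tmop{supp}\bigl( (1 + \varepsilon_{\delta})^k \bigr) \subseteq (\tmop{supp} \varepsilon_{\delta})^{\infty}$, which is Noetherian in $(\mathfrak{N}, \succ)$ by Proposition~\ref{prop-Neumann}. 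By Proposition~\ref{prop-sum-weaksum} it therefore suffices to prove that $(\mathfrak{m} \mathfrak{d}^k)_{\mathfrak{m}X^k \in \mathfrak{I}}$ is summable in $\mathbb{T}$.

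To this end I would introduce the homomorphism of abelian groups $\phi \of \mathfrak{M} \times X^{\mathbb{Z}} \longrightarrow \mathfrak{N}$ with $\phi \upharpoonleft \mathfrak{M} = \tmop{id}$ and $\phi (X) = \mathfrak{d}$, so that $\phi (\mathfrak{m}X^k) = \mathfrak{m} \mathfrak{d}^k$. The crucial point --- and the only place where the hypothesis $\delta \prec \mathfrak{S}$ is used --- is that $\phi$ sends the strictly positive cone $(\mathfrak{M} \times X^{\mathbb{Z}})^{\prec, \mathfrak{S}}$ of (\ref{eq-negative-cone}) into $\mathfrak{N}^{\prec}$: for $\mathfrak{p}X^j$ with $j > 0$ and $\mathfrak{p} \preccurlyeq \mathfrak{u}^{-j}$ for some $\mathfrak{u} \in \mathfrak{S}$, one has $\phi (\mathfrak{p}X^j) = \mathfrak{p} \mathfrak{d}^j \preccurlyeq (\mathfrak{u}^{-1} \mathfrak{d})^j \prec 1$ because $\mathfrak{d} \prec \mathfrak{u}$, and for $\mathfrak{p}X^0$ with $\mathfrak{p} \prec 1$ it is immediate. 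Consequently $\phi$ is order-preserving from $(\mathfrak{M} \times_{\mathfrak{S}} X^{\mathbb{N}}, \prec_{\mathfrak{S}})$ to $(\mathfrak{N}, \prec)$, since $\mathfrak{p} \prec_{\mathfrak{S}} \mathfrak{q}$ gives $\phi (\mathfrak{p}) \phi (\mathfrak{q})^{-1} = \phi (\mathfrak{p} \mathfrak{q}^{-1}) \prec 1$.

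Finally, $(\mathfrak{m} \mathfrak{d}^k)_{\mathfrak{m}X^k \in \mathfrak{I}} = (\phi (\mathfrak{p}))_{\mathfrak{p} \in \mathfrak{I}}$ is a family of single monomials of $\mathbb{T}$, so its summability is equivalent to $\{ \phi (\mathfrak{p}) \suchthat \mathfrak{p} \in \mathfrak{I} \}$ being Noetherian in $(\mathfrak{N}, \succ)$ together with each $\mathfrak{n} \in \mathfrak{N}$ arising as $\phi (\mathfrak{p})$ for only finitely many $\mathfrak{p} \in \mathfrak{I}$. Were one of these to fail then, $\mathfrak{N}$ being linearly ordered, there would be an injective sequence $(\mathfrak{p}_j)_{j \in \mathbb{N}}$ in $\mathfrak{I}$ with $(\phi (\mathfrak{p}_j))_j$ strictly $\prec$-increasing (first case) or constant (second case); by Lemma~\ref{lem-Noeth-subsequence} and the Noetherianity of $\mathfrak{I}$ one could pass to a subsequence with $\mathfrak{p}_0 \succ_{\mathfrak{S}} \mathfrak{p}_1 \succ_{\mathfrak{S}} \cdots$, and applying the order-preserving $\phi$ would force $\phi (\mathfrak{p}_0) \succ \phi (\mathfrak{p}_1) \succ \cdots$ in $\mathfrak{N}$, which is excluded in both cases. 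This proves the proposition. I expect the verification that $\phi$ carries the cone (\ref{eq-negative-cone}) into the infinitesimals of $\mathfrak{N}$ to be the one genuinely load-bearing step; the remainder is bookkeeping with summability and with the definition of $\prec_{\mathfrak{S}}$.
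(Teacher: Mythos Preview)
Your proof is correct and follows essentially the same approach as the paper's: both reduce from $\delta$ to its dominant monomial $\mathfrak{d}$ (you do this up front via Proposition~\ref{prop-sum-weaksum}, the paper does it at the end via Proposition~\ref{prop-adding-finite-factors}), and both hinge on the computation that $\mathfrak{p}X^j$ lying in the cone~(\ref{eq-negative-cone}) forces $\mathfrak{p}\mathfrak{d}^j \prec 1$. The only cosmetic difference is that you package this computation as ``the evaluation homomorphism $\phi$ is strictly order-preserving'' and then argue by contradiction on Noetherianity of the image and finiteness of fibres, whereas the paper applies the bad-sequence criterion of Lemma~\ref{lem-summable-bad-sequence} directly, case-splitting on whether $k_i = k_j$ or $k_i < k_j$; the underlying inequality is identical.
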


\begin{proof}
  Write $\mathfrak{v}=\mathfrak{d}_{\delta}$. Let $(\mathfrak{m}_i X^{k_i})_{i
  \in \mathbb{N}}$ be an injective sequence in $\mathfrak{I}$. Since
  $\mathfrak{I}$ is Noetherian, there are $i, j \in \mathbb{N}$ with $i < j$
  and $\mathfrak{m}_j X^{k_j} \prec_{\mathfrak{S}} \mathfrak{m}_i X^{k_i}$. If
  $k_i = k_j$, then this means that $\mathfrak{m}_j \prec \mathfrak{m}_i$, so
  $\mathfrak{m}_j \mathfrak{v}^{k_j} \prec \mathfrak{m}_i \mathfrak{v}^{k_i}$.
  Otherwise, we must have $k_i < k_j$, and $\mathfrak{m}_j \mathfrak{m}_i^{-
  1} \preccurlyeq \mathfrak{u}^{k_i - k_j}$ for a $\mathfrak{u} \in
  \mathfrak{S}$. Since $\mathfrak{v} \prec \mathfrak{S}$, we have
  $\mathfrak{m}_j \mathfrak{m}_i^{- 1} \prec \mathfrak{v}^{k_i - k_j}$, so
  $\mathfrak{m}_j \mathfrak{v}^{k_j} \prec \mathfrak{m}_i \mathfrak{v}^{k_i}$.
  We conclude with Lemma~\ref{lem-summable-bad-sequence} that
  $(\mathfrak{m}\mathfrak{v}^k)_{\mathfrak{m}X^k \in \mathfrak{I}}$ is
  summable. Since $\mathfrak{I}$ is Noetherian, the set $\{k \in \mathbb{N}
  \suchthat \exists \mathfrak{m} \in \mathfrak{M}, \mathfrak{m}X^k \in
  \mathfrak{I}\}$ must be well-ordered in $(\mathbb{Z}, <)$. It follows by
  Proposition~\ref{prop-adding-finite-factors} that $(\mathfrak{m}
  \delta^k)_{\mathfrak{m}X^k \in \mathfrak{I}}$ is summable.
\end{proof}

\begin{proposition}
  \label{prop-Noeth-evaluation}In the same notations as above, for all $\delta
  \in \mathbb{T}^{\times}$ with $\delta \prec \mathfrak{S}$, the function
  $\mathfrak{M} \times X^{\mathbb{N}} \longrightarrow \mathbb{T} \: ;
  \mathfrak{m}X^k \mapsto \mathfrak{m} \delta^k$ extends uniquely into a
  strongly linear morphism of algebras $\tmop{ev}_{\delta} \of \mathbb{S}
  \llbracket X \rrbracket_{\mathfrak{S}} \longrightarrow \mathbb{T}$.
\end{proposition}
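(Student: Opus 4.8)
The plan is to realise $\tmop{ev}_{\delta}$ as the unique strong linear extension furnished by Proposition~\ref{prop-strongly-linear-extension}, so that essentially all the analytic work has already been carried out in Proposition~\ref{prop-cut-evaluation}. Write $\Phi$ for the map $\mathfrak{M} \times X^{\mathbb{N}} \longrightarrow \mathbb{T}$, $\mathfrak{m}X^k \mapsto \mathfrak{m} \delta^k$. First I would check that $\Phi$ is a Noetherian function in the sense of the definition preceding Proposition~\ref{prop-strongly-linear-extension}, where the domain is given the monoid structure of $\mathfrak{M} \times_{\mathfrak{S}} X^{\mathbb{N}}$: given any Noetherian subset $\mathfrak{I}$ of $(\mathfrak{M} \times X^{\mathbb{N}}, \succ_{\mathfrak{S}})$, we need the family $(\Phi (\mathfrak{m}X^k))_{\mathfrak{m}X^k \in \mathfrak{I}} = (\mathfrak{m} \delta^k)_{\mathfrak{m}X^k \in \mathfrak{I}}$ to be summable in $\mathbb{T}$. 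But this is precisely the content of Proposition~\ref{prop-cut-evaluation}, using the hypothesis $\delta \prec \mathfrak{S}$ (and $\delta \neq 0$, although $0 \prec \mathfrak{S}$ would also work since $\mathfrak{S}$ is a final segment of the positive cone... in any case the stated hypothesis covers it).

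By Proposition~\ref{prop-strongly-linear-extension}, $\Phi$ then extends uniquely to a strongly linear map $\hat{\Phi} =: \tmop{ev}_{\delta} \of \mathbb{S} \llbracket X \rrbracket_{\mathfrak{S}} \longrightarrow \mathbb{T}$, which gives both the existence and the uniqueness assertions. It remains to see that $\hat{\Phi}$ is a morphism of algebras, for which (again by Proposition~\ref{prop-strongly-linear-extension}) it suffices to check that $\Phi$ is a morphism of monoids from $(\mathfrak{M} \times X^{\mathbb{N}}, \cdot, 1)$ to $(\mathbb{T}, \cdot, 1)$. This is immediate: $\Phi (1 \cdot X^0) = 1 \cdot \delta^0 = 1$, and for $\mathfrak{m}X^k, \mathfrak{n}X^l \in \mathfrak{M} \times X^{\mathbb{N}}$ we have
\[ \Phi ((\mathfrak{m}X^k)(\mathfrak{n}X^l)) = \Phi (\mathfrak{m}\mathfrak{n}\,X^{k + l}) = \mathfrak{m}\mathfrak{n}\,\delta^{k + l} = (\mathfrak{m} \delta^k)(\mathfrak{n} \delta^l) = \Phi (\mathfrak{m}X^k) \Phi (\mathfrak{n}X^l) . \]

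I do not expect a genuine obstacle here: the proposition is a repackaging of Proposition~\ref{prop-cut-evaluation} through the universal property of the strong linear extension. The only point requiring a little care is that $\mathbb{S} \llbracket X \rrbracket_{\mathfrak{S}}$ is the algebra of Noetherian series over the ordered monoid $\mathfrak{M} \times_{\mathfrak{S}} X^{\mathbb{N}}$, so one must invoke Proposition~\ref{prop-strongly-linear-extension} with \emph{this} monoid and its ordering $\prec_{\mathfrak{S}}$, rather than with $\mathfrak{M} \times X^{\mathbb{N}}$ under some other ordering; once this bookkeeping is in place the argument is a one-liner.
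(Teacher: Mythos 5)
Your proposal is correct and follows exactly the paper's own route: the paper's proof is the one-liner ``the function preserves products, so the result follows from Proposition~\ref{prop-strongly-linear-extension}'', with the Noetherianity of the map being precisely the content of Proposition~\ref{prop-cut-evaluation}. Your additional bookkeeping (spelling out the monoid-morphism check and the fact that the relevant ordered monoid is $\mathfrak{M} \times_{\mathfrak{S}} X^{\mathbb{N}}$) is accurate and merely makes explicit what the paper leaves implicit.
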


\begin{proof}
  The function preserves products, so the result follows from
  Proposition~\ref{prop-strongly-linear-extension}.
\end{proof}

\begin{proposition}
  \label{prop-conv-duality}Assume that $\mathfrak{M}$ is divisible. Let
  $\mathfrak{N} \supseteq \mathfrak{M}$ be an Abelian linearly ordered group
  extension and set $\mathbb{T} \assign \mathbf{K} \llbracket \mathfrak{N}
  \rrbracket$. For $P = \sum_{k \in \mathbb{N}} \left(
  \sum_{\mathfrak{m} \in \mathfrak{M}} P_{k, \mathfrak{m}} \mathfrak{m}
  \right) X^k$ in $\mathbb{S} \llbracket X \rrbracket$ and $\delta \in
  \mathbb{T}^{\times}$, we have
  \[ P \in \mathbb{S} \llbracket X \rrbracket_{\{\mathfrak{m} \in \mathfrak{M}
     \suchthat \mathfrak{m} \succ \delta\}} \Longleftrightarrow (P_{k,
     \mathfrak{m}} \mathfrak{m} \delta^k)_{\mathfrak{m}X^k \in \mathfrak{M}
     \cdot X^{\mathbb{Z}}} \text{ is summable in $\mathbb{T}$.} \]
\end{proposition}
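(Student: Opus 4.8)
The plan is to prove the two implications separately after one small reduction. I would first set $\mathfrak{S} \assign \{\mathfrak{m} \in \mathfrak{M} : \mathfrak{m} \succ \delta\}$, which is a final segment of $(\mathfrak{M},\prec)$ with $\delta \prec \mathfrak{S}$, so that $\mathbb{S}\llbracket X\rrbracket_{\mathfrak{S}}$, Proposition~\ref{prop-cut-evaluation} and the evaluation morphism $\tmop{ev}_{\delta}$ of Proposition~\ref{prop-Noeth-evaluation} are all available. Since $P \in \mathbb{S}\llbracket X\rrbracket$ has no negative powers of $X$, every term indexed by $\mathfrak{m}X^k$ with $k<0$ vanishes, so summability of $(P_{k,\mathfrak{m}}\mathfrak{m}\delta^k)_{\mathfrak{m}X^k \in \mathfrak{M}\cdot X^{\mathbb{Z}}}$ in $\mathbb{T}$ is equivalent to summability of the subfamily indexed by $\mathfrak{P} \assign \{\mathfrak{m}X^k \in \mathfrak{M}\times X^{\mathbb{N}} : P_{k,\mathfrak{m}}\neq 0\}$, and $\mathfrak{P}$ is exactly $\tmop{supp} P$ when $P$ is regarded as an element of $\mathbb{S}\llbracket X\rrbracket_{\mathfrak{S}}$.

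For the direction $\Rightarrow$, assuming $P \in \mathbb{S}\llbracket X\rrbracket_{\mathfrak{S}}$, I would apply the strongly linear morphism $\tmop{ev}_{\delta} \of \mathbb{S}\llbracket X\rrbracket_{\mathfrak{S}} \to \mathbb{T}$ to the family of terms $(P_{k,\mathfrak{m}}\mathfrak{m}X^k)_{\mathfrak{m}X^k}$ of $P$, which is summable in $\mathbb{S}\llbracket X\rrbracket_{\mathfrak{S}}$ with sum $P$. Since $\tmop{ev}_{\delta}(\mathfrak{m}X^k)=\mathfrak{m}\delta^k$, strong linearity gives that $(P_{k,\mathfrak{m}}\mathfrak{m}\delta^k)_{\mathfrak{m}X^k}$ is summable in $\mathbb{T}$. (Equivalently, one may feed the Noetherian set $\mathfrak{P}$ into Proposition~\ref{prop-cut-evaluation} and then reinstate the scalar factors $P_{k,\mathfrak{m}}$ and the vanishing terms, which affects neither the union of supports nor the fibres.)

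For the direction $\Leftarrow$, I would argue by contraposition. If $P \notin \mathbb{S}\llbracket X\rrbracket_{\mathfrak{S}}$ then $\mathfrak{P}$ is not a Noetherian subset of $(\mathfrak{M}\times X^{\mathbb{N}}, \succ_{\mathfrak{S}})$, so by Lemma~\ref{lem-Noeth-subsequence} there is a bad sequence $(\mathfrak{m}_i X^{k_i})_{i\in\mathbb{N}}$ in $\mathfrak{P}$ for $\succ_{\mathfrak{S}}$; it is automatically injective, and there is no $i<j$ with $\mathfrak{m}_i X^{k_i} \succcurlyeq_{\mathfrak{S}} \mathfrak{m}_j X^{k_j}$. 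I would then pass to a subsequence along which $k_0 \leqslant k_1 \leqslant \cdots$ (possible since $(\mathbb{N},\leqslant)$ is Noetherian and a subsequence of a bad sequence is bad). Writing $\mathfrak{v}\assign\mathfrak{d}_{\delta}$, the heart of the argument is the monomial comparison: for $\mathfrak{a},\mathfrak{b}\in\mathfrak{M}$ and $k\leqslant l$ in $\mathbb{N}$, if $\mathfrak{a}\mathfrak{v}^k\succ\mathfrak{b}\mathfrak{v}^l$ then $\mathfrak{a}X^k\succcurlyeq_{\mathfrak{S}}\mathfrak{b}X^l$. For $k=l$ this is just $\mathfrak{a}\succ\mathfrak{b} \Leftrightarrow \mathfrak{b}\mathfrak{a}^{-1}\prec 1$; for $k<l$, from $\mathfrak{a}\mathfrak{b}^{-1}\succ\mathfrak{v}^{l-k}$ I would use divisibility of $\mathfrak{M}$ to set $\mathfrak{w}\assign(\mathfrak{a}\mathfrak{b}^{-1})^{1/(l-k)}\in\mathfrak{M}$, observe that $\mathfrak{w}^{l-k}\succ\mathfrak{v}^{l-k}$ forces $\mathfrak{w}\succ\mathfrak{v}\asymp\delta$, hence $\mathfrak{w}\in\mathfrak{S}$, and note that $\mathfrak{b}\mathfrak{a}^{-1}=\mathfrak{w}^{-(l-k)}$ witnesses $\mathfrak{b}\mathfrak{a}^{-1}X^{l-k}\in(\mathfrak{M}\times X^{\mathbb{Z}})^{\prec,\mathfrak{S}}$. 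Taking the contrapositive, and using that $\mathfrak{N}$ is linearly ordered, the bad-sequence condition yields $\mathfrak{m}_i\mathfrak{v}^{k_i}\preccurlyeq\mathfrak{m}_j\mathfrak{v}^{k_j}$ for all $i<j$. Since $\mathbb{T}$ is a valued field, $\mathfrak{v}^{k_i}=\mathfrak{d}_{\delta^{k_i}}\in\tmop{supp}(\delta^{k_i})$, so $\mathfrak{m}_i\mathfrak{v}^{k_i}\in\tmop{supp}(P_{k_i,\mathfrak{m}_i}\mathfrak{m}_i\delta^{k_i})$; thus the injective index sequence $(\mathfrak{m}_i X^{k_i})_i$ together with the monomial choices $\mathfrak{m}_i\mathfrak{v}^{k_i}$ violates the summability criterion of Lemma~\ref{lem-summable-bad-sequence}, so $(P_{k,\mathfrak{m}}\mathfrak{m}\delta^k)_{\mathfrak{m}X^k}$ is not summable in $\mathbb{T}$.

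I expect the main obstacle to be exactly this monomial comparison. One cannot directly compare $\mathfrak{m}_i\mathfrak{v}^{k_i}$ with $\mathfrak{m}_j\mathfrak{v}^{k_j}$ for an arbitrary bad sequence — the comparison only goes through after the exponents have been made weakly increasing — and it is precisely at the step $k<l$ that divisibility of $\mathfrak{M}$ is used, in order to manufacture a monomial $\mathfrak{w}\in\mathfrak{S}$ certifying membership in the positive cone. The remaining steps are routine unwinding of the definitions of $\prec_{\mathfrak{S}}$ and of (weak) summability.
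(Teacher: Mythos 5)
Your proof is correct and follows essentially the same route as the paper's: the forward direction via the strongly linear evaluation morphism of Proposition~\ref{prop-Noeth-evaluation}, and the converse via a bad sequence with weakly increasing exponents, using divisibility of $\mathfrak{M}$ to extract a root lying in $\mathfrak{S}$ and then contradicting the summability criterion of Lemma~\ref{lem-summable-bad-sequence}. The only differences are presentational (contraposition instead of contradiction, and folding the constant-exponent case into the $k=l$ case of your monomial comparison).
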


\begin{proof}
  We write $P_k = \sum_{\mathfrak{m} \in \mathfrak{M}} P_{k, \mathfrak{m}}$
  for each $k \in \mathbb{N}$. If $P \in \mathbb{S} \llbracket X
  \rrbracket_{\{\mathfrak{m} \in \mathfrak{M} \suchthat \mathfrak{m} \succ
  \delta\}}$, then $(P_{k, \mathfrak{m}} \mathfrak{m}
  \delta^k)_{\mathfrak{m}X^k \in \mathfrak{M} \cdot X^{\mathbb{Z}}}$ is
  summable by Proposition~\ref{prop-Noeth-evaluation}. Assume conversely that $(P_{k,
  \mathfrak{m}} \mathfrak{m} \delta^k)_{\mathfrak{m}X^k \in \mathfrak{M} \cdot
  X^{\mathbb{Z}}}$ is summable. Write $\mathfrak{d} \assign
  \mathfrak{d}_{\delta}$ and $\mathfrak{T} \assign \{\mathfrak{m} \in
  \mathfrak{M} \suchthat \mathfrak{m} \succ \mathfrak{d}\}$. Assume for
  contradiction that the support of $P$ is not Noetherian in $(\mathfrak{M}
  \times_{\mathfrak{S}} X^{\mathbb{Z}}, \succ_{\mathfrak{T}})$. So there is a
  bad sequence $(\mathfrak{m}_i X^{k_i})_{i \in \mathbb{N}}$ with
  $\mathfrak{m}_i \in \tmop{supp} P_{k_i}$ for all $i \in \mathbb{N}$. If
  $(k_i)_{i \in \mathbb{N}}$ were constant, then the sequence
  $(\mathfrak{m}_i)_{i \in \mathbb{N}}$ would witness that $\tmop{supp} P_k$
  is not Noetherian in $\mathfrak{M}$. So we may assume that $(k_i)_{i \in
  \mathbb{N}}$ is strictly increasing. For all $i, j \in \mathbb{N}$ with $i <
  j$, we have $\mathfrak{m}_i X^{k_i} \nsucccurlyeq \mathfrak{m}_j X^{k_j}$.
  This implies that $\mathfrak{m}_j \mathfrak{m}_i^{- 1} \succ
  \mathfrak{T}^{k_i - k_j}$, whence $\mathfrak{m}_j \mathfrak{m}_i^{- 1} \in
  (\mathfrak{M} \setminus \mathfrak{T})^{k_i - k_j}$ by divisibility of
  $\mathfrak{M}$. Thus $\mathfrak{m}_j \mathfrak{m}_i^{- 1} \succcurlyeq
  \mathfrak{d}^{k_i - k_j}$. Now the family $(\mathfrak{m}_i \delta^{k_i})_{i
  \in \mathbb{N}}$ is summable. Therefore there are $i < j$ with
  $\mathfrak{m}_i \delta^{k_i} \succ \mathfrak{m}_j \delta^{k_j}$, whence
  $\mathfrak{m}_i \mathfrak{d}^{k_i} \succ \mathfrak{m}_j \mathfrak{d}^{k_j}$:
  a contradiction. \end{proof}

\begin{remark}
  We do not have $P \in \mathbb{S} \llbracket X \rrbracket_{\mathfrak{M}
  \setminus \tmop{Conv} (P)}$ in general. For instance if $\mathfrak{M}=
  x^{\mathbb{Q}}$ is a multiplicative copy of $(\mathbb{Q}, +, 0, <)$, then
  the series $P = \sum_{k \in \mathbb{N}} x^{- 2^k} X^k$ satisfies
  $\tmop{Conv} (P) =\mathbb{S}$ but $P \nin \mathbb{S} \llbracket X
  \rrbracket_{\varnothing} =\mathbb{S} [X]$.
\end{remark}

\subsection{Cut extensions of algebra
morphisms}\label{subsection-cut-extensions}

Fix a non-trivial, {\tmem{linearly}} ordered {\tmem{Abelian group}}
$\mathfrak{N}$ and write $\mathbb{T} \assign \mathbf{K} \llbracket
\mathfrak{N} \rrbracket$. Let $\triangle \of \mathbb{S} \longrightarrow
\mathbb{T}$ be a strongly linear morphism of algebras. Let $\mathfrak{T}
\subseteq \mathfrak{N}$ be a non-empty final segment, and write
\[ \triangle^{\ast} (\mathfrak{T}) \assign \{\mathfrak{m} \in \mathfrak{M}
   \suchthat \exists \mathfrak{n} \in \mathfrak{T}, \mathfrak{m} \succcurlyeq
   \mathfrak{d}_{\triangle (\mathfrak{n})} \} . \]
Then $\triangle^{\ast} (\mathfrak{T})$ is a final segment of $\mathfrak{M}$,
so we have orderings $\prec_{\mathfrak{T}}$ and $\prec_{\triangle^{\ast}
(\mathfrak{T})}$ on $\mathfrak{N} \times X^{\mathbb{N}}$ and $\mathfrak{M}
\times X^{\mathbb{N}}$ respectively, and two corresponding algebras of
Noetherian series $\mathbb{S} \llbracket X \rrbracket_{\triangle^{\ast}
(\mathfrak{T})}$ and $\mathbb{T} \llbracket X \rrbracket_{\mathfrak{T}}$.

Note that $\prec_{\mathfrak{T}}$ and $\prec_{\triangle^{\ast} (\mathfrak{T})}$
extend the orderings on $\mathfrak{M}$ and $\mathfrak{N}$ respectively, so
$\triangle$ is an embedding $\mathbb{S} \longrightarrow \mathbb{T} \llbracket
X \rrbracket_{\mathfrak{T}}$. Consider the function
\begin{eqnarray*}
  \overline{\triangle} \of \mathfrak{M} \times_{\triangle^{\ast}
  (\mathfrak{T})} X^{\mathbb{N}} & \longrightarrow & \mathbb{T} \llbracket X
  \rrbracket_{\mathfrak{T}}\\
  \mathfrak{m}X^k & \longmapsto & \triangle (\mathfrak{m}) X^k
\end{eqnarray*}
\begin{proposition}
  \label{prop-Noeth-ext-comp}The function $\overline{\triangle}$ is
  Noetherian.
\end{proposition}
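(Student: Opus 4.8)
The plan is to unfold the definition of a Noetherian function: fixing a subset $\mathfrak{I}$ of $(\mathfrak{M}\times X^{\mathbb{N}},\succ_{\triangle^{\ast}(\mathfrak{T})})$ that is Noetherian, I must show that the family $(\overline{\triangle}(\mathfrak{m}X^k))_{\mathfrak{m}X^k\in\mathfrak{I}}=(\triangle(\mathfrak{m})X^k)_{\mathfrak{m}X^k\in\mathfrak{I}}$ is summable in $\mathbb{T}\llbracket X\rrbracket_{\mathfrak{T}}$. Since $\operatorname{supp}(\triangle(\mathfrak{m})X^k)=(\operatorname{supp}\triangle(\mathfrak{m}))X^k$ lives in the single slice $\mathfrak{N}X^k$, the exponent $k$ is recoverable from any monomial of the support, so it is natural to slice $\mathfrak{I}$ accordingly: for $k\in\mathbb{N}$ set $\mathfrak{I}_k\assign\{\mathfrak{m}\in\mathfrak{M}:\mathfrak{m}X^k\in\mathfrak{I}\}$. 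Because the restriction of $\prec_{\triangle^{\ast}(\mathfrak{T})}$ to $\mathfrak{M}X^k$ (and of $\prec_{\mathfrak{T}}$ to $\mathfrak{N}X^k$) agrees with $\prec$, each $\mathfrak{I}_k$ is Noetherian in $(\mathfrak{M},\succ)$; as $\triangle$ restricted to $\mathfrak{M}$ is Noetherian (it is strongly linear), the family $(\triangle(\mathfrak{m}))_{\mathfrak{m}\in\mathfrak{I}_k}$ is summable in $\mathbb{T}$, so $\mathfrak{J}_k\assign\bigcup_{\mathfrak{m}\in\mathfrak{I}_k}\operatorname{supp}\triangle(\mathfrak{m})$ is Noetherian in $(\mathfrak{N},\succ)$ and every $\mathfrak{n}\in\mathfrak{J}_k$ lies in only finitely many $\operatorname{supp}\triangle(\mathfrak{m})$. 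The latter gives the pointwise-finiteness clause of summability at once, so the whole problem reduces to showing that $\mathfrak{J}\assign\bigcup_{k\in\mathbb{N}}\mathfrak{J}_kX^k$ is Noetherian in $(\mathfrak{N}\times X^{\mathbb{N}},\succ_{\mathfrak{T}})$.

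For this reduction I first record a general fact, to be proved separately: every strongly linear morphism of algebras $\triangle\colon\mathbb{S}\to\mathbb{T}$ is automatically dominance-monotone on monomials, i.e.\ $\mathfrak{m}\preccurlyeq\mathfrak{n}$ in $\mathfrak{M}$ implies $\triangle(\mathfrak{m})\preccurlyeq\triangle(\mathfrak{n})$. It suffices to treat $\mathfrak{m}\prec 1$: then $(\mathfrak{m}^n)_{n\in\mathbb{N}}$ is summable, hence so is $(\triangle(\mathfrak{m})^n)_{n\in\mathbb{N}}$, and summability rules out both $\mathfrak{d}_{\triangle(\mathfrak{m})}\succ 1$ (otherwise $(\mathfrak{d}_{\triangle(\mathfrak{m})}^n)_n$ is an infinite ascending chain in the union of supports) and $\mathfrak{d}_{\triangle(\mathfrak{m})}=1$ (otherwise the monomial $1$ lies in $\operatorname{supp}\triangle(\mathfrak{m})^n$ for every $n$), leaving $\triangle(\mathfrak{m})\prec 1$; the general case then follows from $\triangle(\mathfrak{m}\mathfrak{n}^{-1})=\triangle(\mathfrak{m})\triangle(\mathfrak{n})^{-1}$. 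In particular $\mathfrak{d}_{\triangle}$ is multiplicative and monotone on $\mathfrak{M}$, which is precisely what makes the pull-back cut $\triangle^{\ast}(\mathfrak{T})$ behave well.

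Next I would use Noetherianity of $\mathfrak{I}$ to replace it by finitely many pieces having a single maximal monomial: $\mathfrak{I}$ has finitely many $\prec_{\triangle^{\ast}(\mathfrak{T})}$-maximal elements $\mathfrak{p}_jX^{l_j}$, every $\mathfrak{m}X^k\in\mathfrak{I}$ lies $\preccurlyeq_{\triangle^{\ast}(\mathfrak{T})}$ one of them, and summability is stable under finite partitions, so I may assume $\mathfrak{m}X^k\preccurlyeq_{\triangle^{\ast}(\mathfrak{T})}\mathfrak{p}X^l$ for a fixed $\mathfrak{p}X^l$. Unwinding the definition of $\prec_{\triangle^{\ast}(\mathfrak{T})}$ forces $k\geqslant l$ always, and for $k>l$ it yields some $\mathfrak{u}\in\triangle^{\ast}(\mathfrak{T})$ with $\mathfrak{m}\preccurlyeq\mathfrak{p}\,\mathfrak{u}^{-(k-l)}$; writing $\mathfrak{m}=\mathfrak{p}\cdot(\mathfrak{m}\mathfrak{p}^{-1})$ and applying the dominance-monotone algebra morphism $\triangle$ gives $\operatorname{supp}\triangle(\mathfrak{m})\subseteq(\operatorname{supp}\triangle(\mathfrak{p}))\cdot\{\mathfrak{n}\in\mathfrak{N}:\mathfrak{n}\preccurlyeq\mathfrak{d}_{\triangle(\mathfrak{u})}^{-(k-l)}\}$, where $\mathfrak{d}_{\triangle(\mathfrak{u})}\in\mathfrak{T}$ by construction of $\triangle^{\ast}(\mathfrak{T})$, so that $\mathfrak{d}_{\triangle(\mathfrak{u})}^{-1}X\prec_{\mathfrak{T}}1$. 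With this growth bound in hand I would conclude in one of two equivalent ways: (i) verify the hypotheses of Lemma~\ref{lem-well-based-fin-family} with $f(\mathfrak{m}X^k)=k-l$, taking $\mathfrak{S}'=(\operatorname{supp}\triangle(\mathfrak{p}))X^l$ and $\mathfrak{T}'$ a suitable Noetherian segment below $1$ assembled from the monomials $\mathfrak{d}_{\triangle(\mathfrak{u})}^{-1}X$, the bounded-exponent subfamilies being summable by the slicing step; or (ii) argue by contradiction with a bad sequence $(\mathfrak{n}_iX^{k_i})$ in $\mathfrak{J}$ as in Lemma~\ref{lem-summable-bad-sequence}, passing via Lemma~\ref{lem-Noeth-subsequence} to a subsequence along which $(\mathfrak{m}_iX^{k_i})$ is $\preccurlyeq_{\triangle^{\ast}(\mathfrak{T})}$-decreasing and $(k_i)$ is constant (handled since $\mathfrak{J}_k$ is Noetherian) or strictly increasing (where the bound forces $\mathfrak{n}_iX^{k_i}$ and $\mathfrak{n}_lX^{k_l}$ to be $\succ_{\mathfrak{T}}$-comparable for some $i<l$).

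I expect the hard part to be exactly this last step: transporting the membership constraint $\mathfrak{m}X^k\in\mathfrak{I}$ through $\triangle$ in a way that controls the \emph{whole} support $\operatorname{supp}\triangle(\mathfrak{m})$, not merely its dominant monomial, uniformly as $k\to\infty$, so that the tails of the various $\triangle(\mathfrak{m})$ fit into a single Neumann-type product $\mathfrak{T}'^{\,k}\cdot\mathfrak{S}'$ (equivalently, cannot conspire into an infinite ascending chain or antichain in $(\mathfrak{N}\times X^{\mathbb{N}},\succ_{\mathfrak{T}})$). This is the phenomenon the cut $\triangle^{\ast}(\mathfrak{T})$ was engineered to tame, and the delicate bookkeeping is to keep the segment $\mathfrak{T}'$ Noetherian and strictly below $1$ without assuming divisibility of $\mathfrak{N}$.
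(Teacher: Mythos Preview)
Your overall architecture in option~(ii) matches the paper's: reduce summability via Lemma~\ref{lem-summable-bad-sequence} to an injective sequence $(\mathfrak{m}_iX^{k_i})$ with chosen $\mathfrak{n}_i\in\operatorname{supp}\triangle(\mathfrak{m}_i)$, pass to a $\prec_{\triangle^{\ast}(\mathfrak{T})}$-decreasing subsequence, and split according to whether $(k_i)$ is constant or strictly increasing. The constant case is fine as you say. But the strictly-increasing case is where your argument stalls, and neither of your two proposed closures works as written.

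Option~(i) cannot work: the set $\{\mathfrak{n}\in\mathfrak{N}:\mathfrak{n}\preccurlyeq\mathfrak{d}_{\triangle(\mathfrak{u})}^{-(k-l)}\}$ appearing in your step-8 bound is an entire initial segment of $\mathfrak{N}$, not a Noetherian set, so there is no Noetherian $\mathfrak{T}'$ as required by Lemma~\ref{lem-well-based-fin-family}. In option~(ii), your step-8 bound relative to a \emph{fixed} maximal $\mathfrak{p}X^l$ only controls each $\operatorname{supp}\triangle(\mathfrak{m}_i)$ from above; to compare $\mathfrak{n}_jX^{k_j}$ with $\mathfrak{n}_iX^{k_i}$ you would also need a lower bound on $\mathfrak{n}_i$, which this does not give (the witnesses $\mathfrak{u}_i$ vary, and the tails of $\triangle(\mathfrak{m}_i\mathfrak{p}^{-1})$ are uncontrolled from below).

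The missing idea, and what the paper does, is to exploit the \emph{consecutive} relations along the decreasing subsequence rather than a single global one. From $\mathfrak{m}_{i+1}X^{k_{i+1}}\prec_{\triangle^{\ast}(\mathfrak{T})}\mathfrak{m}_iX^{k_i}$ one gets $\mathfrak{m}_{i+1}/\mathfrak{m}_i\preccurlyeq\mathfrak{u}_i^{-(k_{i+1}-k_i)}$ with $\mathfrak{u}_i\in\triangle^{\ast}(\mathfrak{T})$; telescoping, set $\mathfrak{p}_i\assign\prod_{j<i}\mathfrak{u}_j^{-(k_{j+1}-k_j)}$, so that $(\mathfrak{m}_i/\mathfrak{p}_i)_{i}$ is weakly $\succcurlyeq$-decreasing in $\mathfrak{M}$ and hence weakly summable. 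Now Lemma~\ref{lem-Noetherian-function} makes $(\triangle(\mathfrak{m}_i/\mathfrak{p}_i))_i$ weakly summable, and since $\triangle(\mathfrak{m}_i)=\triangle(\mathfrak{p}_i)\,\triangle(\mathfrak{m}_i/\mathfrak{p}_i)$ one can factor each $\mathfrak{n}_i$ and find $i<j$ with $\mathfrak{n}_i/\mathfrak{n}_j\succcurlyeq\mathfrak{d}_{\triangle(\mathfrak{u})}^{\,k_j-k_i}$ for $\mathfrak{u}=\min(\mathfrak{u}_i,\ldots,\mathfrak{u}_{j-1})$; since $\mathfrak{d}_{\triangle(\mathfrak{u})}\in\mathfrak{T}$ this gives $\mathfrak{n}_jX^{k_j}\prec_{\mathfrak{T}}\mathfrak{n}_iX^{k_i}$. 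Your dominance-monotonicity lemma is correct and is used implicitly here (to identify $\mathfrak{d}_{\triangle(\mathfrak{p}_i)}$ with the product of the $\mathfrak{d}_{\triangle(\mathfrak{u}_j)}$), but the decisive step is the telescoping, which your proposal does not contain.
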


\begin{proof}
  Let $\mathfrak{I}$ be a Noetherian subset of $(\mathfrak{M} \times
  X^{\mathbb{N}}, \succ_{\triangle^{\ast} (\mathfrak{T})})$. We want to prove
  that the family $(\overline{\triangle} (\mathfrak{m}X^k))_{\mathfrak{m}X^k
  \in \mathfrak{I}}$ is summable. Let $(\mathfrak{m}_i X^{k_i})_{i \in
  \mathbb{N}}$ be an injective sequence in $\mathfrak{I}$ and let
  $(\mathfrak{n}_i)_{i \in \mathbb{N}} \in \mathfrak{N}^{\mathbb{N}}$ be a
  sequence with $\mathfrak{n}_i \in \tmop{supp} \triangle (\mathfrak{m}_i)$
  for all $i \in \mathbb{N}$. By Lemma~\ref{lem-summable-bad-sequence}, it suffices
  to show that there are $i, j \in \mathbb{N}$ with $i < j$ and
  $\mathfrak{n}_j X^{k_j} \prec_{\mathfrak{T}} \mathfrak{n}_i X^{k_i}$ . This
  condition is preserved under taking subsequences, so we may assume that
  $(\mathfrak{m}_i X^{k_i})_{i \in \mathbb{N}}$ is strictly decreasing for the
  ordering $\prec_{\triangle^{\ast} (\mathfrak{T})}$. For each $i \in
  \mathbb{N}$, the relation
  \begin{equation}
    \frac{\mathfrak{m}_{i + 1}}{\mathfrak{m}_i} X^{k_{i + 1} - k_i} \in
    (\mathfrak{M} \times X^{\mathbb{Z}})^{\prec, \triangle^{\ast}
    (\mathfrak{T})}, \label{eq-in-neg-cone}
  \end{equation}
  implies in particular that $k_{i + 1} \geqslant k_i$. Taking a subsequence
  if necessary, we may assume that $(k_i)_{i \in \mathbb{N}}$ is either
  constant or strictly increasing.
  
  In the constant case, the condition (\ref{eq-in-neg-cone}) reduces to
  $\mathfrak{m}_i \succ \mathfrak{m}_{i + 1}$, i.e. $(\mathfrak{m}_i)_{i \in
  \mathbb{N}}$ is strictly decreasing. But then since $\triangle$ is strongly
  linear, the family $(\triangle (\mathfrak{m}_i))_{i \in \mathbb{N}}$ is
  summable. By Lemma~\ref{lem-summable-bad-sequence}, there are $i \in \mathbb{N}$
  and $l > 0$ with $\mathfrak{n}_{i + l} \prec \mathfrak{n}_i$, whence
  $\mathfrak{n}_{i + l} X^{k_{i + l}} =\mathfrak{n}_{i + l} X^{k_0}
  \prec_{\mathfrak{T}} \mathfrak{n}_i X^{k_0} =\mathfrak{n}_i X^{k_i}$.
  
  In the strictly increasing case, the condition (\ref{eq-in-neg-cone})
  translates as $\frac{\mathfrak{m}_{i + 1}}{\mathfrak{m}_i} \preccurlyeq
  \mathfrak{u}_i^{- (k_{i + 1} - k_i)}$ for some $\mathfrak{u}_i \in
  \triangle^{\ast} (\mathfrak{T})$. Rewriting this as
  \[ \frac{\mathfrak{m}_i}{\mathfrak{u}_i^{- k_i}} \succcurlyeq
     \frac{\mathfrak{m}_{i + 1}}{\mathfrak{u}_i^{- k_{i + 1}}}, \]
  we have the following weakly decreasing sequence in $\mathfrak{M}$:
  \[ \frac{\mathfrak{m}_0}{\mathfrak{u}_0^{- k_0}} \succcurlyeq
     \frac{\mathfrak{m}_1}{\mathfrak{u}_0^{- k_1}} \succcurlyeq
     \frac{\mathfrak{m}_2}{\mathfrak{u}_0^{- k_1} \mathfrak{u}_1^{- (k_2 -
     k_1)}} \succcurlyeq \cdots \succcurlyeq \frac{\mathfrak{m}_{i +
     1}}{\mathfrak{u}_0^{- k_1} \mathfrak{u}_1^{- (k_2 - k_1)} \cdots
     \mathfrak{u}_i^{- (k_{i + 1} - k_i)}} \succcurlyeq \cdots . \]
  Write $\mathfrak{p}_i \assign \mathfrak{u}_0^{- k_1} \mathfrak{u}_1^{- (k_2
  - k_1)} \cdots \mathfrak{u}_i^{- (k_{i + 1} - k_i)}$ for each $i > 0$. Since
  $\triangle \upharpoonleft \mathfrak{M}$ is Noetherian,
  Lemma~\ref{lem-Noetherian-function} and Lemma~\ref{lem-summable-bad-sequence} for the
  sequence $\mathfrak{n}_i  (\mathfrak{d}_{\smash{\triangle
  (\mathfrak{p}_i)}})^{- 1} \in \tmop{supp} \triangle
  (\frac{\mathfrak{m}_i}{\mathfrak{p}_i})$ gives $i, j > 0$ with $i < j$ and
  \[ \frac{\mathfrak{n}_i}{\triangle (\mathfrak{p}_i)} \succcurlyeq
     \frac{\mathfrak{n}_j}{\triangle (\mathfrak{p}_j)}, \]
  whence
  \[ \mathfrak{n}_i \succcurlyeq \frac{\mathfrak{n}_j}{\mathfrak{d}_{\triangle
     (\mathfrak{u}_i)}^{- (k_{i + 1} - k_i)} \cdots \mathfrak{d}_{\triangle
     (\mathfrak{u}_{j - 1})}^{- (k_j - k_{j - 1})}} . \]
  Taking $\mathfrak{u} \assign \min (\mathfrak{u}_i, \ldots, \mathfrak{u}_{j -
  1})$, we obtain
  \[ \mathfrak{n}_i \succcurlyeq \frac{\mathfrak{n}_j}{\mathfrak{d}_{\triangle
     (\mathfrak{u})}^{- (k_j - k_i)}}, \]
  whence $\frac{\mathfrak{n}_j}{\mathfrak{n}_i} \preccurlyeq
  \mathfrak{d}_{\triangle (\mathfrak{u})}^{- (k_j - k_i)}$. But
  $\mathfrak{d}_{\triangle (\mathfrak{u})} \in \mathfrak{T}$, so this means
  that $\mathfrak{n}_j X^{k_j} \prec_{\mathfrak{T}} \mathfrak{n}_i X^{k_i}$.
  This concludes the proof.
\end{proof}

\begin{corollary}
  \label{cor-cut-extension-morphism}The function $\overline{\triangle}$
  extends into a strongly linear morphism of algebras $\overline{\triangle}
  \of \mathbb{S} \llbracket X \rrbracket_{\triangle^{\ast} (\mathfrak{T})}
  \longrightarrow \mathbb{T} \llbracket X \rrbracket_{\mathfrak{T}}$.
\end{corollary}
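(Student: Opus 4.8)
The plan is to deduce this directly from the strongly linear extension theorem, Proposition~\ref{prop-strongly-linear-extension}, applied to the ordered monoid $\mathfrak{M} \times_{\triangle^{\ast}(\mathfrak{T})} X^{\mathbb{N}}$ and the target algebra $\mathbb{T} \llbracket X \rrbracket_{\mathfrak{T}}$. Proposition~\ref{prop-Noeth-ext-comp} already tells us that $\overline{\triangle} \of \mathfrak{M} \times_{\triangle^{\ast}(\mathfrak{T})} X^{\mathbb{N}} \longrightarrow \mathbb{T} \llbracket X \rrbracket_{\mathfrak{T}}$ is Noetherian, so all that remains is to check that $\overline{\triangle}$ is a morphism of monoids; then Proposition~\ref{prop-strongly-linear-extension} produces the unique strongly linear extension $\overline{\triangle} \of \mathbb{S} \llbracket X \rrbracket_{\triangle^{\ast}(\mathfrak{T})} \longrightarrow \mathbb{T} \llbracket X \rrbracket_{\mathfrak{T}}$ and guarantees that it is a morphism of algebras.

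First I would record that $\overline{\triangle}$ genuinely takes values in $\mathbb{T} \llbracket X \rrbracket_{\mathfrak{T}}$: for $\mathfrak{m} \in \mathfrak{M}$ and $k \in \mathbb{N}$, the support of $\triangle(\mathfrak{m}) X^k$ is $\{ \mathfrak{n} X^k \suchthat \mathfrak{n} \in \tmop{supp} \triangle(\mathfrak{m}) \}$, which is Noetherian in $(\mathfrak{N} \times_{\mathfrak{T}} X^{\mathbb{N}}, \succ_{\mathfrak{T}})$, since on the single level $\mathfrak{N} X^k$ the ordering $\prec_{\mathfrak{T}}$ restricts to the group ordering of $\mathfrak{N}$ and $\tmop{supp} \triangle(\mathfrak{m})$ is Noetherian in $(\mathfrak{N}, \succ)$ because $\triangle$ is strongly linear. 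Multiplicativity is then a one-line check: as $\triangle$ is a morphism of algebras, $\triangle(\mathfrak{m}\mathfrak{n}) = \triangle(\mathfrak{m}) \triangle(\mathfrak{n})$ for $\mathfrak{m}, \mathfrak{n} \in \mathfrak{M}$, so
\[ \overline{\triangle}(\mathfrak{m} X^k \cdot \mathfrak{n} X^l) = \triangle(\mathfrak{m}\mathfrak{n}) X^{k + l} = (\triangle(\mathfrak{m}) X^k)(\triangle(\mathfrak{n}) X^l) = \overline{\triangle}(\mathfrak{m} X^k) \, \overline{\triangle}(\mathfrak{n} X^l), \]
and $\overline{\triangle}(1) = \triangle(1) X^0 = 1$. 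Hence $\overline{\triangle}$ is a Noetherian morphism of monoids, and Proposition~\ref{prop-strongly-linear-extension} with $\mathbb{A} = \mathbb{S} \llbracket X \rrbracket_{\triangle^{\ast}(\mathfrak{T})}$ delivers the desired strongly linear morphism of algebras.

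I do not expect any real obstacle in this corollary: the substantive content — the Noetherianity of $\overline{\triangle}$, whose proof in Proposition~\ref{prop-Noeth-ext-comp} splits into the constant and strictly increasing exponent cases — has already been dealt with, and here the only points meriting a moment's attention are that the codomain is the smaller algebra $\mathbb{T} \llbracket X \rrbracket_{\mathfrak{T}}$ rather than $\mathbb{T} \llbracket X \rrbracket$, and that multiplicativity of $\triangle$ on $\mathfrak{M}$ upgrades to multiplicativity of $\overline{\triangle}$ on $\mathfrak{M} \times_{\triangle^{\ast}(\mathfrak{T})} X^{\mathbb{N}}$.
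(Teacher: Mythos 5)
Your proof is correct and follows exactly the route the paper intends: Proposition~\ref{prop-Noeth-ext-comp} supplies Noetherianity, multiplicativity of $\overline{\triangle}$ on monomials is immediate from $\triangle$ being a morphism of algebras, and Proposition~\ref{prop-strongly-linear-extension} then yields the strongly linear algebra morphism. The paper leaves this deduction implicit, and your extra verification that the values land in $\mathbb{T} \llbracket X \rrbracket_{\mathfrak{T}}$ is a harmless (already built into the statement of Proposition~\ref{prop-Noeth-ext-comp}) but welcome detail.
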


\section{Differential algebra}\label{section-differential-algebra}

We fix a field $\mathbf{K}$, and we recall that our algebras $(\mathbf{A}, +,
\cdot, 0, .)$ over $\mathbf{K}$ are always associative, but not necessarily
commutative or unital.

\subsection{Differential algebra}

We first recall standard and basic notions in differential algebra. The
results here are folklore and we give proofs for the sake of completion. Let
$\mathbf{B}$ be an algebra over $\mathbf{K}$ and let $\mathbf{A} \subseteq
\mathbf{B}$ be a subalgebra. A function $\partial \of \mathbf{A}
\longrightarrow \mathbf{B}$ is called a
{\tmem{derivation}}{\index{derivation}} if it is $\mathbf{K}$-linear and
satisfies the Leibniz product rule
\[ \forall a,b \in \mathbf{A},\partial (a \cdot b) = \partial (a) \cdot b + a \cdot \partial (b) .\]

\begin{example}
  If $\mathbf{A}$ is a $\mathbf{K}$-algebra, $a \in \mathbf{A}$, $\delta,
  \partial \of \mathbf{A} \longrightarrow \mathbf{A}$ are derivations and
  $\sigma \of \mathbf{A} \longrightarrow \mathbf{A}$ is an automorphism of
  algebra, then the following functions are derivations $\mathbf{A}
  \longrightarrow \mathbf{A}$:
  \begin{itemizedot}
    \item $\partial + a \cdot \delta \assign \: b \mapsto \partial (b) + a
    \cdot \delta (b)$,
    
    \item $[\partial, \delta] \assign \partial \circ \delta - \delta \circ
    \partial$,
    
    \item $[a, \cdot] \assign b \mapsto a \cdot b - b \cdot a$,
    
    \item $\sigma \circ \partial \circ \sigma^{\tmop{inv}} \assign b \mapsto
    \sigma (\partial (\sigma^{\tmop{inv}} (b)))$.
  \end{itemizedot}
\end{example}

\begin{lemma}[{\cite[Corollary~3.9]{vdH:noeth}}]\label{lem-strongly-lin-der}
  Suppose that $\mathbb{A}= \mathbf{K} \llbracket \mathfrak{M} \rrbracket$ and
  $\mathbb{B}= \mathbf{K} \llbracket \mathfrak{N} \rrbracket$ are algebras
  over $\mathbf{K}$ of Noetherian series and that $\partial \of \mathbb{A}
  \longrightarrow \mathbb{B}$ is a strongly linear function with
  \[ \partial (\mathfrak{m} \cdot \mathfrak{n}) = \partial (\mathfrak{m})
     \cdot \mathfrak{n}+\mathfrak{m} \cdot \partial (\mathfrak{n}) \]
  for all $\mathfrak{m}, \mathfrak{n} \in \mathfrak{M}$. Then $\partial$ is a
  derivation.
\end{lemma}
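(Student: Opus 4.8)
The plan is to reduce the Leibniz rule on all of $\mathbb{A}$ to the assumed Leibniz rule on monomials by exploiting strong linearity. First I would fix $s, t \in \mathbb{A}$ and expand them as summable sums of terms, $s = \sum_{\mathfrak{m} \in \mathfrak{M}} s(\mathfrak{m}) \mathfrak{m}$ and $t = \sum_{\mathfrak{n} \in \mathfrak{M}} t(\mathfrak{n}) \mathfrak{n}$, which is legitimate since the term families are summable. By Proposition~\ref{prop-sum-prod}, the family $\bigl(s(\mathfrak{m}) t(\mathfrak{n})\, \mathfrak{m}\mathfrak{n}\bigr)_{(\mathfrak{m},\mathfrak{n}) \in \mathfrak{M}^2}$ is summable with sum $s \cdot t$. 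Applying $\partial$ and using that $\partial$ is strongly linear, I get $\partial(s \cdot t) = \sum_{(\mathfrak{m},\mathfrak{n})} s(\mathfrak{m}) t(\mathfrak{n})\, \partial(\mathfrak{m} \cdot \mathfrak{n})$, and then the hypothesis $\partial(\mathfrak{m}\mathfrak{n}) = \partial(\mathfrak{m})\mathfrak{n} + \mathfrak{m}\partial(\mathfrak{n})$ lets me split this into two double sums.

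The remaining work is to recognise each of the two double sums as one of the terms $\partial(s) \cdot t$ and $s \cdot \partial(t)$. For the first, I would note that $\bigl(s(\mathfrak{m})\partial(\mathfrak{m})\bigr)_{\mathfrak{m}}$ is summable with sum $\partial(s)$ (again by strong linearity of $\partial$ applied to the term family of $s$), and $\bigl(t(\mathfrak{n})\mathfrak{n}\bigr)_{\mathfrak{n}}$ is summable with sum $t$; then Proposition~\ref{prop-sum-prod} gives that $\bigl(s(\mathfrak{m}) t(\mathfrak{n})\, \partial(\mathfrak{m})\mathfrak{n}\bigr)_{(\mathfrak{m},\mathfrak{n})}$ is summable with sum $\partial(s) \cdot t$. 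Symmetrically for the second double sum with sum $s \cdot \partial(t)$. Finally, Lemma~\ref{lem-sum-sum} (or Proposition~\ref{prop-summation-by-parts} for regrouping the union of the two index families) lets me recombine, yielding $\partial(s \cdot t) = \partial(s) \cdot t + s \cdot \partial(t)$. Since $\partial$ is $\mathbf{K}$-linear by hypothesis, this is exactly what is needed.

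The only mild subtlety — and the place I would be most careful — is bookkeeping with the summable families: one must check that splitting $\partial(\mathfrak{m}\mathfrak{n})$ inside the sum is valid, i.e. that the two ``halves'' $\bigl(s(\mathfrak{m})t(\mathfrak{n})\,\partial(\mathfrak{m})\mathfrak{n}\bigr)$ and $\bigl(s(\mathfrak{m})t(\mathfrak{n})\,\mathfrak{m}\partial(\mathfrak{n})\bigr)$ are each separately summable before one is allowed to write the sum of the combined family as the sum of the two. This is precisely guaranteed by the Proposition~\ref{prop-sum-prod} computations above, after which Lemma~\ref{lem-sum-sum} (applied indexwise over $\mathfrak{M}^2$) closes the argument. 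There is no real obstacle here; the lemma is essentially a formal consequence of strong linearity plus the multiplicativity structure already established, and the proof is short.
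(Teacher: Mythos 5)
Your proposal is correct and follows essentially the same route as the paper's proof: expand $s\cdot t$ as the summable double family of products of terms, push $\partial$ inside by strong linearity, apply the monomial Leibniz rule, and reassemble the two halves as $\partial(s)\cdot t$ and $s\cdot\partial(t)$ via Proposition~\ref{prop-sum-prod} and Lemma~\ref{lem-sum-sum}. The summability bookkeeping you flag is handled the same way in the paper.
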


\begin{proof}
  Let $a, b \in \mathbb{A}$. We have
  \begin{eqnarray*}
    \partial (a \cdot b) & = & \partial \left( \sum_{\mathfrak{m},
    \mathfrak{n} \in \mathfrak{M}} a (\mathfrak{m}) b
    (\mathfrak{n})\mathfrak{m} \cdot \mathfrak{n} \right)\\
    & = & \sum_{\mathfrak{m}, \mathfrak{n} \in \mathfrak{M}} a (\mathfrak{m})
    b (\mathfrak{n}) \partial (\mathfrak{m} \cdot \mathfrak{n})
    \text{{\hspace*{\fill}}(by strong linearity)}\\
    & = & \sum_{\mathfrak{m}, \mathfrak{n} \in \mathfrak{M}} a (\mathfrak{m})
    b (\mathfrak{n})  (\partial (\mathfrak{m}) \cdot \mathfrak{n}+\mathfrak{m}
    \cdot \partial (\mathfrak{n}))\\
    & = & \sum_{\mathfrak{m}, \mathfrak{n} \in \mathfrak{M}} a (\mathfrak{m})
    b (\mathfrak{n}) \partial (\mathfrak{m}) \cdot \mathfrak{n}+
    \sum_{\mathfrak{m}, \mathfrak{n} \in \mathfrak{M}} a (\mathfrak{m}) b
    (\mathfrak{n}) \mathfrak{m} \cdot \partial (\mathfrak{n})\\
    & = & \sum_{\mathfrak{m}, \mathfrak{n} \in \mathfrak{M}} \partial (a
    (\mathfrak{m})\mathfrak{m}) \cdot (b (\mathfrak{n})\mathfrak{n}) +
    \sum_{\mathfrak{m}, \mathfrak{n} \in \mathfrak{M}} (a
    (\mathfrak{m})\mathfrak{m}) \cdot (b (\mathfrak{n}) \partial
    (\mathfrak{n}))\\
    & = & \left( \sum_{\mathfrak{m} \in \mathfrak{M}} \partial (a
    (\mathfrak{m})\mathfrak{m}) \right) \cdot b + a \cdot \left(
    \sum_{\mathfrak{n} \in \mathfrak{M}} \partial (b
    (\mathfrak{n})\mathfrak{n}) \right) \text{{\hspace*{\fill}}(by
    Proposition~\ref{prop-sum-prod})}\\
    & = & \partial (a) \cdot b + a \cdot \partial (b) .
    \text{{\hspace*{\fill}}(by strong linearity)}
  \end{eqnarray*}
  This concludes the proof.
\end{proof}

The following result is folklore. We prove it for completion.

\begin{proposition}
  \label{prop-Taylor-morphism}Assume that $\mathbf{K}$ has characteristic $0$.
  Let $\mathbf{A}$ be a $\mathbf{K}$-algebra and let $\partial \of \mathbf{A}
  \longrightarrow \mathbf{A}$ be a derivation. Then the function
  \begin{eqnarray}
    \mathcal{T}_{\partial} \of \mathbf{A} & \longrightarrow & \mathbf{A}
    \llbracket X \rrbracket \nonumber\\
    a & \longmapsto & \sum_{k \in \mathbb{N}} \frac{\partial^{[k]} (a)}{k!}
    X^k  \label{eq-Tpartial}
  \end{eqnarray}
  is a morphism of algebras.
\end{proposition}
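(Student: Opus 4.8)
The plan is to verify the two defining properties of an algebra morphism — additivity (which is immediate) and multiplicativity — by reducing everything to the Leibniz rule via the general Leibniz formula. First I would note that $\mathcal{T}_{\partial}$ is $\mathbf{K}$-linear because each coefficient map $a \mapsto \partial^{[k]}(a)/k!$ is $\mathbf{K}$-linear, and that $\mathcal{T}_{\partial}(1) = 1$ since $\partial(1) = 0$, so $\partial^{[k]}(1) = 0$ for $k \geqslant 1$. The substance is to show $\mathcal{T}_{\partial}(a \cdot b) = \mathcal{T}_{\partial}(a) \cdot \mathcal{T}_{\partial}(b)$ for all $a, b \in \mathbf{A}$. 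Comparing coefficients of $X^n$ on both sides, the left-hand side contributes $\partial^{[n]}(a \cdot b)/n!$, while the Cauchy product on the right contributes $\sum_{k+l=n} \frac{\partial^{[k]}(a)}{k!} \cdot \frac{\partial^{[l]}(b)}{l!}$. So the identity to prove, for each $n \in \mathbb{N}$, is the \emph{general Leibniz rule}
\[ \partial^{[n]}(a \cdot b) = \sum_{k=0}^{n} \binom{n}{k} \partial^{[k]}(a) \cdot \partial^{[n-k]}(b) . \]

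This is established by a routine induction on $n$: the base case $n = 0$ is trivial, and for the inductive step one applies $\partial$ to both sides of the rule for $n$, uses $\mathbf{K}$-linearity of $\partial$ together with the Leibniz product rule term by term, and then collapses the resulting double sum using Pascal's identity $\binom{n}{k} + \binom{n}{k-1} = \binom{n+1}{k}$. Note that $n!$ is invertible in $\mathbf{K}$ precisely because $\mathbf{K}$ has characteristic $0$, which is what makes the coefficients $\partial^{[k]}(a)/k!$ meaningful; the binomial coefficients appearing in the Leibniz rule are then exactly what is needed so that dividing the rule for $n$ by $n!$ yields the Cauchy-product coefficient identity. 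I would spell out one line of this: dividing by $n!$ gives $\frac{\partial^{[n]}(a\cdot b)}{n!} = \sum_{k+l=n} \frac{1}{k!\,l!} \partial^{[k]}(a) \cdot \partial^{[l]}(b)$, which is exactly the coefficient of $X^n$ in $\mathcal{T}_{\partial}(a) \cdot \mathcal{T}_{\partial}(b)$.

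There is no real obstacle here: the only mild subtlety is bookkeeping in the induction, namely re-indexing the two sums that arise from differentiating a product so that Pascal's identity applies cleanly, and keeping track of the fact that $\mathbf{A}$ need not be commutative so the order of factors $\partial^{[k]}(a) \cdot \partial^{[n-k]}(b)$ must be preserved throughout. Since $\mathbf{A} \llbracket X \rrbracket$ here is just the ordinary formal power series algebra over $\mathbf{A}$ (the monoid $X^{\mathbb{N}}$ being linearly ordered and every element of finite, hence Noetherian, support on the relevant side), no summability issues intervene and the Cauchy product is the usual finite convolution, so the coefficient-wise comparison is legitimate without any appeal to strong linearity.
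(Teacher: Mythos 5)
Your proof is correct and follows essentially the same route as the paper: establish the general Leibniz rule $\partial^{[n]}(a\cdot b)=\sum_{k}\binom{n}{k}\partial^{[k]}(a)\cdot\partial^{[n-k]}(b)$ by induction, then divide by $n!$ and recognise the Cauchy-product coefficient. The remarks on characteristic zero, non-commutativity, and the finiteness of the convolution are all accurate (the observation $\mathcal{T}_{\partial}(1)=1$ is harmless, though the paper's algebras need not be unital).
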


\begin{proof}
  Let $a, b \in \mathbf{A}$. For $n \in \mathbb{N}$, an easy induction using
  the Leibniz product rule shows that
  \[ \partial^{[k]} (a \cdot b) = \sum_{i = 0}^k \binom{k}{i} \partial^{[i]}
     (a) \cdot \partial^{[k - i]} (b) . \]
  We have
  \begin{eqnarray*}
    \mathcal{T}_{\partial} (a \cdot b) & = & \sum_{k \in \mathbb{N}}
    \frac{\partial^{[k]} (a \cdot b)}{k!} X^k\\
    & = & \sum_{k \in \mathbb{N}} \left( \sum_{i = 0}^k \frac{1}{k! (k - i)
    !} \partial^{[i]} (a) \cdot \partial^{[k - i]} (b) \right) X^k\\
    & = & \sum_{k \in \mathbb{N}} \left( \sum_{m + p = k} \frac{1}{m!p!}
    \partial^{[m]} (a) \cdot \partial^{[p]} (b) \right) X^k\\
    & = & \left( \sum_{m \in \mathbb{N}} \frac{1}{m!} \partial^{[m]} (a) X^m
    \right) \cdot \left( \sum_{p \in \mathbb{N}} \frac{1}{p!} \partial^{[p]}
    (b) X^p \right)\\
    & = & \mathcal{T}_{\partial} (a) \cdot \mathcal{T}_{\partial} (b) .
  \end{eqnarray*}
  The function $\mathcal{T}_{\partial}$ is clearly $\mathbf{K}$-linear, so we
  are done.
\end{proof}

\subsection{Differential pre-logarithmic Hahn fields}

Let $\mathfrak{M}$ be a non-trivial, linearly ordered Abelian group and let
$\mathbf{K}$ be an ordered field. We write $\mathbb{S}= \mathbf{K} \llbracket
\mathfrak{M} \rrbracket$. Recall that $\mathbb{S}$ is an ordered field
extension of $\mathbf{K}$.

\begin{definition}
  Let $\ell \of (\mathfrak{M}, \cdot, 1, <) \longrightarrow (\mathbb{S}, +, 0,
  <)$ be an embedding of ordered groups. Then we say that $(\mathbb{S}, \ell)$
  is a {\tmem{{\tmstrong{pre-logarithmic Hahn field}}}}{\index{transserial
  field}}.
\end{definition}

This is a weaker version of the notion of pre-logarithmic section
{\cite[Definition~2.7]{KM11}} on $\mathbb{S}$.

\begin{remark}
  \label{rem-transserial-extension}Given an embedding of ordered groups
  $\log_{\mathbf{K}} \of (\mathbf{K}^{>}, \cdot, 1, <) \longrightarrow
  (\mathbf{K}, +, 0, <)$, the function $\ell$ extends {\cite[Lemmas~4.12 and
  5.1 and~Theorem~4.1]{Kuhl:ordexp}} into an embedding of ordered groups $\log
  \of (\mathbb{S}^{>}, \cdot, 1, <) \longrightarrow (\mathbb{S}, +, 0, <)$
  with
  \[ \log (c\mathfrak{m}(1 + \varepsilon)) = \ell (\mathfrak{m}) +
     \log_{\mathbf{K}} (c) + \sum_{k > 0} \frac{(- 1)^{k - 1}}{k}
     \varepsilon^k \]
  for all $\mathfrak{m} \in \mathfrak{M}$, $c \in \mathbf{K}^{>}$ and
  $\varepsilon \prec 1$. As a consequence of Proposition~\ref{prop-piecewise-analycity},
  the function $\log$ is analytic on $\mathbb{S}^{>}$ with $\tmop{Conv}
  (\log)_s = s +\mathbb{S}^{\prec}$ and $\log^{(k)} (s) = (- 1)^k  (k - 1)
  !s^{- k}$ for all $s \in \mathbb{S}^{>}$ and $k > 0$.
\end{remark}

\begin{definition}
  Assume that $(\mathbb{S}, \ell)$ is a pre-logarithmic Hahn field. Let
  $\partial \of \mathbb{S} \longrightarrow \mathbb{S}$ be a strongly linear
  derivation. We say that $(\mathbb{S}, \ell, \partial)$ is a
  {\tmem{{\tmstrong{differential pre-logarithmic Hahn field}}}} if we have
  $\partial (\ell (\mathfrak{m})) = \frac{\partial
  (\mathfrak{m})}{\mathfrak{m}}$ for all $\mathfrak{m} \in \mathfrak{M}$.
\end{definition}

\begin{remark}
  Given $s, t, u \in \mathbb{S}$, we will write $s = t + o (u)$ if $s - t
  \prec u$. Suppose that $s$ and $t$ have maximal common truncation $u$, and
  write $s = u + c_1 \mathfrak{m}_1 + o (\mathfrak{m}_1)$, $t = u + c_2
  \mathfrak{m}_2 + o (\mathfrak{m}_2)$ where $c_i \in \mathbf{K}$ and
  $\mathfrak{m}_i \in \mathfrak{M} \cup \{ 0 \}$. Then $s > t$ if and only if
  $c_1 \mathfrak{m}_1 + o (\mathfrak{m}_1) > c_2 \mathfrak{m}_2 + o
  (\mathfrak{m}_2)$ if and only if $c_1 \mathfrak{m}_1 > c_2 \mathfrak{m}_2$.
\end{remark}

Our next technical result Proposition~\ref{prop-Noeth-ext-der} is a version of
Proposition~\ref{prop-Noeth-ext-comp} for derivations, where a final segment
$\mathfrak{S}$ of $\mathfrak{M}$, we extend a strongly linear derivation on
$\mathbb{S}$ into a strongly linear derivation on $\mathbb{S} \llbracket X
\rrbracket_{\mathfrak{S}}$. To that end, we need to ``prepare'' $\mathfrak{S}$
with respect to some weak summability condition on its boundary. This is why
we need the following general lemma.

\begin{lemma}
  \label{lem:coinitial-sequence}Let $C$ be a convex subset of some truncation
  closed $L \subseteq \mathbb{S}$. Then there is a series $\varphi$ and a
  coinitial sequence in $C$ of the form $(\psi_i + c_i \mathfrak{o}_i)_{i <
  \kappa}$, where each $\psi_i$ is a truncation of $\varphi$, $c_i \in
  \mathbf{K}$, $\mathfrak{o}_i \in \mathfrak{M} \cup \{ 0 \}$. Moreover, we
  may assume that either $(\psi_i)_{i < \kappa}$ is injective, in which case
  $\mathfrak{o}_i \in \tmop{supp} (\varphi) \cup \{ 0 \}$ (in particular,
  $(\psi_i + r_i \mathfrak{o}_i)_{i < \kappa}$ is weakly summable), or $\psi_i
  = \varphi$ for all $i < \kappa$.
\end{lemma}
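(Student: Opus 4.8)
The statement is about constructing a coinitial sequence in a convex subset $C$ of a truncation-closed $L \subseteq \mathbb{S}$, with a strong structural normal form. The plan is to distinguish two cases according to whether $C$ has a "pseudo-limit from above" inside $L$ or not, and in both cases to build the witnessing series $\varphi$ by transfinite recursion on truncations.

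First I would set up the recursion. Let $\kappa$ be the coinitiality (cofinality for the reverse ordering) of $C$, which exists by the limitation of size convention. I build an increasing (for the truncation ordering) chain of truncations $(\psi_i)_{i<\kappa}$ together with terms $c_i\mathfrak{o}_i$ as follows. At stage $i$, having built $\psi_j + c_j\mathfrak{o}_j$ for $j<i$ forming a strictly decreasing sequence in $C$ whose elements lie below all earlier choices, I pick any element $x_i \in C$ with $x_i < \psi_j + c_j\mathfrak{o}_j$ for all $j<i$ (possible since the partial sequence is not yet coinitial, as $i<\kappa$); I then let $\psi_i$ be the maximal common truncation of $x_i$ and all the previously constructed $\psi_j$ — equivalently, I let $\psi_i$ be an appropriate truncation of $x_i$ extending $\bigcup_{j<i}\psi_j$ (using that $C$ is convex and $L$ truncation-closed, so these truncations stay in $L$, and in fact one checks they land in $C$ or can be pushed into $C$), and write $x_i = \psi_i + c_i\mathfrak{o}_i + o(\mathfrak{o}_i)$. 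The series $\varphi$ is then $\bigcup_{i<\kappa}\psi_i$ (a well-defined series since the $\psi_i$ are nested truncations and supports stay Noetherian — here one uses that $L \subseteq \mathbb{S}$ and truncation-closedness). By construction each $\psi_i$ is a truncation of $\varphi$, and $\mathfrak{o}_i$, being the dominant monomial of $x_i - \psi_i$, lies in $\mathfrak{M}\cup\{0\}$.

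Next I would address the dichotomy in the "moreover" clause. If the sequence of truncations $(\psi_i)_{i<\kappa}$ is eventually constant, then after discarding an initial segment I am in the case $\psi_i = \varphi$ for all $i$, and the tail $(\varphi + c_i\mathfrak{o}_i)_{i<\kappa}$ is already coinitial in $C$ of the required form. Otherwise, by thinning the sequence I may assume $(\psi_i)_{i<\kappa}$ is strictly increasing, hence injective; then each $\mathfrak{o}_i$ is the dominant monomial of $x_i - \psi_i$, which sits in $\tmop{supp}(\varphi)$ (since $\psi_{i+1}$ is a longer truncation of $\varphi$ than $\psi_i$, it must pick up the term $c_i\mathfrak{o}_i$, forcing $\mathfrak{o}_i \in \tmop{supp}\varphi$) or is $0$. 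Weak summability of $(\psi_i + c_i\mathfrak{o}_i)_{i<\kappa}$ then follows because $\bigcup_i \tmop{supp}(\psi_i + c_i\mathfrak{o}_i) \subseteq \tmop{supp}(\varphi)$, which is Noetherian.

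The main obstacle I anticipate is the bookkeeping in the recursion step: ensuring simultaneously that (a) the chosen $\psi_i$ are genuinely nested truncations forming a coherent $\varphi$, (b) the elements $\psi_i + c_i\mathfrak{o}_i$ actually lie in $C$ (not merely in $L$) — this needs the convexity of $C$ together with a comparison argument showing $\psi_i + c_i\mathfrak{o}_i$ lies between $x_i$ and an earlier element of $C$, which is exactly the kind of "maximal common truncation" sign computation recorded in the Remark preceding the lemma — and (c) the sequence remains strictly decreasing and cofinally placed so that coinitiality is preserved in the limit. Verifying that $\varphi$ is a legitimate element of $\mathbb{S}$ (Noetherian support) when $\kappa$ may be a proper-class-sized ordinal is the other delicate point, handled by observing that $\tmop{supp}\varphi = \bigcup_i \tmop{supp}\psi_i$ is an increasing union of initial segments of a fixed Noetherian set, hence Noetherian.
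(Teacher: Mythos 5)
There are two genuine gaps here, both at the heart of what the lemma is actually for. First, your construction of $\varphi$ does not work as described: you cannot simultaneously take $\psi_i$ to be ``the maximal common truncation of $x_i$ and all the previously constructed $\psi_j$'' and ``a truncation of $x_i$ extending $\bigcup_{j<i}\psi_j$'' --- a maximal common truncation of $x_i$ with $\bigcup_{j<i}\psi_j$ is a truncation \emph{of} $\bigcup_{j<i}\psi_j$ and so cannot properly extend it, and in general the maximal common truncations taken along a decreasing sequence are not nested at all (e.g.\ $x_0=\mathfrak m+\mathfrak n$, $x_1=\mathfrak m+\tfrac12\mathfrak n$, $x_2=\tfrac12\mathfrak m$). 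The paper avoids this circularity by defining $\varphi$ \emph{before} choosing any sequence, by a transfinite stabilisation procedure: at each stage one adds the dominant term $t_i$ of $\gamma-\varphi_i$ provided this term is the \emph{same} for all $\gamma$ in some initial segment of $C$, and stops when no such stable term exists. Only then does one take, for arbitrary $\gamma_i\in C$, the maximal common truncation $\psi_i$ of $\gamma_i$ with this fixed $\varphi$; since all $\psi_i$ are truncations of one and the same $\varphi$, they are automatically pairwise comparable and the bookkeeping you worry about disappears.

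Second, and more seriously, your argument in the injective case is wrong. You claim $\mathfrak o_i\in\tmop{supp}(\varphi)$ because ``$\psi_{i+1}$ must pick up the term $c_i\mathfrak o_i$''. But $c_i\mathfrak o_i$ is, by definition of $\psi_i$ as a \emph{maximal} common truncation, precisely the first term at which $\gamma_i$ \emph{disagrees} with $\varphi$; it is never absorbed into a longer truncation of $\varphi$. Concretely, with $x$ infinite, $\varphi=\sum_n x^{-n}$ and $\gamma_n=\sum_{k\leqslant n}x^{-k}+x^{-n-\frac12}$, one gets $\psi_n=\sum_{k\leqslant n}x^{-k}$ and $\mathfrak o_n=x^{-n-\frac12}\notin\tmop{supp}(\varphi)$. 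The conclusion $\mathfrak o_i\in\tmop{supp}(\varphi)\cup\{0\}$ is therefore not automatic; it is obtained in the paper only after \emph{modifying} the sequence: writing $\psi_{i+1}=\psi_i+d_i\mathfrak p_i+o(\mathfrak p_i)$ with $\mathfrak p_i\in\tmop{supp}(\varphi)$, one checks by a sign comparison that $c_i\mathfrak o_i$ may be replaced by $|2d_i|\mathfrak p_i$ (when $\mathfrak o_i\succ\mathfrak p_i$) or by $0$ (when $\mathfrak o_i\prec\mathfrak p_i$) while keeping the sequence inside $C$ and coinitial. This replacement step is the actual content of the ``moreover'' clause --- it is what later feeds the weak-summability argument in Proposition~\ref{prop-Noeth-ext-der} --- and it is missing from your proof.
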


\begin{proof}
  The conclusion is trivial if $C$ has a minimum, so assume $C$ does not. Let
  $\varphi_0 \assign 0$. By induction, consider the set $T_{i, I}$ of the
  dominant terms of $\gamma - \varphi_i$ for $\gamma$ in some initial segment
  $I \subseteq C$: if there is $I$ so that $T_{i, I} = \{ t_i \}$ is a
  singleton, we let $\varphi_{i + 1} \assign \varphi_i + t_i$, otherwise we
  stop. At the limit stage, let $\varphi_i \assign \sum_{j < i} t_j$. The
  procedure stops at some ordinal $i = \lambda$ and we set $\varphi \assign
  \varphi_{\lambda}$.
  
  For any $\gamma_i \in C$, let $\psi_i$ be the maximal common truncation of
  $\varphi$ and $\gamma_i$ and write $\gamma_i = \psi_i + c_i \mathfrak{o}_i +
  o (\mathfrak{o}_i)$ where $c_i \in \mathbf{K}$ and $\mathfrak{o}_i \in
  \mathfrak{M} \cup \{ 0 \}$. By maximality of $\psi_i$, there is $\gamma_{i +
  1} \in C$ such that $\gamma_{i + 1} < \gamma_i$ and the maximal common
  truncation of $\gamma_{i + 1}$ and $\gamma_i$ is exactly $\psi_i$. By
  definition of the ordering, we must have $\psi_{i + 1} + c_{i + 1}
  \mathfrak{o}_{i + 1} + o (\mathfrak{o}_{i + 1}) < \psi_i + c_i
  \mathfrak{o}_i + o (\mathfrak{o}_i)$.
  
  It follows that there is a coinitial sequence $(\gamma_i)_{i < \kappa}$ such
  that for all $i < j$ we have $\psi_i + c_i \mathfrak{o}_i + o
  (\mathfrak{o}_i) > \psi_j + c_j \mathfrak{o}_j + o (\mathfrak{o}_j)$.
  Therefore, the sequence $(\psi_i + c_i \mathfrak{o}_i)$ is also coinitial
  with $C$. It ranges in $L$ because $L$ is truncation closed, thus it ranges
  in $C$ too.
  
  After extracting a subsequence, we may further assume that $(\psi_i)_{i <
  \kappa}$ is either constant or injective. In the former case, we must have
  $\psi_i = \varphi$, and we are done. In the latter, write $\psi_{i + 1} =
  \psi_i + d_i \mathfrak{p}_i + o (\mathfrak{p}_i)$. We must have $\gamma_i =
  \psi_i + c_i \mathfrak{o}_i > \psi_i + d_i \mathfrak{p}_i + o
  (\mathfrak{p}_i) \ni \gamma_{i + 1}$; if $\mathfrak{o}_i \succ
  \mathfrak{p}_i$, then $\psi_i + d_i \mathfrak{o}_i > \psi_i + | 2 d_i |
  \mathfrak{p}_i > \gamma_1$; if $\mathfrak{o}_i \prec \mathfrak{p}_i$, then
  $s_i < 0$ and so $\psi_i > \gamma_1$. Hence after possibly replacing $c_i
  \mathfrak{o}_i$ with $| 2 d_i | \mathfrak{p}_i$ or with $0$, we may assume
  that $\mathfrak{o}_i \in \tmop{supp} (\psi_{i + 1}) \cup \{ 0 \} \subseteq
  \tmop{supp} (\varphi) \cup \{ 0 \}$, and we are done. \ 
\end{proof}

\subsection{Cut extensions of derivations}\label{subsection-cut-extensions2}

Let $(\mathbb{S}, \partial, \ell)$ be a differential pre-logarithmic Hahn
field with $\mathbb{S}= \mathbf{K} \llbracket \mathfrak{M} \rrbracket$, such
that $\ell (\mathfrak{M})$ is truncation closed in $\mathbb{S}$. Let
$\mathfrak{S} \subseteq \mathfrak{M}$ be a final segment and consider the
corresponding algebra $\mathbb{S} \llbracket X \rrbracket_{\mathfrak{S}} =
\mathbf{K} \llbracket \mathfrak{M} \times X^{\mathbb{N}} \rrbracket$ for the
ordering $\prec_{\mathfrak{S}}$ of \Cref{subsection-subalgebras}. Note that
$\partial$ is a derivation $\mathbb{S} \longrightarrow \mathbb{S} \llbracket X
\rrbracket_{\mathfrak{S}}$. Consider the function
\begin{eqnarray*}
  \overline{\partial} \of \mathfrak{M} \times X^{\mathbb{N}} & \longrightarrow
  & \mathbb{S} \llbracket X \rrbracket_{\mathfrak{S}}\\
  \mathfrak{m}X^k & \longmapsto & \mathfrak{m}' X^k . \end{eqnarray*}
  
\begin{proposition}
  \label{prop-Noeth-ext-der}The function $\overline{\partial}$ is Noetherian.
\end{proposition}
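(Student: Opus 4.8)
The plan is to adapt the proof of Proposition~\ref{prop-Noeth-ext-comp}, with $\partial$ in place of $\triangle$, using Lemma~\ref{lem-summable-bad-sequence} as the summability criterion. Concretely, given a Noetherian subset $\mathfrak{I}$ of $(\mathfrak{M} \times X^{\mathbb{N}}, \succ_{\mathfrak{S}})$, an injective sequence $(\mathfrak{m}_i X^{k_i})_{i \in \mathbb{N}}$ in $\mathfrak{I}$, and a sequence $(\mathfrak{n}_i)_{i \in \mathbb{N}}$ with $\mathfrak{n}_i \in \tmop{supp}(\mathfrak{m}_i')$, I want to produce $i < j$ with $\mathfrak{n}_j X^{k_j} \prec_{\mathfrak{S}} \mathfrak{n}_i X^{k_i}$. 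As in Proposition~\ref{prop-Noeth-ext-comp}, this goal is preserved under extraction of subsequences, so by Lemma~\ref{lem-Noeth-subsequence} and the explicit description~\eqref{eq-negative-cone} of the negative cone I may first arrange that $(\mathfrak{m}_i X^{k_i})$ is $\prec_{\mathfrak{S}}$-decreasing, that $(k_i)$ is nondecreasing, and then, after a further subsequence, that $(k_i)$ is either constant or strictly increasing.

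The constant case $k_i \equiv k_0$ is immediate: then $\mathfrak{m}_0 \succ \mathfrak{m}_1 \succ \cdots$, so $\{\mathfrak{m}_i\}$ is Noetherian in $(\mathfrak{M}, \succ)$; since $\partial$ is strongly linear the family $(\mathfrak{m}_i')_i$ is summable in $\mathbb{S}$, and Lemma~\ref{lem-summable-bad-sequence} applied in $\mathbb{S}$ yields $i < j$ with $\mathfrak{n}_i \succ \mathfrak{n}_j$, hence $\mathfrak{n}_j X^{k_0} \prec_{\mathfrak{S}} \mathfrak{n}_i X^{k_0}$.

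The strictly increasing case is the heart of the matter, and here I would first \emph{prepare} $\mathfrak{S}$. Since $\ell$ embeds $(\mathfrak{M}, \prec)$ into $(\mathbb{S}, <)$ and $\ell(\mathfrak{M})$ is truncation closed, $\ell(\mathfrak{S})$ is a convex subclass of the truncation closed class $L \assign \ell(\mathfrak{M})$, so Lemma~\ref{lem:coinitial-sequence} provides a coinitial sequence $(\mathfrak{v}_p)_p$ in $\mathfrak{S}$ for which either $(\ell(\mathfrak{v}_p))_p$ is weakly summable (injective alternative) or $\ell(\mathfrak{v}_p) = \varphi + c_p \mathfrak{o}_p$ with $\varphi$ a fixed truncation (constant alternative). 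In the injective alternative, Lemma~\ref{lem-Noetherian-function} and strong linearity of $\partial$ make $\mathfrak{W} \assign \bigcup_p \tmop{supp}(\mathfrak{v}_p^{\dagger})$ Noetherian. Using coinitiality I replace each monomial $\mathfrak{u}_i \in \mathfrak{S}$ occurring in the cone inequalities $\mathfrak{m}_{i+1}/\mathfrak{m}_i \preccurlyeq \mathfrak{u}_i^{-(k_{i+1}-k_i)}$ by some $\mathfrak{v}_{p(i)} \preccurlyeq \mathfrak{u}_i$, set $\mathfrak{p}_i \assign \prod_{l<i} \mathfrak{v}_{p(l)}^{-(k_{l+1}-k_l)}$ and $\mathfrak{q}_i \assign \mathfrak{m}_i / \mathfrak{p}_i$; the cone inequalities give $\mathfrak{q}_{i+1} \preccurlyeq \mathfrak{q}_i$, so $\{\mathfrak{q}_i\}$ is Noetherian in $(\mathfrak{M}, \succ)$, the set $\mathfrak{V} \assign \bigcup_i \tmop{supp}(\partial \mathfrak{q}_i)$ is Noetherian by strong linearity of $\partial$, and $\tmop{supp}(\mathfrak{p}_i^{\dagger}) \subseteq \mathfrak{W}$ for every $i$. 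Telescoping and setting $\mathfrak{v} \assign \min_{i \leqslant l < j} \mathfrak{v}_{p(l)} \in \mathfrak{S}$ give both $\mathfrak{m}_j/\mathfrak{m}_i \preccurlyeq \mathfrak{v}^{-(k_j-k_i)}$ and $\mathfrak{p}_j/\mathfrak{p}_i \preccurlyeq \mathfrak{v}^{-(k_j-k_i)}$. From $\partial \mathfrak{m}_i = \mathfrak{p}_i \, \partial \mathfrak{q}_i + \mathfrak{m}_i \mathfrak{p}_i^{\dagger}$ one sees that each $\mathfrak{n}_i \in \tmop{supp}(\mathfrak{m}_i')$ is either $\mathfrak{p}_i \mathfrak{b}_i$ with $\mathfrak{b}_i \in \mathfrak{V}$, or $\mathfrak{m}_i \mathfrak{c}_i$ with $\mathfrak{c}_i \in \mathfrak{W}$; passing to the subsequence on which one type always occurs and applying Lemma~\ref{lem-summable-bad-sequence} to $(\partial \mathfrak{q}_i)_i$ (resp. to the monomials of the Noetherian $\mathfrak{W}$) furnishes $i<j$ with $\mathfrak{b}_j \preccurlyeq \mathfrak{b}_i$ (resp. $\mathfrak{c}_j \preccurlyeq \mathfrak{c}_i$), and multiplying with the telescoped inequality yields $\mathfrak{n}_j/\mathfrak{n}_i \preccurlyeq \mathfrak{v}^{-(k_j-k_i)}$, that is $\mathfrak{n}_j X^{k_j} \prec_{\mathfrak{S}} \mathfrak{n}_i X^{k_i}$.

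The main obstacle will be the constant alternative of Lemma~\ref{lem:coinitial-sequence}, in which the monomials $\mathfrak{o}_p$ need not lie in a Noetherian set, so that $\mathfrak{W}$ may fail to be Noetherian. There I expect one must exploit that $\varphi$, hence $\mathfrak{b} \assign \ell^{-1}(\varphi)$, is fixed: writing $\mathfrak{v}_p = \mathfrak{b}\mathfrak{c}_p$ with $\ell(\mathfrak{c}_p) = c_p \mathfrak{o}_p$ a single term, one should track the products $\mathfrak{m}_i \mathfrak{p}_i^{\dagger}$ directly, so that the rapidly shrinking factors $\mathfrak{m}_i$ absorb the $\mathfrak{o}_p$'s, rather than $\mathfrak{p}_i^{\dagger}$ alone; the degenerate sub-cases in which $\mathfrak{q}_i$ or $\mathfrak{o}_{p(i)}$ is eventually constant are then handled by hand using finiteness of the relevant supports. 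Modulo this bookkeeping, the argument is a routine transcription of Proposition~\ref{prop-Noeth-ext-comp}.
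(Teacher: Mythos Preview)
Your overall strategy matches the paper's proof: reduce to a strictly $\prec_{\mathfrak{S}}$-decreasing sequence, split on whether $(k_i)$ is constant or strictly increasing, and in the latter case use Lemma~\ref{lem:coinitial-sequence} applied to $\ell(\mathfrak{S})$ to control the auxiliary monomials $\mathfrak{u}_i\in\mathfrak{S}$. Your constant-$k_i$ case and your weakly-summable sub-case are correct and essentially identical to the paper (your $\mathfrak{q}_i=\mathfrak{m}_i/\mathfrak{p}_i$ is the paper's $\mathfrak{m}_i\mathfrak{p}_i$ up to a sign convention on the exponent, and your two-type splitting of $\mathfrak{n}_i$ is just the Leibniz decomposition the paper also uses).

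The genuine gap is the constant alternative of Lemma~\ref{lem:coinitial-sequence}, which you explicitly leave open. Your sketch does not work and misses the key idea. First, a minor point: $\varphi$ need not lie in $\ell(\mathfrak{M})$, so $\mathfrak{b}=\ell^{-1}(\varphi)$ is not defined in general. More seriously, your hope that ``the rapidly shrinking factors $\mathfrak{m}_i$ absorb the $\mathfrak{o}_p$'s'' has no force: the $\mathfrak{m}_i$ only satisfy the cone inequalities, while $\mathfrak{p}_i^{\dagger}=\sum_{l<i}(k_{l+1}-k_l)(\varphi'+c_l\mathfrak{o}_l')$ involves $\mathfrak{o}_l'$ for \emph{all} $l<i$, and the union $\bigcup_l\tmop{supp}\mathfrak{o}_l'$ is not Noetherian. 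There is nothing that prevents $\mathfrak{n}_i\in\tmop{supp}(\mathfrak{m}_i\mathfrak{o}_{j(i)}')$ with $j(i)$ unbounded, and then you have no summable family containing $\mathfrak{n}_i\mathfrak{p}_i$.

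The paper's trick is different and subtler. Either some fixed $\mathfrak{o}_j$ works for infinitely many $i$ (then $(\mathfrak{m}_i\mathfrak{p}_i(\varphi'+\mathfrak{o}_j'))_i$ is summable and you are done), or not. In the latter case, the paper exploits the \emph{freedom in the integer coefficients}: for any choice of $n_i\in\mathbb{N}^{>0}$ there is a coinitial sequence $(\mathfrak{v}_i)$ with $\ell(\mathfrak{v}_i)=\varphi+n_ic_i\mathfrak{o}_i$, and redefining $\mathfrak{p}_i$ using these $\mathfrak{v}_i$ changes the coefficient of $\mathfrak{o}_j'$ in $\mathfrak{p}_i^{\dagger}$. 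Since for each $i$ only finitely many $j$ are relevant, one can choose $n_i$ so that whenever $\mathfrak{n}_i\in\tmop{supp}(\mathfrak{m}_i\mathfrak{o}_j')$ the monomial $\mathfrak{n}_i$ does \emph{not} cancel in $\mathfrak{m}_i'+\mathfrak{m}_i\mathfrak{p}_i^{\dagger}$, forcing $\mathfrak{n}_i\mathfrak{p}_i\in\tmop{supp}((\mathfrak{m}_i\mathfrak{p}_i)')$. This places every $\mathfrak{n}_i\mathfrak{p}_i$ in the summable family $(\mathfrak{m}_i\mathfrak{p}_i\varphi')_i\cup((\mathfrak{m}_i\mathfrak{p}_i)')_i$, and the argument closes as before. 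This cancellation-avoidance step is the heart of the proof, and your proposal does not contain it.
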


\begin{proof}
  Let $(\mathfrak{m}_i X^{k_i})_{i \in \mathbb{N}}$ be a strictly
  $\prec_{\mathfrak{S}}$-decreasing sequence in $\mathfrak{M} \times
  X^{\mathbb{N}}$. This means, by definition, that $(k_i)_{i \in \mathbb{N}}$
  is weakly increasing and that there are monomials $\mathfrak{u}_i \succ
  \mathfrak{S}$ such that $\mathfrak{m}_{i + 1} \mathfrak{u}_i^{k_{i + 1} -
  k_i} \prec \mathfrak{m}_i$. Letting $\mathfrak{p}_{i + 1} \assign
  \mathfrak{u}_0^{k_1 - k_0} \cdots \mathfrak{u}_i^{k_{i + 1} - k_i}$,
  $\mathfrak{p}_0 \assign 1$, we find that $\mathfrak{m}_{i + 1}
  \mathfrak{p}_{i + 1} \prec \mathfrak{m}_i \mathfrak{p}_i$.
  
  Now pick some arbitrary $\mathfrak{n}_i \in \tmop{supp} \mathfrak{m}_i'$ for
  $i \in \mathbb{N}$. We claim that after taking a subsequence, the monomials
  $\mathfrak{n}_i \mathfrak{p}_i$ appear in the supports of some summable
  family. This implies that there are $i < j$ such that $\overset{}{}
  \mathfrak{n}_i \mathfrak{p}_i \succ \mathfrak{n}_j \mathfrak{p}_j$, and so
  $\mathfrak{n}_i \succ (\min_{i \leqslant n < j} \mathfrak{u}_n) ^{k_j - k_i}
  \mathfrak{n}_j$, thus $\mathfrak{n}_i X^{k_i} \succ_{\mathfrak{S}}
  \mathfrak{n}_j X^{k_j}$, proving that $(\mathfrak{m}_i' X^{k_i})_{i \in
  \mathbb{N}}$ is summable, and so that $\overline{\partial}$ is Noetherian.
  
  As a warm-up, observe that if $(k_i)_{i \in \mathbb{N}}$ is constant, then
  $\mathfrak{n}_i \mathfrak{p}_i =\mathfrak{n}_i$ is in the support of
  $\mathfrak{m}_i'$, and $(\mathfrak{m}_i')_{i \in \mathbb{N}}$ is summable by
  strong linearity of $\partial$.
  
  In the general case, observe that by strong linearity of $\partial$, the
  family
  \[ (\mathfrak{m}_i \mathfrak{p}_i)' =\mathfrak{m}_i' \mathfrak{p}_i
     +\mathfrak{m}_i \mathfrak{p}_i' =\mathfrak{p}_i  (\mathfrak{m}_i'
     +\mathfrak{m}_i \mathfrak{p}_i^{\dag}) \]
  is summable. Note moreover that $\mathfrak{n}_i \mathfrak{p}_i \in
  \tmop{supp} \mathfrak{m}_i' \mathfrak{p}_i$. We also have
  \[ \mathfrak{p}_{i + 1}^{\dagger} = (k_1 - k_0) \mathfrak{u}_0^{\dagger} +
     \cdots + (k_{i + 1} - k_i) \mathfrak{u}_i^{\dag} . \]
  If the sequence $(\ell (\mathfrak{u}_i))_{i \in \mathbb{N}}$ is weakly
  summable, then $(\mathfrak{u}_i^{\dagger})_{i \in \mathbb{N}}$ is weakly
  summable, so $ (\mathfrak{m}_i \mathfrak{p}_i')_{i \in \mathbb{N}} =
  ((\mathfrak{m}_i \mathfrak{p}_i) \mathfrak{p}_i^{\dagger})_{i \in
  \mathbb{N}}$ is summable, hence $(\mathfrak{m}_i' \mathfrak{p}_i)_{i \in
  \mathbb{N}}$ is summable, and we are done.
  
  If the sequence $(\ell (\mathfrak{u}_i))_{i \in \mathbb{N}}$ is not
  coinitial in $\ell (\mathfrak{M} \setminus \mathfrak{S})$, then there is
  $\mathfrak{u} \succ \mathfrak{S}$ such that $\mathfrak{u} \preccurlyeq
  \mathfrak{u}_i$ for all $i$. So we may assume that $\mathfrak{u}_i
  =\mathfrak{u}$ for all $i$, in which case $(\ell (\mathfrak{u}_i))_{i \in
  \mathbb{N}}$ is weakly summable, and we are done. Otherwise, we may replace
  $(\mathfrak{u}_i)_{i \in \mathbb{N}}$ with a subsequence of any other
  coinitial sequence in $\ell (\mathfrak{M} \setminus \mathfrak{S})$. Since
  $\ell (\mathfrak{M})$ is truncation closed and $\ell (\mathfrak{M} \setminus
  \mathfrak{S})$ is a convex subset of $\ell (\mathfrak{M})$, by
  Lemma~\ref{lem:coinitial-sequence}, we may choose the sequence so that $(\ell
  (\mathfrak{u}_i))_{i \in \mathbb{N}}$ is either weakly summable, or of the
  form $(\varphi + c_i \mathfrak{o}_i)_{i \in \mathbb{N}}$ and not weakly
  summable. Thus after taking a subsequence with $(\mathfrak{o}_i)_{i \in
  \mathbb{N}}$ strictly increasing and with $r_i < 0$.
  
  In the former case, we are done. In the latter, note that for any choice of
  non-zero $n_i \in \mathbb{N}$, there is another coinitial sequence
  $(\mathfrak{v}_i)_{i \in \mathbb{N}}$ in $\ell (\mathfrak{M} \setminus
  \mathfrak{S})$ such that $\ell (\mathfrak{v}_i) = \varphi + n_i c_i
  \mathfrak{o}_i$. If there are $j \in \mathbb{N}$ and infinitely many $i \in
  \mathbb{N}$ such that $\mathfrak{n}_i \in \tmop{supp} (\mathfrak{m}_i
  \varphi') \cup \tmop{supp} (\mathfrak{m}_i \mathfrak{o}_j')$, we note that
  $(\mathfrak{m}_i \mathfrak{p}_i  (\varphi' +\mathfrak{o}_j'))_{i \in
  \mathbb{N}}$ is summable, and we are done. Otherwise, we can choose $n_i$ so
  that $\mathfrak{n}_i \in \tmop{supp} (\mathfrak{m}_i \mathfrak{o}_j')$
  implies that $\mathfrak{n}_i \in \tmop{supp} (\mathfrak{m}_i'
  +\mathfrak{m}_i \mathfrak{p}_i^{\dag})$ and in particular $\mathfrak{n}_i
  \mathfrak{p}_i \in \tmop{supp} (\mathfrak{m}_i \mathfrak{p}_i)'$. With this
  choice, for each $i$ we have $\mathfrak{n}_i \mathfrak{p}_i \in (\tmop{supp}
  \mathfrak{m}_i \varphi') \cup (\tmop{supp} (\mathfrak{m}_i
  \mathfrak{p}_i)')$, and we are done.
\end{proof}

We thus have a strongly linear extension $\overline{\partial} \of \mathbb{S}
\llbracket X \rrbracket_{\mathfrak{S}} \longrightarrow \mathbb{S} \llbracket X
\rrbracket_{\mathfrak{S}}$ of $\overline{\partial}$. It follows from
Lemma~\ref{lem-strongly-lin-der} that $\overline{\partial}$ is a derivation. Define
\[ \mathfrak{S}^{-\rotatebox[origin=c]{180}{\small{\dag}}} \assign
   \{\mathfrak{m} \in \mathfrak{M} \suchthat
   \mathfrak{d}_{\mathfrak{m}^{\dag}} \in \mathfrak{S}^{- 1} \} . \]
\begin{lemma}
  The class $\mathfrak{S}^{-\rotatebox[origin=c]{180}{\tmem{\small{\dag}}}}$
  is a subgroup of $\mathfrak{M}$.
\end{lemma}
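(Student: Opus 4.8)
The plan is to reduce the statement to two elementary facts: that $\dag \colon (\mathfrak{M},\cdot)\longrightarrow(\mathbb{S},+)$, $\mathfrak{m}\mapsto\mathfrak{m}^{\dag}$, is a group homomorphism, and that $\mathfrak{S}^{-1}\assign\{\mathfrak{u}^{-1}\suchthat\mathfrak{u}\in\mathfrak{S}\}$ is an initial segment of $(\mathfrak{M},\prec)$. I adopt throughout the usual convention that $\mathfrak{d}_{0}$ is $\prec$-below every monomial (so $\mathfrak{d}_{0}$ lies in every nonempty initial segment of $\mathfrak{M}$, and the ultrametric estimate $\mathfrak{d}_{s-t}\preccurlyeq\max(\mathfrak{d}_{s},\mathfrak{d}_{t})$, which is immediate from $\tmop{supp}(s-t)\subseteq\tmop{supp} s\cup\tmop{supp} t$ and $\tmop{supp}(-t)=\tmop{supp} t$, holds for all $s,t\in\mathbb{S}$).

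First I would record the homomorphism property: since $\mathfrak{m}^{\dag}=\frac{\mathfrak{m}'}{\mathfrak{m}}=(\ell(\mathfrak{m}))'=\partial(\ell(\mathfrak{m}))$, the fact that $\ell$ is a morphism of groups into $(\mathbb{S},+)$ together with $\mathbf{K}$-linearity of $\partial$ yields $(\mathfrak{m}\mathfrak{n})^{\dag}=\partial(\ell(\mathfrak{m})+\ell(\mathfrak{n}))=\mathfrak{m}^{\dag}+\mathfrak{n}^{\dag}$ for all $\mathfrak{m},\mathfrak{n}\in\mathfrak{M}$; in particular $1^{\dag}=0$ and $(\mathfrak{m}^{-1})^{\dag}=-\mathfrak{m}^{\dag}$. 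Next, $\mathfrak{S}^{-1}$ is an initial segment of $(\mathfrak{M},\prec)$: if $\mathfrak{u}\in\mathfrak{S}$ and $\mathfrak{b}\preccurlyeq\mathfrak{u}^{-1}$, then $\mathfrak{b}^{-1}\succcurlyeq\mathfrak{u}$, so $\mathfrak{b}^{-1}\in\mathfrak{S}$ (a final segment) and hence $\mathfrak{b}\in\mathfrak{S}^{-1}$. Since $\mathfrak{M}$ is linearly ordered, $\mathfrak{S}^{-1}$ is a chain, and being an initial segment it is closed under passing to $\preccurlyeq$-smaller monomials; in particular $\max(\mathfrak{a},\mathfrak{b})\in\mathfrak{S}^{-1}$ whenever $\mathfrak{a},\mathfrak{b}\in\mathfrak{S}^{-1}$.

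With these two facts in hand I would then check the three subgroup axioms directly. For the identity, $1^{\dag}=0$ gives $\mathfrak{d}_{1^{\dag}}=\mathfrak{d}_{0}\in\mathfrak{S}^{-1}$ (using that $\mathfrak{S}$, hence $\mathfrak{S}^{-1}$, is nonempty), so $1\in\mathfrak{S}^{-\rotatebox[origin=c]{180}{\small{\dag}}}$. For closure under $(\mathfrak{m},\mathfrak{n})\mapsto\mathfrak{m}\mathfrak{n}^{-1}$, take $\mathfrak{m},\mathfrak{n}\in\mathfrak{S}^{-\rotatebox[origin=c]{180}{\small{\dag}}}$, so that $\mathfrak{d}_{\mathfrak{m}^{\dag}},\mathfrak{d}_{\mathfrak{n}^{\dag}}\in\mathfrak{S}^{-1}$; then $(\mathfrak{m}\mathfrak{n}^{-1})^{\dag}=\mathfrak{m}^{\dag}-\mathfrak{n}^{\dag}$ and
\[ \mathfrak{d}_{(\mathfrak{m}\mathfrak{n}^{-1})^{\dag}}=\mathfrak{d}_{\mathfrak{m}^{\dag}-\mathfrak{n}^{\dag}}\preccurlyeq\max\bigl(\mathfrak{d}_{\mathfrak{m}^{\dag}},\mathfrak{d}_{\mathfrak{n}^{\dag}}\bigr)\in\mathfrak{S}^{-1}, \]
so $\mathfrak{d}_{(\mathfrak{m}\mathfrak{n}^{-1})^{\dag}}\in\mathfrak{S}^{-1}$ since $\mathfrak{S}^{-1}$ is an initial segment, i.e. $\mathfrak{m}\mathfrak{n}^{-1}\in\mathfrak{S}^{-\rotatebox[origin=c]{180}{\small{\dag}}}$. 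This establishes that $\mathfrak{S}^{-\rotatebox[origin=c]{180}{\small{\dag}}}$ is a subgroup of $\mathfrak{M}$. I do not expect any genuine obstacle here; the only point worth a moment of care is the uniform treatment of the degenerate case $\mathfrak{m}^{\dag}=0$ (e.g. $\mathfrak{m}=1$, or any $\mathfrak{m}$ with $\ell(\mathfrak{m})\in\mathbf{K}$), which is exactly what the $\mathfrak{d}_{0}$ convention above is there to handle.
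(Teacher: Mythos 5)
Your proof is correct and follows essentially the same route as the paper: the key step in both is the ultrametric estimate $\mathfrak{d}_{\mathfrak{m}^{\dag}-\mathfrak{n}^{\dag}}\preccurlyeq\max(\mathfrak{d}_{\mathfrak{m}^{\dag}},\mathfrak{d}_{\mathfrak{n}^{\dag}})$ combined with the fact that $\mathfrak{S}^{-1}$ is an initial segment. The only difference is that you spell out the homomorphism property of $\dag$, the initial-segment verification, and the degenerate case $\mathfrak{m}^{\dag}=0$, all of which the paper leaves implicit.
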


\begin{proof}
  For $\mathfrak{m}, \mathfrak{n} \in \mathfrak{S}^{-\rotatebox[origin=c]{180}{\small{\dag}}}$, we have
  $\mathfrak{d}_{(\mathfrak{m}\mathfrak{n}^{- 1})^{\dag}}
  =\mathfrak{d}_{\mathfrak{m}^{\dag} -\mathfrak{n}^{\dag}} \preccurlyeq \max
  (\mathfrak{d}_{\mathfrak{m}^{\dag}}, \mathfrak{d}_{\mathfrak{n}^{\dag}})$.
  We deduce since $\mathfrak{S}^{- 1}$ is an initial segment of $\mathfrak{M}$
  that $\mathfrak{d}_{(\mathfrak{m}\mathfrak{n}^{- 1})^{\dag}} \in
  \mathfrak{S}^{- 1}$, whence $\mathfrak{m}\mathfrak{n}^{- 1} \in
  \mathfrak{S}^{-\rotatebox[origin=c]{180}{\small{\dag}}}$.
\end{proof}

We write $\mathbb{S}_{[\mathfrak{S}]} \assign \mathbf{K} \left\llbracket
\mathfrak{S}^{-\rotatebox[origin=c]{180}{\small{\dag}}}
\right\rrbracket$, so $\mathbb{S}_{[\mathfrak{S}]}$ is a subfield of
$\mathbb{S}$.

\begin{proposition}
  \label{prop-contracting-der}Suppose that $\partial (\mathfrak{S}^-
  \;^{\rotatebox[origin=c]{180}{\ensuremath{\dag}}}) \subseteq
  \mathbb{S}_{[\mathfrak{S}]}$. Then the function
  \begin{eqnarray*}
    X \cdot \overline{\partial} \of \mathbb{S}_{[\mathfrak{S}]} \llbracket X
    \rrbracket_{\mathfrak{S}} & \longrightarrow & \mathbb{S}_{[\mathfrak{S}]}
    \llbracket X \rrbracket_{\mathfrak{S}}\\
    P & \longmapsto & \overline{\partial} (P) X
  \end{eqnarray*}
  is a strongly linear and contracting derivation.
\end{proposition}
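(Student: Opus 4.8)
The plan is to deduce everything from the facts already established about $\overline{\partial}$ on the ambient algebra $\mathbb{S}\llbracket X\rrbracket_{\mathfrak{S}}$, together with one short computation inside the cone (\ref{eq-negative-cone}). First I would record that, as observed just above, $\overline{\partial}$ is a strongly linear derivation of $\mathbb{S}\llbracket X\rrbracket_{\mathfrak{S}}$ (Proposition~\ref{prop-Noeth-ext-der}, Proposition~\ref{prop-strongly-linear-extension}, Lemma~\ref{lem-strongly-lin-der}), and that multiplication by the monomial $X$ is strongly linear on $\mathbb{S}\llbracket X\rrbracket_{\mathfrak{S}}$ (Proposition~\ref{prop-sum-prod} with a singleton family). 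Since $\mathbb{S}\llbracket X\rrbracket_{\mathfrak{S}}$ is commutative, the composite $P\mapsto\overline{\partial}(P)X$ is then a strongly linear derivation of $\mathbb{S}\llbracket X\rrbracket_{\mathfrak{S}}$: it is strongly linear as a composite, and it satisfies Leibniz because $\overline{\partial}(PQ)X=(\overline{\partial}(P)Q+P\overline{\partial}(Q))X=(\overline{\partial}(P)X)Q+P(\overline{\partial}(Q)X)$. It thus remains to check: (i) that $X\cdot\overline{\partial}$ maps the subalgebra $\mathbb{S}_{[\mathfrak{S}]}\llbracket X\rrbracket_{\mathfrak{S}}=\mathbf{K}\llbracket\mathfrak{S}^{-\rotatebox[origin=c]{180}{\small{\dag}}}\times_{\mathfrak{S}}X^{\mathbb{N}}\rrbracket$ into itself and that its restriction is still a strongly linear derivation, and (ii) that $X\cdot\overline{\partial}$ is contracting on the monomials of that subalgebra.

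For (i): on a monomial $\mathfrak{n}X^k$ with $\mathfrak{n}\in\mathfrak{S}^{-\rotatebox[origin=c]{180}{\small{\dag}}}$ we have $(X\cdot\overline{\partial})(\mathfrak{n}X^k)=\mathfrak{n}'X^{k+1}=\mathfrak{n}\,\mathfrak{n}^{\dag}X^{k+1}$, and the hypothesis $\partial(\mathfrak{S}^{-\rotatebox[origin=c]{180}{\small{\dag}}})\subseteq\mathbb{S}_{[\mathfrak{S}]}$ says precisely that $\mathfrak{n}'=\partial(\mathfrak{n})$ has support in $\mathfrak{S}^{-\rotatebox[origin=c]{180}{\small{\dag}}}$; hence $\mathfrak{n}'X^{k+1}\in\mathbb{S}_{[\mathfrak{S}]}\llbracket X\rrbracket_{\mathfrak{S}}$, and multiplication by $X$ also preserves this subalgebra since $1\in\mathfrak{S}^{-\rotatebox[origin=c]{180}{\small{\dag}}}$. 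As $\mathbb{S}_{[\mathfrak{S}]}\llbracket X\rrbracket_{\mathfrak{S}}$ is the algebra of Noetherian series on a sub-ordered-monoid of $\mathfrak{M}\times_{\mathfrak{S}}X^{\mathbb{N}}$ (embedded strongly linearly via Corollary~\ref{cor-Noetherian-subgroup}), it carries the induced ordering and is closed under sums of summable families; writing an arbitrary $P$ in it as the summable sum of its terms and applying strong linearity of $X\cdot\overline{\partial}$ shows $(X\cdot\overline{\partial})(P)$ again lies in it. That the resulting restriction $\mathbb{S}_{[\mathfrak{S}]}\llbracket X\rrbracket_{\mathfrak{S}}\to\mathbb{S}_{[\mathfrak{S}]}\llbracket X\rrbracket_{\mathfrak{S}}$ is still strongly linear follows because summability of a family of its elements is the same whether computed in the subalgebra or in $\mathbb{S}\llbracket X\rrbracket_{\mathfrak{S}}$; being a derivation is inherited by restriction. (Alternatively: the function $\mathfrak{n}X^k\mapsto\mathfrak{n}'X^{k+1}$ on $\mathfrak{S}^{-\rotatebox[origin=c]{180}{\small{\dag}}}\times_{\mathfrak{S}}X^{\mathbb{N}}$ is a restriction of the Noetherian function underlying $X\cdot\overline{\partial}$, has image inside $\mathbb{S}_{[\mathfrak{S}]}\llbracket X\rrbracket_{\mathfrak{S}}$, hence is Noetherian into it, and one invokes Proposition~\ref{prop-strongly-linear-extension}.)

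For (ii): by definition of ``contracting'' (cf. Proposition~\ref{prop-iterating-operator}) it suffices to show $(X\cdot\overline{\partial})(\mathfrak{n}X^k)\prec_{\mathfrak{S}}\mathfrak{n}X^k$ for every monomial $\mathfrak{n}X^k$ of $\mathbb{S}_{[\mathfrak{S}]}\llbracket X\rrbracket_{\mathfrak{S}}$, i.e. for $\mathfrak{n}\in\mathfrak{S}^{-\rotatebox[origin=c]{180}{\small{\dag}}}$, $k\in\mathbb{N}$. If $\mathfrak{n}^{\dag}=0$ the left-hand side is $0$ and there is nothing to prove. Otherwise $(X\cdot\overline{\partial})(\mathfrak{n}X^k)=\mathfrak{n}\,\mathfrak{n}^{\dag}X^{k+1}$, whose support is $\{\mathfrak{n}\mathfrak{p}X^{k+1}:\mathfrak{p}\in\tmop{supp}\mathfrak{n}^{\dag}\}$, so by translation-invariance of $\prec_{\mathfrak{S}}$ on $\mathfrak{M}\cdot X^{\mathbb{Z}}$ it is enough to check $\mathfrak{p}X\prec_{\mathfrak{S}}1$, i.e. $\mathfrak{p}X\in(\mathfrak{M}\times X^{\mathbb{Z}})^{\prec,\mathfrak{S}}$, for each $\mathfrak{p}\in\tmop{supp}\mathfrak{n}^{\dag}$. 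Now $\mathfrak{p}\preccurlyeq\mathfrak{d}_{\mathfrak{n}^{\dag}}$ and $\mathfrak{d}_{\mathfrak{n}^{\dag}}\in\mathfrak{S}^{-1}$ by definition of $\mathfrak{S}^{-\rotatebox[origin=c]{180}{\small{\dag}}}$, so $\mathfrak{p}\preccurlyeq\mathfrak{u}^{-1}$ with $\mathfrak{u}\assign\mathfrak{d}_{\mathfrak{n}^{\dag}}^{-1}\in\mathfrak{S}$; the clause ``$k>0$'' of (\ref{eq-negative-cone}) then gives $\mathfrak{p}X\in(\mathfrak{M}\times X^{\mathbb{Z}})^{\prec,\mathfrak{S}}$, as wanted. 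The genuinely hard input, Noetherianity of $\overline{\partial}$, was already supplied by Proposition~\ref{prop-Noeth-ext-der}; here the only real point is bookkeeping with the definitions of $\mathfrak{S}^{-\rotatebox[origin=c]{180}{\small{\dag}}}$, $\mathbb{S}_{[\mathfrak{S}]}$ and the cone, the hypothesis on $\partial$ serving exactly to keep the image inside the subalgebra.
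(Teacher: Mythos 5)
Your proof is correct and follows essentially the same route as the paper: strong linearity and the Leibniz rule are inherited from $\overline{\partial}$ and multiplication by $X$, and the contracting property is checked on monomials $\mathfrak{n}X^k$ by observing that each $\mathfrak{q}\in\tmop{supp}\mathfrak{n}'$ satisfies $\mathfrak{q}\mathfrak{n}^{-1}\preccurlyeq\mathfrak{d}_{\mathfrak{n}^{\dag}}\in\mathfrak{S}^{-1}$, which is exactly the cone condition. Your explicit verification that the hypothesis $\partial(\mathfrak{S}^{-\rotatebox[origin=c]{180}{\small{\dag}}})\subseteq\mathbb{S}_{[\mathfrak{S}]}$ keeps the image inside the subalgebra is a point the paper leaves implicit, and is a welcome addition.
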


\begin{proof}
  This is the restriction of a strongly linear function, so it is strongly
  linear. Since $\overline{\partial}$ is a derivation, so is $X \cdot
  \overline{\partial}$. Let $\mathfrak{m}X^k \in \mathfrak{S}^{-\rotatebox[origin=c]{180}{\small{\dag}}} \times X^{\mathbb{N}}$,
  and let $\mathfrak{n} \in \tmop{supp} \left( (X \cdot \overline{\partial})
  (\mathfrak{m}X^k) \right)$. So $\mathfrak{m} \neq 1$, and
  $\mathfrak{n}=\mathfrak{q}X^{k + 1}$ for a $\mathfrak{q} \in \tmop{supp}
  \mathfrak{m}'$. We want to show that $\mathfrak{n} \prec_{\mathfrak{S}}
  \mathfrak{m}X^k$. We have $\mathfrak{q} \preccurlyeq \mathfrak{m}'$, so
  $\mathfrak{q}\mathfrak{m}^{- 1} \preccurlyeq \mathfrak{m}^{\dag}$. We deduce
  since $\mathfrak{m} \in \mathfrak{S}^{-\rotatebox[origin=c]{180}{\small{\dag}}}$ that
  $\mathfrak{q}\mathfrak{m}^{- 1} \in \mathfrak{S}^{- 1}$, so
  $\mathfrak{n}=\mathfrak{q}X^{k + 1} \prec_{\mathfrak{S}} \mathfrak{m}X^k$.
\end{proof}

\begin{corollary}
  \label{cor-Taylor-setting}Assume that $\partial (\mathfrak{S}^{-\rotatebox[origin=c]{180}{\tmem{\small{\dag}}}}) \subseteq
  \mathbb{S}_{[\mathfrak{S}]}$. Then the function
  \begin{eqnarray*}
    \mathbb{S}_{[\mathfrak{S}]} & \longrightarrow &
    \mathbb{S}_{[\mathfrak{S}]} \llbracket X \rrbracket_{\mathfrak{S}}\\
    s & \longmapsto & \sum_{k \in \mathbb{N}} \frac{s^{(k)}}{k!} X^k
  \end{eqnarray*}
  is a well-defined and strongly linear morphism of algebras.
\end{corollary}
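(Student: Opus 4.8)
The plan is to exhibit the map of the statement as the restriction to the constant series of the iterated‑operator sum $\sum_{k \in \mathbb{N}} \tfrac{1}{k!}(X \cdot \overline{\partial})^{[k]}$ acting on $\mathbb{A} \assign \mathbb{S}_{[\mathfrak{S}]} \llbracket X \rrbracket_{\mathfrak{S}}$, and then to transfer the algebra‑morphism property from $\mathcal{T}_{\partial}$ of Proposition~\ref{prop-Taylor-morphism}. The first step is a preliminary bookkeeping remark: from the hypothesis $\partial(\mathfrak{S}^{-\rotatebox[origin=c]{180}{\small{\dag}}}) \subseteq \mathbb{S}_{[\mathfrak{S}]}$ and strong linearity of $\partial$ it follows that $\partial$ restricts to a strongly linear derivation $\mathbb{S}_{[\mathfrak{S}]} \to \mathbb{S}_{[\mathfrak{S}]}$. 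Indeed, writing $\partial(s) = \sum_{\mathfrak{m} \in \tmop{supp} s} s(\mathfrak{m}) \partial(\mathfrak{m})$ for $s \in \mathbb{S}_{[\mathfrak{S}]}$, this is a summable sum of elements of $\mathbb{S}_{[\mathfrak{S}]}$, and $\mathbb{S}_{[\mathfrak{S}]}$ — being itself an algebra of Noetherian series — is closed under its own summable sums; the Leibniz rule and strong linearity are inherited from $\mathbb{S}$. In particular all iterated derivatives $s^{(k)}$ remain in $\mathbb{S}_{[\mathfrak{S}]}$.

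Next I would invoke Proposition~\ref{prop-contracting-der}, which, under the present hypothesis, tells us that $\Phi \assign X \cdot \overline{\partial}$ is a strongly linear, contracting derivation on $\mathbb{A}$, i.e.\ $\Phi(\mathfrak{m}X^k) \prec_{\mathfrak{S}} \mathfrak{m}X^k$ for every monomial of $\mathbb{A}$. Since $\mathbf{K}$ is an ordered field, hence of characteristic $0$, we have $1/k! \in \mathbf{K}$, so Proposition~\ref{prop-iterating-operator} applies: $\sum_{k \in \mathbb{N}} \tfrac{1}{k!}\Phi^{[k]}$ is a well‑defined strongly linear operator $\mathbb{A} \to \mathbb{A}$. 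A one‑line induction, using $\overline{\partial}(\mathfrak{m}X^k) = \mathfrak{m}'X^k$ and strong linearity of $\overline{\partial}$, gives $\Phi^{[k]}(s) = s^{(k)}X^k$ for every constant series $s \in \mathbb{S}_{[\mathfrak{S}]} \subseteq \mathbb{A}$, so that $\bigl(\sum_k \tfrac{1}{k!}\Phi^{[k]}\bigr)(s) = \sum_k \tfrac{s^{(k)}}{k!}X^k$. This shows the map of the statement lands in $\mathbb{A}$ (well‑definedness). Strong linearity then follows because any summable family in $\mathbb{S}_{[\mathfrak{S}]}$ is, viewed inside $\mathbb{A}$, still summable with the same sum — its support sits in degree $X^0$, where $\prec_{\mathfrak{S}}$ restricts to $\prec$ — so the map is the restriction to $\mathbb{S}_{[\mathfrak{S}]}$ of a strongly linear operator.

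For the algebra‑morphism property I would apply Proposition~\ref{prop-Taylor-morphism} to the differential algebra $\mathbb{S}_{[\mathfrak{S}]}$ equipped with the restricted derivation: the map $\mathcal{T}_{\partial} \colon s \mapsto \sum_k \tfrac{s^{(k)}}{k!}X^k$ is a morphism of algebras $\mathbb{S}_{[\mathfrak{S}]} \to \mathbb{S}_{[\mathfrak{S}]}\llbracket X \rrbracket$. The natural inclusion $\mathbb{A} = \mathbb{S}_{[\mathfrak{S}]}\llbracket X\rrbracket_{\mathfrak{S}} \hookrightarrow \mathbb{S}_{[\mathfrak{S}]}\llbracket X \rrbracket$ is an embedding of algebras by Corollary~\ref{cor-Noetherian-subgroup}, and by the computation above $\mathcal{T}_{\partial}$ factors as the map of the statement followed by this inclusion. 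Since an injective morphism of algebras reflects being a morphism of algebras, the map of the statement is itself one, and we are done.

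I do not expect a genuine obstacle: the real content is entirely absorbed into Propositions~\ref{prop-contracting-der}, \ref{prop-iterating-operator} and~\ref{prop-Taylor-morphism}. The only places calling for a little attention are checking that $\partial$ restricts to $\mathbb{S}_{[\mathfrak{S}]}$ (so that the $s^{(k)}$ do not leave the smaller field), the identity $\Phi^{[k]}(s) = s^{(k)}X^k$, and the transport of summability between $\mathbb{S}_{[\mathfrak{S}]}$ and $\mathbb{A}$ — all routine.
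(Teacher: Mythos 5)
Your proposal is correct and follows essentially the same route as the paper: the paper's proof likewise applies Proposition~\ref{prop-iterating-operator} to the contracting derivation $X \cdot \overline{\partial}$ furnished by Proposition~\ref{prop-contracting-der}, restricts to $\mathbb{S}_{[\mathfrak{S}]}$, and invokes Proposition~\ref{prop-Taylor-morphism} for the product rule. Your extra bookkeeping (that $\partial$ restricts to $\mathbb{S}_{[\mathfrak{S}]}$, the identity $\Phi^{[k]}(s) = s^{(k)}X^k$, and the factorisation through the inclusion into $\mathbb{S}_{[\mathfrak{S}]}\llbracket X \rrbracket$) just makes explicit what the paper leaves implicit.
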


\begin{proof}
  We apply Proposition~\ref{prop-iterating-operator} to $X \cdot \overline{\partial}$.
  Since $\mathbb{S}_{[\mathfrak{S}]} \subseteq \mathbb{S}_{[\mathfrak{S}]}
  \llbracket X \rrbracket_{\mathfrak{S}}$, this shows that the restriction of
  $\mathcal{T}_{X \cdot \overline{\partial}} = \sum_{k \in \mathbb{N}}
  \frac{(X \cdot \overline{\partial})^{[k]}}{k!}$ to
  $\mathbb{S}_{[\mathfrak{S}]}$ is well-defined and strongly linear. We see
  with Proposition~\ref{prop-Taylor-morphism} that it preserves products.
\end{proof}

\section{Taylor expansions}\label{section-Taylor-expansions}

Our goal in this section is to study the convergence of Taylor expansions. We
fix an ordered field $\mathbf{K}$. Let $\mathfrak{M}, \mathfrak{N}$ be
non-trivial, linearly ordered Abelian groups. Let $(\mathbb{S}, \ell,
\partial)$ be a differential pre-logarithmic Hahn field with $\mathbb{S}=
\mathbf{K} \llbracket \mathfrak{M} \rrbracket$, write $\mathbb{T} \assign
\mathbf{K} \llbracket \mathfrak{N} \rrbracket$ and let $\triangle \of
\mathbb{S} \longrightarrow \mathbb{T}$ be a strongly linear morphism of
ordered rings. We also fix an $x \in \mathbb{S}^{\times}$, such that for all $\mathfrak{m} \in \mathfrak{M}$ the
following holds:
\begin{equation}
 (\mathfrak{m}^{\dag}
  \preccurlyeq x^{- 1} \wedge (\tmop{supp} \mathfrak{m}')^{\dag} \preccurlyeq
  x^{- 1}) \text{ or $(\mathfrak{m}^{\dag} \succ x^{- 1} \wedge (\tmop{supp}
  \mathfrak{m}')^{\dag} \asymp \mathfrak{m}^{\dag})$)\label{eq-spec-cond}} .
\end{equation}

\begin{remark}
  The condition (\ref{eq-spec-cond}) is satisfied for differential fields of
  transseries, including surreal numbers (Proposition~\ref{prop-near-support-No}), that
  are built upon a variable $x$ in a constructive way (see
  Lemma~\ref{lem-der-dichotomy}). We expect it is valid in most reasonable
  differential fields of transseries.
\end{remark}

Given $s \in \mathbb{S}$, and $\delta \in \mathbb{T}$, we study the
convergence of the Taylor series $\sum_{k \in \mathbb{N}} \frac{\triangle
(s^{(k)})}{k!} X^k \in \mathbb{T} \llbracket X \rrbracket$ at~$X = \delta$.
That is, we want to find conditions under which the family $(\triangle
(s^{(k)}) \delta^k)_{k \in \mathbb{N}}$ is summable. Our summability result is
as follows:

\begin{theorem}
  \label{th-Taylor-convergence}Let $\mathfrak{S} \subseteq \mathfrak{M}$ be a
  well-based subset. For all $\delta \in \mathbb{T}$ with $\delta \prec
  \triangle (x)$ and $\triangle (\mathfrak{m}^{\dag}) \delta \prec 1$ whenever
  $\mathfrak{m} \in \mathfrak{S}$, the family $(\triangle (\mathfrak{m}^{(k)})
  \delta^k)_{\mathfrak{m} \in \mathfrak{S} \wedge k \in \mathbb{N}}$ is
  summable.
\end{theorem}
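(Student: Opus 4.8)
The plan is to realise the operator $s \mapsto \sum_{k} \tfrac{\triangle(s^{(k)})}{k!}\delta^{k}$ as a composite of strongly linear maps between the cut algebras of \Cref{subsection-subalgebras}, and to extract summability of the doubly‑indexed family from strong linearity together with the disjointness of the $X^{k}$‑graded pieces. First I would dispose of degeneracies: if $\delta = 0$ the claim reduces to summability of $(\triangle(\mathfrak{m}))_{\mathfrak{m} \in \mathfrak{S}}$, which holds since $\mathfrak{S}$ is well‑based and $\triangle$ is strongly linear; and the monomial $\mathfrak{m}=1$, if it lies in $\mathfrak{S}$, contributes the trivially summable row $(\triangle(1),0,0,\dots)$, so I may assume $\delta \neq 0$ and $1 \notin \mathfrak{S}$. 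Then set $\mathfrak{T} := \{\mathfrak{n} \in \mathfrak{N} : \mathfrak{n} \succ \mathfrak{d}_{\delta}\}$, a non‑empty final segment of $\mathfrak{N}$ with $\delta \prec \mathfrak{T}$, and $\mathfrak{S}_{0} := \triangle^{\ast}(\mathfrak{T})$, a final segment of $\mathfrak{M}$. By \Cref{cor-cut-extension-morphism} there is a strongly linear morphism $\overline{\triangle} \of \mathbb{S}\llbracket X \rrbracket_{\mathfrak{S}_{0}} \rightarrow \mathbb{T}\llbracket X \rrbracket_{\mathfrak{T}}$, and by \Cref{prop-Noeth-evaluation} a strongly linear morphism $\operatorname{ev}_{\delta} \of \mathbb{T}\llbracket X \rrbracket_{\mathfrak{T}} \rightarrow \mathbb{T}$ sending $\mathfrak{n}X^{k}$ to $\mathfrak{n}\delta^{k}$.

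The technical heart is to verify two facts about $\mathfrak{S}_{0}$. Since $\triangle$ is a strongly linear ring morphism it sends infinitesimals to infinitesimals (if $\varepsilon \prec 1$ then $(\varepsilon^{k})_{k}$ is summable, hence so is $(\triangle(\varepsilon)^{k})_{k}$, forcing $\triangle(\varepsilon) \prec 1$); thus $\triangle(s) \asymp \triangle(\mathfrak{d}_{s})$ for $s \in \mathbb{S}^{\times}$ and $\triangle(\mathfrak{u}^{-1}) = \triangle(\mathfrak{u})^{-1}$ for $\mathfrak{u} \in \mathfrak{M}$, which I use freely. (a) \emph{$\mathfrak{S} \subseteq \mathbb{S}_{[\mathfrak{S}_{0}]}$}: for $\mathfrak{m} \in \mathfrak{S}$ the hypothesis $\triangle(\mathfrak{m}^{\dag})\delta \prec 1$ gives $\triangle(\mathfrak{d}_{\mathfrak{m}^{\dag}})\delta \prec 1$, hence $\mathfrak{d}_{\delta} \prec \mathfrak{d}_{\triangle(\mathfrak{d}_{\mathfrak{m}^{\dag}}^{-1})} \in \mathfrak{T}$, so $\mathfrak{d}_{\mathfrak{m}^{\dag}}^{-1} \in \triangle^{\ast}(\mathfrak{T}) = \mathfrak{S}_{0}$, which says exactly $\mathfrak{m} \in \mathfrak{S}_{0}^{-\rotatebox[origin=c]{180}{\small{\dag}}}$. (b) \emph{$\partial(\mathfrak{S}_{0}^{-\rotatebox[origin=c]{180}{\small{\dag}}}) \subseteq \mathbb{S}_{[\mathfrak{S}_{0}]}$}: the hypothesis $\delta \prec \triangle(x)$ gives $\mathfrak{d}_{x} \in \mathfrak{S}_{0}$ by the same computation; now take $\mathfrak{u} \in \mathfrak{S}_{0}^{-\rotatebox[origin=c]{180}{\small{\dag}}}$ (so $\mathfrak{d}_{\mathfrak{u}^{\dag}}^{-1} \in \mathfrak{S}_{0}$) and $\mathfrak{q} \in \tmop{supp}\mathfrak{u}'$, which we may take $\neq 1$. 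Condition \eqref{eq-spec-cond} applied to $\mathfrak{u}$ gives two cases: if $\mathfrak{u}^{\dag} \preccurlyeq x^{-1}$ then $\mathfrak{q}^{\dag} \preccurlyeq x^{-1}$, so $\mathfrak{d}_{\mathfrak{q}^{\dag}}^{-1} \succcurlyeq \mathfrak{d}_{x} \in \mathfrak{S}_{0}$; if $\mathfrak{u}^{\dag} \succ x^{-1}$ then $\mathfrak{q}^{\dag} \preccurlyeq \mathfrak{u}^{\dag}$, so $\mathfrak{d}_{\mathfrak{q}^{\dag}}^{-1} \succcurlyeq \mathfrak{d}_{\mathfrak{u}^{\dag}}^{-1} \in \mathfrak{S}_{0}$. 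Since $\mathfrak{S}_{0}$ is a final segment, in both cases $\mathfrak{d}_{\mathfrak{q}^{\dag}}^{-1} \in \mathfrak{S}_{0}$, i.e.\ $\mathfrak{q} \in \mathfrak{S}_{0}^{-\rotatebox[origin=c]{180}{\small{\dag}}}$. I expect step (b) to be the main obstacle: it is precisely the stability of the group $\mathfrak{S}_{0}^{-\rotatebox[origin=c]{180}{\small{\dag}}}$ under $\partial$ that makes the rest of the machinery fire, and it is where \eqref{eq-spec-cond} is needed.

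Granting (b), \Cref{cor-Taylor-setting} provides a strongly linear morphism $\mathcal{T}_{\partial} \of \mathbb{S}_{[\mathfrak{S}_{0}]} \rightarrow \mathbb{S}_{[\mathfrak{S}_{0}]}\llbracket X \rrbracket_{\mathfrak{S}_{0}}$, $s \mapsto \sum_{k} \tfrac{s^{(k)}}{k!}X^{k}$, whose target naturally embeds in $\mathbb{S}\llbracket X \rrbracket_{\mathfrak{S}_{0}}$; hence $\Psi := \overline{\triangle} \circ \mathcal{T}_{\partial} \of \mathbb{S}_{[\mathfrak{S}_{0}]} \rightarrow \mathbb{T}\llbracket X \rrbracket_{\mathfrak{T}}$ is strongly linear with $\Psi(\mathfrak{m}) = \sum_{k} \tfrac{\triangle(\mathfrak{m}^{(k)})}{k!}X^{k}$. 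By (a) and well‑basedness of $\mathfrak{S}$, the family $(\mathfrak{m})_{\mathfrak{m} \in \mathfrak{S}}$ is summable in $\mathbb{S}_{[\mathfrak{S}_{0}]}$, so $(\Psi(\mathfrak{m}))_{\mathfrak{m} \in \mathfrak{S}}$ is summable in $\mathbb{T}\llbracket X \rrbracket_{\mathfrak{T}}$. Because the support of the $X^{k}$‑homogeneous part of each $\Psi(\mathfrak{m})$ lies in the slice $\mathfrak{N} \times \{X^{k}\}$ and these slices are pairwise disjoint, the doubly‑indexed family $\big(\tfrac{\triangle(\mathfrak{m}^{(k)})}{k!}X^{k}\big)_{(\mathfrak{m},k) \in \mathfrak{S} \times \mathbb{N}}$ is itself summable in $\mathbb{T}\llbracket X \rrbracket_{\mathfrak{T}}$: its total support is $\bigcup_{\mathfrak{m}} \tmop{supp}\Psi(\mathfrak{m})$, which is Noetherian, and a given $\mathfrak{n}X^{k}$ meets the support of $\tfrac{\triangle(\mathfrak{m}^{(j)})}{j!}X^{j}$ only for $j = k$ and for finitely many $\mathfrak{m}$, by summability of $(\Psi(\mathfrak{m}))_{\mathfrak{m}}$. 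Finally I would apply the strongly linear $\operatorname{ev}_{\delta}$ to this summable family and rescale each term by the scalar $k!$ (harmless, since $\mathbf{K}$ has characteristic $0$ and summability depends only on supports), obtaining that $(\triangle(\mathfrak{m}^{(k)})\delta^{k})_{(\mathfrak{m},k) \in \mathfrak{S} \times \mathbb{N}}$ is summable in $\mathbb{T}$, which is the assertion.
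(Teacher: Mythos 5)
Your proof is correct and follows essentially the same route as the paper: the factorisation through the cut algebras via Corollary~\ref{cor-Taylor-setting}, Corollary~\ref{cor-cut-extension-morphism} and Proposition~\ref{prop-Noeth-evaluation}, with your step (b) reproducing the paper's Lemma~\ref{lem-diff-subfield} (the only place where (\ref{eq-spec-cond}) is used). The extra care you take with the degenerate cases and with extracting the doubly-indexed summability from that of $(\Psi(\mathfrak{m}))_{\mathfrak{m}\in\mathfrak{S}}$ is sound and merely makes explicit what the paper leaves implicit.
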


\begin{remark}
  \label{rem-radius}Subject to the condition $\delta \prec \triangle (x)$, the
  domain of summability of the family $(\triangle (\mathfrak{m}^{(k)})
  \delta^k)_{\mathfrak{m} \in \tmop{supp} s \wedge k \in \mathbb{N}}$ is
  optimal. Indeed, let $f \in \mathbb{S}$. If each element $\mathfrak{m}$ in
  the support of $f$ is flat in the sense that $\mathfrak{m}^{\dag}
  \preccurlyeq x^{- 1}$, then the condition $(\triangle (\mathfrak{m}^{\dag}))
  \delta \prec 1$ is already implied by $\delta \prec \triangle (x)$. Suppose
  now that there is an $\mathfrak{m} \in \tmop{supp} f$ which is not flat.
  Then (\ref{eq-spec-cond}) implies that we have $(\mathfrak{m}^{(k)})^{\dag}
  \asymp \mathfrak{m}^{\dag}$ for each $k \in \mathbb{N}$. Therefore, for all
  $\delta$ with $\triangle (\mathfrak{m}^{\dag}) \delta \succcurlyeq 1$, we
  have $\triangle (\mathfrak{m}^{(k)}) \preccurlyeq \triangle
  (\mathfrak{m}^{(k + 1)}) \delta$, whence
  \[ \triangle (\mathfrak{m}) \preccurlyeq \triangle (\mathfrak{m}') \delta
     \preccurlyeq \triangle (\mathfrak{m}'') \delta^2 \preccurlyeq \cdots . \]
  This implies that the family $(\triangle (\mathfrak{m}^{(k)}) \delta^k)_{k
  \in \mathbb{N}}$ is not summable.
  
  In transseries and surreal numbers (taking $x$ as the unique monomial with
  derivative $1$), a slightly stronger version of (\ref{eq-spec-cond}) applies:
  in the case $\mathfrak{m}^{\dag} \preccurlyeq x^{- 1}$, we get $(\tmop{supp}
  \mathfrak{m}')^{\dag} \asymp x^{- 1}$. This entails that
  $(\mathfrak{m}^{(k)})^{\dag} \asymp x^{- 1}$, thus as in the previous
  argument $\triangle (\mathfrak{m}^{(k)}) \preccurlyeq \triangle
  (\mathfrak{m}^{(k + 1)}) \delta$ whenever $\delta \succcurlyeq \triangle
  (x)$. So in surreal numbers and transseries, the bound $\delta \prec \Delta
  (x)$ is also sharp. In order to prove the stronger version of
  (\ref{eq-spec-cond}), it suffices to use the fact that if
  $\mathfrak{m}^{\dag} \preccurlyeq x^{- 1}$, then there are an $r \in
  \mathbb{R}$ and a $\mathfrak{n} \in \mathfrak{M}$ such that
  $\mathfrak{n}^{\dag} \prec 1$ and $\mathfrak{m}= x^r \mathfrak{n}$.
\end{remark}

Given a fixed $\delta \in \mathbb{T}$ with $\delta \prec \triangle (x)$, let
$\mathfrak{S}_{\delta} \assign \{ \mathfrak{n} \in \mathfrak{N}: \mathfrak{n}
\succ \delta \}$, and write
\begin{eqnarray*}
  \mathfrak{M}_{\triangle, \delta} & \assign & \left( {\triangle^{\ast}}^{}
  (\mathfrak{S}_{\delta}) \right)^{-\rotatebox[origin=c]{180}{\small{\dag}}}= \{\mathfrak{m} \in
  \mathfrak{M} \suchthat \triangle (\mathfrak{m}^{\dag}) \delta \prec 1\},\\
  \mathbb{S}_{\triangle, \delta} & \assign & \mathbb{R} \llbracket
  \mathfrak{M}_{\triangle, \delta} \rrbracket .
\end{eqnarray*}
We call the partial map
\begin{eqnarray*}
  \mathcal{T}_{\delta} (\triangle) \of \mathbb{S} & \longrightarrow &
  \mathbb{T}\\
  s & \longmapsto & \sum_{k \in \mathbb{N}} \frac{\triangle (s^{(k)})}{k!}
  \delta^k .
\end{eqnarray*}
a Taylor deformation of $\triangle$. \Cref{th-Taylor-convergence} follows from
the following result:

\begin{proposition}
  \label{prop-Taylor-deformation}The class $\mathfrak{M}_{\triangle, \delta}$
  is a subgroup of $\mathfrak{M}$ and $\mathbb{S}_{\triangle, \delta}$ is a
  differential subfield of $\mathbb{S}$. The Taylor deformation
  $\mathcal{T}_{\delta} (\triangle) \of \mathbb{S}_{\triangle, \delta}
  \longrightarrow \mathbb{T}$ is a well-defined strongly linear morphism of
  ordered rings.
\end{proposition}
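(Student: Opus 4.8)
The plan is to realise $\mathcal{T}_{\delta} (\triangle)$ as a composite of the three strongly linear morphisms produced in Sections~\ref{section-power-series}--\ref{section-differential-algebra}, in the spirit of the decomposition~(\ref{eq-intro-diff1})--(\ref{eq-intro-ev}) from the introduction. First I would set $\mathfrak{S} \assign \triangle^{\ast} (\mathfrak{S}_{\delta})$, which is a final segment of $\mathfrak{M}$ by \Cref{subsection-cut-extensions}. By the description of $\mathfrak{M}_{\triangle, \delta}$ recalled just before the statement, $\mathfrak{M}_{\triangle, \delta} = \mathfrak{S}^{-\rotatebox[origin=c]{180}{\small{\dag}}}$, so the first assertion (that $\mathfrak{M}_{\triangle, \delta}$ is a subgroup, hence $\mathbb{S}_{\triangle, \delta} = \mathbb{S}_{[\mathfrak{S}]}$ is a subfield of $\mathbb{S}$) is exactly the lemma preceding the definition of $\mathbb{S}_{[\mathfrak{S}]}$. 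Along the way I would record the easy fact that $\triangle$ preserves $\preccurlyeq$ and $\asymp$: being $\mathbf{K}$-linear it fixes $\mathbf{K}$, and if $\mathfrak{m} \prec 1$ then $(\mathfrak{m}^{k})_{k}$ is summable (\Cref{lem-Neumann-series}), so $(\triangle (\mathfrak{m})^{k})_{k}$ is summable by strong linearity, which is impossible unless $\triangle (\mathfrak{m}) \prec 1$; strong linearity then gives $\triangle (\mathbb{S}^{\prec}) \subseteq \mathbb{T}^{\prec}$, and the general statement follows by dividing.

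Next I would verify the hypothesis $\partial (\mathfrak{M}_{\triangle, \delta}) \subseteq \mathbb{S}_{\triangle, \delta}$ of \Cref{cor-Taylor-setting}, which is where~(\ref{eq-spec-cond}) enters. Given $\mathfrak{m} \in \mathfrak{M}_{\triangle, \delta}$ (so $\triangle (\mathfrak{m}^{\dag}) \delta \prec 1$) and $\mathfrak{q} \in \tmop{supp} \mathfrak{m}'$, I want $\triangle (\mathfrak{q}^{\dag}) \delta \prec 1$. If $\mathfrak{m}^{\dag} \preccurlyeq x^{- 1}$, then~(\ref{eq-spec-cond}) gives $\mathfrak{q}^{\dag} \preccurlyeq x^{- 1}$, whence $\triangle (\mathfrak{q}^{\dag}) \delta \preccurlyeq \triangle (x)^{- 1} \delta \prec 1$ since $\delta \prec \triangle (x)$; if $\mathfrak{m}^{\dag} \succ x^{- 1}$, then~(\ref{eq-spec-cond}) gives $\mathfrak{q}^{\dag} \asymp \mathfrak{m}^{\dag}$, whence $\triangle (\mathfrak{q}^{\dag}) \delta \asymp \triangle (\mathfrak{m}^{\dag}) \delta \prec 1$. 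Thus $\mathfrak{q} \in \mathfrak{M}_{\triangle, \delta}$, so $\mathfrak{m}' \in \mathbb{S}_{\triangle, \delta}$; by strong linearity of $\partial$ this yields $\partial (\mathbb{S}_{\triangle, \delta}) \subseteq \mathbb{S}_{\triangle, \delta}$, i.e.\ $\mathbb{S}_{\triangle, \delta}$ is a differential subfield of $\mathbb{S}$.

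Then I would assemble the composite. By \Cref{cor-Taylor-setting}, $\mathcal{T}_{X \cdot \overline{\partial}}$ restricts to a strongly linear morphism of algebras $\mathbb{S}_{[\mathfrak{S}]} \to \mathbb{S}_{[\mathfrak{S}]} \llbracket X \rrbracket_{\mathfrak{S}}$, $s \mapsto \sum_{k} \frac{s^{(k)}}{k!} X^{k}$; taking $\mathfrak{T} \assign \mathfrak{S}_{\delta}$, so that $\triangle^{\ast} (\mathfrak{T}) = \mathfrak{S}$, \Cref{cor-cut-extension-morphism} provides a strongly linear $\overline{\triangle} \of \mathbb{S} \llbracket X \rrbracket_{\mathfrak{S}} \to \mathbb{T} \llbracket X \rrbracket_{\mathfrak{S}_{\delta}}$ acting on coefficients by $\triangle$, which restricts to the subalgebra $\mathbb{S}_{[\mathfrak{S}]} \llbracket X \rrbracket_{\mathfrak{S}}$; and since $\delta \prec \mathfrak{S}_{\delta}$ holds trivially, \Cref{prop-Noeth-evaluation} gives a strongly linear $\tmop{ev}_{\delta} \of \mathbb{T} \llbracket X \rrbracket_{\mathfrak{S}_{\delta}} \to \mathbb{T}$ (the case $\delta = 0$ being trivial). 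Their composite sends $s$ to $\sum_{k} \frac{\triangle (s^{(k)})}{k!} \delta^{k}$, so $\mathcal{T}_{\delta} (\triangle)$ restricted to $\mathbb{S}_{\triangle, \delta}$ is a well-defined strongly linear morphism of algebras.

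Finally, for the ``morphism of ordered rings'' clause I would show $\mathcal{T}_{\delta} (\triangle) (s) \sim \triangle (s)$ for all $s \in \mathbb{S}_{\triangle, \delta}^{\times}$; since $\triangle$ preserves order and fixes $\mathbf{K}$, $\triangle (s)$ has the sign of $s$, hence so does $\mathcal{T}_{\delta} (\triangle) (s)$. Using strong linearity and the decomposition~(\ref{eq-mult-dec}), this reduces to the estimate $\triangle (\mathfrak{m}^{(k)}) \delta^{k} \prec \triangle (\mathfrak{m})$ for $\mathfrak{m} \in \mathfrak{M}_{\triangle, \delta}$ and $k \geqslant 1$, which is precisely the computation underlying Remark~\ref{rem-radius}: when $\mathfrak{m}^{\dag} \preccurlyeq x^{- 1}$, an induction on $k$ using~(\ref{eq-spec-cond}) gives $\tmop{supp} (\mathfrak{m}^{(k)}) \preccurlyeq \mathfrak{m} x^{- k}$ with all its monomials again satisfying $(\cdot)^{\dag} \preccurlyeq x^{- 1}$, so $\triangle (\mathfrak{m}^{(k)}) \delta^{k} \preccurlyeq \triangle (\mathfrak{m}) (\triangle (x)^{- 1} \delta)^{k} \prec \triangle (\mathfrak{m})$; when $\mathfrak{m}^{\dag} \succ x^{- 1}$, (\ref{eq-spec-cond}) gives $(\mathfrak{m}^{(k)})^{\dag} \asymp \mathfrak{m}^{\dag}$, hence $\mathfrak{m}^{(k)} \asymp \mathfrak{m} (\mathfrak{m}^{\dag})^{k}$ and $\triangle (\mathfrak{m}^{(k)}) \delta^{k} \asymp \triangle (\mathfrak{m}) (\triangle (\mathfrak{m}^{\dag}) \delta)^{k} \prec \triangle (\mathfrak{m})$. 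I expect this last estimate — tracking the dominance of $\tmop{supp} (\mathfrak{m}^{(k)})$ under iterated differentiation via~(\ref{eq-spec-cond}) — to be the only real obstacle; everything else is bookkeeping with constructions already in place.
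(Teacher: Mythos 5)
Your proposal is correct and follows essentially the same route as the paper: the subgroup claim is the lemma defining $\mathbb{S}_{[\mathfrak{S}]}$, your second paragraph is verbatim the paper's Lemma~\ref{lem-diff-subfield}, and the composite of Corollary~\ref{cor-Taylor-setting}, Proposition~\ref{prop-Noeth-ext-comp} and Proposition~\ref{prop-Noeth-evaluation} is exactly how the paper assembles $\mathcal{T}_{\delta}(\triangle)$. The only divergence is your final order-preservation estimate, which the paper obtains more directly as Corollary~\ref{cor-Taylor-descent} from the contracting property of $X \cdot \overline{\partial}$ (Proposition~\ref{prop-contracting-der}) rather than by re-running the induction on (\ref{eq-spec-cond}); for that induction note that only the bound $\mathfrak{m}^{(k)} \preccurlyeq \mathfrak{m}(\mathfrak{m}^{\dag})^{k}$, not the asserted $\asymp$, is needed or immediate.
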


We show that Taylor deformations of $\triangle$ satisfy the same commutative
diagrams as $\triangle$ with respect to analytic arrows:

\begin{theorem}
  \label{th-Taylor-analytic}Let $\mathcal{A}_{\mathbb{S}},
  \mathcal{A}_{\mathbb{T}}$ be classes of analytic functions on
  $\mathbb{S}_{\triangle, \delta}^{>}$ and $\mathbb{T}^{>}$ respectively with
  $\mathcal{A}_{\mathbb{S}}' \subseteq \mathcal{A}_{\mathbb{S}}$ and
  $\mathcal{A}_{\mathbb{T}}' \subseteq \mathcal{A}_{\mathbb{T}}$. Assume that
  $\tmop{Conv} (f)_s \supseteq s +\mathbb{S}^{\prec s}$ and $\tmop{Conv} (g)_t
  \supseteq t +\mathbb{T}^{\prec t}$ for all $(f, g) \in
  \mathcal{A}_{\mathbb{S}} \times \mathcal{A}_{\mathbb{T}}$ and $(s, t) \in
  \mathbb{S}_{\triangle, \delta}^{>} \times \mathbb{T}^{>}$. Let $\Psi \of
  \mathcal{A}_{\mathbb{S}} \longrightarrow \mathcal{A}_{\mathbb{T}}$ be a map
  with $\Psi (f') = \Psi (f)'$ for all $f \in \mathcal{A}_{\mathbb{S}}$ and
  \[ \triangle (f (s)) = \Psi (f) (\triangle (s))
     \text{{\hspace{3em}}and{\hspace{3em}}$\partial (f (s)) = \partial (s) f'
     (s)$} \]
  for all $s \in \mathbb{S}_{\triangle, \delta}^{>}$. Then we have
  \[ T_{\delta} (\triangle) (f (s)) = \Psi (f) (T_{\delta} (\triangle) (s)) \]
  for all $s \in \mathbb{S}_{\triangle, \delta}^{>}$.
\end{theorem}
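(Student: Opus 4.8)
The plan is to leverage \Cref{prop-Taylor-deformation}, which tells us that the Taylor deformation $T_\delta(\triangle)$ is itself a strongly linear morphism of ordered rings on $\mathbb{S}_{\triangle,\delta}$, so it plays exactly the role that $\triangle$ played before; the point is to show that the analytic commutation identity lifts from $\triangle$ to $T_\delta(\triangle)$. First I would fix $f \in \mathcal{A}_{\mathbb{S}}$ and $s \in \mathbb{S}_{\triangle,\delta}^{>}$, and expand the left-hand side by applying the definition of $T_\delta(\triangle)$ and the hypothesis $\partial(f(s)) = \partial(s) f'(s)$. By an induction on $k$ using the chain rule and the product rule (the $k$-fold derivative of $f(s)$ is a universal polynomial in $s', s'', \dots$ and in $f'(s), f''(s), \dots$ — this is Faà di Bruno), one writes $(f(s))^{(k)}$ in terms of the iterated derivatives $s^{(j)}$ and the values $f^{(i)}(s)$. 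Hence
\[
  T_\delta(\triangle)(f(s)) = \sum_{k \in \mathbb{N}} \frac{\triangle\bigl((f(s))^{(k)}\bigr)}{k!}\,\delta^k,
\]
and I would reorganise this, using strong linearity of $\triangle$ and the Faà di Bruno expansion, into the composite power series evaluation
\[
  T_\delta(\triangle)(f(s)) = \bigl(\widetilde{P \circ Q}\bigr)(\delta),
\]
where $P = \sum_i \frac{\triangle(f^{(i)}(s))}{i!} X^i$ is essentially the Taylor series of $\Psi(f)$ at $\triangle(s)$ (using $\triangle(f^{(i)}(s)) = \Psi(f^{(i)})(\triangle(s)) = \Psi(f)^{(i)}(\triangle(s))$ via $\Psi(f') = \Psi(f)'$), and $Q = \sum_{j>0} \frac{\triangle(s^{(j)})}{j!} X^j = T_\delta(\triangle)(s) - \triangle(s)$ is the ``deformation part'' of $T_\delta(\triangle)(s)$.

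The key analytic input is then \Cref{prop-Taylor-composition-pow} (and its analytic incarnation \Cref{cor-Taylor-composition}): since $\Psi(f)$ is analytic at $\triangle(s) \in \mathbb{T}^{>}$ with $\tmop{Conv}(\Psi(f))_{\triangle(s)} \supseteq \triangle(s) + \mathbb{T}^{\prec \triangle(s)}$, the composite $\widetilde{P \circ Q}$ converges at $\delta$ and evaluates to $\widetilde{P}(\widetilde{Q}(\delta)) = \Psi(f)(\triangle(s) + (T_\delta(\triangle)(s) - \triangle(s))) = \Psi(f)(T_\delta(\triangle)(s))$, \emph{provided} the side condition $Q_m \delta^m \prec \varepsilon_P$ of \eqref{eq-analytic-comp-cond} holds, i.e. $\tfrac{\triangle(s^{(m)})}{m!}\delta^m \prec$ (radius of convergence of $\Psi(f)$ at $\triangle(s)$). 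Here the radius is all of $\mathbb{T}^{\prec \triangle(s)}$, so the condition becomes $\triangle(s^{(m)})\delta^m \prec \triangle(s)$ for all $m > 0$; this should follow from the defining property of $\mathbb{S}_{\triangle,\delta}$ together with \eqref{eq-spec-cond} — more precisely, for $\mathfrak{m} \in \tmop{supp} s$ one has $\triangle(\mathfrak{m}^\dagger)\delta \prec 1$, and \eqref{eq-spec-cond} controls $(\mathfrak{m}^{(m)})^\dagger$ in terms of $\mathfrak{m}^\dagger$, forcing $\triangle(\mathfrak{m}^{(m)})\delta^m \preccurlyeq \triangle(\mathfrak{m})\prec\triangle(s)$ after summing (compare the computation in \Cref{rem-radius}). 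I would also need that $\widetilde{Q}(\delta) = T_\delta(\triangle)(s) - \triangle(s) \prec \triangle(s)$, which is the same estimate with $m=1$.

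I expect the main obstacle to be the bookkeeping of the Faà di Bruno rearrangement inside the summability framework: one must check that the triple-indexed family obtained by expanding $\triangle((f(s))^{(k)})\delta^k$ via the combinatorial formula is summable and can be reindexed as the coefficients of $P \circ Q$ evaluated at $\delta$. This is where strong linearity of $\triangle$, \Cref{prop-sum-prod}, \Cref{lem-Fubini} and \Cref{lem-well-based-fin-family} all get used; the cleanest route is probably to avoid writing Faà di Bruno explicitly and instead argue at the level of the intermediate power-series operators, mirroring the three-step factorisation \eqref{eq-intro-diff1}--\eqref{eq-intro-ev} from the introduction: the operator $s \mapsto \sum_k \frac{s^{(k)}}{k!}X^k$ is a ring morphism $\mathbb{S}_{\triangle,\delta} \to \mathbb{S}_{\triangle,\delta}\llbracket X\rrbracket_{\mathfrak{S}}$ (\Cref{cor-Taylor-setting}), it intertwines $f \mapsto f(s)$ with a ``deformed'' analytic substitution because of the chain-rule hypothesis $\partial(f(s)) = \partial(s)f'(s)$, and then one applies $\overline{\triangle}$ and $\tmop{ev}_\delta$. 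Once the algebraic identity of power series is in hand, evaluating at $\delta$ and invoking \Cref{prop-Taylor-composition-pow} closes the argument. A final routine check is that all the objects are positive where needed (so that ``analytic at'' makes sense), which follows since $\triangle$ and $T_\delta(\triangle)$ are morphisms of \emph{ordered} rings and $s, \triangle(s) > 0$.
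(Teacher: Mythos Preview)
Your proposal is correct and follows essentially the same route as the paper: Fa\`a di Bruno to rewrite $\triangle\bigl((f(s))^{(k)}\bigr)$, composition of power series via \Cref{prop-Taylor-composition-pow}/\Cref{cor-Taylor-composition} to identify the result with $\Psi(f)$ evaluated at $T_\delta(\triangle)(s)$, and the side condition $\triangle(s^{(m)})\delta^m \prec \triangle(s)$ to justify that composition. The paper isolates exactly this argument as the standalone \Cref{prop-Taylor-expansion-hyperlog} and records the side condition separately as \Cref{cor-Taylor-descent} (obtained from \Cref{prop-contracting-der} rather than by re-running the computation of \Cref{rem-radius} as you suggest, but the content is the same).
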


\begin{remark}
  \label{rem-log-compatibility}This applies in particular to
  $\mathcal{A}_{\mathbb{S}} = \{\log^{(k)} \suchthat k \in \mathbb{N}\}$ and
  $\mathcal{A}_{\mathbb{T}} = \{\log^{(k)} \suchthat k \in \mathbb{N}\}$ if
  $(\mathbb{T}, \ell)$ is itself a pre-logarithmic Hahn field and $\ell$ is
  extended to a logarithm $\log$ (see Remark~\ref{rem-transserial-extension}).
  Assume that $\log$ stabilises $\mathbb{S}_{\triangle, \delta}$. Then,
  setting $\Psi (\log^{(k)}) \assign \log^{(k)}$ for each $k \in \mathbb{N}$,
  we have
  \[ \partial (\log^{(k)} (s)) = \partial (s) \log^{(k + 1)} (s) \]
  for all $s \in \mathbb{S}^{>}$. Indeed for $k = 1$, this follows the
  definition of transserial derivations; for $k > 1$, this follows from the
  Leibniz rule, since $\log' (s) = s^{- 1}$. Then \Cref{th-Taylor-analytic}
  states that if $\triangle$ commutes with $\log$, then so do its Taylor
  deformations.
\end{remark}

\subsection{Convergence of Taylor expansions}

We first analyse convergence of series in $\mathbb{S}$. Recall that we fixed a
$\delta \in \mathbb{T}$ with $\delta \prec \triangle (x)$, and set
$\mathfrak{S}_{\delta} = \{\mathfrak{n} \in \mathfrak{N} \suchthat
\mathfrak{n} \succ \delta\}$, $\mathfrak{M}_{\triangle, \delta} = \left(
{\triangle^{\ast}}^{} (\mathfrak{S}_{\delta}) \right)^-
\;^{\rotatebox[origin=c]{180}{\ensuremath{\dag}}} = \{\mathfrak{m} \in
\mathfrak{M} \suchthat \triangle (\mathfrak{m}^{\dag}) \delta \prec 1\}$,
$\mathbb{S}_{\triangle, \delta} =\mathbb{R} \llbracket
\mathfrak{M}_{\triangle, \delta} \rrbracket$.

\begin{lemma}
  \label{lem-diff-subfield}We have $\partial (\mathfrak{M}_{\triangle,
  \delta}) \subseteq \mathbb{S}_{\triangle, \delta}$.
\end{lemma}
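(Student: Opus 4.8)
The plan is to unwind the definitions and show that for every $\mathfrak{m} \in \mathfrak{M}_{\triangle,\delta}$, the series $\mathfrak{m}' = \partial(\mathfrak{m})$ has all of its monomials lying in $\mathfrak{M}_{\triangle,\delta}$; since $\mathbb{S}_{\triangle,\delta} = \mathbf{K}\llbracket \mathfrak{M}_{\triangle,\delta}\rrbracket$, this is exactly the claim $\partial(\mathfrak{M}_{\triangle,\delta}) \subseteq \mathbb{S}_{\triangle,\delta}$. Recall that $\mathfrak{m} \in \mathfrak{M}_{\triangle,\delta}$ means precisely $\triangle(\mathfrak{m}^{\dag})\,\delta \prec 1$, so for a monomial $\mathfrak{q} \in \tmop{supp}\mathfrak{m}'$ we must show $\triangle(\mathfrak{q}^{\dag})\,\delta \prec 1$. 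Writing $\mathfrak{q}^{\dag} = (\mathfrak{q}\mathfrak{m}^{-1})^{\dag} + \mathfrak{m}^{\dag}$ and noting $\mathfrak{q}\mathfrak{m}^{-1} \in \tmop{supp}\mathfrak{m}' \cdot \mathfrak{m}^{-1} = \tmop{supp}(\mathfrak{m}'/\mathfrak{m}) = \tmop{supp}(\mathfrak{m}^{\dag})$, the problem reduces to controlling $\triangle$ applied to $(\tmop{supp}\mathfrak{m}^{\dag})^{\dag}$ together with $\mathfrak{m}^{\dag}$, which is where the technical hypothesis \eqref{eq-spec-cond} enters.

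The key case split is the dichotomy from \eqref{eq-spec-cond}. \textbf{Case 1: $\mathfrak{m}^{\dag} \preccurlyeq x^{-1}$.} Then \eqref{eq-spec-cond} gives $(\tmop{supp}\mathfrak{m}')^{\dag} \preccurlyeq x^{-1}$, and since $\mathfrak{q}\mathfrak{m}^{-1}$ and $\mathfrak{m}^{\dag}$ both have dominant monomials in $\tmop{supp}\mathfrak{m}'$ scaled appropriately, one gets $\mathfrak{q}^{\dag} \preccurlyeq x^{-1}$ as well. Applying the order-preserving strongly linear morphism $\triangle$ and using $\delta \prec \triangle(x)$, we obtain $\triangle(\mathfrak{q}^{\dag})\,\delta \preccurlyeq \triangle(x^{-1})\,\delta \prec \triangle(x^{-1})\triangle(x) = \triangle(1) = 1$. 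Here I would be careful that $\triangle$ is a morphism of ordered rings so it respects $\preccurlyeq$ via dominant monomials; I may need to argue $\triangle(x^{-1})\triangle(x)=1$ from multiplicativity and that $\triangle(x) \neq 0$. \textbf{Case 2: $\mathfrak{m}^{\dag} \succ x^{-1}$.} Then \eqref{eq-spec-cond} gives $(\tmop{supp}\mathfrak{m}')^{\dag} \asymp \mathfrak{m}^{\dag}$, hence $\mathfrak{q}^{\dag} \preccurlyeq \max((\mathfrak{q}\mathfrak{m}^{-1})^{\dag}, \mathfrak{m}^{\dag})$, and both terms are $\preccurlyeq$-comparable to $\mathfrak{m}^{\dag}$, so $\mathfrak{q}^{\dag} \preccurlyeq \mathfrak{m}^{\dag}$. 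Applying $\triangle$ and multiplying by $\delta$, $\triangle(\mathfrak{q}^{\dag})\,\delta \preccurlyeq \triangle(\mathfrak{m}^{\dag})\,\delta \prec 1$ by the assumption $\mathfrak{m} \in \mathfrak{M}_{\triangle,\delta}$.

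The main obstacle I anticipate is the bookkeeping translating the valuation-theoretic relations on $\mathfrak{M}$ (where $\mathfrak{m}^{\dag}$ is a \emph{series}, not a monomial) through $\triangle$: one has to pass from $\mathfrak{q}^{\dag} \preccurlyeq \mathfrak{m}^{\dag}$ in $\mathbb{S}$ to $\triangle(\mathfrak{q}^{\dag}) \preccurlyeq \triangle(\mathfrak{m}^{\dag})$ in $\mathbb{T}$, which is valid because $\triangle$ is an ordered-ring morphism hence monotone on the value group level, but requires noting that $\preccurlyeq$ is determined by comparison of absolute values (or of dominant monomials). A secondary subtlety is handling the boundary case $\mathfrak{q}\mathfrak{m}^{-1} = 1$ (i.e.\ $\mathfrak{q} = \mathfrak{m}$), where $(\mathfrak{q}\mathfrak{m}^{-1})^{\dag}$ is not defined and one simply has $\mathfrak{q}^{\dag} = \mathfrak{m}^{\dag}$ directly. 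Once these are in place the lemma follows immediately, and it is worth recording that the same computation shows $\mathfrak{M}_{\triangle,\delta}$ is closed under the operations making it a subgroup, as will be used in Proposition~\ref{prop-Taylor-deformation}.
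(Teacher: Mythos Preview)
Your approach is the same as the paper's --- the case split on \eqref{eq-spec-cond} followed by invoking $\delta \prec \triangle(x)$ in the flat case and $\mathfrak{m}\in\mathfrak{M}_{\triangle,\delta}$ in the steep case --- but you have obscured it with an unnecessary detour. The hypothesis \eqref{eq-spec-cond} is phrased in terms of $(\tmop{supp}\mathfrak{m}')^{\dag}$, which by definition is exactly the set $\{\mathfrak{q}^{\dag}:\mathfrak{q}\in\tmop{supp}\mathfrak{m}'\}$. So for your monomial $\mathfrak{q}\in\tmop{supp}\mathfrak{m}'$, the condition \eqref{eq-spec-cond} \emph{already} gives $\mathfrak{q}^{\dag}\preccurlyeq x^{-1}$ in Case~1 and $\mathfrak{q}^{\dag}\asymp\mathfrak{m}^{\dag}$ in Case~2, with no further work. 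Your decomposition $\mathfrak{q}^{\dag}=(\mathfrak{q}\mathfrak{m}^{-1})^{\dag}+\mathfrak{m}^{\dag}$ and the attempt to control $(\tmop{supp}\mathfrak{m}^{\dag})^{\dag}$ separately are redundant, and in Case~2 your justification that ``both terms are $\preccurlyeq$-comparable to $\mathfrak{m}^{\dag}$'' is circular (you would need $\mathfrak{q}^{\dag}\asymp\mathfrak{m}^{\dag}$ to bound $(\mathfrak{q}\mathfrak{m}^{-1})^{\dag}$, which is what you are trying to prove). Drop the decomposition entirely and read off $\mathfrak{q}^{\dag}$ directly from \eqref{eq-spec-cond}; the proof then becomes the paper's two-line argument. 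Your side worries (that $\triangle(x^{-1})\triangle(x)=1$, the boundary case $\mathfrak{q}=\mathfrak{m}$) are non-issues once the detour is removed.
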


\begin{proof}
  Let $\mathfrak{m} \in \mathfrak{M}_{\triangle, \delta}$ and let
  $\mathfrak{n} \in \tmop{supp} \mathfrak{m}'$. If $\mathfrak{m}^{\dag}
  \preccurlyeq x^{- 1}$, then we have $\mathfrak{n}^{\dag} \preccurlyeq x^{-
  1}$ by (\ref{eq-spec-cond}). We deduce since $\delta \prec \triangle (x)$
  that $\triangle (\mathfrak{n}^{\dag}) \delta \prec 1$. If
  $\mathfrak{m}^{\dag} \succ x^{- 1}$, then (\ref{eq-spec-cond}) yields
  $\mathfrak{n}^{\dag} \asymp \mathfrak{m}^{\dag}$. We deduce since $\triangle
  (\mathfrak{m}^{\dag}) \delta \prec 1$ that $\triangle (\mathfrak{n}^{\dag})
  \delta \prec 1$. Thus $\tmop{supp} \mathfrak{m}' \subseteq
  \mathfrak{M}_{\triangle, \delta}$.
\end{proof}

Corollary~\ref{cor-Taylor-setting} applied to $\mathfrak{S}= \triangle^{\ast}
(\mathfrak{S}_{\delta})$ says that the Taylor morphism $\mathbb{S}_{\triangle,
\delta} \rightarrow \mathbb{S}_{\triangle, \delta} \llbracket X
\rrbracket_{\mathfrak{S}}$ is strongly linear, Proposition~\ref{prop-Noeth-ext-comp}
guarantees that we may hit the coefficients of the Taylor series with
$\triangle$ and obtain a series in $\mathbb{T} \llbracket X
\rrbracket_{\mathfrak{S}_{\delta}}$, and Proposition~\ref{prop-Noeth-evaluation} allows
us to substitute $\delta$ for $X$. We thus obtain
Proposition~\ref{prop-Taylor-deformation}. By Proposition~\ref{prop-contracting-der}, we
also obtain the following.

\begin{corollary}
  \label{cor-Taylor-descent}For $s \in \mathbb{S}_{\triangle, \delta}$, then
  we have
  \[ \triangle (s) \succ \triangle (s') \delta \succ \triangle (s'') \delta^2
     \succ \cdots . \]
\end{corollary}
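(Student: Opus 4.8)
The plan is to read the chain of inequalities off the contracting property of $X \cdot \overline{\partial}$. I would apply \Cref{prop-contracting-der} to the final segment $\mathfrak{S} \assign \triangle^{\ast}(\mathfrak{S}_{\delta})$: by the very definition of $\mathfrak{M}_{\triangle, \delta}$ we have $\mathbb{S}_{[\mathfrak{S}]} = \mathbb{S}_{\triangle, \delta}$, and the hypothesis $\partial(\mathfrak{M}_{\triangle, \delta}) \subseteq \mathbb{S}_{\triangle, \delta}$ of that proposition is exactly \Cref{lem-diff-subfield}, so $X \cdot \overline{\partial}$ is a strongly linear contracting derivation on $\mathbb{S}_{\triangle, \delta} \llbracket X \rrbracket_{\mathfrak{S}}$. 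Next I would promote ``contracting on monomials'' to ``strictly $\prec_{\mathfrak{S}}$-decreasing on nonzero series'': if $\Phi$ is strongly linear with $\Phi(\mathfrak{a}) \prec_{\mathfrak{S}} \mathfrak{a}$ for every monomial $\mathfrak{a}$, then for nonzero $P$ one has $\tmop{supp} \Phi(P) \subseteq \bigcup_{\mathfrak{a} \in \tmop{supp} P} \tmop{supp} \Phi(\mathfrak{a})$, and each monomial on the right lies $\prec_{\mathfrak{S}}$ below a monomial of $P$, i.e. $\Phi(P) \prec_{\mathfrak{S}} P$. Now for $s \in \mathbb{S}_{\triangle, \delta}$, \Cref{cor-Taylor-setting} gives $\mathcal{T}_{X \cdot \overline{\partial}}(s) = \sum_{k} \frac{s^{(k)}}{k!} X^k \in \mathbb{S}_{\triangle, \delta} \llbracket X \rrbracket_{\mathfrak{S}}$, hence so does each homogeneous component $\frac{s^{(k)}}{k!} X^k$ (a sub-support of a Noetherian set is Noetherian); since $(X \cdot \overline{\partial})(\frac{s^{(k)}}{k!} X^k) = \frac{s^{(k+1)}}{(k+1)!} X^{k+1}$ up to the nonzero scalar $k+1$, we get $\frac{s^{(k+1)}}{(k+1)!} X^{k+1} \prec_{\mathfrak{S}} \frac{s^{(k)}}{k!} X^k$ whenever $s^{(k)} \neq 0$.

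Then I would transport this to $\mathbb{T}$ along the strongly linear morphism $\Theta \assign \tmop{ev}_{\delta} \circ \overline{\triangle} \colon \mathbb{S}_{\triangle, \delta} \llbracket X \rrbracket_{\mathfrak{S}} \longrightarrow \mathbb{T}$ supplied by \Cref{cor-cut-extension-morphism} and \Cref{prop-Noeth-evaluation} (legitimate since $\delta \prec \mathfrak{S}_{\delta}$), which sends $\mathfrak{m} X^k$ to $\triangle(\mathfrak{m}) \delta^k$. On monomials $\Theta$ sends $\prec_{\mathfrak{S}}$-domination to $\prec$-domination: this is precisely the case analysis on the exponent of $X$ already carried out in the proofs of \Cref{prop-Noeth-ext-comp} and~\Cref{prop-cut-evaluation}, matching the cone defining $\prec_{\mathfrak{S}}$ with the definition of $\triangle^{\ast}$ and with $\delta \prec \mathfrak{S}_{\delta}$; by the support argument of the previous paragraph this then holds for arbitrary series. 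Applying $\Theta$ to the two components above, and noting that $\Theta(\frac{s^{(k)}}{k!} X^k) = \frac{1}{k!} \triangle(s^{(k)}) \delta^k$ is nonzero when $s^{(k)} \neq 0$ (as $\triangle$ is injective and $\delta \neq 0$), yields $\triangle(s^{(k+1)}) \delta^{k+1} \prec \triangle(s^{(k)}) \delta^k$; chaining over $k$ gives $\triangle(s) \succ \triangle(s') \delta \succ \triangle(s'') \delta^2 \succ \cdots$. (One reads the chain up to the first vanishing term, since $s^{(k)} = 0$ forces $s^{(k+1)} = 0$, and one may assume $\delta \neq 0$, the case $\delta = 0$ being trivial.)

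The step I expect to be the real work is the claim that $\Theta$ turns $\prec_{\mathfrak{S}}$ into $\prec$: this is not formal (strongly linear morphisms such as $\partial$ need not respect dominance), and unwinding it duplicates the combinatorics of \Cref{prop-Noeth-ext-comp} and \Cref{prop-cut-evaluation}. An alternative I would keep in reserve sidesteps $\Theta$ entirely: for $\mathfrak{m} \in \mathfrak{M}_{\triangle, \delta}$ one has $\triangle(\mathfrak{m}') \delta = \triangle(\mathfrak{m}) \cdot (\triangle(\mathfrak{m}^{\dag}) \delta) \prec \triangle(\mathfrak{m})$ directly from the defining inequality $\triangle(\mathfrak{m}^{\dag}) \delta \prec 1$; expanding $\triangle(s') = \sum_{\mathfrak{m} \in \tmop{supp} s} s(\mathfrak{m}) \triangle(\mathfrak{m}')$ by strong linearity and using $\triangle(\mathfrak{m}) \preccurlyeq \triangle(s)$ for $\mathfrak{m} \in \tmop{supp} s$ (a morphism of ordered rings preserves $\preccurlyeq$ and satisfies $\triangle(u) \asymp \triangle(\mathfrak{d}_u)$) gives $\triangle(s') \delta \prec \triangle(s)$; multiplying by $\delta$ and iterating via \Cref{lem-diff-subfield} produces the chain.
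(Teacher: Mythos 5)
Your main argument is correct and is essentially the paper's own route: the paper derives this corollary by exactly the citation chain you reconstruct (Proposition~\ref{prop-contracting-der} via Lemma~\ref{lem-diff-subfield}, then transporting along $\overline{\triangle}$ and $\tmop{ev}_{\delta}$), and you rightly identify that the only non-formal step is converting $\prec_{\mathfrak{S}}$-domination into $\prec$-domination in $\mathbb{T}$ — just be aware that this transport is only safe for the homogeneous components $\frac{s^{(k)}}{k!}X^k$ you actually use it on, since for series mixing powers of $X$ the image under $\tmop{ev}_{\delta}\circ\overline{\triangle}$ can suffer cancellation, whereas on a single coefficient one has $\mathfrak{d}_{\triangle(u)}=\mathfrak{d}_{\triangle(\mathfrak{d}_u)}$ because $\triangle$ is a morphism of ordered rings over $\mathbf{K}$. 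Your reserve argument ($\triangle(\mathfrak{m}')\delta=\triangle(\mathfrak{m})\cdot\triangle(\mathfrak{m}^{\dag})\delta\prec\triangle(\mathfrak{m})$, summed over $\tmop{supp}s$ and iterated via Lemma~\ref{lem-diff-subfield}) is also correct and is in fact the most economical proof of the three.
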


\subsection{Taylor expansions and analytic functions}

We next prove \Cref{th-Taylor-analytic}. In order to do that, we rely on the
following formal result:

\begin{proposition}
  \label{prop-Taylor-expansion-hyperlog}Let $\mathfrak{M}_0, \mathfrak{N}_0$
  be non-trivial, linearly ordered Abelian groups and set $\mathbb{S}_0
  =\mathbb{R} \llbracket \mathfrak{M}_0 \rrbracket$ and $\mathbb{T}_0
  =\mathbb{R} \llbracket \mathfrak{N}_0 \rrbracket$. Let $\triangle_0 \of
  \mathbb{S}_0 \longrightarrow \mathbb{T}_0$ be a strongly linear morphism of
  rings and let $\mathd \of \mathbb{S}_0 \longrightarrow \mathbb{S}_0 \: ; \:
  s \mapsto s'$ be a strongly linear derivation.
  
  Let $s \in \mathbb{S}_0$, let $f \of \mathbb{S}_0 \longrightarrow
  \mathbb{S}_0$ and $g \of \mathbb{T}_0 \longrightarrow \mathbb{T}_0$ be
  analytic at $s$ and $\triangle_0 (s)$ respectively, with $\tmop{Conv} (f)_s
  \supseteq s +\mathbb{S}_0^{\prec s}$ and $\tmop{Conv} (g)_{\triangle_0 (s)}
  \supseteq \triangle_0 (s) +\mathbb{T}_0^{\prec \triangle_0 (s)}$. Assume
  that for all $k \in \mathbb{N}$, we have
  \begin{equation}
    (f^{(k)} (s))' = f^{(k + 1)} (s) s' \label{eq-chain-rule-abstract}
  \end{equation}
  and
  \begin{equation}
    \triangle_0 (f^{(k)} (s)) = g^{(k)} (\triangle_0 (s))
    \label{eq-cond-prop-hyperlog-Taylor-comp} .
  \end{equation}
  Let $\varepsilon \in \mathbb{T}_0$ such that the family $(\triangle_0
  (s^{(k)}) \varepsilon^k)_{k \in \mathbb{N}}$ is summable with
  \begin{equation}
    \forall k > 0, \triangle_0 (s) \succ \triangle_0 (s^{(k)}) \varepsilon^k .
    \label{eq-cond2-prop-hyperlog-Taylor-comp}
  \end{equation}
  Then the family $(\triangle_0 (f (s)^{(k)}) \varepsilon^k)_{k \in
  \mathbb{N}}$ is summable, with
  \[ \sum_{k \in \mathbb{N}} \frac{\triangle_0 (f (s)^{(k)})}{k!}
     \varepsilon^k = g \left( \sum_{k \in \mathbb{N}} \frac{\triangle_0
     (s^{(k)})}{k!} \varepsilon^k \right) . \]
  In other words, the relation $\triangle_0 \circ f = g \circ \triangle_0$ is
  also satisfied for the Taylor deformation $T_{\varepsilon} (\triangle_0) \of
  s \mapsto \sum_{k \in \mathbb{N}} \frac{\triangle_0 (s^{(k)})}{k!}
  \varepsilon^k$ of $\triangle_0$.
\end{proposition}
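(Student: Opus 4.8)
The plan is to rewrite $\sum_{k}\frac{\triangle_0(f(s)^{(k)})}{k!}X^k$ as a composite of two convergent power series over $\mathbb{T}_0$ and apply the power-series composition theorem (Proposition~\ref{prop-Taylor-composition-pow}). The first step is purely formal: I would prove, in $\mathbb{S}_0\llbracket X\rrbracket$, the identity
\[
\mathcal{T}_{\mathd}(f(s)) \;=\; f_s \circ \bigl(\mathcal{T}_{\mathd}(s)-s\bigr),
\]
where $f_s=\sum_{k}\frac{f^{(k)}(s)}{k!}X^k$ is the Taylor series of $f$ at $s$ (Proposition~\ref{prop-analytic-main}) and $\mathcal{T}_{\mathd}(s)-s\in X\mathbb{S}_0\llbracket X\rrbracket$. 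Comparing coefficients of $X^K$, this is the Fa\`{a} di Bruno formula
\[
\tfrac{1}{K!}\,f(s)^{(K)} \;=\; \sum_{n\ge 0}\;\sum_{\substack{m_1+\cdots+m_n=K\\ m_i\ge 1}} \tfrac{f^{(n)}(s)}{n!}\,\tfrac{s^{(m_1)}}{m_1!}\cdots\tfrac{s^{(m_n)}}{m_n!},
\]
which I would prove by induction on $K$, applying $\mathd$ to the right-hand side and using the Leibniz rule together with the chain-rule hypothesis~\eqref{eq-chain-rule-abstract} in the form $(f^{(n)}(s))'=f^{(n+1)}(s)\,s'$. Proposition~\ref{prop-Taylor-morphism} (that $\mathcal{T}_{\mathd}$ is a ring morphism) keeps the bookkeeping honest.

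Next I would push this through $\triangle_0$ coefficientwise. As $\triangle_0$ is a ring morphism and, for a power series with zero constant term, each coefficient of a composite is a finite sum of products of coefficients, $\triangle_0$ commutes with the composition above; by~\eqref{eq-cond-prop-hyperlog-Taylor-comp} the coefficientwise image of $f_s$ is $\sum_k\frac{g^{(k)}(\triangle_0(s))}{k!}X^k=g_{\triangle_0(s)}$, the Taylor series of $g$ at $\triangle_0(s)$ (Proposition~\ref{prop-analytic-main} again). Thus, in $\mathbb{T}_0\llbracket X\rrbracket$,
\[
\sum_{K\ge 0}\tfrac{\triangle_0(f(s)^{(K)})}{K!}\,X^K \;=\; g_{\triangle_0(s)}\circ \bar{Q},\qquad \bar{Q}\assign \sum_{m\ge 1}\tfrac{\triangle_0(s^{(m)})}{m!}X^m .
\]

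It then remains to evaluate at $X=\varepsilon$ via Proposition~\ref{prop-Taylor-composition-pow} applied to $P\assign g_{\triangle_0(s)}$ and $Q\assign\bar{Q}$. Here $P$ is convergent because $g$ is analytic at $\triangle_0(s)$, and $\bar{Q}$ converges at $\varepsilon$ because $(\triangle_0(s^{(k)})\varepsilon^k)_k$ is summable by hypothesis; setting $u\assign\widetilde{\bar{Q}}(\varepsilon)=\sum_{m\ge 1}\frac{\triangle_0(s^{(m)})}{m!}\varepsilon^m$, condition~\eqref{eq-cond2-prop-hyperlog-Taylor-comp} forces every monomial of every $\bar{Q}_m\varepsilon^m$ ($m>0$) strictly below $\mathfrak{d}_{\triangle_0(s)}$, so $u\prec\triangle_0(s)$, whence $u\in\tmop{Conv}(g_{\triangle_0(s)})$ and $\widetilde{g_{\triangle_0(s)}}(u)=g(\triangle_0(s)+u)=g(\mathcal{T}_\varepsilon(\triangle_0)(s))$ thanks to $\tmop{Conv}(g)_{\triangle_0(s)}\supseteq\triangle_0(s)+\mathbb{T}_0^{\prec\triangle_0(s)}$. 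Choosing $\varepsilon_P$ with $\bigcup_{m>0}\tmop{supp}(\bar{Q}_m\varepsilon^m)\prec\varepsilon_P\prec\mathfrak{d}_{\triangle_0(s)}$ secures hypothesis~\eqref{eq-analytic-comp-cond}, so Proposition~\ref{prop-Taylor-composition-pow} gives $\varepsilon\in\tmop{Conv}(g_{\triangle_0(s)}\circ\bar{Q})$ and $\widetilde{g_{\triangle_0(s)}\circ\bar{Q}}(\varepsilon)=\widetilde{g_{\triangle_0(s)}}(\widetilde{\bar{Q}}(\varepsilon))=g(\mathcal{T}_\varepsilon(\triangle_0)(s))$; by the displayed identity the left-hand side equals $\sum_K\frac{\triangle_0(f(s)^{(K)})}{K!}\varepsilon^K$, and membership in $\tmop{Conv}(g_{\triangle_0(s)}\circ\bar{Q})$ is exactly the asserted summability of $(\triangle_0(f(s)^{(K)})\varepsilon^K)_K$.

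I expect the delicate point to be the last step — producing a genuine $\varepsilon_P\in\tmop{Conv}(g_{\triangle_0(s)})$ that strictly dominates all $\bar{Q}_m\varepsilon^m$, which is precisely where the strict inequality in~\eqref{eq-cond2-prop-hyperlog-Taylor-comp} matters. A route that sidesteps Proposition~\ref{prop-Taylor-composition-pow} is to expand $g(\triangle_0(s)+u)=\sum_n\frac{g^{(n)}(\triangle_0(s))}{n!}u^n$, substitute $u=\sum_{m\ge1}w_m$ (with $w_m\assign\frac{\triangle_0(s^{(m)})}{m!}\varepsilon^m$) and $u^n=\sum_{(m_1,\dots,m_n)}w_{m_1}\cdots w_{m_n}$ (Proposition~\ref{prop-sum-prod}), verify that the family $\bigl(\frac{g^{(n)}(\triangle_0(s))}{n!}w_{m_1}\cdots w_{m_n}\bigr)_{n,(m_1,\dots,m_n)}$ is summable — using Lemma~\ref{lem-well-based-fin-family}, Lemma~\ref{lem-Neumann-easy} and Proposition~\ref{prop-Neumann} after normalising supports by $\mathfrak{d}_u$ and invoking summability of $(w_m)_m$ — and then regroup by total degree $K=\sum m_i$ (Proposition~\ref{prop-summation-by-parts}, Lemma~\ref{lem-Fubini}), using $\triangle_0$-multiplicativity and the Fa\`{a} di Bruno identity above to identify the $K$-block sum with $\frac{\triangle_0(f(s)^{(K)})}{K!}\varepsilon^K$.
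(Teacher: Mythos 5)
Your proposal is correct and takes essentially the same route as the paper: the paper likewise combines Fa\`a di Bruno's formula (obtained by induction from (\ref{eq-chain-rule-abstract})), hypothesis (\ref{eq-cond-prop-hyperlog-Taylor-comp}) to turn the coefficients $\triangle_0 (f^{(n)} (s))$ into $g^{(n)} (\triangle_0 (s))$, and the composition theorem for convergent power series --- it invokes Corollary~\ref{cor-Taylor-composition}, the analytic-function form of Proposition~\ref{prop-Taylor-composition-pow}, applied to $g$ and $\mathcal{A} (\delta) = \sum_k \triangle_0 (s^{(k)}) \delta^k / k!$ --- using (\ref{eq-cond2-prop-hyperlog-Taylor-comp}) exactly as you do to place the perturbation strictly below $\triangle_0 (s)$. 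The only differences are of bookkeeping (you prove the formal composition identity first and evaluate second, while the paper composes the analytic functions first and identifies the coefficient of $\varepsilon^k$ via Fa\`a di Bruno at the end), and the delicate point you flag --- exhibiting a strict separator $\varepsilon_P$ for condition (\ref{eq-analytic-comp-cond}) --- is left just as implicit in the paper's appeal to Corollary~\ref{cor-Taylor-composition}.
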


\begin{proof}
  We may assume that $\varepsilon \neq 0$. By
  Proposition~\ref{prop-analicity-strong}, the function $\mathcal{A} \of
  \mathbb{S}^{\preccurlyeq \varepsilon}_0 \longrightarrow \mathbb{T}_0$ given
  for $\delta \preccurlyeq \varepsilon$ by
  \begin{eqnarray*}
    \mathcal{A} (\delta) & \assign & \sum_{k \in \mathbb{N}} \frac{\triangle_0
    (s^{(k)})}{k!} \delta^k
  \end{eqnarray*}
  is analytic on $\mathbb{S}^{\preccurlyeq \varepsilon}_0$. Our goal is to
  show that $g (\mathcal{A} (\varepsilon)) = \tilde{P} (\varepsilon)$ where
  \[ P \assign \sum_{k \in \mathbb{N}} \frac{\triangle_0 (f (s)^{(k)})}{k!}
     X^k \in \mathbb{T} \llbracket X \rrbracket . \]
  The function $g$ is analytic at $\triangle_0 (s)$ with $\tmop{Conv}
  (g)_{\triangle_0 (s)} \supseteq \triangle_0 (s) +\mathbb{T}_0^{\prec
  \triangle_0 (s)}$. For $n \in \mathbb{N}$ and $k > 0$, we set
  \begin{eqnarray*}
    X_{n, k} & \assign & \{ v \in (\mathbb{N}^{>})^n \suchthat | v | \assign
    v_{[1]} + \cdots + v_{[n]} = k \} \text{\quad and}\\
    c_{k, n} & \assign & \sum_{v \in X_{n, k}} \frac{g^{(n)} (\triangle_0
    (s))}{n!}  \frac{\triangle_0 (s^{(v_{[1]})})}{v_{[1]} !} \cdots
    \frac{\triangle_0 (s^{(v_{[n]})})}{v_{[n]} !} .
  \end{eqnarray*}
  We have $\mathcal{A} (\delta) - \triangle_0 (s) \prec \triangle_0 (s)$ by
  (\ref{eq-cond2-prop-hyperlog-Taylor-comp}), so we may apply
  Proposition~\ref{cor-Taylor-composition} and see that $g \circ \mathcal{A}$
  is analytic on $\mathbb{S}_0^{\preccurlyeq \varepsilon}$. Moreover, the
  family $(c_{k, n} \varepsilon^k)_{n \in \mathbb{N}, k > 0}$ is summable,
  with
  \begin{equation}
    g \circ \mathcal{A} (\varepsilon) = g (\triangle_0 (s)) + \sum_{n \in
    \mathbb{N}, k > 0} c_{n, k} \varepsilon^k . \label{eq-Taylor-hyperlog-cnk}
  \end{equation}
  So by Lemma~\ref{lem-Fubini}, the family $\left( \sum_{n \in \mathbb{N}}
  c_{k, n} \varepsilon^k \right)_{k > 0}$ is summable, and
  \[ \sum_{n \in \mathbb{N}, k > 0} c_{n, k} \varepsilon^k = \sum_{k > 0}
     \left( \sum_{n \in \mathbb{N}} c_{k, n} \right) \varepsilon^k . \]
  Since $g (\triangle_0 (s)) = \triangle_0 (f (s))$ and in view of
  (\ref{eq-Taylor-hyperlog-cnk}), it suffices to show that $\sum_{n \in
  \mathbb{N}} c_{k, n} = \frac{f (s)^{(k)}}{k!}$ for all $k > 0$. By
  (\ref{eq-cond-prop-hyperlog-Taylor-comp}), we have $\triangle_0 (f^{(n)}
  (s)) = g^{(n)} (\triangle_0 (s))$ for all $n \in \mathbb{N}$. Recall that we
  have a chain rule (\ref{eq-chain-rule-abstract}) at $s$. An induction gives
  Fa{\`a} di Bruno's formula, i.e.
  \[ \frac{(f (s))^{(k)}}{k!} = \sum_{n \in \mathbb{N}} \sum_{v \in X_{n, k}}
     \frac{f^{(n)} (s)}{n!}  \frac{s^{(v_{[1]})}}{v_{[1]} !} \cdots
     \frac{s^{(v_{[n]})}}{v_{[n]} !} . \]
  Therefore
  \[ \frac{\triangle_0 (f (s)^{(k)})}{k!} = \sum_{n \in \mathbb{N}} \sum_{v
     \in X_{n, k}} \frac{g^{(n)} (\triangle_0 (s))}{n!}  \frac{\triangle_0
     (s^{(v_{[1]})})}{v_{[1]} !} \cdots \frac{\triangle_0
     (s^{(v_{[n]})})}{v_{[n]} !} = \sum_{n \in \mathbb{N}} c_{n, k} . \]
  This concludes the proof.
\end{proof}

\Cref{th-Taylor-analytic} follows from Proposition~\ref{prop-Taylor-expansion-hyperlog}
for $(\mathbb{S}_0, \mathbb{T}_0, \triangle_0, \mathd) = (\mathbb{S},
\mathbb{T}, \triangle, \partial)$ and $g \assign \Psi (f)$ for each $f \in
\mathcal{A}_{\mathbb{S}}$. Just as `commutative diagrams' are preserved by
Taylor deformations, so are `chain rules' in the following sense:

\begin{theorem}
  \label{th-chain-rule}Assume that $x' = 1$. Let $\mathd \of \mathbb{T}
  \longrightarrow \mathbb{T}$ be a strongly linear derivation such that
  \[ \forall s \in \mathbb{S}, \mathd (\triangle (s)) = \mathd (\triangle (x))
     \triangle (s') . \]
  Then for all $s \in \mathbb{S}_{\triangle, \delta}$, we have $\mathd
  (T_{\delta} (\triangle) (s)) = \mathd (T_{\delta} (\triangle) (x))
  T_{\delta} (\triangle) (s')$.
\end{theorem}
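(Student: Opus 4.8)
The plan is to compute $\mathd\bigl(T_{\delta}(\triangle)(s)\bigr)$ directly from the definition of the Taylor deformation, using the strong linearity of $\mathd$ and the Leibniz rule, and then to recognise the two resulting sums as multiples of $T_{\delta}(\triangle)(s')$.

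First I would dispatch the value at $x$. Since $x' = 1$ we have $x^{(k)} = 0$ for all $k \geqslant 2$, so the family defining $T_{\delta}(\triangle)(x)$ is finite and $T_{\delta}(\triangle)(x) = \triangle(x) + \triangle(1)\,\delta = \triangle(x) + \delta$, the last equality because $\triangle$ is a unital morphism of rings. Hence $\mathd\bigl(T_{\delta}(\triangle)(x)\bigr) = \mathd(\triangle(x)) + \mathd(\delta)$, and it suffices to prove that $\mathd\bigl(T_{\delta}(\triangle)(s)\bigr) = \bigl(\mathd(\triangle(x)) + \mathd(\delta)\bigr)\,T_{\delta}(\triangle)(s')$ for every $s \in \mathbb{S}_{\triangle, \delta}$.

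So fix such an $s$. By Proposition~\ref{prop-Taylor-deformation}, $\mathbb{S}_{\triangle, \delta}$ is a differential subfield of $\mathbb{S}$, so $s' \in \mathbb{S}_{\triangle, \delta}$ and both $T_{\delta}(\triangle)(s)$ and $T_{\delta}(\triangle)(s')$ are defined; in particular the families $\bigl(\triangle(s^{(k)})\delta^k/k!\bigr)_{k \in \mathbb{N}}$ and $\bigl(\triangle(s^{(k+1)})\delta^k/k!\bigr)_{k \in \mathbb{N}}$ are summable, and the sum of the latter is $T_{\delta}(\triangle)(s')$, using $s^{(k+1)} = (s')^{(k)}$. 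Applying the strongly linear map $\mathd$ to $T_{\delta}(\triangle)(s) = \sum_k \triangle(s^{(k)})\delta^k/k!$ and then the Leibniz rule to each summand $\triangle(s^{(k)})\,\delta^k$ gives
\[ \mathd\bigl(T_{\delta}(\triangle)(s)\bigr) = \sum_{k \in \mathbb{N}} \frac{\mathd(\triangle(s^{(k)}))\,\delta^k}{k!} + \sum_{k \in \mathbb{N}} \frac{\triangle(s^{(k)})\,\mathd(\delta^k)}{k!} . \]
The splitting into two sums is legitimate because each family on the right is separately summable: by the hypothesis on $\mathd$ we have $\mathd(\triangle(s^{(k)})) = \mathd(\triangle(x))\,\triangle(s^{(k+1)})$, so the first family equals $\mathd(\triangle(x))$ times $\bigl(\triangle(s^{(k+1)})\delta^k/k!\bigr)_k$; and since $\mathd(\delta^k) = k\,\delta^{k-1}\,\mathd(\delta)$, a reindexing identifies the second family with $\mathd(\delta)$ times the same summable family $\bigl(\triangle(s^{(j+1)})\delta^j/j!\bigr)_j$. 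Multiplying a summable family by a fixed element of $\mathbb{T}$ preserves summability (Proposition~\ref{prop-sum-prod}), and Lemma~\ref{lem-sum-sum} then justifies the split. Evaluating the two sums yields $\mathd(\triangle(x))\,T_{\delta}(\triangle)(s')$ and $\mathd(\delta)\,T_{\delta}(\triangle)(s')$, hence $\mathd\bigl(T_{\delta}(\triangle)(s)\bigr) = \bigl(\mathd(\triangle(x)) + \mathd(\delta)\bigr)\,T_{\delta}(\triangle)(s') = \mathd\bigl(T_{\delta}(\triangle)(x)\bigr)\,T_{\delta}(\triangle)(s')$, as desired.

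The argument is purely formal, and I do not expect a genuine obstacle. The only points requiring a little care are the summability bookkeeping used to break the single sum into two --- which reduces to $s' \in \mathbb{S}_{\triangle, \delta}$, already recorded in Proposition~\ref{prop-Taylor-deformation} --- and the observation that $x' = 1$ collapses $T_{\delta}(\triangle)(x)$ to $\triangle(x) + \delta$. This is the derivation-theoretic counterpart of Theorem~\ref{th-Taylor-analytic}, but, unlike the analytic case, it needs neither Fa{\`a} di Bruno's formula nor the composition results of \Cref{section-power-series}, only the Leibniz rule and strong linearity.
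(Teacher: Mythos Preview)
Your proof is correct and follows essentially the same computation as the paper's: apply strong linearity of $\mathd$ to the Taylor sum, use the Leibniz rule on each $\triangle(s^{(k)})\delta^k$, substitute $\mathd(\triangle(s^{(k)})) = \mathd(\triangle(x))\,\triangle(s^{(k+1)})$, reindex, and recognise both pieces as multiples of $T_\delta(\triangle)(s')$. The only cosmetic difference is that you evaluate $T_\delta(\triangle)(x) = \triangle(x)+\delta$ up front, whereas the paper waits until the last line; your extra care in justifying the split via $s' \in \mathbb{S}_{\triangle,\delta}$ is a welcome addition.
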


\begin{proof}
  The relation $\mathd \circ \triangle = \mathd (\triangle (x)) \triangle
  \circ \partial$ gives
  \[ \mathd \circ \triangle \circ \partial^{[k]} = \mathd (\triangle (x))
     \triangle \circ \partial^{[k + 1]} \]
  for all $k \in \mathbb{N}$. Let $s \in \mathbb{S}_{\triangle, \delta}$. We
  have
  \begin{eqnarray*}
    \mathd (T_{\delta} (\triangle) (s)) & = & \mathd \left( \sum_{k \in
    \mathbb{N}} \frac{\triangle (s^{(k)})}{k!} \delta^k \right)\\
    & = & \mathd (\delta)  \sum_{k > 0} \frac{\triangle (s^{(k)})}{k!} k
    \delta^{k - 1} + \sum_{k \in \mathbb{N}} \frac{\mathd (\triangle
    (s^{(k)}))}{k!} \delta^k\\
    & = & \mathd (\delta)  \sum_{k > 0} \frac{\triangle (s^{(k)})}{k!} k
    \delta^{k - 1} + \sum_{k \in \mathbb{N}} \frac{\mathd (\triangle (x))
    \triangle (s^{(k)})}{k!} \delta^k\\
    & = & \mathd (\delta)  \sum_{k > 0} \frac{\triangle (s^{(k)})}{(k - 1) !}
    \delta^{k - 1} + \mathd (\triangle (x))  \sum_{k \in \mathbb{N}}
    \frac{\triangle (s^{(k + 1)})}{k!} \delta^k\\
    & = & \mathd (\delta + \triangle (x))  \sum_{k \in \mathbb{N}}
    \frac{\triangle (s^{(k + 1)})}{k!} \delta^k\\
    & = & \mathd (T_{\delta} (\triangle) (x)) T_{\delta} (\triangle) (s') .
    \text{{\hspace*{\fill}}(as $x' = 1$)}
  \end{eqnarray*}
  This concludes the proof.
\end{proof}

\subsection{Application to
\texorpdfstring{$\omega$}{ω}-series}\label{subsection-application-w-series}

The field $\mathbf{No} =\mathbb{R} \llbracket \mathbf{Mo} \rrbracket$, with
Gonshor's logarithm $\log$ {\cite[Chapter~10]{Gon86}}, is a transseries field
as per {\cite[Definition 2.2.1]{Schm01}}. It is also a differential
pre-logarithmic field for Berarducci and Mantova's derivation $\partial$
of~{\cite{BM18}}. For $\mathfrak{m}, \mathfrak{n} \in \mathbf{Mo}$, we have
$\log \mathfrak{m} \prec \log \mathfrak{n}$ if and only if
$\mathfrak{m}^{\dag} \prec \mathfrak{n}^{\dag}$, so in view of
{\cite[Proposition~2.2.4(1)]{Schm01}}, we have:

\begin{lemma}
  \label{lem-flatness-log}For all $\mathfrak{m} \in \mathbf{Mo}$, we have
  $(\tmop{supp} \ell (\mathfrak{m}))^{\dag} \prec \mathfrak{m}^{\dag}$.
\end{lemma}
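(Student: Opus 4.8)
The plan is to reduce the statement to the comparison of logarithms with transfinite support in $\mathbf{No}$, using the fact that for monomials $\mathfrak{m}, \mathfrak{n} \in \mathbf{Mo}$ one has $\log \mathfrak{m} \prec \log \mathfrak{n} \iff \mathfrak{m}^{\dag} \prec \mathfrak{n}^{\dag}$, together with \cite[Proposition~2.2.4(1)]{Schm01}. Recall that $\ell(\mathfrak m) = \log \mathfrak m$ is purely large (has no bounded part) for $\mathfrak m \in \mathbf{Mo}$, so $\mathrm{supp}\,\ell(\mathfrak m) \succ 1$ whenever $\mathfrak m \neq 1$; we want to bound the dominant monomial of $\ell(\mathfrak m)$ — and in fact every monomial of its support — by something $\prec \mathfrak m^{\dag}$.

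First I would recall what \cite[Proposition~2.2.4(1)]{Schm01} gives: for a transseries field satisfying the Schmeling axioms, if $\mathfrak n \in \mathrm{supp}\,\ell(\mathfrak m)$ then $\ell(\mathfrak n) \prec \ell(\mathfrak m)$, i.e. the support of a logarithm consists of monomials whose own logarithms are strictly smaller (this is essentially axiom $\mathrm{T4}$). Now fix $\mathfrak m \in \mathbf{Mo}$, $\mathfrak m \neq 1$, and let $\mathfrak n \in \mathrm{supp}\,\ell(\mathfrak m)$. By the Schmeling property, $\ell(\mathfrak n) \prec \ell(\mathfrak m)$. Applying the stated equivalence $\log \mathfrak n \prec \log \mathfrak m \iff \mathfrak n^{\dag} \prec \mathfrak m^{\dag}$, I get $\mathfrak n^{\dag} \prec \mathfrak m^{\dag}$. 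Since $\mathfrak n \in \mathrm{supp}\,\ell(\mathfrak m)$ was arbitrary and $(\mathrm{supp}\,\ell(\mathfrak m))^{\dag}$ is by definition generated by $\{\mathfrak n^{\dag} : \mathfrak n \in \mathrm{supp}\,\ell(\mathfrak m)\}$, this yields $(\mathrm{supp}\,\ell(\mathfrak m))^{\dag} \prec \mathfrak m^{\dag}$. The case $\mathfrak m = 1$ is trivial since $\ell(1) = 0$ has empty support.

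The main subtlety — and the step I expect to need care — is the passage from a single monomial $\mathfrak n \in \mathrm{supp}\,\ell(\mathfrak m)$ to the whole class $(\mathrm{supp}\,\ell(\mathfrak m))^{\dag}$ in the dominance relation $\preccurlyeq$ on $\mathbf{No}$. One must check that $s \preccurlyeq t$ for series $s,t$ is controlled by the supports termwise, which is exactly the definition of $\preccurlyeq$ recalled in \Cref{subsection-subalgebras}: $s \prec t$ iff for every $\mathfrak p \in \mathrm{supp}\, s$ there is $\mathfrak q \in \mathrm{supp}\, t$ with $\mathfrak p \prec \mathfrak q$. Here $\mathrm{supp}\,\ell(\mathfrak m)$ can be transfinite, so one cannot just take a maximum; but since each $\mathfrak n^{\dag}$ with $\mathfrak n \in \mathrm{supp}\,\ell(\mathfrak m)$ lies strictly below $\mathfrak m^{\dag}$ (which is a single monomial, hence trivially dominates itself), the required inequality $(\mathrm{supp}\,\ell(\mathfrak m))^{\dag} \prec \mathfrak m^{\dag}$ follows directly from the definition without any supremum argument. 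I would also double-check that $(\mathrm{supp}\,\ell(\mathfrak m))^{\dag}$ is being interpreted as the class $\{\mathfrak n^{\dag} : \mathfrak n \in \mathrm{supp}\,\ell(\mathfrak m)\}$ (equivalently, $(\mathrm{supp}\,\ell(\mathfrak m))'$ viewed through $\preccurlyeq$), so that the termwise comparison is the right notion, and then the proof is complete.
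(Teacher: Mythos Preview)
Your argument is correct and is exactly the paper's approach: the lemma is stated in the paper as an immediate consequence of the equivalence $\log\mathfrak{m}\prec\log\mathfrak{n}\Leftrightarrow\mathfrak{m}^{\dag}\prec\mathfrak{n}^{\dag}$ together with \cite[Proposition~2.2.4(1)]{Schm01}, and you have simply spelled this out. One small slip: $\mathfrak{m}^{\dag}=\ell(\mathfrak{m})'$ is in general a series, not a single monomial; but this does not affect your argument, since the relation $\mathfrak{n}^{\dag}\prec\mathfrak{m}^{\dag}$ for each $\mathfrak{n}\in\tmop{supp}\ell(\mathfrak{m})$ is exactly what $(\tmop{supp}\ell(\mathfrak{m}))^{\dag}\prec\mathfrak{m}^{\dag}$ means, and it comes directly from the cited equivalence without needing $\mathfrak{m}^{\dag}$ to be a monomial.
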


Kuhlmann and Matusinski isolated {\cite[Section~4]{KM15}} surreal monomials
$\kappa_{- \gamma, n}, \gamma \in \mathbf{On}, n \in \mathbb{N}$ which later
played a particular role in the definition of $\partial$. Consider the class
$\mathfrak{W} \subseteq \mathbf{Mo}$ of infinitesimal monomials
\[ \mathfrak{l}_{\alpha} \assign \exp \left( - \sum_{\gamma < \alpha} \sum_{n
   \in \mathbb{N}} \kappa_{- \gamma, n + 1} \right), \]
where $\alpha$ ranges in $\mathbf{On}$. We have $\alpha < \beta
\Longrightarrow \mathfrak{l}_{\alpha} \succ \mathfrak{l}_{\beta}$ for all
$\alpha, \beta \in \mathbf{On}$, so $\mathfrak{W}$ is well-based. Moreover, we
$\mathfrak{l}_{\alpha}^{\dag} = - \sum_{\gamma < \alpha} \sum_{n \in
\mathbb{N}} \partial (\kappa_{- \gamma, n + 1}) \sim \partial (\kappa_{0, 1})
= \partial (\log \omega) = \omega^{- 1}$ for all $\alpha \in \mathbf{On}$,
whence~$\mathfrak{W}^{\dag} \preccurlyeq \omega^{- 1}$. See
{\cite[Section~5.3]{BM18}} and {\cite[Section~2]{vdH:bm}} for more details.

\begin{proposition}
  \label{prop-near-support-No}For each $\mathfrak{m} \in \mathbf{Mo} \setminus
  \{1\}$ and $\mathfrak{n} \in \tmop{supp} \partial (\mathfrak{m})$, there are
  an $\mathfrak{s} \in \mathbf{Mo}$ with $\mathfrak{s}^{\dag} \prec
  \mathfrak{m}^{\dag}$ and $\mathfrak{s} \succcurlyeq 1$ and a $\mathfrak{w}
  \in \mathfrak{W}$ with $\mathfrak{n}=\mathfrak{m}\mathfrak{s}\mathfrak{w}$.
\end{proposition}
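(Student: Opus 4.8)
The plan is to unravel the Berarducci--Mantova construction of $\partial$ on monomials and, for each monomial occurring in $\partial\mathfrak{m}$, to record how it is assembled from the iterated logarithms of $\mathfrak{m}$. Since $\partial$ is a strongly linear derivation with $(\ell(\mathfrak{m}))' = \mathfrak{m}^{\dagger}$, we have $\partial\mathfrak{m} = \mathfrak{m}\cdot\partial(\ell(\mathfrak{m}))$, and as $\mathfrak{m}$ is a single monomial, $\tmop{supp}\partial\mathfrak{m} = \mathfrak{m}\cdot\tmop{supp}\,\partial(\ell(\mathfrak{m}))$. So it suffices to show that every $\mathfrak{q}\in\tmop{supp}\,\partial(\ell(\mathfrak{m}))$ factors as $\mathfrak{q} = \mathfrak{s}\mathfrak{w}$ with $\mathfrak{s}\in\mathbf{Mo}$, $\mathfrak{s}\succcurlyeq 1$, $\mathfrak{s}^{\dagger}\prec\mathfrak{m}^{\dagger}$ and $\mathfrak{w}\in\mathfrak{W}$; the sign of $\mathfrak{m}$ is irrelevant, since $\mathfrak{m}$ and $\mathfrak{m}^{-1}$ have the same $\tmop{supp}\ell$ and opposite $\dagger$.

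Next I would bring in the path description of $\partial$ from {\cite{BM18}}: a direct induction on monomials is not available, because passing to logarithms can raise the combinatorial complexity of a monomial, but {\cite{BM18}} computes $\partial\mathfrak{a}$, for any monomial $\mathfrak{a}\neq 1$, as a strongly summable sum --- over sequences (``paths'') $\mathfrak{a} = \mathfrak{a}_0,\mathfrak{a}_1,\mathfrak{a}_2,\dots$ of monomials with $\mathfrak{a}_{i+1}\in\tmop{supp}\ell(\mathfrak{a}_i)$ --- of terms, each a nonzero real times a monomial $\mathfrak{a}_{[P]}$. Each path is eventually the (forced) iterated-logarithm sequence $\lambda,\ell(\lambda),\ell(\ell(\lambda)),\dots$ of a log-atomic monomial $\lambda = \mathfrak{a}_k$ (one whose iterated logarithms are all monomials), and the telescoping $\partial\lambda = \lambda\cdot\ell(\lambda)\cdot\ell(\ell(\lambda))\cdots$ combined with {\cite{BM18}}'s formula for $\partial$ on log-atomic numbers --- which is where the monomials $\kappa_{-\gamma,n}$ of {\cite[Section~4]{KM15}} enter --- collapses this infinite tail to a single monomial, so that $\mathfrak{a}_{[P]} = \mathfrak{a}\,\mathfrak{a}_1\cdots\mathfrak{a}_k\cdot\mathfrak{t}$ with $\mathfrak{t}$ coming from the log-atomic tail. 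Applying this to the finitely many $\mathfrak{p}\in\tmop{supp}\ell(\mathfrak{m})$ for which $\mathfrak{q}\in\tmop{supp}\partial\mathfrak{p}$ (such $\mathfrak{p}$ exist because $\partial(\ell(\mathfrak{m})) = \sum_{\mathfrak{p}}(\ell(\mathfrak{m}))_{\mathfrak{p}}\,\partial\mathfrak{p}$ by strong linearity), one obtains $\mathfrak{q} = \mathfrak{p}\,\mathfrak{p}_1\cdots\mathfrak{p}_k\cdot\mathfrak{t}$ along some such path.

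With this presentation in hand, the factor $\mathfrak{w}\in\mathfrak{W}$ is produced by ``completing'' the partial product of iterated-logarithm monomials hidden in $\mathfrak{t}$ into a full $\mathfrak{l}_{\alpha}$: one writes $\mathfrak{t} = \mathfrak{s}_{\mathfrak{t}}\cdot\mathfrak{l}_{\alpha}$ with $\alpha$ determined by the level of the path's log-atomic tail and $\mathfrak{s}_{\mathfrak{t}}\succ 1$, and sets $\mathfrak{w} := \mathfrak{l}_{\alpha}$, $\mathfrak{s} := \mathfrak{p}\,\mathfrak{p}_1\cdots\mathfrak{p}_k\cdot\mathfrak{s}_{\mathfrak{t}}$. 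Then $\mathfrak{s}\succcurlyeq 1$ because $\ell(\mathfrak{a})$ is purely large (all its monomials are $\succ 1$), so each $\mathfrak{p}_i\succ 1$, and $\mathfrak{s}_{\mathfrak{t}}\succ 1$. Moreover $\mathfrak{s}^{\dagger}$ is a finite sum of logarithmic derivatives of monomials lying on the path issuing from $\mathfrak{p}$, each of which is $\prec\mathfrak{m}^{\dagger}$ by Lemma~\ref{lem-flatness-log} together with its consequence $\mathfrak{a}_{i+1}^{\dagger}\prec\mathfrak{a}_i^{\dagger}$ (monotonicity along a path, applied also at the first step $\mathfrak{p}\in\tmop{supp}\ell(\mathfrak{m})$); hence $\mathfrak{s}^{\dagger}\prec\mathfrak{m}^{\dagger}$. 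The prototype of the completion step, for a path eventually following the iterated logarithms of $\omega$, is $\mathfrak{w} = \mathfrak{l}_1 = \exp(-\sum_n\kappa_{0,n+1})$ together with $\partial(\log\log\omega) = (\omega\log\omega)^{-1} = \exp(\ell(\log\log\omega) + \ell(\ell(\log\log\omega)) + \cdots)\cdot\mathfrak{l}_1$, whose first factor is $\succ 1$ with logarithmic derivative $\asymp(\log\log\omega)^{\dagger}$, which is $\prec\mathfrak{m}^{\dagger}$ by Lemma~\ref{lem-flatness-log} as soon as $\log\log\omega\in\tmop{supp}\ell(\mathfrak{m})$.

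The main obstacle is precisely this last point --- the handling of the infinite log-atomic tails: one must check that the relevant infinite products of iterated-logarithm monomials really define monomials, pin down which $\mathfrak{l}_{\alpha}$ occurs, and control $\mathfrak{l}_{\alpha}^{\dagger}\asymp\omega^{-1}$ against $\mathfrak{m}^{\dagger}$ in the two regimes of {\tmem{(\ref{eq-spec-cond})}} (flat versus non-flat $\mathfrak{m}$). This is where the detailed structure of the $\kappa_{-\gamma,n}$ of {\cite[Section~4]{KM15}} and the summability analysis underlying {\cite{BM18}}'s construction of $\partial$ are indispensable; the finite initial segments of the paths, by contrast, are purely formal and are handled using Lemma~\ref{lem-flatness-log} alone.
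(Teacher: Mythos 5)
Your proposal follows essentially the same route as the paper: both unfold the path description of $\partial$ from {\cite{BM18}}, take $\mathfrak{s}$ to be the product of the path monomials (each arising from a positive infinite term of an iterated logarithm, hence $\succ 1$), obtain $\mathfrak{s}^{\dagger}\prec\mathfrak{m}^{\dagger}$ from Lemma~\ref{lem-flatness-log} applied along the path, and take $\mathfrak{w}=\mathfrak{l}_{\alpha}$ from the log-atomic tail. The only divergence is at the step you flag as the ``main obstacle'': the paper does not re-analyse the tail but simply quotes {\cite[Definitions~6.7 and~6.13]{BM18}}, under which every monomial of $\tmop{supp}\partial(\mathfrak{m})$ is already presented in the form $\mathfrak{m}\,\mathfrak{m}_1\cdots\mathfrak{m}_k\,\mathfrak{l}_{\alpha}$ for a finite path $(r_i\mathfrak{m}_i)_{i\leqslant k}$ and some $\alpha$; the verification you defer (that the infinite product over the log-atomic tail is a monomial $\succcurlyeq 1$ with logarithmic derivative dominated by that of the last path monomial, times an element of $\mathfrak{W}$) is exactly what that cited definition packages, so your residual factor $\mathfrak{s}_{\mathfrak{t}}$ folds into $\mathfrak{s}$ as you propose. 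Note also that you do not need to compare $\mathfrak{l}_{\alpha}^{\dagger}$ with $\mathfrak{m}^{\dagger}$ for this proposition: the statement only asks that $\mathfrak{w}\in\mathfrak{W}$, and the bound $\mathfrak{W}^{\dagger}\preccurlyeq\omega^{-1}$ is invoked separately, in Lemma~\ref{lem-der-dichotomy}. So your argument is correct in outline and matches the paper's; it becomes complete once the appeal to the explicit form of the path-derivative in {\cite{BM18}} is made, which is precisely how the paper's proof closes the gap.
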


\begin{proof}
  The definition of $\partial \of \mathbf{No} \longrightarrow \mathbf{No}$,
  relies {\cite[Definition~6.11]{BM18}} on the notion of path in transseries
  fields {\cite{Schm01}}. A (finite) path in a monomial $\mathfrak{m} \in
  \mathbf{Mo}$ is a sequence $(r_i \mathfrak{m}_i)_{i \leqslant k}$ where $r_0
  = 1$, $\mathfrak{m}_0 =\mathfrak{m}$ and each $r_{i + 1} \mathfrak{m}_{i +
  1}$ for $i < k$ is a positive infinite term in $\tmop{supp} \log
  \mathfrak{m}_i$. Note that $\mathfrak{m}_0^{\dag} \succ
  \mathfrak{m}_1^{\dag} \succ \cdots \succ \mathfrak{m}_k^{\dag}$ by
  Lemma~\ref{lem-flatness-log}.
  
  Fix $\mathfrak{m} \in \mathbf{Mo}$ and $\mathfrak{n} \in \tmop{supp}
  \partial (\mathfrak{m})$. By {\cite[Definitions~6.13 and~6.7]{BM18}}, there
  is a path $(r_i \mathfrak{m}_i)_{i \leqslant k}$ in $\mathfrak{m}$ and an
  $\mathfrak{l}_{\alpha} \in \mathfrak{W}$ with
  $\mathfrak{n}=\mathfrak{m}\mathfrak{m}_1 \cdots \mathfrak{m}_k
  \mathfrak{l}_{\alpha}$. Since $\mathfrak{m}_1^{\dag}, \ldots,
  \mathfrak{m}_k^{\dag} \prec \mathfrak{m}_0^{\dag} =\mathfrak{m}^{\dag}$, we
  have $(\mathfrak{m}_1 \cdots \mathfrak{m}_k)^{\dag} \prec \mathfrak{m}$.
  Since $\mathfrak{m}_1, \ldots, \mathfrak{m}_n \in \tmop{supp} \log
  \mathfrak{m}$, we have $\mathfrak{m}_1 \cdots \mathfrak{m}_n \succcurlyeq
  1$. This concludes the proof.
\end{proof}

\begin{lemma}
  \label{lem-der-dichotomy}Let $\mathbb{S}= \mathbf{K} \llbracket \mathfrak{M}
  \rrbracket$ be a field of well-based series equipped with a strongly linear
  derivation $\mathbb{S} \longrightarrow \mathbb{S} \: ; \: s \mapsto s'$ and
  let $x \in \mathbb{S}^{\times}$. Assume that there is a class $\mathfrak{W}
  \subseteq \mathfrak{M}$ such that $\mathfrak{W}^{\dag} \preccurlyeq x^{-
  1}$, and that for all $\mathfrak{m} \in \mathfrak{M} \setminus \{1\}$ and
  $\mathfrak{n} \in \tmop{supp} \mathfrak{m}'$ there are an $\mathfrak{s} \in
  \mathfrak{M}$ and a $\mathfrak{w} \in \mathfrak{W}$ such that
  $\mathfrak{s}^{\dag} \prec \mathfrak{m}^{\dag}$ and
  $\mathfrak{n}=\mathfrak{m}\mathfrak{s}\mathfrak{w}$. Then the condition
  {\tmem{(\ref{eq-spec-cond})}} is satisfied with respect to $x$.
\end{lemma}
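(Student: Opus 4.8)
The plan is to verify the dichotomy (\ref{eq-spec-cond}) directly, monomial by monomial, reducing everything to the additivity of the logarithmic derivative under products and to the standard valuation-theoretic behaviour of $\preccurlyeq$ and $\asymp$ on the Hahn field $\mathbb{S}$. Fix $\mathfrak{m} \in \mathfrak{M}$. Since $\mathfrak{M}$ is linearly ordered, $\preccurlyeq$ is a linear quasi-ordering on $\mathbb{S}$, so exactly one of $\mathfrak{m}^{\dag} \preccurlyeq x^{-1}$ and $\mathfrak{m}^{\dag} \succ x^{-1}$ holds; accordingly, it suffices to check, for every $\mathfrak{n} \in \tmop{supp} \mathfrak{m}'$, that $\mathfrak{n}^{\dag} \preccurlyeq x^{-1}$ in the first case and $\mathfrak{n}^{\dag} \asymp \mathfrak{m}^{\dag}$ in the second. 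If $\mathfrak{m} = 1$ then $\mathfrak{m}' = 0$, so $\tmop{supp} \mathfrak{m}' = \varnothing$ and $\mathfrak{m}^{\dag} = 0 \preccurlyeq x^{-1}$, and the first disjunct of (\ref{eq-spec-cond}) holds vacuously. So assume from now on that $\mathfrak{m} \neq 1$, and fix $\mathfrak{n} \in \tmop{supp} \mathfrak{m}'$.

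By the hypothesis of the lemma we may write $\mathfrak{n} = \mathfrak{m} \mathfrak{s} \mathfrak{w}$ with $\mathfrak{s} \in \mathfrak{M}$, $\mathfrak{w} \in \mathfrak{W}$ and $\mathfrak{s}^{\dag} \prec \mathfrak{m}^{\dag}$. The Leibniz rule makes the map $\mathfrak{u} \mapsto \mathfrak{u}^{\dag} = \mathfrak{u}'/\mathfrak{u}$ additive on products of monomials, so
\[ \mathfrak{n}^{\dag} = \mathfrak{m}^{\dag} + \mathfrak{s}^{\dag} + \mathfrak{w}^{\dag}, \]
where $\mathfrak{w}^{\dag} \preccurlyeq x^{-1}$ because $\mathfrak{W}^{\dag} \preccurlyeq x^{-1}$. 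If $\mathfrak{m}^{\dag} \preccurlyeq x^{-1}$, then $\mathfrak{s}^{\dag} \prec \mathfrak{m}^{\dag} \preccurlyeq x^{-1}$, so all three summands on the right are $\preccurlyeq x^{-1}$, and hence so is $\mathfrak{n}^{\dag}$; as this holds for every $\mathfrak{n} \in \tmop{supp} \mathfrak{m}'$, we get $(\tmop{supp} \mathfrak{m}')^{\dag} \preccurlyeq x^{-1}$. If instead $\mathfrak{m}^{\dag} \succ x^{-1}$, then $\mathfrak{s}^{\dag} \prec \mathfrak{m}^{\dag}$ and $\mathfrak{w}^{\dag} \preccurlyeq x^{-1} \prec \mathfrak{m}^{\dag}$, so $\mathfrak{s}^{\dag} + \mathfrak{w}^{\dag} \prec \mathfrak{m}^{\dag}$ and therefore $\mathfrak{n}^{\dag} \asymp \mathfrak{m}^{\dag}$; again this holds for all $\mathfrak{n} \in \tmop{supp} \mathfrak{m}'$, so $(\tmop{supp} \mathfrak{m}')^{\dag} \asymp \mathfrak{m}^{\dag}$. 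In either case the appropriate disjunct of (\ref{eq-spec-cond}) is satisfied, which proves the lemma.

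I do not expect a genuine obstacle here: the argument uses only the Leibniz rule (hence additivity of $(\cdot)^{\dag}$), the linearity of $\preccurlyeq$ on $\mathbb{S}$, the fact that a finite sum of series that are each $\preccurlyeq x^{-1}$ is again $\preccurlyeq x^{-1}$, and the fact that adding to $\mathfrak{m}^{\dag}$ a series strictly dominated by it does not change its dominant monomial. The one point to watch is to invoke the lemma's hypothesis only when $\mathfrak{m}' \neq 0$, i.e.\ $\mathfrak{m} \neq 1$ --- equivalently when $\tmop{supp} \mathfrak{m}'$ is non-empty --- and to dispose of the trivial case $\mathfrak{m} = 1$ separately, as above.
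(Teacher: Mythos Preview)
Your proof is correct and follows essentially the same route as the paper's: dispose of $\mathfrak{m}=1$, write $\mathfrak{n}=\mathfrak{m}\mathfrak{s}\mathfrak{w}$ and use additivity of the logarithmic derivative to get $\mathfrak{n}^{\dag}=\mathfrak{m}^{\dag}+\mathfrak{s}^{\dag}+\mathfrak{w}^{\dag}$, then split on whether $\mathfrak{m}^{\dag}\preccurlyeq x^{-1}$ or $\mathfrak{m}^{\dag}\succ x^{-1}$. The paper's argument is the same, just more terse.
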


\begin{proof}
  We may assume that $\mathfrak{m} \neq 1$. Let $\mathfrak{n} \in \tmop{supp}
  \mathfrak{m}'$, and let $\mathfrak{s} \in \mathfrak{M}$ and $\mathfrak{w}
  \in \mathfrak{W}$ such that $\mathfrak{s}^{\dag} \prec \mathfrak{m}^{\dag}$
  and $\mathfrak{n}=\mathfrak{m}\mathfrak{s}\mathfrak{w}$. So
  $\mathfrak{n}^{\dag} =\mathfrak{m}^{\dag} +\mathfrak{s}^{\dag}
  +\mathfrak{w}^{\dag}$. If $\mathfrak{m}^{\dag} \preccurlyeq x^{- 1}$, then
  $\mathfrak{s}^{\dag} \prec x^{- 1}$ so $\mathfrak{n}^{\dag} \preccurlyeq
  x^{- 1}$. If $\mathfrak{m}^{\dag} \succ x^{- 1}$, then $\mathfrak{w}^{\dag}
  \prec \mathfrak{m}^{\dag}$ so $\mathfrak{n}^{\dag} -\mathfrak{m}^{\dag}
  \prec \mathfrak{m}^{\dag}$, whence in particular~$\mathfrak{n}^{\dag} \asymp
  \mathfrak{m}^{\dag}$. \end{proof}

\begin{corollary}
  Let $\mathfrak{M} \subseteq \mathbf{No}$ be a subgroup and assume that
  $\mathbb{R} \llbracket \mathfrak{M} \rrbracket$ is a differential subfield
  of $(\mathbf{No}, \partial)$ containing $\omega$. Then $(\mathbb{R}
  \llbracket \mathfrak{M} \rrbracket, \partial, \omega)$ satisfies
  {\tmem{(\ref{eq-spec-cond})}}.
\end{corollary}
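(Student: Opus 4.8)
The plan is to deduce the statement from the case of the whole field $\mathbf{No}$ by a restriction argument, since the condition \eqref{eq-spec-cond} is a statement about individual monomials. First I would apply Lemma~\ref{lem-der-dichotomy} to the ambient field $\mathbf{No} = \mathbb{R}\llbracket\mathbf{Mo}\rrbracket$, equipped with its strongly linear derivation $\partial$, taking $x \assign \omega$ and taking for $\mathfrak{W}$ the class $\mathfrak{W} \subseteq \mathbf{Mo}$ of monomials $\mathfrak{l}_{\alpha}$ introduced before Proposition~\ref{prop-near-support-No}. Both hypotheses of that lemma are in hand: we recorded that $\mathfrak{W}^{\dag} \preccurlyeq \omega^{-1}$, and Proposition~\ref{prop-near-support-No} says exactly that for every $\mathfrak{m} \in \mathbf{Mo}\setminus\{1\}$ and every $\mathfrak{n} \in \tmop{supp}\partial(\mathfrak{m})$ there are an $\mathfrak{s} \in \mathbf{Mo}$ with $\mathfrak{s}^{\dag} \prec \mathfrak{m}^{\dag}$ and a $\mathfrak{w} \in \mathfrak{W}$ with $\mathfrak{n} = \mathfrak{m}\mathfrak{s}\mathfrak{w}$ (the extra clause $\mathfrak{s} \succcurlyeq 1$ of that proposition is not needed here). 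Hence Lemma~\ref{lem-der-dichotomy} yields that $(\mathbf{No}, \partial, \omega)$ satisfies \eqref{eq-spec-cond}.

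Next I would transfer this to the subfield. Checking \eqref{eq-spec-cond} for $(\mathbb{R}\llbracket\mathfrak{M}\rrbracket, \partial, \omega)$ amounts to producing an $x \in \mathbb{R}\llbracket\mathfrak{M}\rrbracket^{\times}$ — take $x = \omega$, which lies in $\mathbb{R}\llbracket\mathfrak{M}\rrbracket$ by hypothesis and is nonzero — such that the stated dichotomy holds for every $\mathfrak{m} \in \mathfrak{M}$. Since $\mathbb{R}\llbracket\mathfrak{M}\rrbracket$ is a differential subfield of $(\mathbf{No},\partial)$, for $\mathfrak{m} \in \mathfrak{M}$ we have $\mathfrak{m}' = \partial(\mathfrak{m}) \in \mathbb{R}\llbracket\mathfrak{M}\rrbracket$, so $\tmop{supp}\mathfrak{m}'$ is the same set whether computed in $\mathbf{No}$ or in $\mathbb{R}\llbracket\mathfrak{M}\rrbracket$, and $\mathfrak{m}^{\dag}$ together with the relations $\preccurlyeq,\prec,\asymp$ is inherited from $\mathbf{No}$ because $\mathfrak{M} \subseteq \mathbf{Mo}$ as an ordered subgroup. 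As $\mathfrak{M} \subseteq \mathbf{Mo}$, the dichotomy already established for all $\mathfrak{m} \in \mathbf{Mo}$ holds in particular for all $\mathfrak{m} \in \mathfrak{M}$, which is exactly \eqref{eq-spec-cond} for $\mathbb{R}\llbracket\mathfrak{M}\rrbracket$.

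There is no serious obstacle; the only point to get right is that one should not try to apply Lemma~\ref{lem-der-dichotomy} directly to $\mathbb{R}\llbracket\mathfrak{M}\rrbracket$, because the factorisation $\mathfrak{n} = \mathfrak{m}\mathfrak{s}\mathfrak{w}$ coming from Proposition~\ref{prop-near-support-No} only places $\mathfrak{s}$ and $\mathfrak{w}$ in $\mathbf{Mo}$, with merely their product $\mathfrak{s}\mathfrak{w} = \mathfrak{m}^{-1}\mathfrak{n}$ forced to lie in $\mathfrak{M}$; working at the level of $\mathbf{No}$ first sidesteps this. As an alternative to invoking Lemma~\ref{lem-der-dichotomy}, one can reprove the dichotomy in place: set $\mathfrak{t} \assign \mathfrak{m}^{-1}\mathfrak{n}$, note $\mathfrak{t}^{\dag} = \mathfrak{s}^{\dag} + \mathfrak{w}^{\dag}$ with $\mathfrak{s}^{\dag} \prec \mathfrak{m}^{\dag}$ and $\mathfrak{w}^{\dag} \preccurlyeq \omega^{-1}$, and split on $\mathfrak{m}^{\dag}$: if $\mathfrak{m}^{\dag} \preccurlyeq \omega^{-1}$ then $\mathfrak{t}^{\dag} \preccurlyeq \omega^{-1}$, so $\mathfrak{n}^{\dag} = \mathfrak{m}^{\dag} + \mathfrak{t}^{\dag} \preccurlyeq \omega^{-1}$; if $\mathfrak{m}^{\dag} \succ \omega^{-1}$ then $\mathfrak{t}^{\dag} \prec \mathfrak{m}^{\dag}$, so $\mathfrak{n}^{\dag} \asymp \mathfrak{m}^{\dag}$; and $\mathfrak{m} = 1$ is trivial.
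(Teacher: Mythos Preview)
Your proposal is correct and matches the paper's intended argument: the corollary is stated without proof immediately after Lemma~\ref{lem-der-dichotomy}, and the implicit derivation is exactly to feed Proposition~\ref{prop-near-support-No} and the bound $\mathfrak{W}^{\dag}\preccurlyeq\omega^{-1}$ into Lemma~\ref{lem-der-dichotomy} at the level of $\mathbf{No}$, then restrict. Your observation that one cannot apply Lemma~\ref{lem-der-dichotomy} directly to $\mathbb{R}\llbracket\mathfrak{M}\rrbracket$ (since neither $\mathfrak{s}$ nor $\mathfrak{w}$ need lie in $\mathfrak{M}$) is precisely the point that makes the restriction step necessary, and your alternative in-place computation via $\mathfrak{t}=\mathfrak{m}^{-1}\mathfrak{n}$ is a clean way to say the same thing.
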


Let $\mathbb{R} \llangle \omega \rrangle$ be the field of $\omega$-series as
defined in {\cite[Definition~4.7]{BM19}}. This is the smallest subfield of
$\mathbf{No}$ containing $\omega$ which is closed under $\exp$, $\log$ and all
sums of summable families. In particular $(\mathbb{R} \llangle \omega
\rrangle, \partial, \omega)$ satisfies (\ref{eq-spec-cond}). Given $a \in
\mathbf{No}^{>, \succ}$, we write have a function $\mathord{\circ_a} \of \mathbb{R} \llangle \omega \rrangle \longrightarrow
   \mathbf{No} \: ; \: f \mapsto f \circ a$,
where $\circ$ is the composition law $\mathord{\circ} \of \mathbb{R} \llangle
\omega \rrangle \times \mathbf{No}^{>, \succ} \longrightarrow \mathbf{No}$
defined in {\cite{BM19}}.

\begin{proposition}
  Let $a, \delta \in \mathbf{No}$ with $a >\mathbb{R}$ and $\delta \prec a$.
  Then the function
  \begin{eqnarray*}
    \mathcal{T}_{\delta} (\circ_a) \of \mathbb{R} \llangle \omega
    \rrangle_{\circ_a, \delta} & \longrightarrow & \mathbf{No}\\
    s & \longmapsto & \sum_{k \in \mathbb{N}} \frac{s^{(k)} \circ a}{k!}
    \delta^k
  \end{eqnarray*}
  coincides with $\circ_{a + \delta}$ on $\mathbb{R} \llangle \omega
  \rrangle_{\circ_a, \delta}$.
\end{proposition}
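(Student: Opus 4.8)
The plan is to show that $\mathcal{T}_{\delta}(\circ_a)$ carries exactly the structure that pins down $\circ_{a+\delta}$, and then to conclude with the uniqueness of the composition law of {\cite{BM19}}. First I would check that the results of this section apply with $x = \omega$ and $\triangle = \circ_a$. By {\cite{BM19}}, $\circ_a \of \mathbb{R}\llangle\omega\rrangle \to \mathbf{No}$ is a strongly linear morphism of ordered rings with $\omega \circ a = a$, so $\triangle(x) = a$; since $a > \mathbb{R}$ and $\delta \prec a$ we have $a + \delta \sim a > \mathbb{R}$, hence $\circ_{a+\delta}$ is defined, and $\delta \prec a = \triangle(x)$ is the standing hypothesis. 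As $(\mathbb{R}\llangle\omega\rrangle, \partial, \omega)$ satisfies (\ref{eq-spec-cond}) (established just above), Proposition~\ref{prop-Taylor-deformation} gives that $L \assign \mathbb{R}\llangle\omega\rrangle_{\circ_a,\delta} = \mathbb{R}\llbracket\mathfrak{M}_{\circ_a,\delta}\rrbracket$ is a differential subfield of $\mathbb{R}\llangle\omega\rrangle$ and that $\mathcal{T}_{\delta}(\circ_a) \of L \to \mathbf{No}$ is a well-defined strongly linear morphism of ordered rings.

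Next I would verify that $L$ is closed under $\log$. For $\mathfrak{m} \in \mathfrak{M}_{\circ_a,\delta}$ and $\mathfrak{n} \in \tmop{supp}\ell(\mathfrak{m})$, Lemma~\ref{lem-flatness-log} gives $\mathfrak{n}^{\dag} \prec \mathfrak{m}^{\dag}$; since $\circ_a$ is $\mathbb{R}$-linear and order preserving it preserves $\prec$, so $\circ_a(\mathfrak{n}^{\dag})\,\delta \preccurlyeq \circ_a(\mathfrak{m}^{\dag})\,\delta \prec 1$, whence $\mathfrak{n} \in \mathfrak{M}_{\circ_a,\delta}$; together with $\mathfrak{M}_{\circ_a,\delta}$ being a group and with the formula for $\log$ on $\mathbb{R}\llangle\omega\rrangle^{>}$ from Remark~\ref{rem-transserial-extension} this yields $\log(L^{>}) \subseteq L$. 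Now Remark~\ref{rem-log-compatibility} applies (with $\mathbb{T} = \mathbf{No}$, $\mathcal{A}_{\mathbb{S}} = \mathcal{A}_{\mathbb{T}} = \{\log^{(k)} \suchthat k \in \mathbb{N}\}$, using $(\log f)\circ a = \log(f \circ a)$) and shows $\mathcal{T}_{\delta}(\circ_a)$ commutes with $\log$, hence with $\exp$ on $L^{>}$. Moreover $\mathcal{T}_{\delta}(\circ_a)(\omega) = (\omega \circ a) + (\omega' \circ a)\,\delta = a + \delta$, as $\omega' = 1$ (from $\omega^{\dag} = \partial(\log\omega) = \omega^{-1}$) and $\omega'' = 0$; and since $x' = \omega' = 1$ and $\partial(s \circ a) = (s' \circ a)\,a' = (s' \circ a)\,\partial(\omega \circ a)$, Theorem~\ref{th-chain-rule} shows $\mathcal{T}_{\delta}(\circ_a)$ satisfies $\partial(\mathcal{T}_{\delta}(\circ_a)(s)) = \partial(a + \delta)\,\mathcal{T}_{\delta}(\circ_a)(s')$ for the derivation $\partial$ on $\mathbf{No}$.

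Finally, the restriction of $\circ_{a+\delta}$ to $L$ is also a strongly linear morphism of ordered rings that commutes with $\log$ and $\exp$, satisfies the same chain rule against $\partial$, and sends $\omega$ to $a + \delta$; so it remains to see that these data determine the map on $L$. Here I would reduce by strong linearity to a monomial $\mathfrak{m} \in \mathfrak{M}_{\circ_a,\delta}$, write $\mathfrak{m} = \exp(\ell(\mathfrak{m}))$ with $\ell(\mathfrak{m}) \in L$ (by the closure above), use that both maps commute with $\exp$ to descend to $\ell(\mathfrak{m})$, and then, reducing again by strong linearity, to the monomials of its support; iterating and invoking the well-nested (transfinitely built) structure of $\mathbb{R}\llangle\omega\rrangle$ --- equivalently, the uniqueness of its composition {\cite[Theorem~6.3]{BM19}}, for which the $\log$- and $\partial$-compatibilities just produced are precisely what is required --- yields $\mathcal{T}_{\delta}(\circ_a) = \circ_{a+\delta}$ on $L$, which is the assertion. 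I expect this last rigidity step to be the only real obstacle; an alternative is to combine the ``small $\delta$'' Taylor expansion of {\cite[Theorem~7.5]{BM19}} with an analytic-continuation argument along the convex convergence region $L$ (in the spirit of Proposition~\ref{prop-lifting-inequalities}), but making precise that $\delta \mapsto f \circ (a + \delta)$ is analytic is itself delicate.
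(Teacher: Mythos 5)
Your proposal follows essentially the same route as the paper: check that $\log$ stabilises $\mathbb{R}\llangle\omega\rrangle_{\circ_a,\delta}$ (the paper does this by computing $(\log\mathfrak{m})^{\dag}=\mathfrak{m}^{\dag}/(\log\mathfrak{m})^2\prec\mathfrak{m}^{\dag}$, you via Lemma~\ref{lem-flatness-log}), deduce from Remark~\ref{rem-log-compatibility} and Theorem~\ref{th-Taylor-analytic} that $\mathcal{T}_{\delta}(\circ_a)$ commutes with $\log$, compute $\mathcal{T}_{\delta}(\circ_a)(\omega)=a+\delta$, and conclude by the uniqueness of $\circ_b$ as the strongly linear ring morphism sending $\omega\mapsto b$ and commuting with $\log$. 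The rigidity step you worry about is handled in the paper exactly as you suggest: since $\mathbb{R}\llangle\omega\rrangle_{\circ_a,\delta}$ is a transserial subfield containing $\omega$, that uniqueness characterisation restricts to it, so the extra chain-rule verification via Theorem~\ref{th-chain-rule} is not needed.
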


\begin{proof}
  We claim that the logarithm stabilises $\mathbb{R} \llangle \omega
  \rrangle_{\circ_a, \delta}$. Indeed, it suffices to show that $\log
  \mathfrak{m} \in \mathbb{R} \llangle \omega \rrangle_{\circ_a, \delta}$
  whenever $\mathfrak{m} \in \mathbb{R} \llangle \omega \rrangle_{\circ_a,
  \delta}^{>}$ is a monomial. We have $\tmop{supp} \log \mathfrak{m} \succ 1$
  by construction, so it suffices to show that $((\log \mathfrak{m})^{\dag}
  \circ a) \delta \prec 1$. But $(\log \mathfrak{m})^{\dag} = \frac{(\log \log
  \mathfrak{m})'}{\log \mathfrak{m}} = \frac{\mathfrak{m}^{\dag}}{(\log
  \mathfrak{m})^2} \prec \mathfrak{m}^{\dag}$, so $((\log \mathfrak{m})^{\dag}
  \circ a) \delta \prec (\mathfrak{m}^{\dag} \circ a) \delta \prec 1$. This
  proves our claim.
  
  Given a $b \in \mathbf{No}^{>, \succ}$, the function $\mathord{\circ_b} \of
  \mathbb{R} \llangle \omega \rrangle \longrightarrow \mathbf{No}$ is the
  unique strongly linear morphism of rings with
  \[ \mathord{\circ_b} (\omega) = b \text{{\hspace{3em}}and{\hspace{3em}}}
     \forall f \in \mathbb{R} \llangle \omega \rrangle^{> 0},
     \mathord{\circ_b} (\log f) = \log (\mathord{\circ_b} (f)) . \]
  Since $\omega \in \mathbb{R} \llangle \omega \rrangle_{\circ_a, \delta}$ and
  in view of Lemma~\ref{lem-flatness-log}, the field $\mathbb{R} \llangle \omega
  \rrangle_{\circ_a, \delta}$ is a transserial subfield of $\mathbb{R}
  \llangle \omega \rrangle$. Therefore the function $\mathord{\circ_b}
  \upharpoonleft \mathbb{R} \llangle \omega \rrangle_{\circ_a, \delta}$ is
  also unique to satisfy the above conditions on $\mathbb{R} \llangle \omega
  \rrangle_{\circ_a, \delta}$. Recall that $\circ_a$ itself commutes with the
  logarithm. In view of Remark~\ref{rem-log-compatibility},
  \Cref{th-Taylor-analytic} implies that $\mathcal{T}_{\delta} (\circ_a)$
  commutes with log. We conclude by observing that
  \[ \mathcal{T}_{\delta} (\circ_a) (\omega) = \omega \circ a + (\omega' \circ
     a) \delta + \cdots = a + 1 \cdot \delta + 0 + 0 + \cdots = a + \delta .
  \]
\end{proof}

This establishes Theorem~\ref{th-main}.

\end{document}